\numberwithin{equation}{section}
\newcommand{\ve}[1]{{\bf #1}}
\newcommand{\Rz}{\mathcal{H}}
\newcommand{\aleq}{\lesssim}
\newcommand{\brac}[1]{\left (#1 \right )}
\newcommand{\lapms}[1]{I_{#1}}
\newcommand{\laps}[1]{\Dx^{#1}}
\newcommand{\lapv}{\Dx^{1/2}}
\renewcommand{\S}{\mathbb{S}}
\newcommand{\ff}{\mathbf{f}}
\newcommand{\R}{\mathbb{R}}
\newcommand{\C}{\mathbb{C}}
\newcommand{\N}{\mathbb{N}}
\newcommand{\Z}{\mathbb{Z}}
\newcommand{\Ss}{\mathbb{S}}
\newcommand{\im}{\mathrm{i}}
\newcommand{\be}{\begin{equation}}
\newcommand{\ee}{\end{equation}}
\newcommand{\vphi}{\varphi}
\newcommand{\tvphi}{\widetilde{\varphi}}
\renewcommand{\rho}{\varrho}
\newcommand{\eps}{\varepsilon}
\newcommand{\xB}{\mathbf{X}}
\newcommand{\uu}{\mathbf{u}}
\newcommand{\vv}{\mathbf{v}}
\newcommand{\QB}{\mathbf{Q}}
\newcommand{\QBm}{\mathbf{Q}_m}
\newcommand{\hh}{\mathbf{h}}
\newcommand{\eB}{\mathbf{e}}
\newcommand{\Dx}{|\nabla|}
\renewcommand{\theta}{\vartheta}
\newcommand{\pt}{\partial}
\newcommand{\DD}{\Delta}
\newcommand{\HH}{\mathcal{H}}
\newcommand{\tL}{\widetilde{L}}
\newtheorem{thm}{Theorem}
\newtheorem{thm2}{Theorem}[section]
\newtheorem{lemma}{Lemma}[section]
\newtheorem{prop}{Proposition}[section]
\newtheorem{cor}{Corollary}[section]
\newtheorem*{remark*}{Remark}
\newtheorem*{remarks*}{Remarks}
\def\section{\@startsection{section}{1}%
  \z@{1.5\linespacing\@plus\linespacing}{.5\linespacing}%
  {\normalfont\bfseries\large\centering}}
\begin{document}

\title[On Energy-Critical Half-Wave Maps into $\Ss^2$]{On Energy-Critical Half-Wave Maps into $\Ss^2$}

\begin{abstract}
We consider the energy-critical half-wave maps equation 
$$
\pt_t \uu + \uu \wedge |\nabla| \uu = 0
$$
for $\uu : [0,T) \times \R \to \Ss^2$. We give a complete classification of all traveling solitary waves with finite energy. The proof is based on a geometric characterization of these solutions as minimal surfaces with (not necessarily free) boundary on $\Ss^2$. In particular, we discover an explicit Lorentz boost symmetry, which is implemented by the conformal M\"obius group on the target $\Ss^2$ applied to half-harmonic maps from $\R$ to $\Ss^2$.

Complementing our classification result, we carry out a detailed analysis of the linearized operator $L$ around half-harmonic maps $\QB$ with arbitrary degree $m \geq 1$. Here we explicitly determine the nullspace including the zero-energy resonances; in particular, we  prove the nondegeneracy of  $\QB$. Moreover, we give a full description of the spectrum of $L$ by finding all its $L^2$-eigenvalues and proving their simplicity. Furthermore, we prove a coercivity estimate for $L$ and we rule out embedded eigenvalues inside the essential spectrum. Our spectral analysis is based on a reformulation in terms of certain Jacobi operators (tridiagonal infinite matrices) obtained from a conformal transformation of the spectral problem posed on $\R$ to the unit circle $\Ss$. Finally, we construct a unitary map which can be seen as a gauge transform tailored for a future stability and blowup analysis close to half-harmonic maps. Our spectral results also have potential applications to the half-harmonic map heat flow, which is the parabolic counterpart of the half-wave maps equation.
\end{abstract}

\author{Enno Lenzmann}
\address{University of Basel, Department of Mathematics and Computer Science, Spiegelgasse 1, CH-4051 Basel, Switzerland.}
\email{enno.lenzmann@unibas.ch}

\author{Armin Schikorra}
\address{Mathematisches Institut, Abt. f\"ur Reine Mathematik, Albert-Ludwigs-Universit\"at
Freiburg, Eckerstrasse 1, 79104 Freiburg im Breisgau, Germany}
\email{armin.schikorra@math.unifreiburg.de}
\maketitle

\tableofcontents

\section{Introduction}
In the present paper,  we study the {\bf half-wave maps equation} that is given by
\be \tag{H-WM} \label{eq:hwm}
\pt_t \uu + \uu \wedge \Dx \uu=0
\ee
for the unknown function $\uu : [0,T) \times \R \to \Ss^2$. Here $\Ss^2$ is the two-dimensional unit-sphere embedded in $\R^3$ and the symbol $\wedge$ denotes the cross product in $\R^3$. The operator $|\nabla| = \sqrt{-\DD}$ stands for the square root of the Laplacian on  $\R$. Below we provide more details on the precise functional framework. For some background on the physical motivation and the occurrence of \eqref{eq:hwm} as a universal continuum limit of completely integrable spin systems of {\em Calogero--Moser type}, we refer the reader to Appendix \ref{sec:primer}.

The main purpose of this paper is twofold: First, we completely classify all traveling solitary waves with finite energy. Here we use a characterization of these solutions as minimal surfaces satisfying certain boundary conditions on $\Ss^2$, thereby generalizing the notion of free boundary minimal disks that have already appeared in the study of half-harmonic maps (see below). As a surprising fact, we will discover an explicit Lorentz boost symmetry in the problem realized by the conformal M\"obius group acting on the target $\Ss^2$. Secondly, we provide a complete spectral analysis of the linearized operator around static traveling solitary waves, which correspond to half-harmonic maps with arbitrary degree $m \geq 1$. In fact, these spectral results will lay the groundwork for any future stability and blowup analysis for solutions of \eqref{eq:hwm} as well as its parabolic counterpart given by the energy-critical {\em half-harmonic map heat flow}.

Let us first collect some general facts about the problem under consideration. The evolution equation \eqref{eq:hwm} has a Hamiltonian structure with the corresponding conserved energy given by
\be
E[\uu] = \frac{1}{2} \int_{\R} \uu \cdot |\nabla| \uu \, dx.
\ee
As a consequence, the homogenouse Sobolev space $\dot{H}^{\frac 1 2}(\R; \Ss^2)$ is the natural energy space for the Cauchy problem of \eqref{eq:hwm}. Furthermore, it is easy to see that the rescaling $\uu(t,x) \mapsto \uu(\lambda t, \lambda x)$ with  constant $\lambda > 0$ maps solutions into solutions. Since the energy $E[\uu] = E[\uu(\lambda \cdot)]$ remains unaffected by such a change of scales, the half-wave maps equation \eqref{eq:hwm} happens to be {\em energy-critical}. Therefore, delicate phenomena such as blowup (singularity formation) are conceivable scenarios for solutions of the half-wave maps equation. In addition to the feature of energy-criticality, we shall also see that the {\em conformal symmetry} of the energy functional $E[\uu]$ together with a (hidden) Lorentz boost symmetry (implemented by the conformal group of $\Ss^2$) will both play an important role in the complete classification and spectral properties of traveling solitary waves.

Let us briefly put  \eqref{eq:hwm} into the context of other geometric evolution equations. On the one hand, it is not surprising that the half-wave maps equation shows strong similarities to well-established models in the field of dispersive geometric PDEs such as the energy-critical {\em wave-maps equation (WM)} and {\em Schr\"odinger maps equation (SM)} with target $\Ss^2$. For results on singularity formation (blowup) for these equations, see \cite{KrScTa-08, MeRaRo-13, RoSt-10, GuNaTs-10, Sh-88, BeTa-14, Pe-14}. However, as opposed to (HM) and (SM), we mention that satisfactory well-posedness results for \eqref{eq:hwm} pose an interesting open problem. For small data global well-posedness in the energy-supercritical case in sufficiently high space dimensions, see the recent results in \cite{KrSi-16}.

On the other hand, our results below will show that \eqref{eq:hwm} possesses a list of noteworthy features that are in  striking contrast to both (WM) and (SM). For instance, the set of traveling solitary waves turns out to be very rich involving closed analytic expression in terms of Blaschke products from complex analysis. Furthermore, the existence of traveling solitary waves with arbitrarily small energy (as shown below) provides a decisive difference to both the energy-critical (WM) and (SM). As a consequence, the energy-critical half-wave maps equation has no minimal energy threshold for creating traveling solitary waves (i.\,e.~non-trivial bubbles of energy). This fact implies that small energy solutions in general cannot scatter to a free wave. We remark that such a phenomenon was recently exploited in \cite{GeLePoRa-16} for an explicit construction of turbulent solutions for the $L^2$-critical scalar half-wave equation in one spatial dimension. One may thus speculate whether such a turbulence mechanism also exists for \eqref{eq:hwm} in a suitable regime of small energy solutions. 

With regard to the physical motivation, we mention that the half-wave maps equation \eqref{eq:hwm} formally arises as a universal continuum limit for the dynamics of long-range lattice spin system of {\em Calogero--Moser type}, which appear in the theory of completely integrable systems; see Appendix \ref{sec:primer} for more details. Of course, it is a natural  question whether some sort of complete integrability still holds for the Hamiltonian evolution equation formulated by \eqref{eq:hwm}. The remarkable degree of explicitness in the classification results below together with the intriguing properties of the linearized problem both indicate that some ``completely integrable structure'' may be at work here. Yet, even if complete integrability in a certain sense was present,  it would nevertheless be conceivable that the flow also admits some kind of smooth blowup solutions, e.\,g., becoming singular in $\dot{H}^s$ with some $s > \frac 1 2$, as the set of conserved quantities may not be able to control regularity above the energy level. In fact, such a phenomenon would be highly interesting from the analytic point of view.

\section{Classification of Traveling Solitary Waves}
Let us consider {\bf traveling solitary waves}  for the energy-critical half-wave maps equation with finite energy. That is, we look for solutions of the form
\be
\uu(t,x) = \QB_v (x- vt),
\ee
where the parameter $v \in \R$ denotes the traveling velocity and $\QB_v \in \dot{H}^{\frac 1 2}( \R; \Ss^2)$ is some profile with finite energy. If we plug this ansatz into \eqref{eq:hwm}, we readily check that $\QB_v: \R \to \Ss^2$ has to be a (weak) solution of the equation
\be \label{eq:uv}
\QB_v \wedge |\nabla| \QB_v - v \pt_x \QB_v = 0  .
\ee 
On a formal level,  this equation arises from a variational principle; see the remarks made below for more details. However, we will not pursue  this approach but instead we will work directly with the equation itself without resorting to any variational structure. 

In fact, our first two main results in Theorem \ref{thm:vsmall} and \ref{thm:vlarge} below will give a complete and explicit classification of all solutions $\QB_v \in \dot{H}^{\frac 1 2}(\R; \Ss^2)$ for $|v| < 1$ and the triviality of $\QB_v$ when $|v| \geq 1$, respectively. In particular, our classification result reveals a surprising symmetry due to the proper Lorentz group $SO^+(3,1)$ of the problem at hand, which is expressed through the action of the isomorphic M\"obius group $PSL(2, \C)\simeq SO^{+}(3,1)$, which is the (orientation preserving) conformal group on the target sphere $\Ss^2$. We find the presence of such an exact symmetry quite remarkable in view of the fact that the time-dependent equation \eqref{eq:hwm} itself does not have a relativistic Lorentz invariance. In addition to this, we will see that the energy $E[\QB_v]$ is {\em decreasing} under Lorentz boosts, providing a mechansim for producing solitary waves (and hence non-scattering) solutions of arbitrary small energy. This feature is in striking to contrast to the energy-critical wave-maps equation with target $\Ss^2$, say, where Lorentz boosts are known to increase the energy of solutions and small data in energy norm is expected to always lead to scattering of solutions.

To start the analysis of \eqref{eq:uv}, it is obviously of use to first consider the special case with vanishing velocity $v = 0$, which corresponds to static solutions for \eqref{eq:hwm}. Then the profile equation \eqref{eq:uv} reduces to the so-called {\bf half-harmonic maps equation} for $\QB \in \dot{H}^{\frac 1 2}(\R;  \Ss^2)$  which is given by
\be \label{eq:hh}
\QB \wedge |\nabla| \QB = 0.
\ee 
This nonlinear equation has recently attracted substantial attention in the field of geometric PDEs; see the seminal works of Da Lio and Rivi\`ere \cite{DR1dMan,DaLioBubbles} on the regularity theory, see also \cite{DSpalphSphere, Millot-Sire-2015, Scheven-2006, Schikorra-epsilon}. An essential feature of the half-harmonic maps equation is that its (finite-energy) solutions can be identified via a harmonic extension as {\em free boundary minimal disks} inside the unit ball $B \subset \R^3$, i.\,e., minimal disks $\Sigma \subset B$ with boundary that intersect the boundary of $B$ orthogonally, which we write as $\pt \Sigma \perp \Ss^2 = \pt B$ (see \cite{DaMaRi-15, DSpalphSphere, Millot-Sire-2015, DaLio-16, StruweIM84}). Moreover, by a classical result of Nitsche \cite{Nitsche70} any such free boundary minimal disk $\Sigma \subset B$ is a flat disk with $\pt \Sigma$ being a great circle on $\Ss^2$ (for higher dimensions, see also the recent generalization of Nitsche's result by Fraser and Schoen \cite{FrSc-15}). Thus, after a suitable rotation, we conclude that any finite-energy half-harmonic map can be brought into the form $\QB : \R \to \Ss^1 \times \{0 \} \subset \Ss^2$, i.\,e., its image lies in the equatorial plane. Once we are in this setting, we can apply methods of complex analysis and some Hardy space theory to deduce that all such half-harmonic maps (with finite energy) from $\R$ to $\Ss^1$ are explicitly given by suitable Blaschke products, as recently proven by Millot and Sire \cite{Millot-Sire-2015}; see also \cite{BeMiRySa-14} for a related classification result in Ginzburg--Landau theory. 

Let us now come back to the general case with arbitrary velocity $v \in \R$. A moment's reflection shows that if $v \neq 0$ holds, the identification of solutions for \eqref{eq:uv} as free boundary minimal disks inside $B$ fails due to the fact that $\pt \Sigma \not \perp \Ss^2$ in general, where $\Sigma \subset B$ denotes the minimal disk given by the harmonic extension of $\QB_v$. Thus to tackle the case for general velocities $v$, it will turn out that we need a broader notion of not necessarily free boundary minimal disks. By carrying out such an analysis, we can in fact prove the following complete classification result for $|v| < 1$; the case $|v| \geq 1$ will be addressed in Theorem \ref{thm:vlarge} below.

\begin{thm}[Complete Classification of Traveling Solitary Waves for $|v| < 1$] \label{thm:vsmall}
Let $v \in \R$ with $|v| < 1$ be given. Then $\QB_v \in \dot{H}^{\frac 1 2}(\R; \Ss^2)$ solves equation \eqref{eq:uv} if and only if
$$
\QB_v(x) = R \left ( \sqrt{1- v^2} f(x), \sqrt{1-v^2} g(x), s \cdot v  \right )
$$
with some rotation $R \in SO(3)$. Here $s \in \{\pm 1\}$ is a sign factor and the functions $f,g : \R \to \R$ are given by
$$
f(x) =  \mathrm{Re} \, \mathcal{B}(x+\im 0) \quad \mbox{and}  \quad  g(x) = s \cdot \mathrm{Im} \, \mathcal{B}(x + \im 0) ,
$$
where $\mathcal{B} : \overline{\mathbb{C}}_+ \to \C$ is a {\bf finite Blaschke product} of the form
$$
\mathcal{B}(z) =  e^{\im \vartheta} \prod_{k=1}^m \frac{ \lambda_k(z-a_k) - \im }{\lambda_k (z - a_k) + \im } 
$$
with some integer $m \in \N$ and real parameters $\vartheta \in \R$, $\lambda_1, \ldots, \lambda_m  \in \R_{>0}$, $a_1, \ldots, a_m \in \R$.
\end{thm}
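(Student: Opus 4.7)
The plan is to combine a Hopf-differential argument for the harmonic extension of $\QB_v$ with a rigidity result for minimal disks meeting $\Ss^2$ at a constant angle, and then reduce the remaining equation to the classical classification of inner functions on $\mathbb{C}_+$. The converse implication (that any map of the stated form solves \eqref{eq:uv}) is a direct computation, so I focus on the nontrivial direction.

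\textbf{Step 1 (conformality of the harmonic extension).} Let $\uB : \overline{\mathbb{C}_+} \to \R^3$ be the harmonic extension of $\QB_v$, so that $|\nabla|\QB_v = -\pt_y \uB|_{y=0}$. Crossing \eqref{eq:uv} with $\QB_v$ and using $|\QB_v| = 1$ gives
\[
\pt_y \uB\big|_{y=0} = -c\,\QB_v + v\,\QB_v \wedge \pt_x \QB_v, \qquad c := \QB_v \cdot |\nabla| \QB_v .
\]
Since $\pt_x \QB_v$ is orthogonal both to $\QB_v$ and to $\QB_v \wedge \pt_x \QB_v$, this yields $\pt_x \uB \cdot \pt_y \uB = 0$ on $\R$. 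The Hopf differential $H(z) := \sum_{j=1}^{3} (\pt_z \uB_j)^2$ is holomorphic on $\mathbb{C}_+$ and real-valued on $\R$; Schwarz reflection extends it to an entire function, and the finite-energy assumption $|\nabla \uB| \in L^2(\mathbb{C}_+)$ places $H$ in $L^1(\mathbb{C})$, forcing $H \equiv 0$ by the mean-value property. Hence $\uB$ is weakly conformal, $\Sigma := \uB(\mathbb{C}_+)$ is a minimal surface in $\overline{B^3}$, and reading off $\mathrm{Re}\,H = 0$ on $\R$ gives $c^2 = (1-v^2)|\pt_x \QB_v|^2$. A short cross-product computation then shows the unit normal $N = (\pt_x \uB \wedge \pt_y \uB)/|\pt_x \uB|^2$ to $\Sigma$ satisfies $N \cdot \QB_v \equiv v$ along $\pt\Sigma$, so $\Sigma$ meets $\Ss^2$ at the constant contact angle $\arccos(v)$.

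\textbf{Step 2 (rigidity for constant-angle minimal disks).} This is the central and most delicate part. The goal is a generalization of Nitsche's theorem: every minimal disk in $\overline{B^3}$ meeting $\Ss^2$ at a constant angle is a flat disk whose boundary is a round circle on $\Ss^2$. The orthogonal case $v = 0$ is Nitsche's classical result. For general $v$ I expect to proceed via the Enneper-Weierstrass representation of $\Sigma$: the identity $N \cdot \QB_v = v$ translates into a closed-form boundary condition for the Gauss map $g$ which, combined with Schwarz reflection, the finite Dirichlet energy of $g$, and the disk topology, should force $g$ to be constant. A more conceptual alternative is to exploit the explicit Lorentz boost / M\"obius action on $\Ss^2$ advertised in the introduction to transport $\QB_v$ to a half-harmonic map and reduce to the free-boundary case directly. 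Either route delivers that, after composing with an appropriate rotation $R \in SO(3)$, the third component satisfies $\QB_v^{(3)} \equiv sv$ for some sign $s \in \{\pm 1\}$, and $\pt \Sigma$ is the latitude circle $\{x \in \Ss^2 : x_3 = sv\}$.

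\textbf{Step 3 (Blaschke-product classification).} With $\QB_v^{(3)} \equiv sv$ fixed, the third component of \eqref{eq:uv} becomes trivial, and the remaining two components combine, via $w := \QB_v^{(1)} + \im\, \QB_v^{(2)}$, into the scalar identity $|\nabla| w = -\im s\,\pt_x w$. Fourier transformation yields $(|\xi| - s\xi)\,\hat w(\xi) = 0$, so $\hat w$ is supported on $s\cdot[0,\infty)$; hence $w$ (respectively $\overline w$ when $s = -1$) is the boundary value of a holomorphic function on $\mathbb{C}_+$ of modulus $\sqrt{1-v^2}$ on $\R$ and lying in $\dot H^{1/2}(\R)$. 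The classical theory of inner functions---equivalently, the Millot-Sire classification of finite-energy half-harmonic maps from $\R$ to $\Ss^1$---then identifies $w/\sqrt{1-v^2}$ as a finite Blaschke product $\mathcal{B}$ of the stated form. Reinstating the rotation $R$ recovers the formula for $\QB_v$. All the real difficulty sits in Step 2; once the flat-disk rigidity is in place, Steps 1 and 3 are essentially standard complex analysis.
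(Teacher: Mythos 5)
Your Steps 1 and 3 run parallel to the paper: the first Hopf differential $\Phi(z) := 4\,\pt_z\QB^e\cdot\pt_z\QB^e$ (Lemma \ref{lem:Uz1}) delivers conformality and the identity $|\pt_x\QB| = ||\nabla|\QB|$, and the endgame is indeed the Millot--Sire inner-function classification (Lemma \ref{lem:half_harmonic_S1}). Two caveats on Step 1: the Hopf-differential argument needs $\QB_v\in C^1$ up to the boundary, which is nontrivial for $v\neq 0$ and occupies the entire Appendix \ref{sec:regularity} of the paper; and the constant-angle computation $N\cdot\QB_v\equiv v$ is correct but already presupposes the regularity needed to make sense of the boundary normal.

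The real issue is Step 2, which you concede is incomplete. The paper does \emph{not} invoke a black-box rigidity theorem for constant-contact-angle minimal disks. Its key technical innovation is a \emph{second} Hopf differential: Lemma \ref{lem:Uz2} proves $\pt_{zz}\QB^e\cdot\pt_{zz}\QB^e=0$ by showing that $\pt_{xx}\QB\cdot\pt_x|\nabla|\QB=0$ on $\R$, which in turn follows from differentiating the reformulated equation $|\nabla|\QB=\sqrt{1-v^2}\,|\pt_x\QB|\,\QB - v\,\pt_x\QB\wedge\QB$ of Lemma \ref{lem:upoint}. Feeding both vanishing Hopf differentials into the Enneper--Weierstrass data $(F,G)$ in Lemma \ref{lem:plane} gives $F^2(\pt_z G)^2\equiv 0$, forcing $G$ constant and hence the image planar. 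Your proposal gestures at ``a closed-form boundary condition for the Gauss map combined with Schwarz reflection and finite Dirichlet energy'' but does not exhibit the boundary condition or explain why reflection applies; that is exactly where the second Hopf differential does the work. Be aware also that your ``more conceptual alternative''---transport $\QB_v$ to a half-harmonic map by a Lorentz boost on $\Ss^2$---is circular: the boost $(f,g,0)\mapsto(\sqrt{1-v^2}f,\sqrt{1-v^2}g,v)$ only maps solutions to solutions \emph{after} you know that $\QB_v(\R)$ lies on the latitude circle at height $\pm v$, which is precisely the content of Step 2. Until that planarity is established there is no privileged axis along which to boost, and the equation \eqref{eq:uv} is not invariant under the full M\"obius group.
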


\begin{remarks*}
{\em
1) The integer number $m \in \N$ is the {\bf degree} of the finite Blaschke product $\mathcal{B}(z)$. Note that the case $m=0$ corresponds to the trivial case of constant profiles $\QB_v \equiv \mbox{const}$. The choice of $s=+1$ in Theorem \ref{thm:vsmall} corresponds to $\QB_v$ obtained as boundary values of {\bf holomorphic} functions, whereas $s=-1$ corresponds to the {\bf anti-holomorphic} case. The anti-holomorphic case could also be encoded by introducing a negative degree $m \in - \N$. However, we shall not use the nomenclature here. Furthermore, without loss of generality, we adapt the convention that we mean by default that $\QB_v$ is given by a holomorphic function, i.\,e., we choose $s=+1$, unless we say something else explicitly.

2) The energy of $\QB_v \in \dot{H}^{\frac 1 2}(\R; \Ss^2)$ with  $|v| < 1$ and degree $m \in \N$ is found to be
$$
E[\QB_v] = (1 - v^2) \cdot \pi m.
$$
As a direct consequence, the energy-critical half-wave maps equation \eqref{eq:hwm} possesses traveling solitary waves with arbitrarily small energy $E[\QB_v] \to 0$ as $|v| \to 1^-$. In particular, initial data with small energy may not asymptotically scatter to 0 as $|t| \to \infty$ in contrast to the energy-critical wave-maps  and Schr\"odinger maps equation with target $\Ss^2$; see \cite{StTa-10a, StTa-10b} and \cite{BeIoKeTa-13}, respectively.

3) Note that we could absorb the phase factor $e^{\im \vartheta}$ in $\mathcal{B}(z)$ into the rotation $R \in SO(3)$, but we prefer not to do so.

4) Let $\QB=(f,g,0) \in \dot{H}^{\frac 1 2}(\R; \Ss^2)$ be a given half-harmonic map in the equatorial plane. For $0 < |v| < 1$, the transformation
$$
\QB(x) = (f(x), g(x), 0) \mapsto \QB_v(x) = \left ( \sqrt{1-v^2} f(x), \sqrt{1-v^2} g(x), v \right )
$$ 
can be viewed as {\bf Lorentz boost}  with velocity $v$ in $z$-direction, implemented by action of the M\"obius group on the sphere $\Ss^2$; see Appendix \ref{sec:primer} for details. Since all finite energy solutions of \eqref{eq:uv} with $|v| <  1$ are obtained via the boost mechanism (and rotations) by Theorem \ref{thm:vsmall}, we can refer to $\QB_v$ as {\bf boosted} (or {\bf squeezed}) {\bf half-harmonic maps} from $\R$ to $\Ss^2$.  Note that $\QB_v$ traces out a circle on $\Ss^2$ with radius  $r=\sqrt{1-v^2}<1$, which can be seen as the phenomenon of {\em Lorentz contraction} known in special relativity. 

5) For a given degree $m \in \N$, the family of finite Blaschke products $\mathcal{B}(z)$ above is determined the $2m + 1$ real parameters $\{\vartheta, \lambda_1, \ldots, \lambda_m, a_1, \ldots, a_m \}$. As a consequence, the nullspace of the linearized operator $L$ around $\QB_v$ will contain zero modes generated by differentiation with respect to these parameters. In Theorem \ref{thm:null} below, we will explicitly determine the nullspace of $L$ and rule out any additional zero modes that are not due to symmetries. Thus we will show the {\bf nondegeneracy} of the linearized operator.

6) As a curious aside, we mention that equation \eqref{eq:uv} also has solutions with {\bf infinite energy} when $|v| < 1$. For instance, the function $\QB_v : \R \to \Ss^2$ with
$$
\QB_v(x) = \left ( \sqrt{1-v^2} \cos x, \sqrt{1-v^2} \sin x , v \right ) \in \dot{H}^{\frac 1 2}_{\mathrm{loc}}(\R; \Ss^2)
$$
is easily seen to solve \eqref{eq:uv} by using that $|\nabla| \cos x = \cos x$ and $|\nabla| \sin x = \sin x$. In particular, the function $\QB(x) = ( \cos x, \sin x, 0) \in \dot{H}^{\frac 1 2}_{\mathrm{loc}}(\R; \Ss^2)$ is an infinite energy solution of the half-harmonic maps equation \eqref{eq:hh}. It seems that the existence of these infinite-energy solutions has gone unnoticed so far.

7) As mentioned above, the boosted half-harmonic maps $\QB_v \in \dot{H}^{\frac 1 2}(\R; \Ss^2)$ can be {\em formally} seen as {\bf critical points} of the (conformally invariant) functional
\be
S[\uu] = E [\uu] + v P[\uu],
\ee
where $v \in \R$ enters as a parameter and $P[\uu]$ is a functional corresponding to the linear momentum; see Appendix \ref{sec:primer} for more details. In geometric terms, the energy $E[\uu]$ corresponds to the area of the surface $\Sigma$ spanned by the harmonic extension $\uu^e : \R_+^2 \to \R$ with boundary $\partial \Sigma$ given by the closed curve $\uu : \R \to \Ss^2$; whereas the momentum functional $P[\uu]$ yields the {\em solid angle} traced out by $\Sigma$ subtended to a given point $\eB \in \Ss^2$, which can be chosen in the definition of $P[\uu]$. Of course, there is an inherent ambiguity of $4 \pi$ in the definition of $P[\uu]$ due to possible different choices for the point $\eB$. 
}

\end{remarks*}

We conclude our classification analysis of solutions for  \eqref{eq:uv} by showing that traveling speeds $|v| \geq 1$ imply the triviality of the profile $\QB_v$.

\begin{thm}[Triviality of Traveling Solitary Waves for $|v| \geq 1$] \label{thm:vlarge}
Let $v \in \R$ with $|v| \geq 1$ be given. Then $\QB_v \in \dot{H}^{\frac 1 2}(\R; \Ss^2)$ solves \eqref{eq:uv} if and only if $$\QB_v(x) \equiv \mathbf{p}$$ with some constant $\mathbf{p} \in \Ss^2$.
\end{thm}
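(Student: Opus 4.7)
The plan is to eliminate the cross product on the left-hand side of \eqref{eq:uv} by applying $\QB_v\wedge$ once more and exploiting the pointwise constraint $|\QB_v|\equiv 1$. The vector identity $\QB_v\wedge(\QB_v\wedge X)=(\QB_v\cdot X)\QB_v-X$ turns the equation into the pointwise formula
\begin{equation*}
|\nabla|\QB_v \;=\; \alpha\,\QB_v \;-\; v\,\QB_v\wedge \pt_x\QB_v, \qquad \alpha(x):=\QB_v(x)\cdot|\nabla|\QB_v(x).
\end{equation*}
Since $\QB_v\cdot\pt_x\QB_v=0$, the vectors $\QB_v$ and $\QB_v\wedge\pt_x\QB_v$ are orthogonal, and the latter has length $|\pt_x\QB_v|$. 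Taking squared Euclidean norms pointwise and integrating in $x$ therefore yields
\begin{equation*}
\int_\R \bigl||\nabla|\QB_v\bigr|^2\,dx \;=\; \int_\R \alpha^2\,dx \;+\; v^2\int_\R |\pt_x\QB_v|^2\,dx.
\end{equation*}

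Next, I invoke Plancherel componentwise to observe $\int_\R|\pt_x\QB_v|^2\,dx=\int_\R||\nabla|\QB_v|^2\,dx$, which reorganises the above display into the critical identity
\begin{equation*}
(1-v^2)\int_\R \bigl||\nabla|\QB_v\bigr|^2\,dx \;=\; \int_\R \alpha^2\,dx.
\end{equation*}
This is the entire engine of the proof. For $|v|>1$ the left side is $\le 0$ while the right side is $\ge 0$, so both vanish; hence $|\nabla|\QB_v\equiv 0$ and $\QB_v\equiv\mathbf{p}$ for some $\mathbf{p}\in\Ss^2$. For $|v|=1$ one gets $\alpha\equiv 0$ directly, and combining this with $\int_\R\alpha\,dx=2E[\QB_v]=\||\nabla|^{1/2}\QB_v\|_{L^2}^2$ forces $E[\QB_v]=0$, so again $\QB_v$ is constant. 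Notice that the argument is purely algebraic and requires no complex-analytic or variational input — in sharp contrast to the $|v|<1$ case of Theorem \ref{thm:vsmall}.

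The one genuinely delicate point will be justifying the pointwise products and the integrations by parts at the $\dot H^{1/2}$ regularity of the statement, since a priori neither $|\nabla|\QB_v$ nor $\pt_x\QB_v$ lies in $L^2$. The standard route is to invoke the regularity theory for equations of the type \eqref{eq:uv} (in the spirit of the Da Lio--Rivi\`ere and Millot--Sire works cited above) to upgrade any weak finite-energy solution to a smooth one, after which the algebraic manipulations become legitimate. Alternatively, one can test the equation against $|\nabla|\QB_v$ and $\pt_x\QB_v$ in a suitably truncated form and pass to the limit; either way the core inequality $(1-v^2)\||\nabla|\QB_v\|_{L^2}^2=\|\alpha\|_{L^2}^2$ survives, and the theorem follows.
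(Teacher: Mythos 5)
Your algebraic reduction is formally clean: applying $\QB_v\wedge$ and the triple-product identity with $|\QB_v|\equiv1$ gives $|\nabla|\QB_v=\alpha\,\QB_v-v\,\QB_v\wedge\pt_x\QB_v$ with $\alpha=\QB_v\cdot|\nabla|\QB_v$, and pointwise norms plus Plancherel in the form $\||\nabla|\QB_v\|_{L^2}=\|\pt_x\QB_v\|_{L^2}$ yield $(1-v^2)\||\nabla|\QB_v\|^2_{L^2}=\|\alpha\|^2_{L^2}$, after which a sign argument handles both $|v|>1$ and $|v|=1$. The difficulty is the one you flag at the very end, and it is not a technicality. Your derivation requires $\QB_v\in\dot H^1(\R)$, or at least enough regularity to make $|\nabla|\QB_v$ and $\pt_x\QB_v$ square-integrable functions rather than distributions, before any of the pointwise products or the Plancherel step are meaningful. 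The regularity theory you invoke (Theorem~\ref{th:regularity}) is proved only for $|v|<1$, and the restriction is structural: in Lemma~\ref{la:TfBfest} and the Morrey-space iteration built on it, a term $|v|^2\|\ff\|$ is absorbed by dividing by $1-v^2$, which fails for $|v|\geq1$. The paper explicitly remarks, immediately before the proof of Theorem~\ref{thm:vlarge}, that no higher regularity is available when $|v|\geq1$. Your fallback of ``truncate and pass to the limit'' does not rescue this either: for $\QB_v\in\dot H^{1/2}\setminus\dot H^1$ the quantities $\||\nabla|\QB_v\|_{L^2}^2$, $\|\pt_x\QB_v\|_{L^2}^2$ and $\|\alpha\|^2_{L^2}$ are all a priori infinite, so the identity you need is a relation between divergent quantities with no usable sign information, and it cannot be obtained as a limit of truncated finite identities because the pointwise relation that produces it is never established.

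The paper's argument is designed precisely to avoid this issue: it never treats $\pt_x\QB_v$ or $|\nabla|\QB_v$ as functions on the line, but works with the harmonic extension $\QB^e$, whose bulk Dirichlet energy $\iint_{\R^2_+}|\nabla\QB^e|^2=\|\QB_v\|^2_{\dot H^{1/2}(\R)}$ is finite from the hypothesis alone. Pairing the equation with $\Rz(\QB_v)$ and integrating by parts in $\R^2_+$ gives the finite Pohozaev-type identity $\iint_{\R^2_+}|\nabla\QB^e|^2=\tfrac{2}{v}\iint_{\R^2_+}(\QB^e\wedge\pt_x\QB^e)\cdot\pt_y\QB^e$, and Cauchy--Schwarz together with $|\QB^e|\leq1$ (maximum principle) then forces $\nabla\QB^e\equiv0$ for $|v|>1$, with the equality case $|v|=1$ handled by one more integration by parts. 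Your route would be a legitimate and tidy proof in the smooth category, but at the stated $\dot H^{1/2}$ regularity some version of the extension argument is unavoidable.
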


\subsection{Comments on the Proofs}
We close this section by briefly sketching the main ideas behind the proofs of Theorems \ref{thm:vsmall} and \ref{thm:vlarge}. 

We start with some remarks on how to establish Theorem \ref{thm:vsmall}. Let us assume that $\QB_v \in \dot{H}^{\frac 1 2}(\R; \Ss^2)$ is a non-constant solution of the profile equation \eqref{eq:uv} with some given $|v| < 1$, where we shall write $\QB = \QB_v$ for simplicity. We define $\QB^e : \R_+^2 \to \R^3$ to be the harmonic extension of $\QB$ to the upper halfplane $\R_+^2$ by Poisson's formula. This leads us to study the equivalent boundary-value problem that reads
$$
\left \{ \begin{array}{ll} \DD \QB^e = 0 & \quad \mbox{in $\R_+^2$}, \\
v \pt_x \QB^e  + \QB^e \wedge \pt_y \QB^e = 0 & \quad \mbox{on $\pt \R_+^2 = \R \times \{y=0 \}$},
\end{array} \right . 
$$ 
As in the special half-harmonic case when $v=0$ (see \cite{DaLioBubbles, DaLioRi-16, Millot-Sire-2015}), the adaption of a well-established argument involving a Hopf-type differential shows that $\QB^e : \R^2_+ \to \R^3$ is a (weakly) {\em conformal map}, i.\,e., it holds that  $|\pt_x \QB^e| = |\pt_y \QB^e|$ and  $\pt_x \QB^e \cdot \pt_y \QB^e = 0$. Using complex notation, we recall that the conformality conditions can be compactly written as
$$
\pt_z \QB^e \cdot \pt_z \QB^e = 0
$$
with $\pt_z = \frac{1}{2} ( \pt_x - \im \pt_y)$ and $\mathbf{v} \cdot \mathbf{w} =\sum_{k=1}^3 v_k w_k$ for $\mathbf{v}, \mathbf{w} \in \C^3$. It is a well-known fact that the harmonicity of $\QB^e$ together with its conformality implies that $(x,y) \mapsto \QB^e(x,y)$ traces out a (possibly branched) minimal surface $\Sigma \subset \R^3$. In fact, since $|\QB^e| < 1$ on $\R^2_+$ by the strong maximum principle, we have that $\Sigma \subset B$ (the unit ball in $\R^3$).
 
However, a mandatory -- and quite technical -- prerequisite for carrying out the aforementioned steps consists in first proving {\em higher regularity} for the map $\QB$. In Appendix \ref{sec:regularity} below, we provide a careful extension of the regularity proof by Da Lio and Rivi\`ere \cite{DR1dMan} (developed for half-harmonic maps) to the general case $|v| < 1$. We emphasize that  the condition that $|v|$ be strictly less than one is crucial to close the arguments, enabling us to effectively treat the boost term $v \pt_x \QB$ as a perturbation with respect to the main term involving $|\nabla| \QB$ in the profile equation \eqref{eq:uv}.  

Now, with the help of the conformality of the harmonic extension $\QB^e$, we can reformulate equation \eqref{eq:uv} in the following alternative form:
$$
|\nabla | \QB = \sqrt{1-v^2} |\pt_x \QB | \QB - v \pt_x \QB \wedge \QB .
$$
As an interesting aside, we notice that this version already contains some nontrivial geometric information. Indeed, by taking the scalar product with $\QB(x) \in \Ss^2$ of this equation, followed by integrating and using the properties of the harmonic extension, we deduce
$$
2 A[\Sigma] = \sqrt{1-v^2} L[\pt \Sigma] .
$$
Here the two functionals
$$
\left \{ \begin{array}{l}
\displaystyle A[\Sigma] = \frac 1 2 \int \! \! \int_{\R^2_+} |\nabla \QB^e|^2 \, dx \, dy = \frac{1}{2} \int_{\R} \QB \cdot |\nabla| \QB \,d x = E[\QB], \\[1ex]
\displaystyle L[\pt \Sigma] = \int_{\R} |\pt_x \QB| \, dx, \end{array} \right .
$$
denote the area of the (possibly multi-covered) surface $\Sigma$ parametrized by the map $\QB^e :Ê\R_+^2 \to \R^3$ and  the length of its boundary $\pt \Sigma$ parametrized by the curve $\QB : \R \to \Ss^2$, respectively. Combined with the general {\em isoperimetric inequality} $L[\pt \Sigma]^2 \geq 4 \pi A[\Sigma]$, we thus obtain the lower bound for the energy of $\QB$ such that 
$$
E[\QB] = A[\Sigma] \geq (1-v^2) \cdot  \pi  .
$$ 
At this point, however, we are not able to conclude that equality holds and hence $\QB$ must trace out a circle on $\Ss^2$, which would yield a great deal of information enabling us to basically conclude the proof of Theorem \ref{thm:vsmall}. Instead, we need to proceed by introducing a second Hopf-type differential argument showing that
$$
\pt_z^2 \QB^e \cdot \pt_z^2 \QB^e = 0 .
$$
By combining this equation with the classical Weierstrass--Enneper representation formula for minimal surfaces in $\R^3$, we eventually find that $\Sigma$ must be a flat disk and hence $\pt \Sigma$ is a circle on $\Ss^2$; more precisely, we find that $\pt \Sigma$ is great circle if $v=0$ and a small circle on $\Ss^2$ if $0 < |v| < 1$ . With this geometrical information, combined with complex analysis involving factorization theorems in Hardy spaces, we can then conclude the rest of the proof of Theorem \ref{thm:vsmall}.

The proof of Theorem \ref{thm:vlarge} runs along completely different lines by building up a contradiction argument. Note that a proof of higher regularity for $\QB_v \in \dot{H}^{\frac 1 2}(\R; \Ss^2)$ is {\em not} our disposal when $|v| \geq 1$. In particular, we cannot simply deduce the conformality of the harmonic extension $\QB^e$ due to the lack of boundary regularity.

To circumvent this difficulty, we derive a  {\em ``Pohozaev identity''} for the harmonic extension $\QB^e$ by testing the equation \eqref{eq:uv} against the one-dimensional Hilbert transform $\mathcal{H}( \QB)$ followed by some integration by parts. In summary, we obtain the following identity:
$$
\int \! \! \int_{\R_+^2} |\nabla \QB^e|^2 \, dx \, dy = \frac{2}{v} \int \! \! \int_{\R_+^2} (\QB^e \wedge \pt_x \QB^e) \cdot \pt_y \QB^e \, dx \, dy. 
$$
Now, by using that $| \QB^e | \leq 1$ on $\R_+^2$ (thanks to the maximum principle) it is then straightforward to conclude that $\nabla \QB^e \equiv 0$ holds when $|v| \geq 1$, which shows that $\QB \equiv \mathbf{p}$  with some constant $\mathbf{p} \in \Ss^2$ in this case.

\section{Spectral Analysis of the Linearized Operator}

We now study the linearization for the energy-critical half-wave maps eqaution \eqref{eq:hwm} around a given traveling solitary wave. In what follows, we will deal with the important special case of static solutions, i.\,e., the solitary wave profile is a half-harmonic map. In fact, the spectral results derived below will provide the basis for any future stability and possible blowup analysis for the flow \eqref{eq:hwm} close to half-harmonic maps.  Furthermore, our findings will also have direct applications to the study of the {\em half-harmonic map heat flow} close to half-harmonic maps; see below for more details on this.

Let us suppose  that $\uu(t,x)$ is a (sufficiently regular) solution of the half-wave maps equation \eqref{eq:hwm} defined on some time interval $[0,T)$. We make the following splitting
\be
\uu(t,x) = \QB(x) + \hh(t,x),
\ee
where $\QB \in \dot{H}^{\frac 1 2}(\R; \Ss^2)$ is some fixed half-harmonic map, i.\,e., a solution of \eqref{eq:uv} with $v=0$. 

Thanks to Theorem \ref{thm:vsmall} we can assume without loss of generality that $\QB = (f,g,0)$ is an equatorial half-harmonic map. In order to express the perturbation field $\hh(t,x)$ in a convenient way, we choose the orthonormal frame $\{ \eB, J \eB , \QB \}$ on $\R^3$ with $\eB  := \eB_z =(0,0,1)$ and $J \eB := \QB \wedge \eB$. Hence we can write
\be
\hh = h_1 \eB + h_2 J\eB + h_3 \QB
\ee
with some real-valued functions $h_i(t,x)$ where  $i=1,2,3$. Note that the orthonormal frame $\{ \eB, J\eB \}$ spans the tangent space $T_{\QB} \Ss^2$ and observe that $h_3 = O(\hh^2)$ due to the fact that $ | \QB + \hh |^2 = 1$ holds almost everywhere. A straightforward calculation (see Section \ref{sec:linearize} below) shows that the components of the tangential part $h_1 \eB  + h_2  J \eB \in T_{\QB} \Ss^2$ solve the equation  
\be \label{eq:linearized_hwm}
\pt_t \left [ \begin{array}{c} h_1 \\ h_2 \end{array} \right ]  = J L   \left [ \begin{array}{c} h_1 \\ h_2 \end{array} \right ] + O(\hh^2) ,
\ee
where $O(\hh^2)$ stands for quadratic terms in $\hh$ and $|\nabla| \hh$. The linearized operator is found to be
\be
J L = \left [ \begin{array}{cc} 0 & -1 \\ 1 & 0 \end{array} \right ]  \left [ \begin{array}{cc} L_+ & 0 \\ 0 & L_- \end{array} \right ] = \left [ \begin{array}{cc} 0 & - L_- \\ L_+ & 0  \end{array} \right ].
\ee
Here $L_+$ and $L_-$ are the scalar operators given by
\be
L_+ = |\nabla| - | \pt_x \QB | \quad \mbox{and} \quad L_- = |\nabla| - |\pt_x \QB|  + R ,
\ee
and $R$ denotes the integral operator of the form
\be
(R f)(x) = \frac{1}{2 \pi} \int_{\R} \frac{|\QB(x)- \QB(y)|^2}{|x-y|^2} f(y) \, dy. 
\ee
Our choice to name the scalar operators above as $L_+$ and $L_-$ stems from the commonly used notation in the analysis of solitary waves for nonlinear Schr\"odinger equations (NLS), which has some structural resemblance. In a further analogy to (NLS), the operators $L_+$ and  $L_-$ also arise in the linearization of the energy functional. More specifically, it is easy to see that
\be
E[\QB + \hh] = E[\QB] + \frac{1}{2} \left [ (h_1, L_+ h_1) + (h_2, L_- h_2)  \right ] + o(\hh^2)
\ee
with the notation introduced above, expressing the fact that $L= \mathrm{diag}(L_+, L_-)$ is the Hessian  $D^2 E[\QB]$ of the energy functional  at the critical point $\QB$. We refer to Section \ref{sec:linearize} for details on how to obtain the linearization sketched above.
 
For the rest of this section,  we shall focus on the natural case where $\QB$ is a {\bf pure-power half-harmonic map} of degree $m \geq 1$. By this we mean that the Blaschke product $\mathcal{B}(z)$ appearing in Theorem \ref{thm:vsmall} consists of identical factors. In such a case, we can assume without loss of generality that we have
\be \label{eq:QBm}
\QB(x) = \QB_m(x) := (f_m(x), g_m(x), 0 ) ,
\ee 
where the functions $f_k, g_k \in \dot{H}^{\frac 1 2}(\R)$ with $k \in \N$ are given by
\be \label{def:fkgk}
f_k(x) := \mathrm{Re} \, \left ( \im \cdot \frac{x-\im}{x+\im} \right )^k \quad \mbox{and} \quad g_k(x) := \mathrm{Im} \, \left ( \im \cdot \frac{x-\im}{x+\im} \right)^k.
\ee
For an expression of the functions $f_k$ and $g_k$ in terms of {\em Chebyshev polynomials}, see formula \eqref{eq:TmUm} below. We remark that the appearance of the global phase factor $e^{\im \vartheta} = \im^m$ in $\QB_m=(f_m,g_m,0)$ turns out to be convenient with respect to our choice of the stereographic projection $\Pi : \Ss^1 \to \R \cup \{ \infty \}$ used below. In particular,  the complex-valued function $Q_m = f_m + \im g_m$ becomes the monomial $z^m$ under the conformal lifting to the unit circle $\Ss^1 \subset \C$, i.\,e., we have
\be
(Q_m \circ \Pi)(e^{\im \theta}) = e^{\im m \theta} = z^m \quad \mbox{for $|z| = 1$}.
\ee

We remark that in the {\em ground state case}, i.\,e., for half-harmonic maps with degree $m=1$, we can assume that the form \eqref{eq:QBm} holds with {\em no loss of generality}. That is, any half-harmonic map $\QB \in \dot{H}^{\frac 1 2}(\R; \Ss^2)$ with degree $m=1$ can be brought into this form after a suitable rotation on the target $\Ss^2$ combined with suitable rescalings and translations on the domain $\R$. Of course, in the higher degree case $m \geq 2$, the assumed form in \eqref{eq:QBm} imposes an extra condition on $\QB$.
 
For $\QB = \QB_m$ as above, it is elementary to check that $|\pt_x \QB|= \frac{2m}{1+x^2}$ holds. In this case, we thus find that
\be
L_+ = |\nabla| - \frac{2m}{1+x^2} \quad \mbox{and} \quad L_- = L_+ + R .
\ee
In the following discussion, we will in fact give a complete spectral analysis of these operators $L_+$ and $L_-$. As an interesting aside, we notice that operators of the form $L_+$ also play a central in the study of solitons for the completely integrable {\em Benjamin--Ono equation (BO)}; see e.\,g.~\cite{BO-83}. However, the known methods to study the spectral properties of linearized operators arising in the solion study for (BO) do not seem to be applicable for general $m \geq 1$ and, moreover, the nonlocal integral operator $R$ appearing in $L_-$ does not seem to fit these methods.

\subsection{Nullspace, Nondegeneracy, and $L^2$-Eigenvalues}

As our first main result in this section, we explicitly determine the nullspaces of $L_+$ and $L_-$ in $\dot{H}^{\frac 1 2}$, which we denote as
\be
\mathcal{N}(L_+) = \{ f \in \dot{H}^{\frac 1 2}(\R) : L_+ f = 0 \} ,
\ee 
and we define $\mathcal{N}(L_-)$ accordingly. Note that elements  of $\mathcal{N}(L_+)$ or $\mathcal{N}(L_-)$ do not necessarily belong to $L^2(\R)$, in which case we say that we have a {\bf resonance}. In fact, we will prove the existence of a (common) resonance of $L_+$ and $L_-$ below. 

We remark that the rotational symmetry on the target $\Ss^2$ and the explicit classification result stated in Theorem \ref{thm:vsmall} imply the general lower bounds:
\be
\dim \mathcal{N}(L_+) \geq 2 \quad \mbox{and} \quad \dim \mathcal{N}(L_-) \geq 2m + 1 .
\ee
In fact, this follows from the rotational symmetry around the $x$- and $y$-axis and the presence of $2m+1$ real parameters in the Blaschke product $\mathcal{B}(z)$ that appears in Theorem \ref{thm:vsmall}, respectively. If both inequalities above turn out to be equalities, we say that the linearized operator $L= \mathrm{diag}(L_+, L_-)$ is {\bf nondegenerate}, since in this case all zero modes are due to the degrees of freedom of the solution set. Indeed, the following result establishes this key fact among other things, where we recall that the functions $f_k, g_k \in \dot{H}^{\frac 1 2}(\R)$ have been introduced in \eqref{def:fkgk} above.
 
\begin{thm}[Nullspace, Nondegeneracy, and Resonance] \label{thm:null}
For any degree $m \geq 1$, the nullspaces of the operators $L_+$ and $L_-$ as above are given by
$$
\mathcal{N}(L_+) = \mathrm{span} \, \{ f_m , g_m \} \quad \mbox{and} \quad \mathcal{N}(L_-) = \mathrm{span} \, \{ 1, f_1, \ldots, f_m, g_1, \ldots, g_m \} .
$$ 
As a consequence, we have that 
$$
\dim \mathcal{N}(L_+) = 2 \quad \mbox{and} \quad \dim \mathcal{N}(L_-) = 2m+1
$$
and hence {\bf nondegeneracy} holds for the linearized operator around half-harmonic maps $\QB= \QB_m \in \dot{H}^{\frac 1 2}(\R;\Ss^2)$ for any degree $m \geq 1$.

In particular, the operators $L_+$ and $L_-$ have a common {\bf resonance} $\vphi \in \dot{H}^{\frac 1 2}(\R) \setminus L^2(\R)$, which is given by 
$$
\vphi = \begin{dcases*} f_m & if $m$ is odd, \\ g_m & if $m$ is even. \end{dcases*} 
$$
\end{thm}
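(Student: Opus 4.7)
The strategy is to conformally transport the problem to the unit circle and to exploit a simultaneous Fourier diagonalization of $L_\pm$ in a suitable basis $\{F^n\}_{n\in\Z}$ on $\R$. Let $F(z)=i(z-i)/(z+i)$, so that $|F(x)|=1$ on $\R$ and $Q_m(x)=F(x)^m=f_m+ig_m$; under the angular variable $\theta=\arg F(x)$ on $\Ss^1$, the map $\QB_m$ lifts to $e^{i\theta}\mapsto e^{im\theta}$. The heart of the argument is the pair of pointwise identities
\[
|\nabla| F^n \;=\; \frac{2|n|}{1+x^2}\, F^n, \qquad R\,F^n \;=\; \frac{2(m-|n|)_+}{1+x^2}\, F^n,
\]
valid for every $n\in\Z$ (with the convention $F^n:=\bar F^{|n|}$ for $n<0$), which together will simultaneously diagonalize $L_+$ and $L_-$.

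For the first identity I would use $|\nabla|u=-\pt_y u^e|_{y=0^+}$ via Poisson extension: for $n\ge 0$, $F^n$ extends holomorphically into the upper half-plane, and the Cauchy--Riemann equations together with the elementary computation $F'(x)/F(x)=2i/(1+x^2)$ yield the result; the case $n<0$ follows by complex conjugation since the symbol $|\xi|$ of $|\nabla|$ is even. Setting $n=m$ and taking real/imaginary parts gives $L_+f_m=L_+g_m=0$. For the second identity, I would write $|\QB_m(x)-\QB_m(y)|^2=2-F^m(x)\bar F^m(y)-\bar F^m(x)F^m(y)$, change variables to the angular parameter $\theta$ on $\Ss^1$ (so that $dy=\frac{1+y^2}{2}\,d\phi$ and $(x-y)^2=\frac{(1+x^2)(1+y^2)}{4}\cdot 4\sin^2\frac{\theta-\phi}{2}$), and recognize the Fej\'er kernel identity $\sin^2(m\alpha/2)/\sin^2(\alpha/2)=\sum_{|j|<m}(m-|j|)e^{ij\alpha}$. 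In particular, setting $n=0$ yields $R(1)=2m/(1+x^2)$, from which $L_-(1)=0$ follows directly.

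Combining the two identities,
\[
L_+ F^n = \frac{2(|n|-m)}{1+x^2}\,F^n, \qquad L_-F^n = \frac{2\bigl[(|n|-m)+(m-|n|)_+\bigr]}{1+x^2}\,F^n,
\]
so $L_+F^n=0\iff |n|=m$ while $L_-F^n=0\iff |n|\le m$. Passing to real combinations gives $\mathcal{N}(L_+)=\mathrm{span}_\R\{f_m,g_m\}$ (dimension $2$) and $\mathcal{N}(L_-)=\mathrm{span}_\R\{1,f_1,g_1,\ldots,f_m,g_m\}$ (dimension $2m+1$). For sharpness one invokes that $\{F^n/\sqrt{2\pi}\}_{n\in\Z}$ is a complete orthonormal basis of the conformal weighted Hilbert space $L^2\bigl(\R,\frac{2\,dx}{1+x^2}\bigr)$ (the unitary pullback via $V:L^2(\Ss^1,d\theta)\to L^2\bigl(\R,\frac{2\,dx}{1+x^2}\bigr)$, $(Vh)(x)=h(\theta(x))$, of the Fourier basis on $L^2(\Ss^1)$), and that $\dot H^{1/2}(\R)$ embeds into this weighted space; hence any $\dot H^{1/2}$-zero mode admits such a Fourier expansion, and the diagonal form of $L_\pm F^n$ forces its nonzero Fourier coefficients to be supported on the indicated finite set of indices, giving the claimed nondegeneracy.

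For the resonance statement I would expand $Q_m(x)=i^m(1-i/x)^m(1+i/x)^{-m}=i^m+O(1/x)$ as $|x|\to\infty$. For odd $m$, $i^m=\pm i$, hence $f_m(x)=O(1/x)$ while $g_m(x)\to\pm 1$; for even $m$ the parities swap. In either case the $O(1/x)$-decaying element of $\{f_m,g_m\}$ is bounded, belongs to $\dot H^{1/2}(\R)$ by direct inspection of its explicit rational form, yet fails to be in $L^2(\R)$ by a logarithmic margin; this is the common resonance $\vphi$ of $L_+$ and $L_-$. The principal obstacle is the Fej\'er-kernel identity for $R$: one must carefully track the change-of-variables Jacobians so that the $1/|x-y|^2$ singularity in the integral kernel is converted exactly into $\sin^{-2}((\theta-\phi)/2)$ on the circle and pairs correctly with $|\QB_m(x)-\QB_m(y)|^2$ to recover the Fej\'er kernel. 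Once this identity is in place, the entire Jacobi/Fourier reformulation collapses the nullspace question into the elementary finite-dimensional algebra displayed above.
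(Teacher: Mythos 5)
Your overall strategy — conformally lifting the problem to $\Ss^1$, diagonalizing $L_\pm$ on the Fourier basis $\{F^n\}_{n\in\Z}$, and using the Fej\'er kernel to handle the nonlocal operator $R$ — is essentially the paper's proof. The Fej\'er kernel identity you invoke is precisely the Christoffel--Darboux identity for Chebyshev polynomials that the paper uses in the Funk--Hecke step (note $1-\cos(m\alpha)=2\sin^2(m\alpha/2)$, so $\frac{1-\cos(m\alpha)}{1-\cos\alpha}=\frac{\sin^2(m\alpha/2)}{\sin^2(\alpha/2)}=\sum_{|j|<m}(m-|j|)e^{ij\alpha}$), and the completeness argument in the weighted Hilbert space $L^2\bigl(\R,\frac{2\,dx}{1+x^2}\bigr)$ matches the paper's use of the isometry $\dot H^{1/2}(\R)\cong\dot H^{1/2}(\Ss^1)$. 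No substantive difference there.

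There is, however, a genuine error in the resonance argument. You correctly compute the asymptotics: for odd $m$, $f_m(x)=O(1/x)$ while $g_m(x)\to\pm1$ as $|x|\to\infty$, and vice versa for even $m$. But you then assert that the $O(1/x)$-decaying element is the one that "fails to be in $L^2(\R)$ by a logarithmic margin." This is false: a bounded function behaving like $C/x$ at infinity \emph{is} square-integrable (there is no logarithm in $\int_1^\infty x^{-2}\,dx$), and indeed both Chebyshev expressions in \eqref{eq:TmUm} show that the decaying member of $\{f_m,g_m\}$ is a bounded rational function with $(1+x^2)^{-m}$ denominator and no worse than $O(1/x)$ decay, hence an honest $L^2$-eigenfunction with eigenvalue $0$. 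The zero mode that is \emph{not} in $L^2$ — and hence the resonance — is the one converging to $\pm1$ at infinity: $g_m$ when $m$ is odd and $f_m$ when $m$ is even. (Check $m=1$: $f_1(x)=\frac{2x}{1+x^2}\in L^2$ while $g_1(x)=\frac{x^2-1}{1+x^2}\to1\notin L^2$.) This is the assignment recorded in the remark following Proposition~\ref{prop:null_Lp}; the parities displayed in the theorem statement you were asked to prove are in fact swapped relative to what the paper's own analysis shows (an apparent typo), so your conclusion happens to match the quoted display but the reasoning you give for it would not survive scrutiny. Correct the $L^2$-membership argument and the resonance falls out immediately once you pick the non-decaying element.
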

 
\begin{remarks*}
{\em 1) We mention that, for the energy-critical Schr\"odinger maps equation with target $\Ss^2$, the zero energy resonance plays an important role in the construction of finite-time blowup solutions close to the ground state harmonic-map with degree $m=1$; see \cite{MeRaRo-13}.

2) After finalizing our paper, it was brought to our attention that the very recent work by Sire--Wei--Zheng in \cite{SiWeZh-17} establishes a nondegeneracy result for half-harmonic maps $\QB : \R \to \S^1$ in the special case of degree $m=1$. }
\end{remarks*}

Next, we turn to study the eigenvalues of the operators $L_+$ and $L_-$ acting on $L^2(\R)$. From standard operator theory we infer that $L_+$ and $L_-$ are both self-adjoint with operator domain $H^1(\R)$ and essential spectrum $\sigma_{\mathrm{ess}}(L_+) = \sigma_{\mathrm{ess}}(L_-) = [0, \infty)$. In order to study the set of eigenvalues, we  introduce the point spectrum denoted by
$$
\sigma_{\mathrm{p}}(L_+) = \{ E \in \R : \mbox{$E$ is an $L^2$-eigenvalue of $L_+$} \}
$$
where the definition of $\sigma_{\mathrm{p}}(L_-)$ is analogous. We have the following  result.

\begin{thm}[Point Spectrum in $L^2$] \label{thm:L2}
Let $m \geq 1$ and consider the operators $L_+$ and $L_-$ from above as acting on $L^2(\R)$ with domain $H^1(\R)$. Then the following properties hold.
 
\begin{itemize}
\item[(i)] {\bf $L^2$-Eigenvalues of $L_+$:} The point spectrum of $L_+$  consists of exactly $2m$ eigenvalues, i.\,e., 
$$
\sigma_\mathrm{p}(L_+) = \left \{ E_{0}, E_1, \ldots, E_{2m-1} \right  \}.
$$
Moreover, each eigenvalue $E_k$ is {\bf simple} and we have the inequalities
$$
E_0 < E_1 < \ldots < E_{2m-2} < E_{2m-1} = 0.
$$

\item[(ii)] {\bf $L^2$-Eigenvalues of $L_-$:} The point spectrum of $L_-$ only contains zero, i.\,e.,
$$
\sigma_{\mathrm{p}}(L_-) = \{ 0 \}.
$$
Moreover, the eigenvalue $E=0$ is exactly $2m$-fold degenerate.
\end{itemize}

In particular, both operators $L_+$ and $L_-$ have {\bf no embedded} eigenvalue $E > 0$ inside  $\sigma_{\mathrm{ess}}(L_+) = \sigma_{\mathrm{ess}}(L_-) = [0, \infty)$.
\end{thm}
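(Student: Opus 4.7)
The plan is to conformally transplant the spectral problem from $\R$ to the unit circle $\Ss^1$ via the stereographic map $x = \tan(\theta/2)$, under which $1+x^2 = 2/(1+\cos\theta)$ so that the weight $|\pt_x \QB_m| = \frac{2m}{1+x^2}$ becomes the trigonometric polynomial $m(1+\cos\theta)$, while the nonlocal operator $|\nabla|$ acts (up to a conformal weight absorbed into a unitary conjugation) as the Fourier multiplier $|n|$ on $e^{\im n\theta}$. Because multiplication by $\cos\theta$ couples only nearest-neighbor Fourier modes, $L_+$ and $L_-$ turn into \emph{Jacobi} (tridiagonal) matrices $J_+$ and $J_-$ acting on a suitable (weighted) $\ell^2$-space indexed by $\Z$, and the symmetry $\theta \mapsto -\theta$ further decouples each into two half-line Jacobi operators on $\ell^2(\N)$. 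With this representation in hand, the theorem should reduce to an essentially algebraic study of two explicit Jacobi matrices.

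From the Jacobi representation several statements follow from general principles. Simplicity of every $L^2$-eigenvalue of $L_+$ is immediate, since any half-line Jacobi eigenspace is at most one-dimensional (the three-term recursion is second order, and fixing one entry determines the solution). Absence of embedded eigenvalues in $[0,\infty)$ should follow because at positive energies the asymptotic recursion degenerates to the free recursion of $|D_\theta|$ and admits only oscillatory, non-summable solutions. To count the $L^2$-eigenvalues of $L_+$, I would apply a Sturm oscillation theorem for Jacobi matrices: the number of sign changes in the sequence of truncated characteristic polynomials $D_n(E)$ equals the number of eigenvalues below $E$. Evaluating at $E = 0$ and using the kernel description of Theorem~\ref{thm:null} (which on $\Ss^1$ becomes $\mathrm{span}\{e^{\pm\im m\theta}\}$ for $L_+$) should pin down exactly $2m-1$ strictly negative eigenvalues. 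The zero eigenvalue is simple in $L^2$ because the $\dot{H}^{\frac 1 2}$-kernel $\mathrm{span}\{f_m,g_m\}$ contains the common resonance $\vphi \notin L^2$, collapsing the $L^2$-kernel to one dimension; the total $2m-1+1 = 2m$ then matches the claim.

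For part~(ii), the aim is to show that after transplantation to $\Ss^1$ the operator $L_-$ becomes manifestly non-negative with kernel exactly the Fourier modes $|n|\leq m$. The natural attempt is to write $J_- = J_+ + J_R$, where $J_R$ is the conformal image of the integral operator $R$, and to verify nonnegativity via a Schur-complement factorization, using that the kernel $\frac{1}{2\pi}|\QB_m(x)-\QB_m(y)|^2/|x-y|^2$ becomes a simple trigonometric expression involving $|e^{\im m\theta}-e^{\im m\vartheta}|^2/|e^{\im\theta}-e^{\im\vartheta}|^2$ on the circle. Combined with the $(2m+1)$-dimensional nullspace in $\dot{H}^{\frac 1 2}$ from Theorem~\ref{thm:null} and the loss of one dimension to the resonance, this yields the $2m$-fold $L^2$-degeneracy of $E=0$, while positive embedded eigenvalues are ruled out by the same Jacobi oscillation argument as for $L_+$.

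The main obstacle I expect is the clean derivation of the Jacobi representation for $L_-$: the local potential $\frac{2m}{1+x^2}$ transforms beautifully into a nearest-neighbor coupling, but the nonlocal operator $R$ does not obviously preserve the tridiagonal structure, and confirming that its Fourier-side image collapses to a short-range (or at least structurally tractable) coupling is the technical heart of the argument. A related subtlety is the interplay between the $L^2(\R,dx)$-spectral problem and the $\dot{H}^{\frac 1 2}$-kernel of Theorem~\ref{thm:null}: the Jacobian $d\theta/dx = 2/(1+x^2)$ makes $L^2(\R)$ and $L^2(\Ss^1)$ differ by a weight, and tracking this weight carefully is precisely what produces the resonance phenomenon that determines whether each $\dot{H}^{\frac 1 2}$-zero mode actually lies in $L^2$.
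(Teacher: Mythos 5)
Your high-level strategy --- transplant the spectral problem to $\Ss^1$, read off a tridiagonal (Jacobi) structure in the Fourier basis, and reconcile the $L^2(\R)$-count with the $\dot H^{\frac{1}{2}}$-nullspace via the conformal Jacobian --- is essentially the paper's, and you correctly flag the resonance subtlety. However, you miss the one structural fact that drives the whole argument: after transplantation, the (non-self-adjoint) operator $J = (1-\sin\theta)\bigl(|\nabla|_{\Ss^1}-m\mathds{1}\bigr)$ acting on $L^2(\Ss^1)$ has \emph{zero columns at $l = \pm m$} in the Fourier basis, because the diagonal factor $(|l|-m)$ vanishes there. Consequently $J$ acts invariantly on the three frequency blocks $\Lambda^{(-)}_m = \overline{\mathrm{span}\{e_k : k \le -m\}}$, $\Lambda^{(0)}_m = \mathrm{span}\{e_k : |k| \le m\}$, and $\Lambda^{(+)}_m = \overline{\mathrm{span}\{e_k : k \ge m\}}$. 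All nonzero eigenvalues are thereby confined to the finite block $\Lambda^{(0)}_m$, whose nontrivial part is an explicit $(2m-1)\times(2m-1)$ tridiagonal matrix $M^{(m)}$ with nowhere-vanishing off-diagonals, giving the count \emph{and} simplicity at once; eigenvalues on $\Lambda^{(\pm)}_m$ are excluded by the gauge substitution $\widetilde{g}(\theta) = e^{\mp\im m\theta} g(\theta)$, which turns $J g = Eg$ into a first-order ODE whose solutions have constant modulus and hence cannot lie in the weighted $L^2$-class. Your parity decoupling does \emph{not} localize the bound states to a finite block, and a Sturm-oscillation count in each half-line sector would have to cope with the off-diagonal coefficient vanishing at $n=m$, a degeneracy outside the scope of standard oscillation theory. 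Moreover, half-line-Jacobi simplicity only gives simplicity \emph{within} each parity sector; it does not rule out the same eigenvalue occurring once in the even and once in the odd sector, and your sketch does not close that gap.

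For $L_-$, the crucial structural input is not a Schur-complement positivity argument but the observation that $\widetilde{R}_m$ is rotation-invariant on $\Ss^1$ --- its kernel $\frac{1}{2\pi}\frac{1-\cos m(\theta-\omega)}{1-\cos(\theta-\omega)}$ depends only on $\theta-\omega$ --- and is therefore a Fourier multiplier. The paper's Lemma~\ref{lem:funk-hecke} computes its eigenvalues on $\mathcal{Y}_\ell(\Ss^1)$ via the Funk--Hecke formula combined with a Christoffel--Darboux identity for Chebyshev polynomials, obtaining $\widetilde{R}_m = (m-\ell)\mathds{1}$ on $\mathcal{Y}_\ell$ for $0 \le \ell < m$ and $\widetilde{R}_m = 0$ for $\ell \ge m$. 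This \emph{exactly} cancels the diagonal $(|l|-m)$ of $\widetilde{L}_+$ on $\Lambda^{(0)}_m$, so the transplanted operator $H = (1-\sin\theta)\widetilde{L}_-$ vanishes identically on the central block, and $\sigma_{\mathrm{p}}(L_-)=\{0\}$ then follows from the same gauge-transform exclusion on $\Lambda^{(\pm)}_m$. You correctly identify the transplanted kernel as the heart of the matter, but the fact that must be established --- and that a Schur complement does not access --- is that this kernel is a \emph{finite} trigonometric polynomial in $\theta-\omega$, i.e.\ a short-range Fourier multiplier whose diagonal action cancels the potential.
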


As an application of our analysis of the nullspaces and the $L^2$-eigenvalues of $L_+$ and $L_-$ from above, we can derive the following (sharp) coercivity estimate in the energy space $\dot{H}^{\frac 1 2}$, which can be seen as necessary starting point for the modulational analysis. In order to formulate suitable orthogonality conditions, we introduce the following set of $2m+1$ functions $\{ \psi_k \}_{k=0}^{2m} \subset H^1(\R)$ defined as
\be \label{def:psi_k}
\psi_k = \begin{dcases*} \vphi_k & for $0 \leq k \leq 2m-2$, \\
\frac{\vphi_{2m-1}}{1+x^2}  & for $k=2m-1$,\\ 
 \frac{\vphi}{1+x^2} & for $k=2m$. \end{dcases*}
\ee
Here $\{ \vphi_0, \ldots, \vphi_{2m-2} \}$ denote the $L^2$-eigenfunctions of $L_+$ with strictly negative energy $E < 0$, whereas $\vphi_{2m-1}$ denotes the unique $L^2$-eigenfunction of $L_+$ for the eigenvalue $E=0$, and $\vphi \in \dot{H}^{\frac 1 2}(\R) \setminus L^2(\R)$ is the zero energy resonance from Theorem \ref{thm:null}. The weight factor $(1+x^2)^{-1}$ turns out to be a convenient choice for localizing the weakly and non-decaying functions $|\vphi_{2m-1}(x)| \sim \langle x \rangle^{-1}$ (for $m=1$) and $|\vphi(x)| \sim 1$ (for any $m \geq 1$) as $|x| \to \infty$, respectively. Indeed, we  have the following coercivity estimate with the optimal coercivity constant.

\begin{cor}[Coercivity Estimate in $\dot{H}^{\frac 1 2}$] \label{cor:coercive}
Let $L_+$, $L_-$ and $\{ \psi_k \}_{k=0}^{2m}$ be as above. Suppose that $f, g \in \dot{H}^{\frac 1 2}$ satisfy the $2m+1$ orthogonality conditions given by
$$
\int_{\R} \psi_k f \, dx  = \int_{\R} \psi_k g \,dx =0 \quad \mbox{for $0 \leq k \leq 2m$}.
$$
Then it holds
$$
(f, L_+ f) + (g, L_- g) \geq \frac{1}{m+1} \left ( \| f \|_{\dot{H}^{\frac 1 2}}^2 + \| g \|_{\dot{H}^{\frac 1 2}}^2 \right ).
$$
\end{cor}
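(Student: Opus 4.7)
The plan is to combine an algebraic rewriting of $L_+$ that shifts the effective degree from $m$ to $m+1$ with the full spectral classification provided by Theorems~\ref{thm:null} and~\ref{thm:L2}. The starting identity is
$$
L_+ \;=\; \frac{1}{m+1}\,|\nabla| \;+\; \frac{m}{m+1}\,\widetilde L, \qquad \widetilde L := |\nabla| - \frac{2(m+1)}{1+x^2},
$$
which recognizes $\widetilde L$ as the operator $L_+$ associated to a pure-power half-harmonic map of degree $m+1$ rather than $m$. Using $(\cdot,|\nabla|\,\cdot) = \|\cdot\|_{\dot{H}^{1/2}}^2$ together with the pointwise nonnegativity of the kernel of $R$ (which, combined with the spectral bound $L_- \geq 0$ from Theorem~\ref{thm:L2}(ii), yields $R \geq 0$ on the admissible subspace), one obtains
$$
(f, L_+ f) + (g, L_- g) - \frac{1}{m+1}\!\left(\|f\|_{\dot{H}^{1/2}}^2 + \|g\|_{\dot{H}^{1/2}}^2\right) \;\geq\; \frac{m}{m+1}\!\left[(f,\widetilde L f) + (g,\widetilde L g)\right].
$$
The corollary thereby reduces to the scalar statement that $(h,\widetilde L\, h) \geq 0$ for every $h \in \dot{H}^{1/2}(\R)$ satisfying $\int_\R \psi_k h\, dx = 0$ for $k = 0, \ldots, 2m$.

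The spectral picture for $\widetilde L$ is now supplied directly by Theorems~\ref{thm:null} and~\ref{thm:L2} at degree $m+1$: the operator $\widetilde L$ has exactly $2m+1$ simple, strictly negative $L^2$-eigenvalues, a simple $L^2$-eigenvalue at $0$, and one zero-energy resonance in $\dot{H}^{1/2}\setminus L^2$. Crucially, the number of strictly negative modes matches the $2m+1$ orthogonality conditions. By the Courant--Fischer principle, it therefore suffices to prove that $\widetilde L$ is negative definite on the $(2m+1)$-dimensional subspace $W := \mathrm{span}\{\psi_0,\ldots,\psi_{2m}\}$. Since $\widetilde L = L_+ - \tfrac{2}{1+x^2}$, for $\psi_k = \varphi_k$ with $0 \leq k \leq 2m-2$ one immediately computes
$$
(\varphi_k, \widetilde L\, \varphi_k) \;=\; E_k\,\|\varphi_k\|_{L^2}^2 - \int_\R \frac{2\,\varphi_k(x)^2}{1+x^2}\,dx \;<\; 0,
$$
both terms being negative. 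The localizing weight $(1+x^2)^{-1}$ appearing in $\psi_{2m-1}$ and $\psi_{2m}$ is dictated by the identities
$$
\widetilde L\, \varphi_{2m-1} \;=\; -\frac{2}{1+x^2}\,\varphi_{2m-1} \;=\; -2\,\psi_{2m-1}, \qquad \widetilde L\, \varphi \;=\; -2\,\psi_{2m},
$$
which follow at once from $L_+\,\varphi_{2m-1} = L_+\,\varphi = 0$ by subtracting $\tfrac{2}{1+x^2}$. These two identities simultaneously place $\psi_{2m-1}, \psi_{2m}$ in $L^2(\R)$ and furnish explicit $\widetilde L$-preimages of them, feeding into the remaining entries of the Gram-type matrix $(\psi_j, \widetilde L\, \psi_k)$.

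Once the negative-definiteness of $\widetilde L|_W$ is established, the Courant--Fischer principle forces $\widetilde L$ to have no negative direction on $W^\perp$, giving $(h,\widetilde L\, h) \geq 0$ there and hence the claimed coercivity bound. Optimality of the constant $\tfrac{1}{m+1}$ is then transparent: any strictly larger coefficient in front of $|\nabla|$ would force the reduction to pass through an operator of the form $|\nabla| - \tfrac{2m'}{1+x^2}$ with $m' > m+1$, which by Theorem~\ref{thm:L2} carries at least $2m+2$ strictly negative $L^2$-modes---too many to be killed by only $2m+1$ orthogonality conditions. The main technical obstacle lies in the explicit verification that $\widetilde L|_W$ is negative definite: while diagonal signs are immediate, the off-diagonal entries couple $L_+$-eigenfunctions with their $(1+x^2)^{-1}$-weighted counterparts, and a clean execution will likely proceed through the Jacobi-matrix (conformal-circle $\R \to \Ss$) reformulation highlighted in the abstract, under which $|\nabla|$ and $\tfrac{1}{1+x^2}$ conjugate to explicit tridiagonal operators and the required matrix computation becomes algebraic rather than integral.
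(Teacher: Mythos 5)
Your decomposition $L_+ = \tfrac{1}{m+1}|\nabla| + \tfrac{m}{m+1}\widetilde L$ with $\widetilde L := |\nabla| - \tfrac{2(m+1)}{1+x^2}$ (the ``$L_+$ of degree $m+1$'') is a clean algebraic identity, and it correctly reduces the corollary to $(h,\widetilde L h)\geq 0$ on the orthogonality subspace. The gap is in the step meant to deliver this. You argue that because $\widetilde L$ has exactly $2m+1$ strictly negative $L^2$-eigenvalues and (allegedly) $\widetilde L|_W < 0$ on $W:=\mathrm{span}\{\psi_0,\dots,\psi_{2m}\}$, Courant--Fischer forces $\widetilde L\geq 0$ on $W^\perp$. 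That inference is false: Morse index $n$ together with negative definiteness on \emph{some} $n$-dimensional $W$ does not imply positivity on $W^\perp$. Counterexample: $A=\mathrm{diag}(-1,\varepsilon,\varepsilon)$ on $\R^3$ with small $\varepsilon>0$ has Morse index $1$; take $W$ spanned by $v=(1,\tfrac{1}{2\sqrt{\varepsilon}},0)$, so $(v,Av)=-\tfrac34<0$; yet $w=(-\tfrac{1}{2\sqrt{\varepsilon}},1,0)\in W^\perp$ gives $(w,Aw)=\varepsilon-\tfrac{1}{4\varepsilon}<0$. The min-max principle only asserts that the \emph{optimal} $n$-dimensional subspace (the true negative eigenspace of $\widetilde L$) renders $\widetilde L\geq 0$ on its orthocomplement; for a generic $W$ it gives nothing. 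You also acknowledge that the negative-definiteness of $\widetilde L|_W$ is itself unverified, but even verifying it would not close the gap.

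To rescue the argument you would have to identify $W^\perp$ explicitly, which is precisely what the paper does and which makes the $\widetilde L$-detour unnecessary. Under the stereographic lift, the $2m+1$ conditions $\int\psi_k h\,dx=0$ translate to $\widetilde h\perp e^{\im k\theta}$ for $|k|\leq m$, so $\widetilde h$ is Fourier-supported in $\{|k|\geq m+1\}$, and then $(h,L_+h)=\sum_{|k|\geq m+1}(|k|-m)|h_k|^2\geq\tfrac{1}{m+1}\sum_{|k|\geq m+1}|k||h_k|^2=\tfrac{1}{m+1}\|h\|^2_{\dot{H}^{\frac 1 2}}$ follows in one line from $|k|-m\geq\tfrac{|k|}{m+1}$ for $|k|\geq m+1$; no spectral analysis of a second operator is needed. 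Two smaller issues: your stated justification for $R\geq 0$ does not go through --- a pointwise-nonnegative kernel does not make an operator positive, and $R=L_--L_+\geq 0$ would require $L_+\leq 0$, which is false. The fact $R\geq 0$ (and in fact $R$ annihilates the admissible lifted subspace, which is all the paper uses) follows instead from Lemma~\ref{lem:funk-hecke}, whose Funk--Hecke eigenvalues $\max(m-\ell,0)$ are nonnegative and vanish for $\ell\geq m$.
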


\begin{remarks*}
{\em 1) We have the decay estimate $|\psi_k(x)| \lesssim \langle x \rangle^{-2}$, which guarantees that the pairing $\int_{\R} \psi_k f \,d x$ is well-defined for $f \in \dot{H}^{\frac 1 2}(\R) \subset L^1(\R, (1+x^2)^{-1} dx)$.

2) Of course, different choices for orthogonality conditions are possible. But the choice above turns out to be natural with regard to the stereographic projection used in the spectral analysis below.}
\end{remarks*}

\subsection{Continuous Spectrum and Unitary Gauge Transform}

Let us now turn to the ``scattering states'' of $L_+$ and $L_-$, which are given by the orthogonal complement of the bound states, i.\,e., the eigenfunctions given by Theorem \ref{thm:L2} above. Indeed, let $\{ \vphi_k \}_{k=0}^{2m-1} \subset H^1(\R)$ denote the set of $L^2$-orthonormalized eigenfunctions for $L_+$. The corresponding orthogonal projection $P : L^2(\R) \to L^2(\R)$ onto the subspace spanned by these eigenfunctions is given by 
\be \label{def:Pproj}
P = \sum_{k=0}^{2m-1}  \vphi_k  ( \vphi_k , \cdot) .
\ee
A remarkable and essential fact shown below is that the operators $L_+$ and $L_-$ {\em coincide} on the orthogonal complement spanned by the eigenfunctions $\{ \vphi_k \}_{k=0}^{2m-1}$. Thus if we define the orthogonal projection $P^\perp = \mathds{1} - P$, we obtain
\be
P^\perp L_+ P^\perp = P^\perp L_- P^\perp .
\ee
In fact, we will prove much more below by showing a common unitary equivalence of $L_+$ and $L_-$ on $P^\perp L^2(\R)$ to the ``free'' operator $|\nabla|$. 

To do so, we first need to introduce the  projection $\Pi_+ f := \mathcal{F}(\mathds{1}_{\xi \geq 0} \widehat{f})$ for $f \in L^2(\R)$ onto the subspace of positive Fourier frequencies, which we denote by $L_+^2(\R)$. Accordingly, we define $\Pi_- :=  \mathds{1} - \Pi_+$ as the projection onto the $L^2$-subspace of negative frequencies denoted as $L_-^2(\R)$. With the help $\Pi_{\pm}$, we define the map
$$
U : L^2(\R) \to  P^\perp L^2(\R), \quad f \mapsto Q_m f_+ + \overline{Q}_m f_-.
$$
with $f_+ = \Pi_+ f$, $f_- = \Pi_- f$, and where we recall that
$$
Q_m(x) = \left ( \im \cdot \frac{x -\im}{x+\im} \right )^m.
$$ 
The adjoint operator of $U$ is given by
$$
U^* : P^\perp L^2(\R) \to L^2(\R), \quad g \mapsto \overline{Q}_m g_+ + Q_m g_-,
$$
where $g_+ =\Pi_+ g$ and $g_- = \Pi_- g$. Indeed, we now establish that $U : L^2(\R) \to P^\perp L^2(\R)$ is  well-defined and that is furnishes {\bf unitary map} as follows. 

\begin{thm}[Unitary Equivalence of $L_\pm$ to $|\nabla|$ on Continuous Spectrum] \label{thm:unitaryL2} The map $U : L^2(\R) \to P^\perp L^2(\R)$ is unitary and, for the operators $L_+$ and $L_-$ as above, we have the unitary equivalence
$$
U^* P^\perp L_+ P^\perp U = U^* P^\perp  L_- P^\perp U =  |\nabla| .
$$
As a consequence, the operators $L_+$ and $L_-$ have purely absolutely continuous spectrum equal to $[0, \infty)$.
\end{thm}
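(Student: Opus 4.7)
The plan is to establish the theorem in three stages: (i) verify that $U$ is an $L^2$-isometry; (ii) prove the intertwining identities $L_\pm U = U \Dx$ on $L^2(\R)$; and (iii) identify $\mathrm{Ran}(U)$ with $P^\perp L^2(\R)$ by a dimension count. Once these three are in hand, the unitary equivalences $U^* L_\pm U = \Dx$ follow immediately from $U^* U = \mathds{1}$, and the spectral statement results because $\Dx$ has purely absolutely continuous spectrum $[0,\infty)$, so neither a singular continuous part nor embedded eigenvalues can occur for $L_\pm$ on $P^\perp L^2$.

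For the intertwining I identify $L^2_+(\R)$ with the boundary values of $H^2(\mathbb{C}_+)$, so that $\Dx = -\im \pt_x$ on $L^2_+$ and $\Dx = +\im \pt_x$ on $L^2_-$. Isometry of $U$ follows from $|Q_m| \equiv 1$ and $L^2_+ \perp L^2_-$. Since $Q_m \in H^\infty(\mathbb{C}_+)$, multiplication by $Q_m$ preserves $L^2_+$; combined with the explicit identity
$$
-\im \pt_x Q_m(x) = \frac{2m}{1+x^2}\, Q_m(x) ,
$$
obtained by direct differentiation of the Blaschke product, the Leibniz rule gives $\Dx(Q_m \varphi) = \frac{2m}{1+x^2}\, Q_m \varphi + Q_m \Dx \varphi$ for $\varphi \in L^2_+ \cap H^1$. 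Thus the potential in $L_+ = \Dx - \frac{2m}{1+x^2}$ is cancelled exactly, yielding $L_+(Q_m \varphi) = Q_m \Dx \varphi$. The analogous calculation on $L^2_-$ with $\overline{Q}_m$ gives $L_+(\overline{Q}_m \varphi) = \overline{Q}_m \Dx \varphi$, and summing over the two Fourier halves produces $L_+ U = U \Dx$.

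For $L_- = L_+ + R$ the same identity reduces to showing $R\, \mathrm{Ran}(U) = 0$. Using $|Q_m|^2 = 1$, a short rearrangement yields
$$
R(Q_m \varphi)(x) = -\frac{\overline{Q_m(x)}}{2\pi} \int_{\R} \left( \frac{Q_m(x) - Q_m(y)}{x-y} \right)^2 \varphi(y)\, dy .
$$
For each fixed $x$, the squared divided difference is, as a function of $y$, a rational function whose only singularities are poles at $y = -\im$ of total order at most $2m$, the apparent $(x-y)^{-2}$ singularity being cancelled by the double zero of the numerator; consequently it belongs to $L^2_+$. Since $\int_\R f(y) \varphi(y)\,dy = (f, \overline{\varphi})$ and $\overline{\varphi} \in L^2_-$ whenever $\varphi \in L^2_+$, this pairing of an $L^2_+$ function against an $L^2_-$ function vanishes, so $R(Q_m \varphi) = 0$. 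The symmetric identity $R(\overline{Q}_m \varphi) = 0$ for $\varphi \in L^2_-$ is proved identically, giving $L_- U = U \Dx$.

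Finally, $L_+$ preserves $\mathrm{Ran}(U) = Q_m L^2_+ \oplus \overline{Q}_m L^2_-$ and is unitarily equivalent there to $\Dx$ on $L^2(\R)$, which is purely absolutely continuous; consequently no eigenvector of $L_+$ can lie in $\mathrm{Ran}(U)$, so $PL^2 \subseteq \mathrm{Ran}(U)^\perp$. By Beurling's theorem applied to the inner function $Q_m$ of degree $m$ on $\mathbb{C}_+$, the model space $L^2_+ \ominus Q_m L^2_+$ has dimension $m$, and the analogous statement on the negative-frequency side gives $\dim \mathrm{Ran}(U)^\perp = 2m$; together with $\dim PL^2 = 2m$ from Theorem~\ref{thm:L2}(i), the inclusion is forced to be an equality, so $\mathrm{Ran}(U) = P^\perp L^2$. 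The main obstacle I anticipate is the vanishing $R\, \mathrm{Ran}(U) = 0$: whereas the intertwining for $L_+$ is essentially one application of the Leibniz rule, for $L_-$ one must find the algebraic rewriting of $R$ using $|Q_m|^2 = 1$ and then recognize the resulting integrand as an $L^2_+$ element, taking proper care of the apparent $(x-y)^{-2}$ singularity at $y=x$.
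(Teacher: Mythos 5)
Your proof is correct, and it reaches the theorem by a genuinely different route from the paper. The paper derives the intertwining from a Darboux-type factorization proved on the unit circle: after conformally lifting via the stereographic projection, $\tL_+ = |\nabla|_{\Ss^1} - m$ is rewritten as $e^{\im m\theta}(-\im\partial_\theta)e^{-\im m\theta}$ on positive frequencies and then pushed back to $\R$ (Lemma~\ref{lem:darboux}). You instead stay entirely on $\R$: the pointwise identity $-\im \partial_x Q_m = \frac{2m}{1+x^2}Q_m$ (which I have checked by direct differentiation) combined with the Leibniz rule on each Hardy half-space gives $L_+(Q_m\varphi_+) = Q_m |\nabla|\varphi_+$ immediately, which is shorter and avoids the transfer to $\Ss^1$. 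For $L_-$, the paper appeals to the Funk--Hecke formula (Lemma~\ref{lem:funk-hecke}) to see that the lifted integral operator $\widetilde{R}_m$ annihilates the Fourier modes $|k| \geq m$; you instead use the algebraic identity $|Q_m|^2 = 1$ to rewrite $R(Q_m\varphi_+)$ as the pairing of two functions with Fourier support in $[0,\infty)$, which vanishes --- a more elementary observation, and your handling of the apparent $(x-y)^{-2}$ singularity and the location of the poles of the squared divided difference is correct. For the identification $\mathrm{Ran}(U) = P^\perp L^2(\R)$, the paper proves $Uf \perp \varphi_k$ for each eigenfunction $\varphi_k$ individually (Step~1 of Lemma~\ref{lem:unitary}) and surjectivity via a density argument requiring Lemma~\ref{lem:dense_ran}; you argue abstractly that since $L_+$ restricted to the reducing subspace $\mathrm{Ran}(U)$ is unitarily equivalent to $|\nabla|$ and hence has no eigenvectors, $PL^2 \subseteq \mathrm{Ran}(U)^\perp$, and then close by the dimension count $\dim \mathrm{Ran}(U)^\perp = 2m$ (Beurling's theorem applied to the degree-$m$ inner function $Q_m$ on each Hardy half) together with $\dim PL^2 = 2m$ from Theorem~\ref{thm:L2}. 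In short, you trade the stereographic-projection machinery and the technical density lemma for a direct ODE-style computation plus Beurling's theorem; both are sound, and yours is arguably more self-contained as a proof of this particular theorem, though it imports the eigenvalue count from Theorem~\ref{thm:L2}, which the paper establishes via the same $\Ss^1$-machinery and so obtains the orthogonality at essentially no extra cost.
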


The mechanism behind the unitary map $U$ introduced above can be seen as a sort of a {\bf gauge transform} that allows us to treat perturbations around $\QB$ in a convenient way. We refer to the next subsection below for a potential future application of $U$ for the linearized flow \eqref{eq:linearized_hwm} of the half-wave maps equation  to obtain a {\em complex-scalar half-wave equation} in the linearized setting. Also, another potential application of this gauge transform $U$ can arise for the linearization around half-harmonic maps for the {\em (energy-critical) half-harmonic map heat flow} which can be written as
\be
\pt_t \uu = P_\uu |\nabla| \uu,
\ee
where $P_\uu \vv = \vv - (\vv \cdot \uu) \uu$ denotes the projection of $\vv \in \R^3$ onto the tangent space $T_\uu \Ss^2$ at $\uu \in \Ss^2$.


\subsection{Spectral Properties of the Matrix Operator $JL$} To conclude the spectral analysis of the linearized operator, we finally consider the matrix operator
\be
\mathcal{L} = JL = \left [ \begin{array}{cc} 0 & - L_- \\ L_+ & 0  \end{array} \right ],
\ee
which appears in the linearization of the half-wave maps equation close the a half-harmonic map $\QB$. For a potential stability or blowup analysis close to $\QB=\QB_m$, it is of importance to obtain spectral information about $\mathcal{L}$. Although our knowledge about the spectral properties of $L=\mathrm{diag}(L_+, L_-)$ is complete, it is far from obvious that this extends to $\mathcal{L} = JL$. In fact, it is in general an intricate spectral problem to derive detailed information about the spectrum $\sigma(\mathcal{L})$ from the spectral knowledge of the diagonal operator $L$. But luckily in our case, we are able to obtain a complete description of the spectral properties of the matrix operator $\mathcal{L}$ that arises from linearizing around $\QB = \QB_m$. 

\begin{thm}[Spectrum of $\mathcal{L}=JL$] \label{thm:JL}
Consider the operator $\mathcal{L}=JL$ from above as acting on $L^2(\R) \times L^2(\R)$ with domain $H^1(\R) \times H^1(\R)$. Then the following properties hold true.
\begin{itemize}
\item[(i)] The spectrum is given by $\sigma( \mathcal{L}) = \im \R$.
\item[(ii)] $E=0$ is the only eigenvalue of $\mathcal{L}$ and we have $\dim \ker \, ( \mathcal{L} ) = 2m + 1$. In particular, the operator $\mathcal{L}$ has {\bf no embedded eigenvalue} $E \in \im \R \setminus \{ 0 \}$.
\item[(iii)] The generalized nullspace of $\mathcal{L}$ acting on $L^2(\R) \times L^2(\R)$ is given by
$$
\bigcup_{n \geq 1} \mathrm{ker} (\mathcal{L}^n) = \mathrm{ker} ( \mathcal{L}^2 ) = \mathrm{span} \left  \{ \left [ \begin{array}{c} \vphi_0 \\  0 \end{array} \right ] , \ldots,   \left [ \begin{array}{c} \vphi_{2m-1} \\  0 \end{array} \right ],  \left [ \begin{array}{c} 0 \\  \vphi_0 \end{array} \right ] , \ldots,  \left [ \begin{array}{c} 0 \\ \vphi_{2m-1} \\   \end{array} \right ]\right \}
$$
\item[(iv)] $\mathcal{L}$ has two linearly independent {\bf resonances} at $E=0$ given by $[ \vphi, 0]^T$ and $[0, \vphi]^T$, where $\vphi \in \dot{H}^{\frac 1 2}(\R) \setminus L^2(\R)$ is the same function as in Theorem \ref{thm:null} above.
\end{itemize}
 \end{thm}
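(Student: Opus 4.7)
The plan is to reduce everything to a clean block decomposition of $\mathcal{L}$ that hinges on a single structural identity. Parts (ii) and (iv) come essentially for free: since $\mathcal{L}$ is block off-diagonal, the equation $\mathcal{L}[a,b]^T = 0$ decouples into $L_+a = 0$ and $L_-b = 0$ in $L^2$, so Theorem \ref{thm:L2} immediately gives $\dim\ker\mathcal{L} = 1+2m = 2m+1$; and the common resonance $\vphi \in \dot H^{1/2}(\R)\setminus L^2(\R)$ from Theorem \ref{thm:null} satisfies both $L_+\vphi = 0$ and $L_-\vphi = 0$, so $[\vphi,0]^T$ and $[0,\vphi]^T$ are annihilated by $\mathcal{L}$ while manifestly failing to lie in $L^2\times L^2$.

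The crux, and what I expect to be the main obstacle, is the structural identity $V = W$, where $V := \mathrm{span}\{\vphi_j\}_{j=0}^{2m-1}$ is the $L^2$-bound-state subspace of $L_+$ and $W := \ker L_-|_{L^2}$; both are $2m$-dimensional by Theorem \ref{thm:L2}. To obtain $V \subseteq W$, one must verify $L_-\vphi_j = 0$ for every bound state of $L_+$. This should follow from combining the explicit description of $\mathcal{N}(L_-)\cap L^2$ in Theorem \ref{thm:null} with the detailed Jacobi-operator analysis underlying Theorem \ref{thm:L2}, which constructs each $\vphi_j$ in a form that makes $L_-\vphi_j = 0$ a direct check; equality $V = W$ then follows by matching finite dimensions.

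Given $V = W$, $L_-$ annihilates $V$, and self-adjointness of $L_\pm$ makes the orthogonal splitting $L^2(\R)\times L^2(\R) = (V\times V)\oplus(V^\perp\times V^\perp)$ invariant under $\mathcal{L}$. On $V\times V$ the restriction is the nilpotent $4m\times 4m$ matrix $\bigl(\begin{smallmatrix} 0 & 0 \\ \mathrm{diag}(E_0,\dots,E_{2m-1}) & 0 \end{smallmatrix}\bigr)$, so $\mathcal{L}^2 = 0$ there, $\sigma(\mathcal{L}|_{V\times V}) = \{0\}$, and the explicit bases for $\ker\mathcal{L}$ and $\ker\mathcal{L}^2$ can be read off by inspection. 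On $V^\perp\times V^\perp$, Theorem \ref{thm:unitaryL2} conjugates $\mathcal{L}|_{V^\perp\times V^\perp}$ by $\mathrm{diag}(U,U)$ to the anti-self-adjoint operator $J|\nabla|$ on $L^2(\R)\times L^2(\R)$, whose spectrum is $i\R$ entirely essential and devoid of $L^2$-eigenvalues. Glueing the two blocks yields (i) $\sigma(\mathcal{L}) = i\R$, the statement that $0$ is the only eigenvalue, and the absence of embedded eigenvalues in $i\R\setminus\{0\}$, completing (ii).

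It remains to rule out Jordan chains of length $\geq 3$ for (iii). Any hypothetical extender $\psi \in L^2(\R)\times L^2(\R)$ would satisfy $\mathcal{L}\psi = [a,b]^T \in \ker\mathcal{L}^2\setminus\ker\mathcal{L}$ with $a\in V\setminus\mathrm{span}\{\vphi_{2m-1}\} \subset V\setminus\{0\}$, which would require $L_-q = -a$ for some $q\in H^1(\R)$. Since $\mathrm{Range}(L_-)\subset\ker(L_-)^\perp = V^\perp$ and $V\cap V^\perp = \{0\}$, no such $q$ exists; hence $\ker\mathcal{L}^n = \ker\mathcal{L}^2$ for all $n\geq 2$.
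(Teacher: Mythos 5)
Your proposal is correct and follows essentially the same route as the paper: you decompose $L^2(\R)\times L^2(\R)$ into the invariant subspaces $V\times V$ and $V^\perp\times V^\perp$ (this is exactly the paper's $\mathcal{P}H\oplus\mathcal{P}^\perp H$), use the key structural fact $V=\ker L_-|_{L^2}$ (which is Proposition \ref{prop:Lm_L2}) to get the nilpotent block, and invoke the unitary equivalence of Theorem \ref{thm:unitaryL2} to identify the complementary block with the free anti-self-adjoint operator. Your Jordan-chain argument for (iii) via $\mathrm{ran}(L_-)\subset V^\perp$ is also the natural consequence of the same facts, so there is no genuine difference in method.
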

In fact, this theorem is essentially a corollary of the preceding discussion and we sketch the main ideas. Given the projection $P: L^2(\R) \to L^2(\R)$  from \eqref{def:Pproj} above, we define the orthogonal projection 
$$
\mathcal{P} : L^2(\R) \times L^2(\R) \to L^2(\R) \times L^2(\R) \quad \mbox{with} \quad \mathcal{P} = \left [ \begin{array}{cc} P & 0 \\ 0 & P \end{array} \right ] ,
$$
where we set $\mathcal{P}^\perp = \mathds{1}  - \mathcal{P}$. 
Recalling that $P L_- P = 0$ and $P^\perp L_+ P^\perp = P^\perp L_- P^\perp$ and using the unitary equivalence provided by Theorem \ref{thm:unitaryL2}, we obtain
\be \label{eq:niceL}
\mathcal{L} = \mathcal{P} \mathcal{L} \mathcal{P} + \mathcal{P}^\perp \mathcal{L} \mathcal{P}^\perp =  \left [ \begin{array}{cc} 0 & 0 \\ P L_+ P & 0 \end{array} \right ]  + \mathcal{U}  \left [ \begin{array}{cc} 0 & -|\nabla| \\ |\nabla| & 0 \end{array} \right ] \mathcal{U}^*
\ee
with the unitary map defined as
\be
\mathcal{U} : L^2(\R) \times L^2(\R) \to \mathcal{P}^\perp (L^2(\R) \times L^2(\R)) \quad \mbox{with} \quad \mathcal{U} = \left [ \begin{array}{cc} U & 0 \\ 0 & U \end{array} \right ] ,
\ee 
where $U : L^2(\R) \to P^\perp L^2(\R)$ is the unitary map from Theorem \ref{thm:unitaryL2} above. By using the key identity \eqref{eq:niceL}, it is not hard to deduce the properties in Theorem \ref{thm:JL}; see Section \ref{sec:spectrum} for more details.

 A natural application of \eqref{eq:niceL} with respect to the linearized equation \eqref{eq:linearized_hwm} is as follows. Suppose that, by carrying out some modulation theory close to $\QB$, we can arrange that the perturbation satisfies the orthogonality conditions $P h_1 = Ph_2 = 0$. Then if we apply the ``gauge transform'' given by
 $$
\left [ \begin{array}{c} h_1 \\ h_2 \end{array} \right ] \mapsto    \left [ \begin{array}{c} h_1^g \\ h_2^g \end{array} \right ] = \mathcal{U}  \left [ \begin{array}{c} h_1 \\ h_2 \end{array} \right ]
 $$
and if we define the complex-valued function $\psi(t,x) = h_1^g(t,x) + \im h_2^g(t,x) \in \C$, we see 
 \be
 \im \pt_t \psi = |\nabla| \psi +  \mbox{nonlinear terms}.
 \ee
 Thus the linearized equation \eqref{eq:linearized_hwm} can be naturally cast into the form of a half-wave equation (with nonlinear terms) for a complex field $\psi : [0,T) \times \R \to \C$. We believe that this observation will be of great use for the future well-posedness as well as stability/blowup analysis close to half-harmonic maps for the half-wave maps equation.

\subsection{Comments on the Proofs} 
The proofs of Theorems \ref{thm:null} -- \ref{thm:JL} all have in common that we make essential use of the stereographic projection $\Pi$ from the unit circle $\Ss^1$ to the projective real line $\hat{\R} = \R \cup \{ \infty \}$. Let us remark that the use of stereographic projections to gain insight into nonlinear problems with conformal symmetry is, of course, not new by itself; in particular, we mention here the pioneering works of E.~Lieb \cite{Li-83} and W.~Beckner \cite{Be-93} on conformally invariant functional inequalities. However, the application of such a conformal change of coordinates from $\R$ to $\Ss^1$ turns out to be very successful in the spectral analysis of the operators $L_+$ and $L_-$, leading to a complete understanding of these nonlocal operators that seems to be unmatched to the best of our knowledge.   

More precisely, the starting point is that, by means of this conformal transformation $\Pi$, we can recast the spectral analysis for the operators
$$
L_+= |\nabla| - \frac{2m}{1+x^2} \quad \mbox{and} \quad L_-= |\nabla| - \frac{2m}{1+x^2} + R
$$
in terms of the unbounded operators given by
$$
J = (1-\sin \theta) ( |\nabla|_{\Ss^1} - m \mathds{1}) \quad \mbox{and} \quad H = (1- \sin \theta) ( |\nabla|_{\Ss^1} - m \mathds{1} + \widetilde{R} )
$$
acting on $L^2(\Ss^1)$, where $|\nabla|_{\Ss^1}$ is the square root of the Laplacian on $\Ss^1$ and $\widetilde{R}$ is some integral operator. Clearly, the operators $J \neq J^*$ and $H\neq H^*$ are {\em not self-adjoint}, and therefore their detailed spectral analysis seems to be a hopeless enterprise at first sight. However, the benefit of this approach is that $J$ and $H$ are both seen to be {\bf Jacobi operators}, meaning that the corresponding (infinite) matrices have a tridiagonal structure with respect to the standard Fourier basis $\{ e^{\im k \theta} \}_{k \in \Z}$ of $L^2(\Ss^1)$. For instance, the matrix for $J$ is of the form
$$
[J_{kl}] = \left [ \begin{array}{cccccc} \ddots & \ddots & \ddots &  & &  0 \\ & a_{n} & b_{n} & c_{n} \\ & & a_{n+1} & b_{n+1} & c_{n+1}   \\ 0 & &  & \ddots & \ddots & \ddots  \end{array} \right ] 
$$
with certain sequences $(a_n)$, $(b_n)$, $(c_n)$  and $n \in \Z$. Roughly speaking, the degree $m \in \N$ of the half-harmonic maps $\QB = \QB_m$ plays a central role by splitting the frequencies $k \in \Z$ on the unit circle with $|k| \leq m$ and $|k| \geq m$. More precisely, the analysis of the Jacobi operators given by $J$ and $H$ then shows the following. 
\begin{itemize}
\item The {\em bound states} of $L_+$ and $L_-$ are determined via the actions of $J$ and $H$ on the  $2m+1$-dimensional subspace $\mathrm{span} \, \{ e^{\im k \theta} : |k| \leq m \}$ in $L^2(\Ss^1)$. 
\item The {\em scattering states} of $L_+$ and $L_-$  can be analyzed in detail via the action of $J$ and $H$ on the infinite-dimensional subspace  $\mathrm{span} \, \{ e^{\im k \theta} : |k| \geq m \}$ in $L^2(\Ss^1)$.
\end{itemize}
Here, a particular role will be played the two-dimensional subspace spanned by $\{ e^{\im k \theta} : k = \pm m \}$, which lies at the interface between the bound states and scattering states (i.\,e.~the continuous spectrum), and which yields the the two linearly independent solutions of $L_+ \vphi = L_-\vphi = 0$ given by the $L^2$-zero mode and the zero-energy resonance. 

Let us mention two facts in the spectral analysis that we find particularly remarkable. First, we are able to find a {\bf Darboux-type factorization formula}\footnote{Note the reminiscence to the classical Darboux factorization trick for one-dimensional Schr\"odinger operators $H=-\pt_{xx} +V$, i.\,e., we can write $H= S^* S + e_0$ with $S=\phi \pt_x \phi^{-1}$ where $\phi>0$ is the ground state of $H$ with $H \phi = e_0 \phi$. Note that we have $Q_m^{-1} = \overline{Q}_m$ in our case, since $|Q_m|=1$ holds.} of the form
$$
L_+ f = Q_m \left ( -\im \frac{d}{dx} \right ) \overline{Q}_m f_+ + \overline{Q}_m \left ( + \im \frac{d}{dx} \right ) Q_m f_-
$$ 
where $f_+ = \Pi_+ f$ and $f_- = \Pi_- f$ denote the projections of $f$ onto positive and negative Fourier frequencies, respectively. The use of the frequency projectors $\Pi_{\pm}$ will introduce a fair amount of results from Hardy space theory at various places in the analysis. Furthermore, the somewhat intriguing factorization formula above will serve as the starting point for finding the gauge transform $U$ that provides the joint unitary equivalence in Theorem \ref{thm:unitaryL2} of the free operator $|\nabla|$ to both $L_+$ and $L_-$ restricted to the continuous spectrum. As an interesting aside, we note that the above formula shows that the generalized scattering solutions $\vphi \in L_{1/2}(\R)$ of $L_+ \vphi = E \vphi$ with $E > 0$ are given by linear combinations of
$$
\vphi_+ (x) = Q_m(x) e^{\im E x} \quad \mbox{and} \quad \vphi_-(x) = \overline{Q}_m(x) e^{-\im E x} ,
$$
which correspond to the {\bf Jost solutions} in classical scattering theory for one-dimensional Schr\"odinger operators $H = -\pt_{xx} + V$. Another interesting spectral feature ( ue to the tridiagonal structure of the matrix for $J$) is that the $L^2$-eigenvalues $E_k$ of $L_+$ are all simple and that $E_k$ together with the corresponding $L^2$-eigenfunctions $\vphi_k$ can be explicitly calculated by using orthogonal polynomials; see Section \ref{sec:spectrum} below for explicit examples for degree $m=1$ and $m=2$.

 
\subsection*{Definitions and Notations} For $n \in \N$, we define the weighted space $L_{1/2}(\R; \R^n) = \{ f \in L^{1}_{\mathrm{loc}}(\R; \R^n) : \int_{\R} \frac{|f(x)|}{1+x^2} \,dx < +\infty \}$. By duality (see, e.\,g., \cite{DaMaRi-15}), we can define the action $|\nabla|$ on $L_{1/2}(\R; \R^n)$ and introduce the space
\be
\dot{H}^{\frac 1 2}(\R; \R^n) = \{ f \in L_{1/2}(\R; \R^n) : \| f \|_{\dot{H}^{\frac 1 2}} < +\infty \},
\ee 
with the (semi)-norm $\| f \|_{\dot{H}^{\frac 1 2}}$ given by
\be
\| f \|_{\dot{H}^{\frac 1 2}}^2 = \int_{\R} f \cdot |\nabla| f \, dx = \frac{1}{2 \pi} \int \! \! \int_{\R \times \R} \frac{|f(x)-f(y)|^2}{|x-y|^2} \, dx \, dy .
\ee
Note that any non-trivial constant function $f = \mbox{const}.$ belongs to $\dot{H}^{\frac 1 2}$. Furthermore, we define the space 
$$
\dot{H}^{\frac 1 2}(\R; \Ss^2) = \left \{ \uu \in \dot{H}^{\frac 1 2}(\R; \R^3) : \mbox{$|\uu(x)| = 1$ for a.\,e.~$x \in \R$}  \right \} .
$$
For $s \geq 0$, we define homogeneous and inhomogeneous Sobolev spaces $\dot{H}^s$ and $H^s$  in a similar standard manner. On $L^2(\R)$ and $L^2(\Ss^1)$, we define the complex scalar products
$$
(f,g )= \int_{\R} \overline{f}(x) g(x) \, dx \quad \mbox{and} \quad \langle u, v \rangle = \int_{\Ss^1} \overline{u}(e^{\im \theta}) v(e^{\im \theta}) \, d \theta,
$$
respectively. In fact, we ultimately deal with real-valued functions, but nevertheless it is sometimes convenient to make use of complex-valued functions in our analysis. 

\subsection*{Acknowledgments}
It is a pleasure to thank G.\,M.~Graf and R.\,L.~Frank for helpful conversations regarding the action of the Lorentz group on $\Ss^2$ and the use of the stereographic projection in the spectral analysis, respectively. E.~Lenzmann acknowledges financial support by the Swiss National Science Foundation (SNF) and he is also grateful for the kind hospitality of the I.H.\'E.S., where part of this work was done. A.~Schikorra is supported by the German Research Foundation (DFG) through Grant No.~SCHI-1257-3-1 and a Heisenberg fellowship.

\section{Proving the Classification Results}

In this section we prove Theorems \ref{thm:vsmall} and \ref{thm:vlarge}, which provide the explicit classification of all boosted half-harmonic maps, i.\,e., solutions $\QB_v \in \dot{H}^{\frac 1 2}(\R; \Ss^2)$ of \eqref{eq:uv} with $v \in \R$ given.

\subsection{Free and Non-Free Boundary Minimal Disks}

The main step in the proof of Theorem \ref{thm:vsmall} will involve the use of {\em two Hopf-type differentials} to show that the image $\QB_v(\R)$ belongs to a fixed plane in $\R^3$ and hence $x \mapsto \QB_v(x)$ traces out a circle on $\Ss^2$ (which will be a small circle if $v \neq 0$). After having established this geometric fact, we can apply a M\"obius transformation on the target $\Ss^2$ and transform $\QB_v$ to a half-harmonic map $\QB : \R \to \Ss^2$ lying in the equatorial plane, i.\,e., we have $\QB(x) = (f(x), g(x), 0)$ for some functions such that $f(x)^2 + g(x)^2 =1$ for a.\,e.~$x$. The latter situation can then be completely understood by means of complex analysis, as already used in \cite{Millot-Sire-2015}; see also \cite{BeMiRySa-14}.

Throughout this subsection, we assume that $\QB_v \in \dot{H}^{\frac 1 2}(\R; \Ss^2)$ is a  solution to the profile equation \eqref{eq:uv} with some given $v \in \R$ such that $|v| < 1$. For notational simplicity, we drop the dependence on $v$ and we shall write 
$$
\QB \equiv \QB_v .
$$ 
 By the regularity result in Theorem \ref{th:regularity} worked out in Section \ref{sec:regularity}, we have that $\QB \in  \dot{H}^2 \cap C^\infty(\R; \Ss^2)$. We will make use of this regularity result frequently without further reference. 
 
 We begin with the following useful pointwise identity, which is simple to prove.

\begin{lemma} \label{lem:rigid}
For any $x \in \R$, it holds that
$$
(\QB \cdot |\nabla| \QB)(x) = \frac{1}{2 \pi} \int_{\R} \frac{ | \QB(x) - \QB(y)|^2}{|x-y|^2} \, dy .
$$
In particular, we have the following rigidity property: If $(\QB \cdot |\nabla| \QB)(x) = 0$ for some $x \in \R$, then $\QB \equiv \mbox{const}.$ holds.
\end{lemma}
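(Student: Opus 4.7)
The plan is to use the standard singular integral representation of the half-Laplacian on $\R$,
\be
(|\nabla| f)(x) = \frac{1}{\pi} \, \mathrm{p.v.} \int_{\R} \frac{f(x)-f(y)}{(x-y)^2} \, dy,
\ee
applied componentwise to $\QB$. Since $\QB \in \dot H^2 \cap C^\infty(\R;\Ss^2)$ (from the regularity result invoked just above the lemma), this formula holds pointwise and the integrand decays like $|x-y|^0$ near the diagonal (by the $C^1$ bound) and like $|x-y|^{-2}$ at infinity (by boundedness of $\QB$), so no principal value is actually needed in the end.

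Taking the scalar product with $\QB(x)$ and using the key pointwise identity
\be
\QB(x) \cdot \bigl(\QB(x) - \QB(y)\bigr) = 1 - \QB(x) \cdot \QB(y) = \tfrac{1}{2}\bigl(|\QB(x)|^2 + |\QB(y)|^2 - 2\QB(x)\cdot\QB(y)\bigr) = \tfrac{1}{2}|\QB(x)-\QB(y)|^2,
\ee
which is valid because $|\QB|=1$ a.e., yields directly
\be
(\QB \cdot |\nabla|\QB)(x) = \frac{1}{2\pi} \int_{\R} \frac{|\QB(x)-\QB(y)|^2}{|x-y|^2}\, dy,
\ee
as claimed. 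Note that the integrand is manifestly nonnegative, which justifies dropping the $\mathrm{p.v.}$ by monotone convergence.

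For the rigidity part, suppose $(\QB \cdot |\nabla|\QB)(x_0) = 0$ at some point $x_0 \in \R$. The nonnegative integrand in the identity just proved must then vanish for almost every $y \in \R$, i.e., $\QB(y) = \QB(x_0)$ for a.e.\ $y$. By continuity of $\QB$ (guaranteed by the regularity statement), this forces $\QB \equiv \QB(x_0)$, a constant. The only subtlety worth flagging is that one needs the pointwise representation of $|\nabla|\QB$ rather than a merely distributional one; this is exactly what the higher regularity in Theorem \ref{th:regularity} provides, so no real obstacle arises.
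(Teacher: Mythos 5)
Your proof is correct and follows essentially the same route as the paper: combine the singular integral representation of $|\nabla|$ with the pointwise identity $2\,\mathbf{a}\cdot(\mathbf{a}-\mathbf{b})=|\mathbf{a}-\mathbf{b}|^2$ for unit vectors $\mathbf{a},\mathbf{b}\in\Ss^2$, then read off the rigidity from the nonnegativity of the resulting integrand. The extra remarks about dropping the principal value and invoking continuity for the rigidity step are welcome bookkeeping but do not change the argument.
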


\begin{proof}
By combining the singular integral formula $|\nabla| \uu(x) = \frac{1}{\pi} \int_{\R} \frac{ \uu(x) - \uu(y)}{|x-y|^2} \, dy$ with the identity $2 \uu \cdot (\uu - \vv) = | \uu - \vv |^2$ valid for $\uu, \vv \in \R^3$ with $|\uu|=|\vv|=1$, we obtain the desired identity. The rigidity property is obvious from the identity.
\end{proof}
 
As a next step, we reformulate the problem of solving \eqref{eq:uv} in terms of its harmonic extension to the upper half-plane $\R_+^2 = \R \times \{ y > 0 \}$. To this end, we let $\QB^e : \R_+^2 \to \R^3$ denote the harmonic extension of $\QB : \R \to \Ss^2$ to the upper half-plane $\R_+^2$ given by the classical Poisson extension formula
\be \label{eq:Udef}
\QB^e(x,y) = (P_y \ast \QB)(x) = \int_{\R} P_y(x-x') \QB(x') \, dx' \quad \mbox{for $(x,y) \in \R_+^2$},
\ee
where 
\be
P_y(x) = \frac{1}{ \pi} \frac{y}{y^2+x^2}
\ee
denotes the Poisson kernel defined for $x \in \R$ and $y > 0$. Since $\QB$ lies in the space  $\dot{H}^{\frac 1 2}$, it is a classical fact that its harmonic extension $\QB^e$ belongs to $\dot{H}^1(\R_+^2)$ with
\be
\| \QB^e \|_{\dot{H}^1(\R_+^2)}^2 = \int \! \! \int_{\R_+^2} \left ( |\pt_x \QB^e|^2 + |\pt_y \QB^e|^2  \right ) \, dx \, dy = \int_{\R} \QB \cdot |\nabla| \QB \, dx .
\ee
Moreover, we recall that $\QB^e : \R_+^2 \to \R^3$ satisfies the boundary-value problem
\be
\left \{ \begin{array}{ll} \DD \QB^e = 0 & \quad \mbox{in $\R_+^2$}, \\
v \pt_x \QB^e  + \QB^e \wedge \pt_y \QB^e = 0& \quad \mbox{on $\pt \R_+^2 = \R \times \{y=0 \}$},
\end{array} \right . 
\ee
thanks to well-known facts that $- \pt_y \QB^e = |\nabla| \QB$ and $\QB^e = \QB$ hold on $\pt \R_+^2$ in the trace sense. We also notice that it is elementary to check from \eqref{eq:Udef} that
$$
\sup_{(x,y) \in \R_+^2} | \QB^e(x,y) | \leq \sup_{x \in \R} |\QB(x)| = 1,
$$
which can also be seen from the maximum principle applied to the subharmonic function $|\QB^e|^2$ on $\R_+^2$. In fact, the strict maximum principle tells us that $|\QB^e(x,y)| < 1$ holds in $\R_+^2$, unless $\QB^e$ is constant. 

In the following, it will often be convenient to identify $\R_+^2$ with the complex upper halfplane $\C_+ = \{ z \in \C : \mathrm{Im} \, z > 0 \}$ via the canonical relation $z= x+ \im y$ for $x,y \in \R$. Furthermore, we recall the definition of the Wirtinger operators given by
\be
\pt_z := \frac 1 2 ( \pt_x - \im \pt_y) \quad \mbox{and} \quad \pt_{\bar{z}} := \frac 1 2 ( \pt_x + \im \pt_y ).
\ee

Next, we make an observation  about the conformality of the harmonic extension $\QB^e$, which is well-known for the half-harmonic maps case when $v=0$ (see \cite{DaMaRi-15, DSpalphSphere, Millot-Sire-2015, DaLio-16}) .

\begin{lemma} \label{lem:Uz1}
For $\QB^e : \R_+^2 \simeq \C_+ \to \R^3$ as above, it holds that
$$
\pt_z \QB^e \cdot \pt_z \QB^e = 0 ,
$$
where $\mathbf{v} \cdot \mathbf{w} = \sum_{i=1}^3 v_i w_i$ for $\mathbf{v}, \mathbf{w} \in \C^3$. Equivalently, we have the $\QB^e$ is a (weakly) conformal map, i.\,e.,
$$
| \pt_x \QB^e|^2 = |\pt_y \QB^e|^2 \quad \mbox{and} \quad \pt_x \QB^e \cdot \pt_y \QB^e = 0  .
$$
In particular, by taking boundary values, we obtain the identities
$$
| \pt_x \QB |^2 = | |\nabla| \QB |^2 \quad \mbox{and} \quad \pt_x \QB \cdot |\nabla| \QB = 0 .
$$ 
\end{lemma}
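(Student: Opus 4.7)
The plan is to apply a Hopf-type differential argument. Introduce the function
$$
\Phi(z) := \pt_z \QB^e \cdot \pt_z \QB^e \qquad \mbox{on $\C_+$}.
$$
Since $\QB^e$ is harmonic, $\pt_{\bar z}(\pt_z \QB^e) = \tfrac{1}{4}\DD \QB^e = 0$, so the $\C^3$-valued map $\pt_z \QB^e$ is holomorphic on $\C_+$, and therefore so is $\Phi$. Expanding $\pt_z = \tfrac{1}{2}(\pt_x - \im \pt_y)$ and using bilinearity gives
$$
\mathrm{Re}\, \Phi = \tfrac{1}{4}\bigl(|\pt_x \QB^e|^2 - |\pt_y \QB^e|^2\bigr), \qquad \mathrm{Im}\, \Phi = -\tfrac{1}{2}\,\pt_x \QB^e \cdot \pt_y \QB^e,
$$
so the assertion $|\pt_x\QB^e|=|\pt_y\QB^e|$ together with $\pt_x\QB^e\cdot\pt_y\QB^e = 0$ is equivalent to $\Phi \equiv 0$ on $\C_+$.

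The second step is to show $\mathrm{Im}\, \Phi = 0$ on $\pt\C_+ = \R$. Using $\pt_y \QB^e|_{y=0} = -|\nabla|\QB$ and $\pt_x \QB^e|_{y=0} = \pt_x \QB$, this reduces to verifying $\pt_x \QB \cdot |\nabla|\QB = 0$. Dotting equation \eqref{eq:uv} with $|\nabla|\QB$ eliminates the triple-product term, leaving $v\,\pt_x \QB \cdot |\nabla|\QB = 0$; for $v \neq 0$ the desired orthogonality is immediate, while for $v = 0$ the equation forces $|\nabla|\QB \parallel \QB$, and combining this with $\pt_x \QB \cdot \QB = \tfrac{1}{2}\pt_x|\QB|^2 = 0$ again yields $\pt_x \QB \cdot |\nabla|\QB = 0$. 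The pointwise validity of these manipulations is legitimate thanks to the regularity $\QB \in \dot H^2 \cap C^\infty(\R;\Ss^2)$ supplied by Theorem~\ref{th:regularity}, which in turn makes $\QB^e$ smooth up to the boundary through standard Poisson-extension theory.

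The third step is to conclude $\Phi \equiv 0$. Since $\mathrm{Im}\, \Phi$ vanishes on $\R$, the Schwarz reflection principle extends $\Phi$ to an entire function on $\C$ via $\widetilde{\Phi}(z) := \overline{\Phi(\bar z)}$ for $z \in \C_-$. The uniform bound $|\Phi| \leq |\pt_z \QB^e|^2 \leq \tfrac{1}{4}|\nabla \QB^e|^2$ combined with $\QB^e \in \dot H^1(\R^2_+)$ gives $\Phi \in L^1(\C_+)$, and by the reflection $\widetilde{\Phi} \in L^1(\C)$. An entire $L^1$-function must vanish identically: the mean-value property applied on the disk $\{|z-z_0| \leq R\}$ gives $|\widetilde\Phi(z_0)| \leq (\pi R^2)^{-1}\|\widetilde\Phi\|_{L^1(\C)} \to 0$ as $R \to \infty$. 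Hence $\Phi \equiv 0$ on $\C_+$, giving conformality in the interior, and the stated boundary identities $|\pt_x \QB|^2 = ||\nabla|\QB|^2$ and $\pt_x \QB \cdot |\nabla|\QB = 0$ follow by taking smooth boundary traces.

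The main obstacle is not the Hopf argument itself, which is classical, but the justification of the boundary manipulations: in the natural energy class $\dot H^{\frac{1}{2}}$ one has no pointwise meaning for $\pt_x \QB \cdot |\nabla|\QB$. This is precisely where the higher regularity result of Section~\ref{sec:regularity} is indispensable — it promotes $\QB$ out of the critical space into $\dot H^2 \cap C^\infty$, after which the Schwarz reflection and the Liouville-type $L^1$ argument proceed without further difficulty.
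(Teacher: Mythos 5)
Your proof follows essentially the same Hopf-differential argument as the paper: same definition of $\Phi$, same boundary computation (dotting \eqref{eq:uv} with $|\nabla|\QB$ and handling $v=0$ via $|\nabla|\QB \parallel \QB$), and the same reliance on the regularity theorem to justify the pointwise boundary manipulations. The only cosmetic variation is in the last step — you reflect $\Phi$ holomorphically via Schwarz and apply the $L^1$-Liouville bound directly, while the paper reflects the harmonic function $\mathrm{Im}\,\Phi$ oddly, concludes $\mathrm{Im}\,\Phi\equiv 0$, then notes a real-valued holomorphic $L^1$ function must vanish — which is an equivalent, interchangeable chain of reasoning.
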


\begin{remark*}
{\em The conformality together with harmonicity imply that $\QB^e$ parametrizes a (possibly branched) minimal surface $\Sigma$ of disk type that lies inside the unit ball $B \subset \R^3$. In the half-harmonic map case when $v=0$, we readily see that normal vector $\pt_y \QB^e |_{y=0}$ at the boundary is orthogonal to the tangent space $T_{\QB} \Ss^2$, i.\,e., the boundary $\pt \Sigma$ intersects $\pt B = \Ss^2$ orthogonally. In this case, we can refer to $\Sigma$ as a {\bf free minimal disk} inside $B$. By a classical result of Nitsche \cite{Nitsche70}, any such free minimal disk $\Sigma$ is a flat disk in the equatorial plane (up to rotations). 

However, for the case $v \neq 0$, we readily see that $\pt_y \QB^e |_{y=0}$ is {\em not orthogonal} to $T_\QB \Ss^2$. Consequently, we refer to the corresponding surface $\Sigma$ as a {\bf non-free minimal disk} inside $B$. Below, we will prove that $\Sigma$ is a flat disk with radius $r=\sqrt{1-v^2}$.}
\end{remark*}

\begin{proof}
We use a classical technique involving Hopf differentials; we follow closely the arguments in \cite{Millot-Sire-2015} for half-harmonic maps corresponding to the special case when $v=0$. To this end, we define the function $\Phi : \C_+ \to \C$ given by
$$
\Phi(z) := 4 \left ( \pt_z \QB^e \cdot \pt_z \QB^e \right ) = \left ( |\pt_x \QB^e|^2 - |\pt_y \QB^e|^2 \right ) - 2 \im ( \pt_x \QB^e \cdot \pt_y \QB^e  ).
$$
Note that $\Phi$ is holomorphic, since we have that $\pt_{\bar{z}} \Phi = 0$ and $4 \pt_{\bar{z}} \pt_z \QB^e =  \Delta \QB^e = 0$.

We claim that $\Phi(z) \equiv 0$ holds, which will complete the proof of Lemma \ref{lem:Uz1}. To prove that $\Phi$ is identically zero, we first note that
\be \label{eq:Hopf1}
- \pt_x \QB^e \cdot \pt_y \QB^e = \pt_x \QB \cdot |\nabla| \QB = 0 \quad \mbox{on $\partial \C_+ $} \, .
\ee
Indeed, the first equality holds because of $\pt_y \QB^e = - |\nabla| \QB $ and $\pt_x \QB^e = \pt_x \QB$ for every $x \in \R \simeq \partial \C_+$ and using the regularity $\QB \in C^1$. To see that $\pt_x \QB \cdot |\nabla| \QB  = 0$ holds on $\R$, we use \eqref{eq:uv} directly as follows. If $v = 0$, then by \eqref{eq:uv} we have that $|\nabla| \QB = \lambda(\QB) \QB$ for some function $\lambda : \Ss^2 \to \R$. Since $\pt_x \QB \perp \QB$ by the fact that $|\QB|^2 = 1$, we conclude that  \eqref{eq:Hopf1} holds when $v=0$. Now let us assume that $v \neq 0$. But in this case we immediately obtain that $\pt_x \QB \perp |\nabla| \QB$ by the boundary condition in \eqref{eq:uv}. Again, we conclude that \eqref{eq:Hopf1} holds.

Thus we have shown that $g(z) := \mathrm{Im} \, \Phi(z)$ vanishes identically on $\C_+$. By odd reflection across $\pt \C_+$, we can extend the harmonic function $g$ to all of $\C$. However, since $g$ is harmonic and $g \in L^1(\R_+^2)$ because of $\QB^e \in \dot{H}^1(\R_+^2)$, we conclude that $g \equiv 0$ on $\C$. Thus $\Phi$ is real-valued and holomorphic, which implies that $\Phi$ is constant. Since $\Phi \in L^1(\R_+^2)$, we deduce that $\Phi(z) \equiv 0$ holds.
\end{proof}

As a consequence of Lemma \ref{lem:Uz1}, we can recast equation \eqref{eq:uv} into the following form.

\begin{lemma} \label{lem:upoint}
For $\QB : \R \to \Ss^2$ as above, we have the identity
$$
|\nabla| \QB = \sqrt{1-v^2} |\pt_x \QB| \QB - v   \pt_x \QB \wedge \QB .
$$
\end{lemma}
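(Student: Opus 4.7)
The plan is to derive the identity by taking the cross product of the profile equation \eqref{eq:uv} with $\QB$ and then resolving the resulting coefficients using the conformality of Lemma \ref{lem:Uz1} and the positivity from Lemma \ref{lem:rigid}.

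First I would rewrite \eqref{eq:uv} as $\QB \wedge |\nabla|\QB = v\,\pt_x \QB$ and take the wedge product with $\QB$ on the left. Using the BAC--CAB identity $\mathbf{a}\wedge(\mathbf{a}\wedge\mathbf{b}) = (\mathbf{a}\cdot\mathbf{b})\mathbf{a} - |\mathbf{a}|^2\,\mathbf{b}$ together with $|\QB|^2=1$, the left-hand side becomes $(\QB\cdot|\nabla|\QB)\QB - |\nabla|\QB$, so rearranging yields the pointwise decomposition
\begin{equation*}
|\nabla|\QB \;=\; (\QB\cdot|\nabla|\QB)\,\QB \;\pm\; v\,\pt_x\QB\wedge\QB,
\end{equation*}
where the sign is determined by the anti-symmetry of $\wedge$. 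This already has the correct tangential/normal structure — the first term is normal to the tangent plane $T_\QB \Ss^2$ and the second is tangent (using $\pt_x\QB\wedge\QB \in T_\QB\Ss^2$ since $\pt_x\QB\perp\QB$).

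The second step is to identify the scalar coefficient $\QB\cdot|\nabla|\QB$. I would take squared norms of the decomposition, exploiting the orthogonality $\QB\cdot(\pt_x\QB\wedge\QB)=0$ (a scalar triple product with two identical vectors) and the identity $|\pt_x\QB\wedge\QB|^2 = |\pt_x\QB|^2|\QB|^2 - (\pt_x\QB\cdot\QB)^2 = |\pt_x\QB|^2$ (using $\pt_x\QB\perp\QB$ from $|\QB|=1$). This gives
\begin{equation*}
\bigl||\nabla|\QB\bigr|^2 \;=\; (\QB\cdot|\nabla|\QB)^2 + v^2|\pt_x\QB|^2.
\end{equation*}
Invoking the conformality relation $||\nabla|\QB|^2 = |\pt_x\QB|^2$ from Lemma \ref{lem:Uz1}, I can solve for
\begin{equation*}
(\QB\cdot|\nabla|\QB)^2 \;=\; (1-v^2)\,|\pt_x\QB|^2.
\end{equation*}

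Finally, to select the correct square root I would invoke Lemma \ref{lem:rigid}, which shows the pointwise bound $\QB\cdot|\nabla|\QB \geq 0$ (it equals the integral of a nonnegative kernel). Therefore $\QB\cdot|\nabla|\QB = \sqrt{1-v^2}\,|\pt_x\QB|$, and plugging this back into the decomposition produces the claimed formula. There is no essential obstacle here — the argument is a short algebraic manipulation once Lemma \ref{lem:Uz1} and Lemma \ref{lem:rigid} are in hand; the only point requiring a moment's care is tracking the sign of the wedge term through the BAC--CAB identity, and fixing the sign of the square root via the non-negativity supplied by the singular-integral representation of $\QB\cdot|\nabla|\QB$.
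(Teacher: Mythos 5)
Your argument is correct and is essentially the paper's proof: both decompose $|\nabla|\QB$ in the orthogonal frame $\{\QB,\,\pt_x\QB,\,\pt_x\QB\wedge\QB\}$ with vanishing $\pt_x\QB$-component, square the decomposition and invoke $\bigl||\nabla|\QB\bigr| = |\pt_x\QB|$ from Lemma~\ref{lem:Uz1} to pin down the normal coefficient up to sign, and then fix the sign via $\QB\cdot|\nabla|\QB > 0$ from Lemma~\ref{lem:rigid}. The one cosmetic difference is how you reach the two-term decomposition: the paper uses the orthogonality $\pt_x\QB \perp |\nabla|\QB$ (the other half of conformality from Lemma~\ref{lem:Uz1}) to justify the ansatz $|\nabla|\QB = \lambda\QB + \mu\,\pt_x\QB\wedge\QB$ and then plugs it into \eqref{eq:uv} to read off $\mu$, whereas your cross-multiplication with $\QB$ plus the \emph{bac--cab} identity yields the same decomposition as an algebraically equivalent rewriting of \eqref{eq:uv}, using only $|\QB|^2 = 1$; you thereby use slightly less (just the norm-equality part of conformality, not the orthogonality).

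One point you deferred and should settle, rather than hedging with a $\pm$: carrying the signs through, $\QB\wedge(\QB\wedge|\nabla|\QB) = (\QB\cdot|\nabla|\QB)\QB - |\nabla|\QB$ and $\QB\wedge(v\,\pt_x\QB) = -v\,\pt_x\QB\wedge\QB$, so
\[
|\nabla|\QB = (\QB\cdot|\nabla|\QB)\,\QB + v\,\pt_x\QB\wedge\QB,
\]
i.e.\ a $+v$, opposite to the $-v$ in the printed lemma. This is not a flaw in your argument: substituting the paper's ansatz into \eqref{eq:uv} with $\QB\wedge(\pt_x\QB\wedge\QB) = \pt_x\QB$ also gives $\mu = +v$, and the explicit verification with the boosted Blaschke-product profile in Appendix~\ref{sec:primer} confirms the $+$ sign. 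So trust the sign your computation produces rather than retrofitting it to the printed statement.
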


\begin{proof}
We assume that $\QB$ is not constant, since otherwise the assertion is trivial. By Lemma \ref{lem:rigid} and \ref{lem:Uz1}, we know  that $|\pt_x \QB (x) | = | \Dx \QB (x)| \neq 0$ for all $x \in \R$. Furthermore, we recall $\pt_x \QB \perp \Dx \QB$ from Lemma \ref{lem:Uz1} above. Hence, we find
\be \label{eq:uspan}
\Dx \QB = \lambda \QB +  \mu \pt_x \QB \wedge \QB 
\ee
with some real-valued functions $\lambda, \mu : \R \to \R$. By plugging this ansatz into \eqref{eq:uv}, we readily deduce that $\mu(x) \equiv -v$ holds. In order to determine the function $\lambda$, we square \eqref{eq:uspan} on both sides and use that $|\QB|^2 = 1$ to find that $|\Dx \QB|^2 = \lambda^2 + v^2 |\pt_x \QB|^2$. Since $|\pt_x \QB| = | |\nabla| \QB|$ by Lemma \ref{lem:Uz1}, we see that $\lambda = \pm \sqrt{1-v^2} | \pt_x \QB|$. Since $(\QB \cdot \Dx \QB)(x)> 0 $ for all $x \in \R$ by  Lemma \ref{lem:rigid}, we finally see that $\lambda(x) > 0$ must be a positive function. 
\end{proof}

\begin{lemma} \label{lem:Uz2}
For $\QB^e : \R_+^2 \simeq \C_+ \to \R^3$ as above, we have the identity
$$
\pt_{zz} \QB^e \cdot \pt_{zz} \QB^e = 0.
$$
\end{lemma}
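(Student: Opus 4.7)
Since $\QB^e$ is harmonic, $\pt_z\QB^e$ is holomorphic on $\C_+$, hence so is $\pt_{zz}\QB^e$, and therefore the componentwise bilinear pairing
$$
\Psi(z) \;:=\; \pt_{zz}\QB^e \cdot \pt_{zz}\QB^e
$$
is a holomorphic $\C$-valued function on $\C_+$. The plan is to imitate the Hopf-differential argument used in Lemma \ref{lem:Uz1}: show that $\mathrm{Im}\,\Psi$ vanishes on $\pt\C_+$, extend $\mathrm{Im}\,\Psi$ by odd reflection to all of $\C$, use harmonicity plus an $L^1$-bound (which is where the $\dot H^2$-regularity of $\QB$ from Theorem~\ref{th:regularity} enters, via $\nabla^2\QB^e\in L^2(\C_+)$, yielding $\Psi\in L^1(\C_+)$) to conclude that $\mathrm{Im}\,\Psi\equiv 0$, and finally to deduce from holomorphicity and integrability that the real-valued holomorphic function $\Psi$ must be constant, hence zero.

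\textbf{The boundary computation (main step).} Using harmonicity $\pt_{xx}\QB^e = -\pt_{yy}\QB^e$ one obtains
$$
\pt_{zz}\QB^e \;=\; \tfrac{1}{2}\bigl(\pt_{xx}\QB^e - \im\, \pt_{xy}\QB^e\bigr),
$$
so that $\mathrm{Im}\,\Psi = -\tfrac{1}{2}\,\pt_{xx}\QB^e\cdot\pt_{xy}\QB^e$. On $\{y=0\}$ we have $\pt_{xx}\QB^e = \pt_{xx}\QB$ and $\pt_{xy}\QB^e = -\pDx\,\pt_x\QB$ (where I write $\pDx = |\nabla|$ on boundary values), hence the task reduces to proving the pointwise identity
$$
\pt_{xx}\QB\cdot\bigl(\Dx\,\pt_x\QB\bigr) \;=\; 0 \qquad\text{on }\R.
$$
To see this I would differentiate the identity from Lemma~\ref{lem:upoint} with respect to $x$,
$$
\Dx\pt_x\QB \;=\; \sqrt{1-v^2}\bigl(\pt_x|\pt_x\QB|\,\QB + |\pt_x\QB|\,\pt_x\QB\bigr) \;-\; v\,\pt_{xx}\QB\wedge\QB,
$$
and take the scalar product with $\pt_{xx}\QB$. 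The $v$-term dies because $a\cdot(a\wedge b)=0$. For the remaining two terms, differentiating $|\QB|^2=1$ twice gives $\pt_{xx}\QB\cdot\QB=-|\pt_x\QB|^2$, while $\pt_{xx}\QB\cdot\pt_x\QB = \tfrac{1}{2}\pt_x|\pt_x\QB|^2 = |\pt_x\QB|\,\pt_x|\pt_x\QB|$. Substituting these, the two contributions cancel exactly, which yields the desired vanishing.

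\textbf{Conclusion.} Once $\mathrm{Im}\,\Psi = 0$ on $\pt\C_+$ is in hand, the remaining reflection-and-Liouville argument is a verbatim repeat of the end of the proof of Lemma~\ref{lem:Uz1}: odd reflection across $\R$ extends $\mathrm{Im}\,\Psi$ to a harmonic function on $\C$ which is integrable (thanks to $\Psi\in L^1(\C_+)$ following from $\QB^e\in\dot H^2(\R_+^2)$), hence vanishes. Therefore $\Psi$ is a real-valued holomorphic function on $\C_+$, hence constant by the open mapping theorem, and this constant is $0$ because $\Psi\in L^1(\C_+)$.

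\textbf{Expected difficulty.} The routine parts are the Cauchy--Riemann-type bookkeeping on $\Psi$ and the reflection/Liouville step. The only real content is the algebraic cancellation on the boundary; and the only subtle prerequisite is having enough regularity on $\QB^e$ to justify taking two tangential derivatives pointwise and to ensure the $L^1$-bound on $\Psi$, both of which are supplied by the regularity assertion $\QB\in\dot H^2\cap C^\infty(\R;\Ss^2)$ from Theorem~\ref{th:regularity}. The crucial observation making the boundary cancellation possible is that the Lorentz-boost term $-v\,\pt_x\QB\wedge\QB$ enters Lemma~\ref{lem:upoint} through a cross product with $\QB$, so the $v$-contribution automatically drops out when paired with $\pt_{xx}\QB$; this is what makes the Hopf-type differential $\Psi$ vanish uniformly in $v$.
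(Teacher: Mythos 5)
Your proof is correct and follows essentially the same Hopf-differential route as the paper: you define the same holomorphic function $\Psi$, compute $\mathrm{Im}\,\Psi$ on $\pt\C_+$ by differentiating Lemma~\ref{lem:upoint} and pairing with $\pt_{xx}\QB$ to obtain the same algebraic cancellation (with the $v$-term dropping out because $\pt_{xx}\QB\cdot(\pt_{xx}\QB\wedge\QB)=0$), and then close with the same odd-reflection and Liouville argument using the $\dot H^2$-regularity of $\QB^e$. The only cosmetic difference is that you express $\pt_{zz}\QB^e$ via harmonicity as $\tfrac12(\pt_{xx}\QB^e-\im\,\pt_{xy}\QB^e)$ before reading off $\mathrm{Im}\,\Psi$, which is a slightly tidier bookkeeping than the paper's direct expansion.
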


\begin{proof}
 Again, we use a Hopf differential type argument similar to the proof of Lemma \ref{lem:Uz1}. But now we consider the function $\Psi : \C_+ \to \C$ defined as
\be
\Psi(z) := 16 \left ( \pt_{zz} \QB^e \cdot \pt_{zz} \QB^e \right ) .
\ee
Note that $\Psi$ is holomorphic, since $\pt_{\bar{z}} \Psi = 0$. Furthermore, we find 
\begin{align*}
\Psi(z) & = \left ( | \pt_{xx} \QB^e|^2  + |\pt_{yy} \QB^e|^2 - 2 | \pt_{xy} \QB^e |^2  \right )  + 4 \im \left ( \pt_{xx} \QB^e  \cdot \pt_{xy} \QB^e - \pt_{yy} \QB^e  \cdot \pt_{xy} \QB^e \right ) \\
& = 2 \left ( |\pt_{xx} \QB^e|^2 - | \pt_{xy} \QB^e|^2  \right ) +  8 \im \left ( \pt_{xx} \QB^e \cdot \pt_{xy} \QB^e \right ) \, ,
\end{align*}
where the last step follows from the harmonicity $-\pt_{xx} \QB^e = \pt_{yy} \QB^e$.

We claim that $\Psi (z) \equiv 0$ holds. To this end, we show that
\be \label{eq:Psi}
- \pt_{xx} \QB^e \cdot \pt_{xy} \QB^e = \pt_{xx} \QB \cdot  \pt_x \Dx \QB = 0 \quad \mbox{on $\pt \C_+$}.
\ee 
Indeed, the first identity follows again by the properties of the harmonic extension together with the regularity properties shown for the boundary function, i.\,e., here we use that $\QB$ is of class $C^2$. To see that the second equation holds true in \eqref{eq:Psi}, we differentiate the identity given in Lemma \ref{lem:upoint}, which gives us
\begin{align*}
\pt_x \Dx  \QB & = \sqrt{1-v^2} \left ( \frac{\pt_x \QB \cdot \pt_{xx} \QB }{| \pt_x \QB |} \QB + |\pt_x \QB| \pt_x \QB \right ) -  v  \pt_{xx} \QB \wedge \QB .
\end{align*}
Recall that $|\pt_x \QB | \neq 0$ for all $x \in \R$ as shown above (since otherwise $\QB$ must be constant by Lemma \ref{lem:rigid}). Now, by taking the scalar product with $\pt_{xx} \QB$ and using that $\pt_{xx} \QB \cdot \QB + \pt_x \QB \cdot \pt_x \QB = 0$ thanks to $|\QB|^2 \equiv 1$, we obtain 
\begin{align*}
\pt_{xx} \QB \cdot \pt_x \Dx \QB & = \sqrt{1-v^2} \left (  - ( \pt_x \QB \cdot \pt_{xx} \QB ) |\pt_x \QB| + |\pt_x \QB | (\pt_{xx} \QB \cdot \pt_x \QB)   \right ) = 0.
\end{align*}
Thus we see that \eqref{eq:Psi} holds, i.\,e., we have $\mathrm{Im} \, \Psi(z) \equiv 0$ on $\pt \C_+$. Next, by the regularity results shown in Appendix \ref{sec:regularity}, we have $\QB \in \dot{H}^{3/2}(\R; \Ss^2)$. Thus its harmonic extension satisfies $\QB^e \in \dot{H}^2(\R_+^2; \R^3)$. By using \eqref{eq:Psi} and adapting the arguments in the proof of Lemma \ref{lem:Uz1}, we finally deduce that $\Psi (z) \equiv 0$ vanishes identically, which completes the proof of Lemma \ref{lem:Uz2}.
\end{proof}

We conclude this subsection by showing that the image of the harmonic extension $\QB^e$ lies in a fixed plane in $\R^3$.

\begin{lemma} \label{lem:plane}
Let $\QB : \R_+^2 \simeq \C_+ \to \R^3$ be as above. Then the image $\QB(\R_+^2)$ belongs to a fixed plane in $\R^3$. As a consequence, the image of the boundary $\QB^e(\partial \R_+^2) = \QB(\R)$ is a circle on $\Ss^2$.
\end{lemma}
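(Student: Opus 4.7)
My plan is to exploit the rigidity of complex null vectors in $\C^3$. Since $\QB^e$ is harmonic, the vector-valued function $\mathbf{F}(z) := \pt_z \QB^e$ is holomorphic on $\C_+$. Lemma \ref{lem:Uz1} gives $\mathbf{F} \cdot \mathbf{F} = 0$ (null vector condition), and Lemma \ref{lem:Uz2} gives $\mathbf{F}' \cdot \mathbf{F}' = 0$ because $\pt_{zz} \QB^e = \mathbf{F}'$. Differentiating the first relation in $z$ also yields $\mathbf{F} \cdot \mathbf{F}' = 0$. Thus for each $z \in \C_+$, the subspace $\mathrm{span}_\C\{\mathbf{F}(z), \mathbf{F}'(z)\}$ is totally isotropic for the complex bilinear form on $\C^3$.

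The key linear-algebraic observation is that the Witt index of the standard bilinear form on $\C^3$ equals one, so every totally isotropic subspace has dimension $\leq 1$. Consequently $\mathbf{F}(z)$ and $\mathbf{F}'(z)$ are linearly dependent at each point, which is equivalent to the identical vanishing of the bivector $\mathbf{F}(z) \wedge \mathbf{F}'(z) \in \Lambda^2 \C^3$. Writing out the wedge in components shows that $F_i(z) F_j'(z) = F_j(z) F_i'(z)$ for all indices, so (assuming $\QB^e$ is non-constant, hence $\mathbf{F} \not\equiv 0$) there is a fixed vector $\mathbf{c} \in \C^3 \setminus \{0\}$ and a holomorphic scalar $h : \C_+ \to \C$ such that
\begin{equation*}
\mathbf{F}(z) = h(z)\, \mathbf{c} \qquad \text{on } \C_+.
\end{equation*}

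Next I split $\mathbf{c} = \mathbf{a} + \im \mathbf{b}$ with $\mathbf{a},\mathbf{b} \in \R^3$. Since $\pt_x \QB^e = 2\,\mathrm{Re}(h \mathbf{c})$ and $\pt_y \QB^e = -2\,\mathrm{Im}(h \mathbf{c})$, both partial derivatives take values in the real subspace $V := \mathrm{span}_\R\{\mathbf{a},\mathbf{b}\} \subset \R^3$, which has dimension at most two. Integrating along any path in the simply-connected domain $\C_+$, we conclude that $\QB^e(\C_+) \subset \mathbf{p}_0 + V$ for some base point $\mathbf{p}_0 \in \R^3$, i.e.~the image lies in a fixed affine plane $\Pi \subset \R^3$. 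Taking non-tangential limits to the boundary, $\QB(\R) \subset \Pi \cap \Ss^2$, which is a circle (or a point in the degenerate case when $\QB$ is constant).

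The main potential obstacle is the step from pointwise proportionality $\mathbf{F} \wedge \mathbf{F}' = 0$ to the global factorization $\mathbf{F} = h \mathbf{c}$: one must argue that at least one component of $\mathbf{F}$ is not identically zero (true by analyticity unless $\mathbf{F} \equiv 0$, in which case $\QB^e$ is constant and the statement is trivial) and then use the identity $F_i/F_j \equiv \mathrm{const}$ off the isolated zeros of $F_j$, extending by analyticity. The degenerate subcases where $\mathbf{a},\mathbf{b}$ are linearly dependent force $\QB^e$ into an affine line, hence $\QB$ is constant, recovering a trivial instance of the conclusion.
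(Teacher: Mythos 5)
Your proof is correct and takes a genuinely different — and arguably cleaner — route than the paper. The paper invokes the Enneper--Weierstrass representation $\xB = \bigl(\tfrac{F}{2}(1-G^2), \tfrac{\im F}{2}(1+G^2), FG\bigr)$ for the null holomorphic vector $\xB = \pt_z \QB^e$, computes that $\pt_z\xB \cdot \pt_z\xB = F^2(\pt_z G)^2$, and concludes from Lemma \ref{lem:Uz2} that $G$ is constant; this also requires a case split to deal with components $X_k \equiv 0$, where the Weierstrass parametrization degenerates. You instead bypass the parametrization entirely by observing that $\mathbf{F}\cdot\mathbf{F} = \mathbf{F}'\cdot\mathbf{F}' = \mathbf{F}\cdot\mathbf{F}' = 0$ makes $\mathrm{span}_\C\{\mathbf{F}(z),\mathbf{F}'(z)\}$ totally isotropic, and since the non-degenerate bilinear form on $\C^3$ has Witt index $1$ (a $2$-dimensional isotropic $W$ would satisfy $W\subset W^\perp$ while $\dim W^\perp = 1$), $\mathbf{F}$ and $\mathbf{F}'$ must be pointwise proportional. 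The wedge identity $\mathbf{F}\wedge\mathbf{F}'\equiv 0$ then gives the global factorization $\mathbf{F} = h\,\mathbf{c}$ by the standard meromorphic-ratio argument, and your handling of the degenerate cases ($\mathbf{F}\equiv 0$, or $\mathbf{a},\mathbf{b}$ dependent) is careful and correct. What your approach buys is a purely linear-algebraic, representation-free argument that unifies the case analysis the paper has to do by hand; what the paper's approach buys is contact with the classical minimal-surface literature (and it incidentally exhibits $\Sigma$ explicitly as a flat Weierstrass surface), which fits the broader geometric narrative of the section.
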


\begin{proof} 
We define the function $\xB := \pt_z \QB^e : \C_+ \to \C^3$,  where we write $\xB = (X_1, X_2, X_3)$. We recall from Lemma \ref{lem:Uz1} and \ref{lem:Uz2} that the identities
\be \label{eq:X1}
\xB \cdot \xB = X_1^2 + X_2^2 + X_3^3 = 0 ,
\ee
\be \label{eq:X2}
\pt_z \xB \cdot \pt_z \xB = (\pt_z X_1)^2 + (\pt_z X_2)^2 + (\pt_z X_3)^2 = 0,
\ee
hold on $\C_+$. Note that every component $X_i : \C_+ \to \C$ is a holomorphic function. 

Suppose now that $X_k \not \equiv 0$ for each $k=1,2,3$. (Otherwise if $X_3 \equiv 0$, say, then we readily check that the image $\QB^e(\R_+^2)$ belongs to a plane parallel to the $x_1x_2$-plane.) By the classical {\em Enneper--Weierstrass representation formula} for minimal surfaces (see, e.\,g., \cite{CoMi-11,DiHiSa-10}), we deduce from identity \eqref{eq:X1} that $\xB : \C_+ \to \C^3$ is of the form 
$$
\xB(z) = \left ( \frac{F(z)}{2} \left (1- G(z)^2 \right), \frac{\im F(z)}{2} \left (1+ G(z)^2 \right ), F(z)G(z) \right ),
$$
where $F : \C_+ \to \C$ is some holomorphic function and $G: \C_+ \setminus A \to \C$ is some meromorphic function, where the discrete set $A \subset \C_+$ denotes the poles of $G$. Furthermore, the function $FG^2$ extends to a holomorphic function on all of $\C_+$. 

Let $N = \{ z \in \C_+ : F(z) = 0\}$ denote the set of zeros of the holomorphic function $F$. Note that $N$ only contains isolated points (since $F \not \equiv 0$ by assumption) and observe that $A \subset N$ holds, since every pole of $G$ must be a zero of $F$. 

Next, we introduce the the holomorphic function $H := F G^2$ defined on $\C_+$. Using that $\pt_z H = \pt_z F G^2 + 2 F G \pt_z G$ holds for $z \in \C_+ \setminus N$, we obtain from \eqref{eq:X2} that
\begin{align*}
0 & = \frac 1 4 ( \pt_z F - \pt_z H)^2 - \frac{1}{4} ( \pt_z F + \pt_z H)^2 + ( \pt_z F G + F \pt_z G)^2 
 = F^2 (\pt_z G)^2
\end{align*}
 on the set $\C_+ \setminus N$. Since $F(z) \neq 0$ for $z \in \C_+ \setminus N$, we find that $\pt_z G \equiv 0$ on $\C_+ \setminus N$. Since $N$ only contains isolated points, we deduce that the meromorphic functions $\pt_z G$ extends to all of $\C_+$ with $\pt_z G \equiv 0$ on $\C_+$. But this implies $G(z)$ is a constant function on the simply connected domain $\C_+$.

In summary, we have shown that
\be
\xB = F \mathbf{V} 
\ee
for some constant vector $\mathbf{V} \in \C^3$ and some holomorphic function $F : \C_+ \to \C$. Recalling that $\pt_x \QB^e = 2\, \mathrm{Re} \, \xB$ and $\pt_y \QB^e = -2 \, \mathrm{Im} \, \xB$, we conclude 
\be
\pt_x \QB^e \wedge \pt_y \QB^e = -4 \, \mathrm{Re}  \, ( F \mathbf{V}) \wedge \mathrm{Im}  \, ( F \mathbf{V}) = - 4 |F|^2 \left ( \mathrm{Re} \, \mathbf{V} \wedge \mathrm{Im} \, \mathbf{V} \right ),
\ee
where the last identity follows from an elementary calculation. Thus the functions $\pt_x \QB^e$ and $\pt_y \QB^e$ belong to  a fixed plane in $\R^3$ orthogonal to the constant normal vector $\mathbf{N} = \mathrm{Re} \, \mathbf{V} \wedge \mathrm{Im} \, \mathbf{V}$. This completes the proof of Lemma \ref{lem:plane}.
\end{proof}

We are now able to achieve the goal of this subsection by proving the following symmetry result.

\begin{prop}[Reduction to Planar Case] \label{prop:planar}
Suppose $\QB_v \in \dot{H}^{1/2}(\R; \Ss^2)$ solves \eqref{eq:uv} with $|v| < 1$. Then there exists some rotation $R \in SO(3)$ such that
$$
\QB_v(x) = R \left ( \sqrt{1-v^2} f(x), \sqrt{1-v^2} g(x), \pm v \right )
$$
with some functions $f, g \in \dot{H}^{\frac 1 2}(\R; \R)$ satisfying $f^2 + g^2 = 1$ a.\,e.~in $\R$. In particular, the image $\QB_{v}(\R)$ belongs to a fixed plane in $\R^3$. Consequently, the map $x \mapsto \QB_v(x)$ traces out a circle on $\Ss^2$ with radius $r=\sqrt{1-v^2}$.
\end{prop}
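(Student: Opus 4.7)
\textbf{Proof plan for Proposition \ref{prop:planar}.}
If $\QB_v \equiv \mathbf{p}$ is constant, the conclusion is trivial: choose any $R \in SO(3)$ sending $(\sqrt{1-v^2},0,v) \in \Ss^2$ to $\mathbf{p}$ and take $f \equiv 1, g \equiv 0$. So write $\QB := \QB_v$ and assume henceforth that $\QB$ is non-constant. By Lemma \ref{lem:plane} the image $\QB(\R)$ lies on a circle of $\Ss^2$, namely the intersection of $\Ss^2$ with a fixed affine plane $\Pi \subset \R^3$; non-constancy ensures this circle is non-degenerate. Choose $R \in SO(3)$ rotating $\Pi$ to a horizontal plane $\{z = d\}$ with some $d \in (-1,1)$; then, with $f,g \in \dot{H}^{1/2}(\R;\R)$ inherited from $\QB$,
\begin{equation*}
R^{T} \QB(x) \;=\; \bigl(\sqrt{1-d^{2}}\, f(x),\ \sqrt{1-d^{2}}\, g(x),\ d\bigr), \qquad f^{2} + g^{2} \equiv 1 .
\end{equation*}
The entire proposition reduces to the identity $d = \pm v$, which I will extract from Lemma \ref{lem:upoint}.

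The next step is to apply Lemma \ref{lem:upoint} to $R^{T}\QB$ (allowed since \eqref{eq:uv} is $SO(3)$-equivariant) and take the $\mathbf{e}_{z}$-component. Because $(R^{T}\QB)_{3} \equiv d$ is constant, the left side is $(|\nabla| R^{T}\QB)_{3} = 0$. Setting $W := f\,\pt_{x} g - g\,\pt_{x} f$ and differentiating $f^{2}+g^{2}\equiv 1$ gives $f\,\pt_{x}f + g\,\pt_{x}g \equiv 0$, whence the Pythagorean-type identity
\begin{equation*}
(\pt_{x} f)^{2} + (\pt_{x} g)^{2} \;=\; (f^{2}+g^{2})\bigl((\pt_{x} f)^{2}+(\pt_{x} g)^{2}\bigr) \;=\; (f\,\pt_{x}f + g\,\pt_{x}g)^{2} + W^{2} \;=\; W^{2},
\end{equation*}
yields $|\pt_{x} R^{T}\QB| = \sqrt{1-d^{2}}\,|W|$. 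A direct cross-product computation gives $(\pt_{x} R^{T}\QB \wedge R^{T}\QB)_{3} = -(1-d^{2})\,W$. Substituting into Lemma \ref{lem:upoint} and projecting on $\mathbf{e}_{z}$ produces the pointwise scalar identity
\begin{equation*}
0 \;=\; d\,\sqrt{1-v^{2}}\,\sqrt{1-d^{2}}\,|W| \;+\; v\,(1-d^{2})\,W \qquad \text{on } \R .
\end{equation*}

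Finally I extract $d = \pm v$ from this identity. Since $\QB$ is non-constant one has $W \not\equiv 0$; examining the identity on the open sets $\{W>0\}$ and $\{W<0\}$ produces two linear relations between the \emph{constants} $d\sqrt{1-v^{2}}\sqrt{1-d^{2}}$ and $v(1-d^{2})$, which are compatible only if $W$ does not change sign (for $v \neq 0$). Squaring the resulting scalar equation then yields $(1-v^{2})\,d^{2} = v^{2}\,(1-d^{2})$, hence $d^{2} = v^{2}$, i.\,e.~$d = \pm v$; the sign in the third slot of the statement reflects the sign of $W$ and can alternatively be absorbed by composing $R$ with a half-turn about the $x$-axis (which flips both $d$ and $g$). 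For $v=0$ the same projection directly forces $d = 0$, consistent with the $v=0$ case. The main obstacle, as anticipated, is the careful sign bookkeeping in this last step; everything else is a mechanical consequence of the planarity reduction in Lemma \ref{lem:plane} together with the pointwise structure identity of Lemma \ref{lem:upoint}.
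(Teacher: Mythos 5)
Your proof is correct and follows essentially the same route as the paper: both rely on Lemma \ref{lem:plane} to reduce to the horizontal-plane form $(\sqrt{1-d^2}f,\sqrt{1-d^2}g,d)$ and then extract $d=\pm v$. The only real difference is in the extraction step: the paper derives the constant-sign CR-type relations \eqref{eq:CR_hidden} from the conformality established in Lemmas \ref{lem:rigid} and \ref{lem:Uz1} and substitutes them back into the original equation \eqref{eq:uv}, whereas you project the rewritten form of the equation from Lemma \ref{lem:upoint} onto $\mathbf{e}_z$ and use the Lagrange identity $(\pt_x f)^2+(\pt_x g)^2=W^2$ to isolate a single scalar relation between the constants. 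Both are valid; your version is arguably a touch more economical. One small streamlining is available for the sign bookkeeping: Lemmas \ref{lem:rigid} and \ref{lem:Uz1} already give $|\pt_x\QB|\neq 0$ pointwise for non-constant $\QB$, so $W$ is nowhere zero and therefore of constant sign by continuity, making the case split over $\{W>0\}$ and $\{W<0\}$ unnecessary.
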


\begin{remark*}
{\em The sign in $\pm v$ will be determined below depending on whether $f,g$ arise as boundary values of a holomorphic or an anti-holomorphic function, respectively.}
\end{remark*}

\begin{proof}
Let $v \in \R$ with $|v| < 1$ be given and assume that $\QB_v \in \dot{H}^{\frac 1 2}(\R; \Ss^2)$ solves \eqref{eq:uv}. Recall that we write $\QB = \QB_v$ for notational simplicity. Moreover, we can suppose that $\QB$ is not constant, because otherwise the assertion of Proposition \ref{prop:planar} readily follows. 

Let $\QB^e : \R_+^2 \to \R^3$ be the harmonic extension of $\QB$  given by \eqref{eq:Udef}. By Lemma \ref{lem:plane}, we find that the image $\QB^e(\R_+^2)$ belongs to a fixed plane $E \subset \R^3$. By rotational symmetry, we can assume that $E$ is parallel to the $x_1x_2$-plane, say. Thus after applying some rotation $R \in SO(3)$ we have 
\be
\QB^e(x,y) = ( \alpha(x,y), \beta(x,y), c) \quad \mbox{for $(x,y) \in \R_+^2$},
\ee 
with some constant $c \in \R$ and some functions $\alpha, \beta : \R_+^2 \to \R$. Since $\QB(x) = \QB^e(x,0)$ on $\pt \R_+^2$, we deduce that
$\QB(x) = ( \tilde{f}(x), \tilde{g}(x), c)$ with some functions $\tilde{f}, \tilde{g} \in \dot{H}^{\frac 1 2}(\R; \R)$ such that $\tilde{f}^2 + \tilde{g}^2 + c^2 = 1$  holds a.\,e.~in $\R$. In particular, we must have $|c| \leq 1$. If $c=\pm 1$, then $\tilde{f}=\tilde{g}=0$ and hence $\QB$ is a constant solution. Thus we can assume that $|c| < 1$ holds. By introducing the functions $f(x) := (1-c^2)^{-1/2} \tilde{f}(x)$ and $g(x) := (1-c^2)^{-1/2} \tilde{g}(x)$, we get
\be 
\QB(x) = \left ( \sqrt{1-c^2} f(x), \sqrt{1-c^2} g(x), c \right ) 
\ee
with some functions $f, g \in \dot{H}^{\frac 1 2}(\R;\R)$ such that $f^2 + g^2 =1$ a.\,e.~in $\R$. Recalling that $\pt_x \QB \perp |\nabla| \QB$ and $|\pt_x \QB| = | |\nabla| \QB| \neq 0$ for all $x \in \R$ by Lemma \ref{lem:rigid} and \ref{lem:Uz1}, we see that there exists a constant $\lambda = \pm 1$ such that
\be \label{eq:CR_hidden}
\pt_x f = \lambda |\nabla| g, \quad \pt_x g = - \lambda |\nabla| f .
\ee
On the other hand, from equation \eqref{eq:uv} we readily see that $c = -\lambda v = \mp v$ holds. This completes the proof of Proposition \ref{prop:planar}.\end{proof}

\subsection{Proof of Theorem \ref{thm:vsmall}}

Let $v \in \R$ with $|v| < 1$ be given. First, by direct calculation, we note any $\QB_v \in \dot{H}^{\frac 1 2}(\R; \Ss^2)$ as given in Theorem \ref{thm:vsmall} furnishes a solution of \eqref{eq:uv}. Thus it remains to prove the ``only if'' part in Theorem \ref{thm:vsmall}. Suppose that $\QB_v \in \dot{H}^{\frac 1 2}(\R; \Ss^2)$ solves \eqref{eq:uv}. From the symmetry result given in Proposition \ref{prop:planar} we deduce
\be \label{eq:QB_good}
\QB_v(x) = R \left ( \sqrt{1-v^2} f(x), \sqrt{1-v^2} g(x), s \cdot v \right )
\ee
with some rotation $R \in SO(3)$, some sign factor $s \in \{ \pm 1 \}$, and two real-valued functions $f,g \in \dot{H}^{\frac 1 2}(\R)$ such that $f^2(x) +g^2(x) = 1$ for a.\,e.~$x \in \R$. 

We are now ready to use the following classification result for half-harmonic maps from $\R$ to $\Ss^1$, whose proof goes back to Millot and Sire \cite{Millot-Sire-2015} and Berlyand et al.~\cite{BeMiRySa-14}; see also \cite{MiPi-04}. We provide a slightly alternative and self-contained proof, which does not involve the technical use of some results due to Brezis and Nirenberg \cite{BrNi-95} about VMO-spaces.

\begin{lemma} \label{lem:half_harmonic_S1}
Let $f, g \in \dot{H}^{\frac 1 2}(\R; \R)$ satisfy 
\be 
f |\nabla| g - g |\nabla| f = 0 \quad \mbox{and} \quad f^2 + g^2 = 1 \ \ \mbox{a.\,e.~on $\R$}.
\ee
Then $f(x) = \mathrm{Re} \, \mathcal{B}(x+\im 0 )$ and  $g(x) = \mathrm{Im} \, \mathcal{B}(x+ \im 0)$, where $\mathcal{B} : \overline{\C}_+ \to \C$ (or its complex conjugate $\overline{\mathcal{B}}$) is a finite Blaschke product given by
$$
\mathcal{B}(z) = e^{\im \theta} \prod_{k=1}^m  \frac{\lambda_k (z- a_k) - \im }{\lambda_k (z-a_k) + \im}, \quad \mbox{for $z \in \overline{\C}_+$},
$$ 
with some $\theta \in \R$, $m \in \N$, $\lambda_1, \ldots, \lambda_m \in \R_{>0}$, and $a_1, \ldots, a_m \in \R$. 

Furthermore, the energy of the half-harmonic map $\QB=(f,g,0) \in \dot{H}^{\frac 1 2}(\R; \Ss^2)$ satisfies
$$
E[\QB] = \frac{1}{2} \int_{\R} \QB \cdot |\nabla| \QB \, dx = \pi m .
$$
\end{lemma}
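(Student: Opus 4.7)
My plan is to reduce the statement to the structure theorem for inner functions on the upper half-plane with finite Dirichlet integral. First, I form the complex-valued map $F := f + \im g : \R \to \Ss^1$ and the associated sphere-valued map $\QB := (f, g, 0) \in \dot{H}^{\frac 1 2}(\R; \Ss^2)$. The hypothesis $f |\nabla| g - g |\nabla| f = 0$ is exactly the identity $\QB \wedge |\nabla| \QB = 0$, so $\QB$ is a genuine half-harmonic map and Lemma \ref{lem:Uz1} applies to its harmonic extension $\QB^e = (f^e, g^e, 0)$. Consequently, the complex-valued harmonic extension $F^e := f^e + \im g^e$ on $\C_+$ is available and satisfies the conformality relation $(\pt_z f^e)^2 + (\pt_z g^e)^2 = 0$.

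The key observation is that in this planar setting the conformality identity factorizes over $\C$:
$$
0 \, = \, (\pt_z f^e)^2 + (\pt_z g^e)^2 \, = \, \bigl(\pt_z f^e + \im \pt_z g^e\bigr)\bigl(\pt_z f^e - \im \pt_z g^e\bigr) \, = \, \bigl(\pt_z F^e\bigr)\bigl(\pt_z \overline{F^e}\bigr)
$$
on $\C_+$. Both factors are holomorphic on the connected domain $\C_+$ (since $F^e$ is harmonic and $\pt_{\bar z} \pt_z$ is proportional to the Laplacian), so by the identity theorem one of them vanishes identically. This yields the dichotomy that $F^e$ is either holomorphic or anti-holomorphic on $\C_+$. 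Replacing $F$ by $\overline{F}$ if necessary (which exchanges $\mathcal{B}$ with $\overline{\mathcal{B}}$), I may assume $F^e : \C_+ \to \C$ is holomorphic.

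Since $|F| = 1$ a.e.~on $\pt \C_+$, the maximum principle forces $|F^e| \leq 1$ in $\C_+$, so $F^e$ is an inner function on the upper half-plane. The hypothesis $f, g \in \dot{H}^{\frac 1 2}$ converts, via the Douglas identity combined with the Cauchy--Riemann equations $|\nabla F^e|^2 = 2 |(F^e)'|^2$, into
$$
\int_\R F \cdot |\nabla| F \, dx \, = \, \int \! \! \int_{\R_+^2} |\nabla F^e|^2 \, dx \, dy \, = \, 2 \int \! \! \int_{\R_+^2} |(F^e)'|^2 \, dx \, dy \, < \, \infty.
$$
By the area formula $\int\!\!\int_{\R_+^2} |(F^e)'|^2 \, dx \, dy = \int_{\D} \#\{F^e = w\} \, dA(w)$ and the inner factorization of Hardy space theory (Beurling/Frostman), the finiteness of the Dirichlet integral rules out any singular inner factor as well as any accumulation of zeros in $\C_+$; hence $F^e$ must be a finite Blaschke product of some degree $m \in \N$. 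Writing each zero $z_k \in \C_+$ as $z_k = a_k + \im / \lambda_k$ with $\lambda_k > 0$ and $a_k \in \R$ and absorbing the unimodular prefactor as $e^{\im \vartheta}$ gives the representation $\mathcal{B}(z) = e^{\im \vartheta} \prod_{k=1}^m \frac{\lambda_k(z-a_k) - \im}{\lambda_k(z-a_k) + \im}$ as stated.

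Finally, the energy identity follows from the same area-counting argument: a finite Blaschke product of degree $m$ on $\C_+$ is a proper holomorphic map onto $\D$ with a.e.~multiplicity $m$, so $\int\!\!\int_{\R_+^2} |(F^e)'|^2 \, dx \, dy = m \cdot \pi$, and therefore $E[\QB] = \tfrac{1}{2} \int\!\!\int_{\R_+^2} |\nabla F^e|^2 \, dx \, dy = \pi m$. I expect the main technical hurdle to be the step that promotes an arbitrary inner function on $\C_+$ with finite Dirichlet integral to a \emph{finite} Blaschke product (i.e.~ruling out both singular inner factors and infinite Blaschke products); once this classical ingredient is invoked, everything else is a direct consequence of Lemma \ref{lem:Uz1} and routine complex-analytic bookkeeping.
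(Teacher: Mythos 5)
Your argument is correct and reaches the same conclusion as the paper, but it takes a genuinely different route in the second half. Up to the reduction to a holomorphic (or anti-holomorphic) inner function $F^e$ on $\C_+$ you are doing essentially what the paper does, only phrased more cleanly: you invoke Lemma~\ref{lem:Uz1} for $\QB^e=(f^e,g^e,0)$ and factor the conformality identity $(\pt_z f^e)^2+(\pt_z g^e)^2=(\pt_z F^e)(\pt_z\overline{F^e})=0$, whereas the paper re-runs the Hopf-differential argument directly on the complex scalar. From that point onward the two proofs diverge. The paper's Step~2 is computational and self-contained: it writes down the canonical inner factorization $F=\lambda B(z)e^{\im\alpha z}\exp\{\cdots d\sigma\}$ explicitly, kills the singular measure $\sigma$ by using the Lipschitz regularity of $f,g$ (which forces $F$ to have no zeros in a strip near $\pt\C_+$ and the singular-factor modulus to converge uniformly to $1$), and then uses a residue computation on $-\overline{F}\pt_y F\big|_{y=0}=\alpha-B'(z)/B(z)\big|_{y=0}$ together with finiteness of the energy to force $\alpha=0$ and the Blaschke product to be finite; the energy identity $E[\QB]=\pi m$ drops out of the same residue count. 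You instead invoke the classical equivalence ``an inner function has finite Dirichlet integral if and only if it is a finite Blaschke product'' (which, transported to $\D$ by the Cayley transform, subsumes the $e^{\im\alpha z}$ factor as an atomic singular part), plus the area/valence formula $\int\!\!\int|(F^e)'|^2=\int_{\D}n(w)\,dA(w)$ to both rule out the singular factor and infinite Blaschke products and to read off $E[\QB]=\pi m$ in one stroke. Your route is more conceptual and shorter, at the cost of leaning on a nontrivial piece of classical Hardy-space theory that you should cite precisely (it follows from Frostman's theorem on Frostman shifts combined with the observation that a Frostman shift of a non-finite-Blaschke inner function is again non-finite); the paper deliberately re-derives the needed facts from scratch to stay self-contained, and its residue computation simultaneously establishes the energy formula. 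Two small cosmetic points: write $\int_{\R}(f|\nabla|f+g|\nabla|g)\,dx$ rather than ``$\int_{\R}F\cdot|\nabla|F\,dx$'' (the complex product would carry a spurious imaginary part, and only the hypothesis $f|\nabla|g=g|\nabla|f$ makes it vanish), and a finite Blaschke product of degree $m$ is a proper $m$-to-$1$ map onto $\D$ with multiplicity $m$ for every (not merely a.e.)~$w\in\D$.
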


From Lemma \ref{lem:half_harmonic_S1} we conclude that $f(x) = \mathrm{Re} \, \mathcal{B}(x+\im 0)$ and $g(x) = \sigma \cdot \mathrm{Im} \, \mathcal{B}(x+\im 0)$ with some finite Blaschke product $\mathcal{B}(z)$ as above and some sign factor $\sigma \in \{\pm 1 \}$. Suppose now that $\sigma=+1$. Then the Cauchy--Riemann equations satisfied by $\mathcal{B}(z)$ together with $|\nabla| f= -\pt_y \mathrm{Re} \, \mathcal{B}(x+\im y) |_{y=0}$ and $|\nabla| g = -\pt_y \mathrm{Im} \, \mathcal{B}(x+ \im y) |_{y=0}$ imply that $\pt_x f = -|\nabla| g$ and $|\nabla| f = \pt_x g$. Thus we see that $\lambda=+1$ holds in \eqref{eq:CR_hidden} above and hence we must have $s \cdot v = +v$ in \eqref{eq:QB_good} if $g(x) = \mathrm{Im} \, \mathcal{B}(x+\im 0)$. Likewise, we see that $s \cdot v = -v$ in the case $g(x) = -\mathrm{Im} \, \mathcal{B}(x+\im 0)$. In summary, we have shown that the sign factor $s \in \{ \pm 1\}$ in \eqref{eq:QB_good} is consistent with $g(x) = s \cdot \mathrm{Im} \, \mathcal{B}(x+\im 0)$.

It remains to give the proof of Lemma \ref{lem:half_harmonic_S1}.

\begin{proof}[Proof of Lemma \ref{lem:half_harmonic_S1}] We divide the proof into the following steps.

\subsubsection*{Step 1} Using that $z = x + \im y $, we define $F : \C_+ \to \C$ to be the Poisson harmonic extension of the complex-valued function $f + \im g : \R \to \C$. That is, we set
$$
F(z) = (P_y \ast f)(x) + \im (P_y \ast g)(x) \quad \mbox{for $z = x + \im y \in \C_+$}.
$$
The harmonicity of $F$ implies that function $\phi : \C_+ \to \C$ defined as
$$
\phi(z) := (\pt_z F )(\pt_z \overline{F}) = ( | \pt_x F|^2 - |\pt_y F|^2 ) - 2 \im \, \mathrm{Re} \, ( \pt_x \overline{F} \pt_y F)  
$$ 
is holomorphic on $\C_+$. A straightforward variation of the proof of Lemma \ref{lem:Uz1} yields that $\phi(z) \equiv 0$. Thus $\pt_z F \equiv 0$ or $\pt_z \overline{F} \equiv 0$ and hence $F$ is holomorphic or anti-holomorphic.  For the rest of the proof, let us suppose that $F$ is holomorphic (by possibly replacing $F$ with $\overline{F}$). 

Because of $|F(z)| \leq 1$ for all $z \in \C_+$, the function $F$ belongs to the Hardy space $\HH^\infty(\C_+)$ of bounded holomorphic functions on $\C_+$. Since moreover $F \in \mathcal{H}^\infty(\C_+)$ with $|F(z)|^2 = 1$ for $z \in \pt \C_+$, an application of the canonical factorization theorem in the Hardy space $\mathcal{H}^\infty(\C_+)$ (see, e.\,g.~\cite[Chapter 13]{Ma-09}) yields that
\be \label{eq:Blaschke_factor}
F(z) = \lambda B(z) e^{\im \alpha z}  \exp \left \{ - \frac{\im}{\pi} \int_{\R} \left ( \frac{1}{z-t} + \frac{t}{1+t^2} \right ) d \sigma(t) \right \}   .
\ee
Here $\lambda \in \C$ with $|\lambda|=1$, $\alpha \geq 0$, $B(z)$ is a Blaschke product on having the same zeros as $F(z)$ on $\C_+$, and $\sigma$ is positive singular Borel measure on $\R$ satisfying $\int_{\R} \frac{d \sigma(t)}{1+t^2} dt < +\infty$. Furthermore, let $\mu_k \in \C_+$ denotes the zeros of $F(z)$. Then $B(z)$ is given by
\be
B(z) = \prod_{k} e^{\im \alpha_k} \frac{z- \mu_k}{z - \overline{\mu}_k} ,
\ee 
where $\alpha_k \in \R$ are real numbers chosen such that $e^{\im \alpha_k} =   \left | \frac{\im-\mu_k}{\im - \overline{\mu}_k} \right | / \left ( \frac{\im-\mu_k}{\im - \overline{\mu}_k} \right )$. Furthermore, it is a well-known fact that $F \in \mathcal{H}^\infty(\C_+)$ implies the summability condition
\be \label{eq:Blaschke_sum}
\sum_k \frac{\mathrm{Im} \, \mu_k}{ | \im + \mu_k |^2} < +\infty,
\ee
which in turn implies that the product in $B(z)$ converges uniformly on compact sets in $\C_+$. Of course, if $F(z)$ has only finitely many zeros $\mu_k$ then the condition \eqref{eq:Blaschke_sum} is trivially satisfied. In the next step, we will show that the singular measure $\sigma \equiv 0$, $\alpha=0$, and that $B(z)$ is a finite Blaschke product in \eqref{eq:Blaschke_factor}.

\subsubsection*{Step 2} We start by showing the triviality of the singular measure, i.\,e.,
\be
\sigma \equiv 0 
\ee
holds in \eqref{eq:Blaschke_factor}. In fact, the vanishing of the singular measure $\sigma$ follows from the regularity properties of $f$ and $g$ combined with some standard arguments (see also \cite{Po-11} for a similar argument for the traveling solitary waves of the cubic Szeg\"o equation on $\R$).  Indeed, we recall the global Lipschitz estimates $\| \pt_x f \|_{L^\infty} \lesssim 1$ and $\| \pt_x g \|_{L^\infty} \lesssim 1$, whence the function $F(x+\im 0)=f(x)+ \im g(x)$ is uniformly continuous in  $x \in \R$. As consequence of Poisson's extension formula, we deduce that $F(x+\im \eps) \to F(x,0) = f(x) + \im g(x)$ as $\eps \to 0^+$ uniformly in $x \in \R$ . Since $|F(x+\im 0)|^2 = 1$ for $x \in \R$, this implies that $F(z)$ has no zeros inside some strip $\{ z \in \C : 0 \leq \mathrm{Im}\, z \leq \eps_0 \}$ for some  $\eps_0 > 0$. Consequently, the Blaschke product $B(z)$ has all its zeros $\mu_k$ satisfying $\mathrm{Im} \, \mu_k > \eps_0$ for any $k$. Furthermore, we conclude that $| F(x+\im \eps)/B(x+\im \eps)| \to |F(x)/B(x)| =1$ as $\eps \to 0^+$ uniformly in $x$ on compact subsets in $\R$.  Therefore, we conclude from \eqref{eq:Blaschke_factor} that
\begin{align*}
&  \mathrm{Re} \left \{ - \frac{\im}{\pi} \int_{\R} \left ( \frac{1}{(x+ \im \eps) -t} + \frac{t}{1+t^2} \right ) d \sigma(t) \right \} = \frac{1}{\pi} \int_{\R} \frac{\eps}{(x-t)^2 + \eps^2} d \sigma(t) \\
& = \int_{\R} P_\eps(x-t) \, d \sigma(t) \to 0 \ \ \mbox{as} \ \ \eps \to 0^+,
\end{align*}
 uniformly in $x$ on compact sets in $\R$. Now, let $\phi \in C_c(\R)$ be a continuous function on $\R$ with compact support. It is elementary to check that its Poisson extension satisfies $|(P_y \ast \phi)(x)| \leq C/(1+x^2)$ for all $x \in \R$ and $0 < y < 1$, where $C=C(\phi)$ is some constant. Thus, by the dominated convergence theorem and the fact that $\int_{\R} \frac{d \sigma(t)}{1+t^2} < +\infty$, we deduce that
$$
\int_{\R^2} \phi(x)  P_\eps(x-t) \, d \sigma(t) \, dt \, dx = \int_{\R} (P_\eps \ast \phi)(t) \, d \sigma(t) \to \int_{\R} \phi(t) d \sigma(t) \ \ \mbox{as} \ \ \eps \to 0^+.
$$
On the other hand, we deduce from above that $\int_{\R^2} \phi(x) P_\eps(x-t) \, d \sigma(t) \to 0$ as $\eps \to 0^+$. Thus we conclude that $\int_{\R} \phi(t) d \sigma(t) = 0$ for all $\phi \in C_c(\R)$. Hence $\sigma \equiv 0$ holds.

Next, we show that $\alpha=0$ holds and that $B(z)$ is a finite Blaschke product in \eqref{eq:Blaschke_factor}. To prove this claim, we use the finite energy condition
\be \label{ineq:finite_energy}
- \int_{\R} \overline{F} \pt_{y} F \big |_{y=0} \, dx = \int_{\R} ( f |\nabla| f + g |\nabla| g) \, dx < +\infty.
\ee
Since $F(z) = \lambda e^{\im \alpha z} B(z)$ with $|\lambda|= 1$, we find
\be \label{eq:Blaschke_d}
-\overline{F}(z) \pt_y F(z) \Big |_{y=0} = \alpha - \frac{B'(z)}{B(z)} \Big |_{y=0},
\ee
where we used that $|B(z)|^2 = 1$ and $\overline{B}(z) = 1/B(z)$ for $z \in \pt \C_+$, as well as $\pt_y B(z) = -\im B'(z)$ by the Cauchy--Riemann equations. Since we readily check that $B'(z)/B(z) |_{y=0}$ tends to zero as $|x| \to \infty$, we conclude from \eqref{eq:Blaschke_d} and the finite energy condition \eqref{ineq:finite_energy} that $\alpha=0$ must hold. Furthermore, by the residue theorem, we obtain
\be
\frac{1}{\im} \int_{\R} \frac{B'(z)}{B(z)} \Big |_{y=0} \, dx = 2 \sum_k \int_{\R} \frac{\mathrm{Im} \, \mu_k}{|x-\mu_k|^2} \, dx = 2 \sum_k \pi ,
\ee
which is finite if and only if $B(z)$ has only finitely many zeros $\mu_k$, i.\,e., it is a finite Blaschke product $B(z) = \prod_{k=1}^m e^{\im \alpha_k} (z-\mu_k)/(z- \overline{\mu}_k)$ with some integer $m \in \N$ and $\mu_1, \ldots, \mu_k \in \C_+$. [The case $m=0$ corresponds to the trivial case $B(z) \equiv 1$.] In addition, we conclude that $\QB=(f,g,0) \in \dot{H}^{\frac 1 2}(\R; \Ss^2)$ has the energy
\be
E[\QB] = \frac{1}{2} \int_{\R} \QB \cdot |\nabla| \QB \, dx = \pi m .
\ee
Finally, it is straightforward to check that we obtain 
\be
F(z) = \lambda \prod_{k=1}^m e^{\im \alpha_k} \frac{z-\mu_k}{z- \overline{\mu}_k} = e^{\im \theta} \prod_{k=1}^m \frac{\lambda_k(z- a_k) - \im}{\lambda_k(z - a_k) + \im }
\ee
for a suitable choice of the parameters $\theta \in \R$, $\lambda_1, \ldots, \lambda_m \in \R_{>0}$, and $a_1, \ldots, a_m \in \R$.

This completes the proof of Lemma \ref{lem:half_harmonic_S1} and the proof of Theorem \ref{thm:vsmall} as well.
\end{proof}

\subsection{Proof of Theorem \ref{thm:vlarge}}

Let $v \in \R$ with $|v| \geq 1$ and suppose that $\QB_v \in \dot{H}^{\frac 1 2}(\R; \Ss^2)$ solves \eqref{eq:uv}. For simplicity, we write $\QB \equiv \QB_v$ in the following. Furthermore, let $\QB^e : \R_+^2 \to \R^3$ denote the harmonic extension of $\QB : \R \to \R^3$ into the upper halfplane $\R_+^2$ by the Poisson formula \eqref{eq:Udef}. We emphasize the fact that the following calculations are already well-defined for $\QB \in \dot{H}^{\frac 1 2}$ and hence $\QB^e \in \dot{H}^1$. No further higher regularity (which is also not our disposal when $|v| \geq 1$) is required.

Let $\HH$ denote the one-dimensional  Hilbert transform with respect to $x \in \R$, where we recall our sign convention for $\HH$ such that $\HH \pt_x  = |\nabla|$ holds.  If we take the scalar product of \eqref{eq:uv} with $\HH (\QB)$, we obtain
\begin{align*}
 0 & = -v \pt_x \QB \cdot \HH(\QB)+(\QB \wedge |\nabla| \QB) \cdot \HH(\QB)  \\
 & = \int_{y=0}^\infty \pt_y \left \{ v \pt_x \QB^e \cdot \HH(\QB^e) + (\QB^e \wedge \pt_y \QB^e) \cdot \HH(\QB^e) \right \} dy .
\end{align*}
Note here that we also used that the Poisson extension commutes with taking the Hilbert transform, i.\,e., $\HH (P_y \ast \QB) = (P_y \ast \HH(\QB))$. Next, by integrating the identity above over $x$, we find
\begin{align*}
0  & = \int \! \! \int_{\R^2_+} \pt_y \left \{ v \pt_x \QB^e \cdot \HH(\QB^e) +  ( \QB^e  \wedge \pt_y \QB^e)  \cdot \HH(\QB^e) \right \} dx \, dy  \\
& = v \int \! \! \int_{\R^2_+} \left ( \pt_x \pt_y \QB^e \cdot \HH(\QB^e) + \pt_x \QB^e \cdot \pt_y \HH(\QB^e) \right )  dx \, dy \\
& \quad + \int \! \! \int_{\R^2_+} \left ( ( \QB^e \wedge \pt_{yy} \QB^e ) \cdot \HH(\QB^e) +( \QB^e \wedge \pt_y \QB^e ) \cdot \pt_y \HH(\QB^e) \right ) dx \, dy \\
& =: I + II  .
\end{align*} 
To analyze term $I$, we make use of the identities $\pt_y \HH (\QB^e) =  \pt_x \QB^e$ and $\pt_x \HH (\QB^e) = - \pt_y \QB^e$ and we integrate by parts in $x$ to find that
$$
I = v \int \! \! \int_{\R^2_+} \left (  |\pt_x \QB^e|^2 + |\pt_y \QB^e|^2 \right ) dx \, dy.
$$
As for $II$, we use that $\pt_{yy} \QB^e = - \pt_{xx} \QB^e$ (because $\QB^e$ is harmonic) and integrate by parts in $x$, which gives us
\begin{align*}
II & = - \int \! \! \int_{\R^2_+} ( \QB^e \wedge \pt_{xx} \QB^e ) \cdot \HH(\QB) \, dx \, dy + \int \! \! \int_{\R^2_+} ( \QB^e \wedge \pt_y \QB^e ) \cdot  \pt_x \QB^e \, dx \, dy \\
& = - \int \! \! \int_{\R^2_+} ( \QB^e \wedge \pt_x \QB^e) \cdot \pt_y \QB^e \, dx \,d y +  \int \! \! \int_{\R^2_+} ( \QB^e \wedge \pt_y \QB^e ) \cdot  \pt_x \QB^e \, dx \, dy \\
& = -2 \int \! \! \int_{\R^2_+} ( \QB^e \wedge \pt_x \QB^e ) \cdot  \pt_y \QB \, dx \, dy,
\end{align*}
using that $\pt_x \HH (\QB^e) = - \pt_y \QB^e$ once again. Next, by recalling that $I+II=0$ holds, we conclude that
\be \label{eq:IeqII}
\int \! \! \int_{\R^2_+} \left ( |\pt_x \QB^e|^2 + |\pt_y \QB^e|^2 \right ) dx \, dy  = \frac 2 v \int \! \! \int_{\R^2_+} (\QB^e \wedge \pt_x \QB^e) \cdot \pt_y \QB^e \, dx \, dy .
\ee
Since $\| \QB^e \|_{L^\infty(\R_+^2)} \leq 1$  by the maximum principle,  the Cauchy--Schwarz inequality yields
\be \label{ineq:CS}
 2 \left | ( \QB^e \wedge \pt_x \QB^e) \cdot \pt_y \QB^e \right | \leq  |\pt_x \QB^e|^2 + |\pt_y \QB^e|^2  .
\ee 
Thus if $|v| > 1$, we deduce from \eqref{eq:IeqII} that $\| \QB^e \|_{\dot{H}^1(\R_+^2)} = 0$ holds, which implies that $\QB \in \dot{H}^{\frac 1 2}(\R;\Ss^2)$ is a constant. 

Finally, we consider the borderline case $|v|=1$. Then equality must hold in \eqref{ineq:CS} almost everywhere, which is possible only if $\QB^e \wedge \pt_x \QB^e$ is collinear to $\pt_y \QB^e$ almost everywhere, i.\,e., we must have $\pt_y \QB^e \cdot \QB^e = 0$ a.\,e.~on $\R_+^2$. But by using the harmonicity of $\QB^e$ and integrating by parts in $x$, this implies that
\begin{align*}
0 & = \int \! \! \int_{\R^2_+} \pt_y (  \pt_y \QB^e \cdot \QB^e) \, dx \,dy  = \int \! \! \int_{\R^2_+} \left ( -\pt_{xx} \QB^e \cdot \QB^e + \pt_y \QB^e \cdot \pt_y  \QB^e \right )  dx \, dy \\
& = \int \! \! \int_{\R^2_+} \left (  |\pt_x \QB^e|^2 + |\pt_y \QB^e|^2 \right ) dx \, dy.
\end{align*}
Again, we conclude that $\| \QB^e \|_{\dot{H}^1(\R_+^2)}=0$ holds, which shows that $\QB \in \dot{H}^{\frac 1 2}(\R; \Ss^2)$ must be constant.  The proof of Theorem of \ref{thm:vlarge} is now complete. \hfill $\Box$

\section{Analysis of the Linearized Operator}

\label{sec:linearize}

\subsection{Preliminaries}
We now investigate the linearization around a traveling solitary wave given by Theorem \ref{thm:vsmall}, where we consider solutions $\QB_v \in \dot{H}^{\frac 1 2}(\R; \Ss^2)$ of \eqref{eq:uv} with vanishing velocity $v= 0$. For notational convenience, we shall write $\QB(x) = \QB_{v=0}(x)$ and, by Theorem \ref{thm:vsmall}, we can assume without loss of generality that $\QB : \R \to \Ss^2$ is half-harmonic map of the form
\be \label{eq:QB_equator}
\QB(x) = (f(x), g(x), 0) .
\ee  
Now let $\hh \in \dot{H}^{\frac 1 2}(\R; \Ss^2)$ be such that $|\QB + \hh|^2 = 1$ a.\,e. In view of \eqref{eq:QB_equator}, it is convenient to introduce the orthonormal frame $\{ \eB, J \eB, \QB \}$ in $\R^3$ with $\eB = \eB_z = (0,0,1)$ and $J \eB = \QB \wedge \eB$. With respect to this frame, we let $(h_1, h_2, h_3)$ denote the components of $\hh$, i.\,e., we have
\be \label{eq:hbase}
\hh = h_1 \eB + h_2 J \eB + h_3 \QB. 
\ee
The constraint $|\QB + \hh|^2 = 1$ a.\,e.~implies that $-2 h_3 = h_1^2 + h_2^2 + o(\hh^2)$ and, in particular, we have $h_3 = O(\hh^2)$. 

By expanding the energy functional, we obtain the following result.   

\begin{prop} \label{prop:Hessian}
For $\QB$ and $\hh$ as above, it holds that
$$
E[\QB+ \hh] = E[\QB] + \frac{1}{2} \left [ ( h_1, L_+ h_1 ) + ( h_2, L_- h_2 ) \right ] + o(\hh^2),
$$
where the operators $L_+$ and $L_-$ are given by
$$
L_+  = |\nabla|  - |\pt_x \QB|  \quad \mbox{and} \quad  L_- f = |\nabla|  - |\pt_x \QB|  + R ,
$$
with the integral operator
$$
 (Rf)(x) = \frac{1}{ 2 \pi} \int_{\R} \frac{|\QB(x)-\QB(y)|^2}{|x-y|^2} f(y) \, dy.
$$
\end{prop}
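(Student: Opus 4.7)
The plan is to expand $E[\QB+\hh]$ to second order in $\hh$ by separating the linear and quadratic contributions, and then to identify the quadratic form with the claimed operators $L_+$ and $L_-$. Writing bilinearity of the quadratic form associated to $|\nabla|$, I would begin with
\[
E[\QB+\hh] = E[\QB] + \int_{\R} \hh \cdot |\nabla| \QB \, dx + \frac{1}{2} \int_{\R} \hh \cdot |\nabla| \hh \, dx.
\]

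For the linear term, I would use that $\QB$ is half-harmonic: since $\QB \wedge |\nabla|\QB = 0$ we have $|\nabla|\QB = \lambda\QB$ for some scalar function $\lambda$, and combining Lemma \ref{lem:rigid} (positivity of $\QB \cdot |\nabla|\QB$) with the conformality relation $|\pt_x \QB| = ||\nabla|\QB|$ of Lemma \ref{lem:Uz1} forces $\lambda = |\pt_x \QB|$. Thus $|\nabla|\QB = |\pt_x \QB|\,\QB$, so by orthogonality of the frame only the $h_3$-component contributes, giving $\int \hh \cdot |\nabla|\QB\,dx = \int h_3 |\pt_x\QB|\,dx$. The constraint $|\QB+\hh|^2=1$ yields $h_3 = -\tfrac{1}{2}(h_1^2+h_2^2) + o(\hh^2)$, producing the quadratic contribution $-\tfrac{1}{2}\int (h_1^2+h_2^2)|\pt_x\QB|\,dx$ which will form the potential pieces of $L_+$ and $L_-$.

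For the quadratic term, I would split $\hh = \hh_T + h_3\QB$ with tangential part $\hh_T = h_1\eB + h_2 J\eB$. Since $h_3 = O(\hh^2)$, all cross terms and the $h_3^2$ term contribute at order $o(\hh^2)$, so to leading order it suffices to compute $\tfrac{1}{2}\int \hh_T \cdot |\nabla|\hh_T\,dx$. I would then pass to the Gagliardo representation
\[
\tfrac{1}{2}\int_\R \hh_T \cdot |\nabla|\hh_T\,dx = \frac{1}{4\pi}\int\!\!\int_{\R\times\R} \frac{|\hh_T(x)-\hh_T(y)|^2}{|x-y|^2}\,dx\,dy,
\]
and exploit that $\eB$ is constant while $J\eB(x) = \QB(x)\wedge\eB$ varies. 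Using $\eB \perp J\eB(x)$ to kill cross terms, $|J\eB(x)|^2=1$ (since $\QB\cdot\eB=0$), and the key identity
\[
J\eB(x)\cdot J\eB(y) = \QB(x)\cdot\QB(y) = 1 - \tfrac{1}{2}|\QB(x)-\QB(y)|^2,
\]
which follows from $(a\wedge b)\cdot(c\wedge d) = (a\cdot c)(b\cdot d) - (a\cdot d)(b\cdot c)$, I would obtain
\[
|\hh_T(x)-\hh_T(y)|^2 = (h_1(x)-h_1(y))^2 + (h_2(x)-h_2(y))^2 + h_2(x)h_2(y)\,|\QB(x)-\QB(y)|^2.
\]
The first two terms reproduce $\tfrac{1}{2}(h_1,|\nabla|h_1) + \tfrac{1}{2}(h_2,|\nabla|h_2)$, and the last yields exactly $\tfrac{1}{2}(h_2, R h_2)$ by the definition of $R$.

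Collecting the linear and quadratic contributions gives
\[
E[\QB+\hh] = E[\QB] + \tfrac{1}{2}(h_1, (|\nabla|-|\pt_x\QB|)h_1) + \tfrac{1}{2}(h_2, (|\nabla|-|\pt_x\QB|+R)h_2) + o(\hh^2),
\]
which is the claim. The only nontrivial step — and hence the main place requiring care — is the decomposition of $|\hh_T(x)-\hh_T(y)|^2$: one must verify that the ``twisting'' of the moving frame $J\eB$ produces precisely the nonlocal kernel of $R$ on the $h_2$-channel and nothing on the $h_1$-channel, which hinges crucially on the fact that $\QB$ lies in the equatorial plane so that $\eB$ is a fixed unit normal.
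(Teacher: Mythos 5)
Your proof is correct and follows essentially the same route as the paper: expand the energy, use $|\nabla|\QB = |\pt_x\QB|\QB$ together with the constraint $-2h_3 = h_1^2+h_2^2+o(\hh^2)$ for the linear term, and discard the $h_3$-dependent contributions to the quadratic piece. The only cosmetic difference is that you pass directly to the Gagliardo double-integral form and split $|\hh_T(x)-\hh_T(y)|^2$ pointwise, whereas the paper first computes $|\nabla|(h_2\,J\eB)$ via the singular-integral formula and then pairs with $\hh_T$; both versions hinge on the same twisting identity $J\eB(x)\cdot J\eB(y)=\QB(x)\cdot\QB(y)$ and the equatorial normalization $\QB\cdot\eB=0$.
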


\begin{proof}
First, we expand the energy functional to find that
\begin{align*}
E[\QB + \hh] & = E[\QB] +  \int_{\R} \hh \cdot |\nabla| \QB \, dx + \frac 1 2 \int_{\R} \hh \cdot |\nabla| \hh \, dx \\
 & = E[\QB] - \frac 1 2 \int_{\R}  |\pt_x \QB| (h_1^2 + h_2^2) \, dx + \frac 1 2 \int_{\R} \hh \cdot |\nabla| \hh \, dx + o(\hh^2),
\end{align*}
where we used that $|\nabla| \QB = |\pt_x \QB| \QB$ (see Lemma \ref{lem:upoint} with $v=0$) and $-2h_3 = h_1^2 + h_2^2+ o(\hh^2)$. Now since $\hh = h_1 \eB + h_2 J \eB + h_3 \QB$ with $h_3 = O(\hh^2)$, we see 
$$
\int_{\R} \hh \cdot |\nabla| \hh \, dx = \int_{\R} (h_1 \eB + h_2 J \eB) \cdot |\nabla| (h_1 \eB + h_2 J \eB ) \, dx + o(\hh^2).
$$
Since $\eB = \eB_z$ is constant, we clearly have that $|\nabla| (h_1 \eB) = (|\nabla| h_1) \eB$. As for $h_2 J \eB$ with $J \eB = \QB \wedge \eB$, we note that
\be \label{eq:JeB}
|\nabla| ( h_2 J \eB )(x) =  ( |\nabla| h_2(x) ) J \eB(x) + \frac{1}{\pi} \int_{\R}  \frac{(J \eB(x)- J\eB(y))}{|x-y|^2} h_2(y) \, dy ,
\ee
which follows from a simple calculation using the singular integral expression for $|\nabla|$. Next, by using that $\eB \cdot J \eB = 0$ and 
$J \eB(x) \cdot ( J \eB(x) - J \eB(y) ) = 1 - \QB(x) \cdot \QB(y) = \frac{1}{2} |\QB(x) - \QB(y)|^2$, we conclude that
$$
\int_{\R} (h_1 \eB + h_2 J \eB) \cdot |\nabla| (h_1 \eB + h_2  J \eB) \, dx = \int_{\R} h_1 |\nabla| h_1 \, dx + \int_{\R} h_2 |\nabla| h_2 \, dx + D[h_2,h_2],
$$
with the quadratic form
$$
D[f,f] = \frac{1}{2 \pi} \int \! \! \int_{\R \times \R}  \frac{|\QB(x)-\QB(y)|^2}{|x-y|^2} f(x) f(y) \, dx \, dy.
$$
By gathering the identities shown above and defining $L_+$, $L_-$ and $R$ as stated in Proposition \ref{prop:Hessian} above, we complete the proof.
\end{proof}

Next, we show that the pair of operators $L_+$  and $L_-$ from above also arises in the linearization of half-wave maps equation \eqref{eq:hwm} around a given half-harmonic map $\QB$. Indeed, we suppose that $\uu(t,x) = \QB + \hh(t,x)$ solves \eqref{eq:hwm} with $\QB$ and $\hh = h_1 \eB + h_2 J \eB + h_3 \QB$  as above. We claim that the components of the tangential part $h_1 \eB  + h_2  J \eB \in T_{\QB} \Ss^2$ solve the equation  
\be \label{eq:linearized_hwm}
\pt_t \left [ \begin{array}{c} h_1 \\ h_2 \end{array} \right ]  = J L   \left [ \begin{array}{c} h_1 \\ h_2 \end{array} \right ] + O(\hh^2) , \quad \mbox{with} \quad  JL = \left [ \begin{array}{cc} 0 & -L_- \\ L_+ & 0 \end{array} \right ],
\ee
where $O(\hh^2)$ stands for quadratic terms in $\hh$ and $|\nabla| \hh$. To see this, we use  $\QB \wedge |\nabla| \QB = 0$ as well as $|\nabla| \QB = |\pt_x \QB| \QB$ (see Lemma \ref{lem:upoint} with $v=0$) to find
\be \label{eq:linear_hwm}
\pt_t \hh = \QB \wedge |\nabla| \hh + \hh \wedge |\nabla| \QB + O(\hh^2)  = \QB \wedge \left ( |\nabla| \hh- |\pt_x \QB| \hh \right ) + O(\hh^2) .
\ee
Next we recall that $h_3 = O(\hh^2)$ and we deduce
\be
 |\nabla| \hh  =  (|\nabla| h_1) \eB + (|\nabla| h_2) J \eB + \frac{1}{\pi} \int_{\R} \frac{(J \eB(x)- J \eB(y))}{(x-y)^2} h_2(y) \,dy + O(\hh^2).
\ee
Since $\QB(x) \wedge J\QB(x) = \QB(x) \wedge (\QB(x) \wedge \eB) = -\eB$ and $\QB(x) \wedge (J \eB(x) - J \eB(y)) =  -(1- \QB(x) \cdot \QB(y)) \eB = -\frac{1}{2} |\QB(x)-\QB(y)|^2 \eB$, we get
\be \label{eq:linear1}
\QB \wedge |\nabla| \hh = ( |\nabla| h_1)  J \eB - (|\nabla| h_2 + R h_2) \eB + O(\hh^2),
\ee
with the operator $R$ defined as in Proposition \ref{prop:Hessian} above. Furthermore, we readily find that
\be \label{eq:linear2}
\QB \wedge |\pt_x \QB| \hh = ( |\pt_x \QB| h_1) J \eB   - (|\pt_x \QB| h_2) \eB + O (\hh^2) .
\ee
From \eqref{eq:linear1} and \eqref{eq:linear2} we now see that \eqref{eq:linear_hwm} holds.

\subsection{Stereographic Lifting to $\Ss^1$}
As a next preliminary step for the spectral analysis, we introduce $\Pi$ as the stereographic projection of the unit circle $\Ss^1 = \{ z \in \C : | z | = 1\}$ without the `north pole' $z=\im$ onto $\R$, i.\,e., we set
\be
\Pi : \Ss^1 \setminus \{ \im \} \to \R, \quad e^{\im \theta} \mapsto \frac{\cos \theta}{1- \sin \theta}.
\ee
As usual, we extend $\Pi$ to all of $\Ss^1$ by setting $\Pi(\im) = \infty$, so that $\Pi$ becomes a homeomorphism from $\Ss^1$ to the real projective line $\hat{\R} = \R \cup \{ \infty \}$. 

For later use, we record some useful identities concerning the stereographic projection as follows. If we let $x = \Pi(e^{\im \theta})$, we obtain the formulas
\be \label{eq:stereo1}
\cos \theta = \frac{2x}{1+x^2}, \quad \sin \theta = \frac{x^2-1}{x^2+1},
\quad \frac{2}{1+x^2} = 1- \sin \theta, \quad \frac{2}{1+x^2} dx = d \theta .
\ee
Furthermore, a simple calculation shows that
\be \label{eq:stereo3}
2- 2 \cos ( \theta - \omega) = |e^{\im \theta} - e^{\im \omega}|^2 = \frac{2}{1+x^2} |x-y|^2 \frac{2}{1+y^2}
\ee
when $x = \Pi(e^{\im \theta})$ and $y = \Pi(e^{\im \omega})$.  As a direct consequence, we deduce the classical fact
\begin{align} 
\| \vphi \|_{\dot{H}^{\frac 1 2}(\R)}^2 & = \frac{1}{2 \pi} \int \! \! \int_{\R \times \R} \frac{ |\vphi(x) - \vphi(y)|^2}{|x-y|^2} \, dx \, dy \nonumber  \\
& =  \frac{1}{2 \pi} \int \! \! \int_{\Ss^1 \times \Ss^1} \frac{|\tvphi(e^{\im \theta}) - \tvphi(e^{\im \omega})|^2}{2- 2 \cos (\theta - \omega)} \, d\theta \, d \omega  =  \| \tvphi \|_{\dot{H}^{\frac 1 2}(\Ss^1)}^2.  \label{eq:master}
\end{align}
for $\vphi \in \dot{H}^{\frac 1 2}(\R)$ and $\tvphi = \vphi \circ \Pi : \Ss^1 \to \R$; in particular, we see that $\vphi \in \dot{H}^{\frac 1 2}(\R)$ if and only if $\tvphi \in \dot{H}^{\frac 1 2}(\Ss^1)$. Indeed, by polarization, we obtain the following ``transformation rule'' between $|\nabla|_{\Ss^1}$ and $|\nabla|$ on the level of $\dot{H}^{\frac 1 2}$-functions.

\begin{lemma} \label{lem:pol}
For $\vphi \in \dot{H}^{\frac 1 2}(\R)$ and $\tvphi = \vphi \circ \Pi \in \dot{H}^{\frac 1 2}(\Ss^1)$, it holds that 
$$
(|\nabla|_{\Ss^1} \tvphi)(e^{\im \theta}) =  \frac{(|\nabla| \vphi)(\Pi(e^{\im \theta}))}{1 - \sin \theta} \quad \mbox{in $\mathcal{D}'(\Ss^1)$}. 
$$
\end{lemma}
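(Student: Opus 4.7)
\medskip

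\noindent\textbf{Proof plan for Lemma \ref{lem:pol}.} The identity is the infinitesimal/distributional version of the isometry \eqref{eq:master}; accordingly, the plan is to polarize \eqref{eq:master}, integrate by parts to transfer the fractional Laplacian onto a test function, and then use the change-of-variables formula implicit in \eqref{eq:stereo1}.

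First, I would polarize the isometry \eqref{eq:master}. Replacing $\vphi$ by $\vphi + \psi$ in \eqref{eq:master} and expanding, one obtains the bilinear identity
\begin{equation} \label{eq:pol_pair}
\int_{\R} \vphi \cdot |\nabla|\psi \, dx \;=\; \int_{\Ss^1} \tvphi \cdot |\nabla|_{\Ss^1} \tilde{\psi} \, d\theta
\end{equation}
for every $\vphi, \psi \in \dot{H}^{\frac{1}{2}}(\R)$ with lifts $\tvphi = \vphi \circ \Pi$ and $\tilde{\psi} = \psi \circ \Pi$, where both sides are interpreted as the $\dot{H}^{\frac{1}{2}}$--$\dot{H}^{-\frac{1}{2}}$ duality pairing on $\R$ and $\Ss^1$, respectively. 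This upgrade is standard and uses only the boundedness $|\nabla|\colon \dot H^{\frac{1}{2}} \to \dot H^{-\frac{1}{2}}$ and the analogous mapping property on $\Ss^1$.

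Next, I would test the claimed distributional identity against an arbitrary $\tilde{\eta} \in \mathcal{D}(\Ss^1)$. Setting $\eta = \tilde{\eta} \circ \Pi^{-1}$, which is smooth on $\R$ with a prescribed constant limit at infinity determined by $\tilde{\eta}(\im)$, the self-adjointness of $|\nabla|_{\Ss^1}$ and \eqref{eq:pol_pair} applied with $\psi = \eta$ give
\begin{equation}
\langle |\nabla|_{\Ss^1} \tvphi , \tilde{\eta} \rangle_{\mathcal{D}'(\Ss^1), \mathcal{D}(\Ss^1)} \;=\; \langle \tvphi, |\nabla|_{\Ss^1} \tilde{\eta} \rangle \;=\; \int_{\R} \vphi \cdot |\nabla| \eta \, dx \;=\; \int_{\R} (|\nabla|\vphi)(x) \, \eta(x) \, dx .
\end{equation}
Finally, I would transport the last integral back to $\Ss^1$ via the change of variables $x = \Pi(e^{\im \theta})$, which by \eqref{eq:stereo1} satisfies $dx = (1-\sin\theta)^{-1} d\theta$, yielding
\begin{equation}
\int_{\R} (|\nabla|\vphi)(x) \, \eta(x) \, dx \;=\; \int_{\Ss^1} \frac{(|\nabla|\vphi)(\Pi(e^{\im\theta}))}{1-\sin\theta} \, \tilde{\eta}(e^{\im\theta}) \, d\theta .
\end{equation}
Since $\tilde{\eta}$ was an arbitrary test function, this precisely identifies $|\nabla|_{\Ss^1} \tvphi$ with the distribution $(1-\sin\theta)^{-1} (|\nabla|\vphi)\circ \Pi$, which is the claim.

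The main obstacle, as usual in such conformal transfer arguments, is the behaviour near the north pole $e^{\im\theta} = \im$, corresponding to the point $x = \infty$ under $\Pi$. The weight $(1-\sin\theta)^{-1}$ blows up there, so one has to check that the right-hand side makes sense as a well-defined distribution on all of $\Ss^1$ and not merely on $\Ss^1 \setminus \{\im\}$. This is handled cleanly by the computation above: the factor $(1-\sin\theta)^{-1}$ in the integrand is exactly cancelled by the Jacobian $(1-\sin\theta)$ appearing when rewriting $d\theta$ in terms of $dx$, so the pairing with any test function $\tilde{\eta} \in \mathcal{D}(\Ss^1)$ is finite and agrees with a pairing on $\R$ that is controlled by $\vphi \in \dot H^{\frac{1}{2}}(\R) \subset L_{1/2}(\R)$. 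No further regularity of $\vphi$ is needed.
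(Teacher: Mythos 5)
Your proof is correct and follows essentially the same route as the paper: both polarize the quadratic identity \eqref{eq:master} to obtain the bilinear pairing identity and then use the change of variables $dx = (1-\sin\theta)^{-1}\,d\theta$ from \eqref{eq:stereo1} to identify the resulting distribution. The only cosmetic difference is that you spell out the test-function step against $\mathcal{D}(\Ss^1)$ explicitly and pass $|\nabla|$ onto $\vphi$ by self-adjointness, whereas the paper keeps the pairing in the form $(\psi,|\nabla|\vphi)$ throughout; these are interchangeable, and your version has the correct Jacobian factor.
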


\begin{remark*}
{\em If we were to relax the assumption to $\vphi \in L_{1/2}(\R)$ or, equivalently, we assume $\tvphi \in L^1(\Ss^1)$ for the conformal lifting to $\Ss^1$, the right-hand side of the stated formula contains a Dirac mass located at $z=\im$ given by $-c \delta_{\im}$ with $c = \int_{\R} |\nabla| \vphi \, dx$, provided we assume that $|\nabla| \vphi \in L^1(\R)$; see \cite{DaMaRi-15}. But we do not need this generalization here, since we always work on the $\dot{H}^{\frac 1 2}$-level, where no Dirac mass term is present.} 
\end{remark*}

\begin{proof}
Suppose that $\vphi, \psi \in \dot{H}^{\frac 1 2}(\R)$ and let $\tvphi = \vphi \circ \Pi$ and $\widetilde{\psi} = \psi \circ \Pi$. By polarization of the quadratic identity \eqref{eq:master}, we deduce
\begin{align*}
(\psi, |\nabla| \vphi) & = \frac{1}{4} \left [ (\psi+\vphi, |\nabla| (\psi+\vphi)) - (\psi- \vphi, |\nabla| ( \psi- \vphi) ) \right ] \\
& = \frac{1}{4}  \left [ \langle \widetilde{\psi}+\tvphi, |\nabla|_{\Ss^1} (\widetilde{\psi}+\tvphi) \rangle - \langle \widetilde{\psi}- \tvphi, |\nabla|_{\Ss^1} ( \widetilde{\psi}- \tvphi) \rangle \right ] =  \langle \widetilde{\psi}, |\nabla|_{\Ss^1} \tvphi  \rangle ,
\end{align*}
where $\langle \widetilde{f}, \widetilde{g} \rangle = \int_{\Ss^1} \widetilde{f}(e^{\im \theta}) \widetilde{g}(e^{\im \theta}) \, d \theta$ denotes the inner product for real-valued functions $\widetilde{f}, \widetilde{g} \in L^2(\Ss^1)$. Since $\int_{\R} f(x) g(x) \, dx = \int_{\Ss^1} \widetilde{f}(e^{\im \theta}) \widetilde{g}(e^{\im \theta}) (1-\sin \theta) \, d\theta$ when $\widetilde{f} = f \circ \Pi$ and $\widetilde{g} = g \circ \Pi$, we deduce that the asserted identity holds if tested against any function in $\dot{H}^{\frac 1 2}(\Ss^1)$. Hence the desired identity holds in $\mathcal{D}'(\Ss^1)$. 
\end{proof}

Finally, we remark that the lift of the half-harmonic maps $\QBm$ to $\Ss^1$ is found to be
\be \label{eq:QBm_lift}
\widetilde{\QB}_m(e^{\im \theta}) := (\QB_m \circ \Pi)(e^{\im \theta}) = \left ( \cos (m \theta), \sin (m \theta), 0 \right ) = (e^{\im m \theta}, 0),
\ee  
which follows from $\im \left ( \frac{x-i}{x+i} \right ) = \cos \theta + \im \sin \theta$ for $x = \Pi(e^{\im \theta}) = \frac{\cos \theta}{1- \sin \theta}$. Moreover, we recall the well-known identity 
\be \label{eq:Cheby}
 \cos( m \theta) + \im \sin (m \theta) = T_m (\cos \theta) + \im (\sin \theta) U_{m-1}(\cos \theta),
\ee
where $T_m$ and $U_m$ denote the {\em Chebyshev polynomials} of the  first and second kind, respectively. From \eqref{eq:stereo1} we deduce the useful formulas
\be \label{eq:TmUm}
\left \{ \begin{array}{ll}
\displaystyle f_k(x) := \mathrm{Re} \left ( \im \cdot \frac{x-\im }{x+\im} \right)^k & \displaystyle = T_k \left ( \frac{2x}{1+x^2} \right ) \quad \mbox{for $k \in \N$}, \\[2ex]
\displaystyle g_k(x) := \mathrm{Im} \left ( \im \cdot \frac{x-\im }{x+\im} \right )^k & \displaystyle = \left ( \frac{x^2-1}{1+x^2} \right ) U_{k-1} \left ( \frac{2x}{1+x^2} \right ) \quad \mbox{for $k \in \N$, $k \geq 1$}.
 \end{array} \right .
\ee
In particular, we have that $\QB_m(x) = (f_m(x), g_m(x), 0 )$. We now record the following simple calculational observations. 

\begin{prop} The linearized operators $L_+$ and $L_-$ around $\QBm$ are given by
$$
L_+ = |\nabla| - \frac{2m}{1+x^2}, \quad L_+ = |\nabla| - \frac{2m}{1+x^2} + R_m,
$$
where $R_m$ denotes the integral operator with
$$
(R_m \vphi)(x) = \frac{1}{2 \pi} \int_{\R} \frac{|\QBm(x)- \QBm(y)|^2}{|x-y|^2} \vphi(y) \, dy. 
$$
\end{prop}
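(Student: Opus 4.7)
The statement is essentially a specialization of Proposition \ref{prop:Hessian}: the general formulas
$$
L_+ = |\nabla| - |\pt_x \QB|, \qquad L_- = |\nabla| - |\pt_x \QB| + R
$$
and the integral kernel defining $R$ already apply to an arbitrary half-harmonic map $\QB \in \dot{H}^{1/2}(\R;\Ss^2)$. Thus the only substantive point is to evaluate $|\pt_x \QB_m|$ when $\QB = \QB_m = (f_m,g_m,0)$ and to observe that the operator $R_m$ is, by definition, the operator $R$ associated with $\QB = \QB_m$.

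The plan is therefore to compute $|\pt_x \QB_m|$ by passing through the complex representation $Q_m = f_m + \im g_m = \im^m \bigl(\tfrac{x-\im}{x+\im}\bigr)^m$. Since $(\pt_x f_m)^2 + (\pt_x g_m)^2 = |\pt_x Q_m|^2 = |\pt_x \QB_m|^2$, it suffices to differentiate $Q_m$. A one-line calculation based on $\pt_x \bigl(\tfrac{x-\im}{x+\im}\bigr) = \tfrac{2\im}{(x+\im)^2}$ yields
$$
\pt_x Q_m = \frac{-2m}{(x+\im)^2}\, Q_{m-1},
$$
and since $|Q_{m-1}| \equiv 1$ and $|x+\im|^2 = 1+x^2$, one reads off $|\pt_x \QB_m| = \tfrac{2m}{1+x^2}$, which is the desired potential.

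Alternatively, one may obtain the same conclusion geometrically via the stereographic lifting recorded in \eqref{eq:QBm_lift}: on $\Ss^1$ one has $\widetilde{\QB}_m(e^{\im\theta}) = (e^{\im m\theta},0)$, so $|\pt_\theta \widetilde{\QB}_m| \equiv m$, and the change-of-variables identities \eqref{eq:stereo1} give $\tfrac{d\theta}{dx} = \tfrac{2}{1+x^2} = 1 - \sin\theta$. Hence $|\pt_x \QB_m| = m(1-\sin\theta) = \tfrac{2m}{1+x^2}$, in agreement with the complex-analytic computation. This cross-check also foreshadows the role that the conformal lifting will play in the subsequent spectral analysis.

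I do not anticipate any genuine obstacle here: the proposition is a book-keeping statement whose content is entirely captured by (i)~the already-established Proposition \ref{prop:Hessian}, and (ii)~the elementary identity $|\pt_x \QB_m(x)| = \tfrac{2m}{1+x^2}$. The statement about $R_m$ requires no argument beyond reading off the definition.
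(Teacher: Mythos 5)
Your proposal is correct and follows essentially the same route as the paper: both reduce the statement to Proposition \ref{prop:Hessian} plus the single identity $|\pt_x \QB_m| = \tfrac{2m}{1+x^2}$, and both offer the stereographic change-of-variables $|\pt_x\QB_m| = |\pt_\theta e^{\im m\theta}|\,\tfrac{d\theta}{dx}$ as the clean way to see it. Your explicit complex differentiation $\pt_x Q_m = \tfrac{-2m}{(x+\im)^2}Q_{m-1}$ is a nice concrete instance of the ``direct calculation'' the paper merely gestures at, but there is no substantive difference in method.
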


\begin{proof}
In view of Proposition \ref{prop:Hessian}, we only need to verify that
$$
|\pt_x \QBm(x)| = \sqrt{(\pt_x f_m(x))^2 + (\pt_x g_m(x))^2} = \frac{2m}{1+x^2}.
$$
Indeed, this can be checked by a direct calculation using the explicit form of $f_m$ and $g_m$. A more elegant alternative is to note that $\widetilde{\QB}_m = \QBm \circ \Pi = (e^{i m \theta}, 0)$. Since $e^{\im \theta} = \Pi^{-1}(x)$ implies $\frac{\pt \theta}{\pt x} = \frac{2}{1+x^2}$, we find that $|\pt_x \QB_m| =  |\frac{\pt}{\pt \theta} ( e^{\im m \theta})| | \frac{\pt \theta}{\pt x} |  = \frac{2m}{1+x^2}$. \end{proof}

\section{Nullspace, Resonances, and Nondegeneracy}

\label{sec:nullspace}

This section is devoted to the proof of Theorem \ref{thm:null}, which will directly follow from summarizing the results shown next. 

Recall that we always assume that $\QBm=(f_m, g_m,0) \in \dot{H}^{\frac 1 2}(\R; \Ss^2)$ is a pure-power half-harmonic map of the form \eqref{eq:QBm} with degree $m \geq 1$. We are now going to explicitly determine the nullspaces of $L_+$ and $L_-$ in $\dot{H}^{\frac{1}{2}}(\R)$, which we denote as
\be
\mathcal{N}(L_+) = \left \{ \vphi \in \dot{H}^{\frac 1 2}(\R) : L_+ \vphi =  0 \right \}, \quad \mathcal{N}(L_-) = \left \{ \vphi \in \dot{H}^{\frac 1 2}(\R) : L_- \vphi = 0 \right \}.
\ee
We start with $\mathcal{N}(L_+)$, which is the simpler to analyze than $\mathcal{N}(L_-)$.

\begin{prop}[Nondegeneracy of $L_+$] \label{prop:null_Lp}
The nullspace of $L_+$ is given by
$$
\mathcal{N}(L_+) = \mathrm{span} \left \{ f_m, g_m \right \} .
$$
In particular, it holds that $\dim \mathcal{N}(L_+) = 2$.
\end{prop}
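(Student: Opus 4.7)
The strategy is to reduce the equation $L_+\vphi = 0$ on $\R$ to a spectral equation on the unit circle $\Ss^1$ via the stereographic lift, and then read off the kernel directly from the Fourier decomposition.

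First, I would verify the inclusion $\mathrm{span}\{f_m, g_m\} \subseteq \mathcal{N}(L_+)$. Since $\QBm = (f_m, g_m, 0)$ is a half-harmonic map, Lemma \ref{lem:upoint} (applied with $v=0$) gives $|\nabla|\QBm = |\pt_x \QBm|\QBm = \frac{2m}{1+x^2}\QBm$, and reading this componentwise shows $L_+ f_m = L_+ g_m = 0$; both functions clearly lie in $\dot{H}^{\frac{1}{2}}(\R)$.

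For the converse, suppose $\vphi \in \dot{H}^{\frac{1}{2}}(\R)$ satisfies $L_+\vphi = 0$ and set $\tvphi = \vphi \circ \Pi \in \dot{H}^{\frac{1}{2}}(\Ss^1)$. Combining Lemma \ref{lem:pol} with the identity $\frac{2}{1+x^2} = 1 - \sin\theta$ from \eqref{eq:stereo1}, the equation $L_+\vphi = 0$ transforms into
\[
(1-\sin\theta)\bigl(|\nabla|_{\Ss^1} - m\bigr)\tvphi = 0 \quad \text{in } \mathcal{D}'(\Ss^1).
\]
The key analytic step is to rule out ``singular'' solutions concentrated at the image $\im \in \Ss^1$ of $x = \infty$. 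Since $1 - \sin\theta > 0$ on $\Ss^1 \setminus \{\im\}$, the distribution $\psi := (|\nabla|_{\Ss^1} - m)\tvphi$ is supported at the single point $\im$, hence of the form $\psi = \sum_{j=0}^N c_j \delta^{(j)}_{\im}$ by the standard structure theorem for point-supported distributions. On the other hand, the inclusion $\tvphi \in \dot{H}^{\frac{1}{2}}(\Ss^1)$ forces $\psi \in \dot{H}^{-\frac{1}{2}}(\Ss^1)$, and a direct Fourier-series computation shows that any nonzero finite combination $\sum_{j=0}^N c_j \delta^{(j)}_{\im}$ fails to lie in $\dot{H}^{-\frac{1}{2}}(\Ss^1)$ (the Fourier coefficients grow as $|k|^N$, so $\sum_{k \neq 0} |k|^{-1}|\hat\psi(k)|^2$ diverges). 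Consequently $\psi = 0$, i.e., $(|\nabla|_{\Ss^1} - m)\tvphi = 0$.

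Finally, expanding $\tvphi = \sum_{k \in \Z} c_k e^{\im k\theta}$ yields $(|k|-m)c_k = 0$ for every $k$, so $c_k = 0$ unless $|k| = m$; since $\tvphi$ is real-valued, $\tvphi = a\cos(m\theta) + b\sin(m\theta)$ with $a,b \in \R$. By \eqref{eq:QBm_lift}, $\cos(m\theta) = \tilde f_m$ and $\sin(m\theta) = \tilde g_m$, so pulling back via $\Pi$ gives $\vphi = af_m + bg_m$, whence $\dim \mathcal{N}(L_+) = 2$. The main obstacle is precisely the distributional argument ruling out the point-supported contributions at $\im$: the lift to $\Ss^1$ sends $x = \infty$ to a finite point where the weight $1-\sin\theta$ degenerates, and one must carefully exploit the $\dot{H}^{\frac{1}{2}}$-regularity of admissible $\tvphi$ to exclude such parasitic solutions; everything else is essentially an elementary Fourier computation once the spectral problem has been transplanted to $\Ss^1$.
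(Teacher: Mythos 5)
Your proof is correct, and the core strategy — stereographic lift to $\Ss^1$ via Lemma \ref{lem:pol}, reduction to $|\nabla|_{\Ss^1}-m$, Fourier decomposition, pull-back — is exactly the paper's. The one place you diverge is what you call the ``main obstacle'': after writing the lifted equation as $(1-\sin\theta)(|\nabla|_{\Ss^1}-m)\tvphi = 0$, you separately rule out point-supported contributions at $\im$ by appealing to the structure theorem for distributions supported at a point plus the observation that nonzero combinations $\sum_j c_j\delta^{(j)}_{\im}$ cannot lie in $H^{-1/2}(\Ss^1)$ (Fourier coefficients growing polynomially). That argument is sound, but it is redundant given Lemma \ref{lem:pol}: as the remark following that lemma makes explicit, the transformation rule $(|\nabla|_{\Ss^1}\tvphi)(e^{\im\theta}) = (|\nabla|\vphi)(\Pi(e^{\im\theta}))/(1-\sin\theta)$ already holds in $\mathcal{D}'(\Ss^1)$ globally when $\vphi\in\dot{H}^{\frac12}(\R)$, with no singular (Dirac-mass) term at $\im$ -- this is precisely what the polarization proof of Lemma \ref{lem:pol} buys, and the Dirac mass would only reappear at the weaker $L_{1/2}$ level. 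Plugging $|\nabla|\vphi = m(1-\sin\theta)\tvphi\circ\Pi^{-1}$ into that identity therefore gives $(|\nabla|_{\Ss^1}-m)\tvphi = 0$ directly, without ever introducing the $(1-\sin\theta)$ prefactor or invoking the structure theorem. Your version is more self-contained in that it only uses the transformation rule away from $\im$ and then argues the singular part away by hand, which is a reasonable belt-and-suspenders route; the paper's route is shorter because it exploits the full distributional content of Lemma \ref{lem:pol}. Either way the endpoint $\tvphi\in\mathrm{span}\{\cos m\theta,\sin m\theta\}$ and hence $\vphi\in\mathrm{span}\{f_m,g_m\}$ is reached, and your verification of the forward inclusion via Lemma \ref{lem:upoint} (with $v=0$) is correct.
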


\begin{remarks*}
{\em 1) The fact that the functions $f_m$ and $g_m$ lie in the nullspace of $L_+$ is due to the {\em rotational symmetry} of the half-harmonic maps equation. More precisely, recalling the equatorial form $\QBm=(f_m, g_m,0)$ and considering infinitesimal rotations around the the $x$- and $y$-axis yield the zero modes for $L_+$ given by $\QBm \wedge \eB_x = -g_m$ and $\QBm \wedge \eB_y = f_m$, respectively. The above lemma then shows that we have {\bf nondegeneracy} of $L_+$, as further (accidental) zero modes are ruled out for this operator.

2) From \eqref{eq:TmUm} and basic properties of Chebyshev polynomials, we deduce that
$$
\lim_{|x| \to \infty} |f_m(x)| = \begin{dcases*} 0 & \mbox{if $m$ is odd,} \\ 1 & \mbox{if $m$ is even,} \end{dcases*} \quad \lim_{|x| \to \infty} |g_m(x)| = \begin{dcases*} 1 & \mbox{if $m$ is odd,} \\ 0 & \mbox{if $m$ is even.} \end{dcases*}  
$$
We then easily conclude that:
\begin{itemize}
\item $f_m \in \dot{H}^{\frac 1 2}(\R) \setminus L^2(\R)$ is a {\bf resonance} for $L_+$ if and only if $m$ is even.
\item $g_m \in \dot{H}^{\frac 1 2}(\R) \setminus L^2(\R)$ is a {\bf resonance} for $L_+$ if and only if $m$ is odd.
\end{itemize}

3) For $m=1$, we check that $x |\pt_x \QB_{m=1}| = \frac{2x}{1+x^2} \in \mathrm{span} \, \{ f_m, g_m \}$ and hence this function (reflecting the scaling symmetry) for the half-harmonic maps equation belongs to the nullspace of $L_+$. Below, we will see that $x |\pt_x \QBm|$ always belong to the nullspace of $L_-$ for any $m \geq 1$.
}
\end{remarks*}

\begin{proof}
Let $\vphi \in \dot{H}^{\frac 1 2}(\R)$ solve $L_+ \vphi = 0$, i.\,e.,
\be
|\nabla| \vphi - \frac{2m}{1+x^2} \vphi = 0 .
\ee 
Recalling that $1-\sin \theta = \frac{2}{1+x^2}$ when $x = \Pi(e^{\im \theta})$ and by using Lemma \ref{lem:pol}, we deduce the equivalence
\be
L_+ \vphi = 0  \quad \Longleftrightarrow \quad \tL_+ \tvphi = 0 ,
\ee
with $\tvphi = \vphi \circ \Pi \in \dot{H}^{\frac  1 2}(\Ss^1)$ and
\be
\tL_+ = |\nabla|_{\Ss^1} - m \mathds{1}.
\ee
Clearly, the nullspace of $\tL_+$ is given by
\be
\mathcal{N}(\tL_+) = \left \{ \tvphi \in \dot{H}^{\frac 1 2}(\Ss^1) : \tL_+ \tvphi = 0 \right  \} =  \mathrm{span} \, \{ \cos( m \theta), \sin (m \theta) \}
\ee
 By undoing the stereographic projection, we conclude from \eqref{eq:Cheby} and \eqref{eq:TmUm} that the nullspace of $L_+$ is given by
\be
\mathcal{N}(L_+) = \mathrm{span} \left \{ f_m, g_m \right \} ,
\ee
which completes the proof of Proposition \ref{prop:null_Lp}.
\end{proof}

Next, we turn to the nullspace of $L_-$ which requires a more subtle analysis. Still, we have the following explicit and complete result.

\begin{prop}[Nondegeneracy of $L_-$] \label{prop:null_Lm}
The nullspace of $L_-$ is given by
$$
\mathcal{N}(L_-) = \mathrm{span} \left \{ 1, f_1, \ldots, f_m, g_1 , \ldots , g_m \right \} .
$$
In particular, it holds that $\dim \mathcal{N}(L_-) = 2m + 1$.
\end{prop}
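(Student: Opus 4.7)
The plan is to mirror the strategy used for Proposition \ref{prop:null_Lp}, pushing the eigenvalue equation $L_-\vphi = 0$ through the stereographic projection $\Pi$ to the circle $\Ss^1$. The first two terms of $L_-$ transform exactly as in the $L_+$ analysis: using Lemma \ref{lem:pol} together with $\frac{2}{1+x^2} = 1-\sin\theta$, one has $|\nabla|\vphi - \frac{2m}{1+x^2}\vphi = (1-\sin\theta)(|\nabla|_{\Ss^1}-m\mathds{1})\tvphi$ with $\tvphi = \vphi\circ\Pi$. The crucial new step is to rewrite the nonlocal term $R_m$ in angular coordinates. Using \eqref{eq:stereo3} for the denominator, the identity $\widetilde{\QB}_m(e^{\im\theta}) = (\cos(m\theta),\sin(m\theta),0)$ for the numerator, and the change of variables $dy = \frac{1+y^2}{2}\,d\omega$, a direct computation yields
\[
(R_m \vphi)\bigl(\Pi(e^{\im\theta})\bigr) = (1-\sin\theta)\,\bigl(\widetilde{R}_m\tvphi\bigr)(e^{\im\theta}),
\qquad
(\widetilde{R}_m\tvphi)(e^{\im\theta}) = \frac{1}{2\pi}\int_{0}^{2\pi} K_m(\theta-\omega)\,\tvphi(e^{\im\omega})\,d\omega,
\]
where $K_m(\alpha) := \frac{1-\cos(m\alpha)}{1-\cos\alpha}$. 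Dividing out the positive common factor $1-\sin\theta$ then gives the equivalence
\[
L_-\vphi = 0 \quad\Longleftrightarrow\quad \widetilde{L}_- \tvphi := \bigl(|\nabla|_{\Ss^1} - m\mathds{1} + \widetilde{R}_m\bigr)\tvphi = 0.
\]

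The spectral content of $\widetilde{L}_-$ is then read off in the Fourier basis $\{e^{\im n\theta}\}_{n\in\Z}$. Factoring $2(1-\cos(m\alpha)) = |1-e^{\im m\alpha}|^2$ (and similarly for $\alpha$) produces the key algebraic identity
\[
K_m(\alpha) \;=\; \left|\sum_{k=0}^{m-1}e^{\im k\alpha}\right|^2 \;=\; m + 2\sum_{k=1}^{m-1}(m-k)\cos(k\alpha),
\]
so that $K_m$ is a trigonometric polynomial of degree $m-1$. Consequently $\widetilde{R}_m$ is the Fourier multiplier with symbol $n \mapsto \max\{m-|n|,0\}$, which vanishes for $|n|\ge m$. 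Combined with $|\nabla|_{\Ss^1}e^{\im n\theta} = |n|\,e^{\im n\theta}$, this yields
\[
\widetilde{L}_-\, e^{\im n\theta} \;=\; \bigl(|n|-m+\max\{m-|n|,0\}\bigr)\,e^{\im n\theta} \;=\; \begin{cases} 0, & |n|\le m,\\ (|n|-m)\,e^{\im n\theta}, & |n|\ge m. \end{cases}
\]
Hence $\mathcal{N}(\widetilde{L}_-) = \mathrm{span}\{e^{\im n\theta} : |n|\le m\}$, whose real form is $\mathrm{span}\{1,\cos(k\theta),\sin(k\theta) : 1\le k \le m\}$, of dimension exactly $2m+1$.

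Finally, I undo the stereographic projection. The isometry \eqref{eq:master} identifies $\dot{H}^{\frac 1 2}(\Ss^1)$ with $\dot{H}^{\frac 1 2}(\R)$, and constants do belong to $\dot{H}^{\frac 1 2}(\R)$ as noted in the \emph{Definitions and Notations}. Formulas \eqref{eq:Cheby}--\eqref{eq:TmUm} then send $\cos(k\theta)\circ\Pi^{-1} = f_k$ and $\sin(k\theta)\circ\Pi^{-1} = g_k$, producing the announced basis $\{1,f_1,\ldots,f_m,g_1,\ldots,g_m\}$ for $\mathcal{N}(L_-)$. The principal obstacle -- and the structural reason the argument closes so cleanly -- is the exact cancellation making $\widetilde{R}_m$ a finite-rank Fourier multiplier whose symbol precisely compensates the eigenvalue $|n|-m$ of $|\nabla|_{\Ss^1}-m\mathds{1}$ throughout the low-frequency band $|n|\le m$; verifying this requires the explicit kernel factorization $K_m = |1+z+\cdots+z^{m-1}|^2$, which in turn is precisely the point where the pure-power structure of $\QB_m$ enters in an essential way.
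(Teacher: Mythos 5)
Your proof is correct, and it reaches the paper's conclusion by a genuinely more elementary route in the one technically nontrivial step. Both you and the paper pass to the circle, reduce $L_-\vphi=0$ to $\widetilde{L}_-\tvphi=0$ with $\widetilde{L}_-=|\nabla|_{\Ss^1}-m\mathds{1}+\widetilde{R}_m$, and then must compute how $\widetilde{R}_m$ acts on each Fourier mode. The paper invokes the Funk--Hecke formula to reduce the eigenvalue to a Gegenbauer-type integral $\int_{-1}^{1}K_m(y)T_\ell(y)(1-y^2)^{-1/2}\,dy$, and then proves the expansion $\frac{1-T_m(y)}{1-T_1(y)}=mT_0(y)+2\sum_{k=1}^{m-1}(m-k)T_k(y)$ via the Christoffel--Darboux identity for Chebyshev polynomials before using their orthogonality. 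You instead note that $\widetilde{R}_m$ is convolution on $\Ss^1$ with the Fej\'er-type kernel $K_m(\alpha)=\bigl|\textstyle\sum_{k=0}^{m-1}e^{\im k\alpha}\bigr|^2=m+2\sum_{k=1}^{m-1}(m-k)\cos(k\alpha)$, and read off the multiplier symbol $n\mapsto\max\{m-|n|,0\}$ directly from the geometric-series factorization and the convolution theorem for Fourier series. The two expansions of $K_m$ are of course the same trigonometric identity (set $y=\cos\alpha$), so the arithmetic is identical at the end; what your version buys is that it avoids Funk--Hecke and Christoffel--Darboux entirely, replacing them with the observation that a rotation-invariant kernel on $\Ss^1$ is just a Fourier multiplier and a one-line product-of-sums computation. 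One small caveat worth stating explicitly if you were to write this up: after identifying the multiplier of $\widetilde{L}_-$ as $n\mapsto\max\{|n|-m,0\}$, you should note that a diagonal operator has nullspace exactly the span of the modes with vanishing symbol, so that no extra argument is needed to localize $\tvphi$ to $|n|\le m$; this is implicit in your write-up but deserves one sentence.
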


\begin{remarks*}
{\em 1) From Theorem \ref{thm:vsmall} we recall that the family of half-harmonic maps of equatorial form $\QBm=(f_m(x), g_m(x), 0)$ has $2m +1$ degrees of freedom expressed by the parameters $\theta \in \R$ (phase), $\lambda_1, \ldots, \lambda_m >0$ (scalings) and $a_1, \ldots, a_m \in \R$ (translations). Hence the fact that $\dim \mathcal{N}(L_-) = 2m+1$ holds can be seen as a {\bf nondegeneracy} result for $L_-$, which rules out extra zero modes that are not due to symmetry.

2) The set of functions $\{1,  f_1, \ldots, f_m , g_1, \ldots, g_m\}$ are obtained via the inverse stereographic projection $\Pi^{-1}$ as real and imaginary parts of the set of the complex functions $\{ 1, z, \ldots, z^m, z^{-1}, \ldots, z^{-m} \}$ restricted to the unit circle $\Ss^1$.

3) Reflecting the translation and scaling symmetries and rotational symmetry around the $z$-axis, it is straightforward to check that we always have the zero modes
$$
\mathrm{span} \left \{ 1, |\pt_x \QBm|, x |\pt_x \QBm | \right \} \subset \mathcal{N}(L_+),
$$
and  this inclusion must be an equality if $m=1$.}
\end{remarks*}

\begin{proof}
 Again, we make essential use of the stereographic projection. From \eqref{eq:QBm_lift} together with the relations \eqref{eq:stereo1} and \eqref{eq:stereo3} we obtain that
\be
(R \vphi)(x) = (1- \sin \theta) (\widetilde{R}_m \tilde{\vphi})(e^{i \theta}) .
\ee 
Here $\tilde{\vphi} = \vphi \circ \Pi$ as usual and the integral operator $\widetilde{R}_m$ is found to be
\be \label{eq:Rm_tilde}
(\widetilde{R}_m \tilde{\vphi})(e^{i \theta}) = \int_{\Ss^1} K_m(\theta, \omega) \tvphi(\omega) \, d \omega ,
\ee
where the kernel $K(\theta, \omega)$ is given by
\be
K_m(\theta, \omega) = \frac{1}{2 \pi} \frac{|e^{im \theta} - e^{im \omega}|^2}{|e^{i \theta} - e^{i \omega}|^2} = \frac{1}{2\pi} \frac{1- \cos (m (\theta-\omega))}{1-\cos(\theta-\omega)}. 
\ee

Now, let $\vphi \in \dot{H}^{\frac 1 2}(\R)$ solve $L_- \vphi = 0$. As in the proof of Proposition \ref{prop:null_Lp} above, we can make use of Lemma \ref{lem:pol} to obtain the following equivalence for $\vphi \in \dot{H}^{\frac 1 2}(\R)$ and $\tvphi = \vphi \circ \Pi \in \dot{H}^{\frac 1 2}(\Ss^1)$ such that
\be
L_- \vphi = 0 \quad \Longleftrightarrow \quad \widetilde{L}_- \tvphi = 0,
\ee
with the operator
\be
 \widetilde{L}_- = |\nabla|_{\Ss^1} - m \mathds{1} + \widetilde{R}_m   .
\ee
To analyze the nullspace of $\widetilde{L}_-$, we need the following auxiliary result.

\begin{lemma} \label{lem:funk-hecke}
For $f_\ell \in \{ \cos (\ell \theta), \sin (\ell \theta) \}$ with $\ell \in \N$, we have
$$
\widetilde{R}_m f_\ell = \lambda_\ell f_\ell \quad \mbox{with} \quad \lambda_\ell = \begin{dcases*}  (m - \ell ) & for $0 \leq \ell < m$, \\ 0 & for $\ell \geq m$. \end{dcases*}  
$$
\end{lemma}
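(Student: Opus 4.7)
The key observation is that the kernel $K_m(\theta,\omega)$ depends only on the difference $\theta-\omega$, so $\widetilde{R}_m$ is a convolution operator on $\Ss^1$. Such operators are automatically diagonalized by the orthonormal Fourier basis $\{e^{\im k \theta}\}_{k \in \Z}$ of $L^2(\Ss^1)$, and the eigenvalue at frequency $\ell$ is (up to normalization) the $\ell$-th Fourier coefficient of the kernel. Hence the entire lemma reduces to computing the Fourier series of the Fej\'er-type kernel
$$
\kappa_m(\phi) := \frac{1}{2\pi} \frac{1-\cos(m \phi)}{1- \cos \phi}, \qquad \phi = \theta-\omega.
$$

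The first step I would carry out is to rewrite $\kappa_m$ using the half-angle identity $1-\cos(m\phi) = 2 \sin^2(m\phi/2)$, yielding
$$
\kappa_m(\phi) = \frac{1}{2\pi} \left ( \frac{\sin(m\phi/2)}{\sin(\phi/2)} \right )^2.
$$
Next, I would invoke the classical geometric-series identity
$$
\frac{\sin(m\phi/2)}{\sin(\phi/2)} = \sum_{k=0}^{m-1} e^{\im (k - (m-1)/2) \phi},
$$
and square it. Collecting like exponentials and counting the number of ordered pairs $(j,k) \in \{0, \dots, m-1\}^2$ with $j+k -(m-1) = n$ (which equals $m-|n|$) gives the expansion
$$
\kappa_m(\phi) = \frac{1}{2 \pi} \sum_{n=-(m-1)}^{m-1} (m-|n|)\, e^{\im n \phi}.
$$

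With this expansion in hand, applying $\widetilde{R}_m$ to $e^{\im \ell \theta}$ and using the orthogonality $\int_{\Ss^1} e^{\im (n-\ell) \omega}\, d\omega = 2\pi \delta_{n,\ell}$ immediately yields
$$
\widetilde{R}_m e^{\im \ell \theta} = \begin{dcases*} (m - |\ell|)\, e^{\im \ell \theta} & if $|\ell| \leq m-1$, \\ 0 & if $|\ell| \geq m$. \end{dcases*}
$$
Since $\cos(\ell \theta)$ and $\sin(\ell \theta)$ are real and imaginary parts of $e^{\im \ell \theta}$ and $\widetilde{R}_m$ has a real, even kernel, both inherit the same eigenvalue, proving the claim. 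I do not expect any real obstacle: the only substantive step is recognizing the kernel as a squared Dirichlet kernel and carrying out the Fourier expansion, which is routine once the convolution structure is exploited.
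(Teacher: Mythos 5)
Your proof is correct. You exploit the same underlying structure as the paper — the kernel depends only on $\theta-\omega$, so the operator is diagonal on the Fourier basis and the eigenvalues are (up to the normalization you keep track of) the Fourier coefficients of the kernel — but the route to those coefficients differs. You recognize $K_m$ as a Fej\'er kernel, i.e.\ a squared Dirichlet kernel, and read off the coefficients $(m-|n|)$ by squaring a finite geometric sum, which is elementary and self-contained. The paper instead phrases the rotation-invariance in terms of the Funk--Hecke formula for $\Ss^1$, writes the kernel as a function of $\cos(\theta-\omega)$, and obtains the same triangular expansion via the Christoffel--Darboux identity for Chebyshev polynomials $T_k$; this is really the cosine version of your exponential expansion, but packaged through orthogonal-polynomial machinery. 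Your approach buys a shorter and more transparent computation on $\Ss^1$; the paper's viewpoint generalizes more naturally if one were to pose the analogous problem on higher-dimensional spheres, where Funk--Hecke is the standard tool and there is no direct analogue of the Dirichlet-kernel trick. Either argument is complete and the final eigenvalue formula agrees.
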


\begin{proof} Since $\cos ( m \vartheta) = T_m(\cos \vartheta)$, where $T_m$ denotes the Chebyshev polynomial of the first kind with index $m \in \N$, we see that the kernel
$$
K_m(\theta, \omega) = \frac{1}{2 \pi} \frac{1- T_m (\cos (\theta-\omega))}{1-T_1(\cos (\theta - \omega))} 
$$  
only depends on $\cos (\theta-\omega)$. With some slight abuse of notation we shall simply write $K_m(\cos(\theta-\omega))=K_m (\theta, \omega)$ in what follows. Since  the operator $\widetilde{R}_m$ commutes with rotations on $\Ss^1$, we can diagonalize it with respect to the orthogonal decomposition
$$
L^2(\Ss^1) = \bigoplus_{\ell = 0}^\infty \mathcal{Y}_\ell(\Ss^1)
$$
into subspaces $\mathcal{Y}_\ell(\Ss^1)$ indexed by the angular momentum $\ell \in \N$, where we recall that $\mathcal{Y}_0(\Ss^1) = \mathrm{span}  \{ 1 \}$ and $\mathcal{Y}_\ell(\Ss^1) = \mathrm{span} \{ \cos (\ell \theta), \sin (\ell \theta) \}$ for $\ell \geq 1$.

By invoking the classical {\bf Funk--Hecke formula} (see, e.\,g., \cite{Mu-98}), we find that the eigenvalue $\lambda_\ell$ of the operator $\widetilde{R}_m$ acting on $f_\ell \in \mathcal{Y}_\ell(\Ss^1)$ is given by
\be
\widetilde{R}_m f_\ell = \lambda_\ell f_\ell \quad \mbox{with} \quad \lambda_\ell =  2 \int_{-1}^{+1} K_m(y) T_\ell (y) \frac{dy}{\sqrt{1-y^2}}   .
\ee
In order to evaluate this integral explicitly, we shall need the following key identity
\be \label{eq:T_darboux}
2 \pi K_m(y) = \frac{1-T_m(y)}{1-T_1(y)} = m T_0(y) + 2 \sum_{k=1}^{m-1} (m-k) T_k(y) 
\ee
valid for all $y \in [-1,1)$. To show \eqref{eq:T_darboux}, we first recall the {\bf Christoffel--Darboux identity} for Chebyshev polynomials which reads 
\be \label{eq:T_darboux2}
T_0(x) T_0(y) + 2 \sum_{j=1}^n T_j(x) T_j(y) =  \frac{T_{n+1}(x) T_n(y) - T_n(x) T_{n+1}(y)}{x-y} 
\ee
for $x \neq y$ and $n \in \N$; see, e.\,g., \cite{Ri-74}. Using that $T_n(1) = 1$ for all $n \in \N$ and $T_1(y)=y$ together with  \eqref{eq:T_darboux2} for $x=1$ and $y \in [-1,1)$, we conclude that
\begin{align*}
\frac{1- T_m(y)}{1-T_1(y)} & =  \sum_{k=0}^{m-1} \frac{T_k(y) - T_{k+1}(y)}{1-y} = \sum_{k=0}^{m-1} \left ( T_0(y) + 2 \sum_{j=1}^{k} T_j(y) \right ) \\
& = m T_0(y) + 2 \sum_{k=1}^{m-1} (m-k) T_k(y) ,
\end{align*}
which proves that \eqref{eq:T_darboux} holds for $y \in [-1,1)$. With \eqref{eq:T_darboux} at hand now, we deduce
\begin{align*}
\lambda_\ell  = 2 \int_{-1}^{+1} K_m(y) T_\ell(y) \frac{dy}{\sqrt{1-y^2}} = \begin{dcases*} (m- \ell) & for $0 \leq \ell < n$, \\
0 & for $\ell \geq m$, \end{dcases*}
\end{align*}
where we used that well-known orthogonality property $\int_{-1}^{+1} T_i(y) T_j(y) (1-y^2)^{-1/2} dy = c_i \delta_{ij}$ with $c_0 = \pi$ and $c_i=\pi/2$ for $i \geq 1$.  This completes the proof of Lemma \ref{lem:funk-hecke}.
\end{proof}

We now return to the proof of Proposition \ref{prop:null_Lm}. By applying Lemma \ref{lem:funk-hecke}, we find that
\begin{align*}
\left \{ \tvphi \in \dot{H}^{\frac 1 2}(\Ss^1) :  \widetilde{L}_- \tvphi = 0 \right \} =
\mathrm{span} \left \{ 1, \cos \theta, \sin \theta, \ldots, \cos( m \theta), \sin (m \theta) \right  \},
\end{align*}
using that $\widetilde{L}_-  e_\ell = 0$ with $e_\ell \in \{ \cos (\ell \theta), \sin (\ell \theta) \}$ if and only if $0 \leq \ell \leq m$. By recalling the definition of the functions $f_k$ and $g_k$ in \eqref{eq:TmUm}, we complete proof of Proposition \ref{prop:null_Lm}.
\end{proof}

\section{Spectral Analysis in $L^2$} 

\label{sec:spectrum}

We now study the spectra of the operators $L_+$ and $L_-$ acting on $L^2(\R)$ obtained from the linearization around a half-harmonic map $\QBm$ given by \eqref{eq:QBm}. The results stated in Theorems \ref{thm:L2}--\ref{thm:JL} and Corollary \ref{cor:coercive} will follow by summarizing our findings in this section.

By standard theory, we note that $L_+ = |\nabla| - \frac{2m}{1+x^2}$ and $L_- = L_+ + R_m$ are self-adjoint operators on $L^2(\R)$ with operator domains $\mathrm{dom}(L_+)= \mathrm{dom}(L_-)=  H^{1}(\R)$. Furthermore, since $\frac{2m}{1+x^2} \to 0$ as $|x| \to \infty$ and $R_m$ is easily seen to be a compact operator on $L^2(\R)$, we conclude that the essential spectra of $L_+$ and $L_-$ are 
\be
\sigma_{\mathrm{ess}}(L_+) = \sigma_{\mathrm{ess}}(L_-) = [0, + \infty ).
\ee
Below, we will actually show that $L_+$ and $L_-$ on the orthogonal complement of their bound states  are both unitarily equivalent to the ``free'' operator $|\nabla|$. As a consequence, both $L_+$ and $L_-$ have purely absolutely continuous spectrum $\sigma_{\mathrm{ac}}(L_+) = \sigma_{\mathrm{ac}}(L_-) = [0, \infty)$ and, in particular, there are {\em no embedded} eigenvalues $E > 0$ of $L_+$ and $L_-$.

\subsection{Point Spectrum of $L_+$}
We first study the point spectrum for the self-adjoint operators $L_+$ acting on $L^2(\R)$, which is the set defined as
\be
\sigma_{\mathrm{p}}(L_+) = \{ E \in \R : \mbox{$L_+ \vphi = E \vphi$ for some $\vphi \in H^1(\R)$ with $\vphi \not \equiv 0$} \} .
\ee
We obtain the following complete description.

\begin{prop} \label{prop:Lp_L2}
Let $m \in \N$ with $m \geq 1$. Then the point spectrum of the operator $L_+= |\nabla| - \frac{2m}{1+x^2}$ acting on $L^2(\R)$ consists of exactly $2m$ eigenvalues, i.\,e., 
$$
\sigma_{\mathrm{p}}(L_+) = \{ E_0, \ldots, E_{2m-1} \} .
$$
Moreover, every eigenvalue $E_j$ is simple and nonnegative and we have that
$$
E_0 < E_1 < E_2 < \ldots < E_{2m-2} < E_{2m-1} = 0 .
$$
In particular, $0$ is always $L^2$-eigenvalue of $L_+$ and there are no positive eigenvalues $E >0$ embedded in $\sigma_{\mathrm{ess}}(L_+)=[0, +\infty)$. 
\end{prop}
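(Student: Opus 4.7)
The plan is to convert the eigenvalue problem $L_+\vphi = E\vphi$ on $L^2(\R)$ into a Jacobi-matrix spectral problem on $\Ss^1$ via the stereographic projection $\Pi$. Combining Lemma \ref{lem:pol} with the identity $\frac{2}{1+x^2} = 1-\sin\theta$ turns the equation into
\begin{equation*}
J\tvphi = E\tvphi, \qquad J := (1-\sin\theta)\bigl(|\nabla|_{\Ss^1} - m\mathds{1}\bigr), \qquad \tvphi := \vphi\circ\Pi,
\end{equation*}
where the correspondence $\vphi\leftrightarrow\tvphi$ is an isometry between $L^2(\R)$ and the weighted space $L^2\bigl(\Ss^1,(1-\sin\theta)^{-1}d\theta\bigr)$. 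In the Fourier basis $\{e^{\im k\theta}\}_{k\in\Z}$ the operator $J$ acts tridiagonally as
\begin{equation*}
Je^{\im k\theta} = (|k|-m)\,e^{\im k\theta} + \tfrac{\im(|k|-m)}{2}\bigl(e^{\im (k+1)\theta} - e^{\im (k-1)\theta}\bigr),
\end{equation*}
and the decisive feature is that every entry in column $k$ is proportional to $|k|-m$ and therefore vanishes simultaneously at $|k|=m$. Consequently $Je^{\pm \im m\theta}=0$, the $(2m+1)$-dimensional subspace $V_m:=\mathrm{span}\{e^{\im k\theta}:|k|\le m\}$ is $J$-invariant, and moreover $\mathrm{Range}(J)$ is annihilated at $\theta=\pi/2$ because of the prefactor $(1-\sin\theta)$.

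I would then separate bound states from the continuous spectrum using these two features. A function $\tvphi\in V_m$ lifts to $L^2(\R)$ via $\Pi^{-1}$ if and only if $\tvphi(\pi/2)=0$, which defines a $2m$-dimensional subspace $V_m^{\mathrm{loc}}\subset V_m$; since $\mathrm{Range}(J)\subset V_m^{\mathrm{loc}}$, every eigenvector of $J|_{V_m}$ with $E\neq 0$ automatically lies in $V_m^{\mathrm{loc}}$, and the restriction $J|_{V_m^{\mathrm{loc}}}$ is self-adjoint with respect to the weighted inner product (inherited from self-adjointness of $L_+$ on $L^2(\R)$). Its kernel is exactly one-dimensional: only one of the two directions in $\ker J|_{V_m}=\mathrm{span}\{e^{\pm \im m\theta}\}$ satisfies the vanishing condition at $\pi/2$, namely the combination that lifts to the $L^2$-decaying element of $\mathcal{N}(L_+)\cap\dot{H}^{\frac{1}{2}}=\mathrm{span}\{f_m,g_m\}$ (this is $f_m$ for $m$ odd and $g_m$ for $m$ even by the remarks following Proposition \ref{prop:null_Lp}), while the companion direction is the resonance of Theorem \ref{thm:null}. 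The remaining $2m-1$ eigenvalues are real and strictly negative, as witnessed by the quadratic-form identity
\begin{equation*}
(\vphi, L_+\vphi)_{L^2(\R)} = \sum_{|k|\le m}(|k|-m)|a_k|^2,
\end{equation*}
obtained by combining \eqref{eq:master} with the direct computation $\int_\R\frac{2m}{1+x^2}|\vphi|^2\,dx = m\|\tvphi\|_{L^2(\Ss^1)}^2$ via $dx=(1-\sin\theta)^{-1}d\theta$; the right-hand side is negative unless $\tvphi\in\ker J|_{V_m}$. Simplicity of these $2m-1$ eigenvalues follows from the tridiagonal Jacobi structure: after a diagonal similarity symmetrizing $J|_{V_m^{\mathrm{loc}}}$ into a real Jacobi matrix with nonvanishing sub/superdiagonal entries in the indices $|k|<m$, the classical three-term-recurrence argument applies.

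It remains to exclude $L^2$-eigenvalues outside $V_m$, in particular embedded positive eigenvalues in $\sigma_{\mathrm{ess}}(L_+)=[0,\infty)$. For this I would analyze the recurrence $(Ju)_k = Eu_k$ in the regime $|k|\to\infty$: its coefficients are asymptotically linear in $|k|$ and the two fundamental solutions correspond under $\Pi^{-1}$ to the Jost-type functions $Q_m(x)e^{\pm \im Ex}$ flagged in the introduction, neither of which is square-integrable on $\R$. A WKB/Birkhoff-type asymptotic analysis of the three-term recurrence then excludes any $\ell^2$-summable tail for $E>0$. The main technical obstacle I anticipate is precisely this last step: since Theorem \ref{thm:unitaryL2} (which would give the conclusion immediately via unitary equivalence with $|\nabla|$) is not yet at our disposal, the embedded-eigenvalue exclusion must be carried out directly through asymptotic Jacobi-recurrence analysis, for instance via a discrete Wronskian or Green-function estimate adapted to the affine-linear coefficient growth of the Jacobi entries.
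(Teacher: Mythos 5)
Your proposal follows essentially the same route as the paper — stereographic projection to $\Ss^1$, tridiagonal Jacobi structure with the key cancellation at $|k|=m$, invariance of the finite block $V_m = \Lambda^{(0)}_m$, symmetrization of the central tridiagonal matrix to deduce simplicity, and the quadratic-form bound $\langle \tvphi, \tL_+\tvphi\rangle = \sum(|k|-m)|a_k|^2 \le 0$ to obtain strict negativity of the nonzero eigenvalues. Your observation that $\mathrm{Range}(J)$ is annihilated at $\theta = \pi/2$ because of the prefactor $(1-\sin\theta)$, and that this forces any eigenvector with $E\neq 0$ into the localized subspace $V_m^{\mathrm{loc}}$, is a clean packaging of a fact the paper uses more implicitly (via Lemma \ref{lem:J_eigen} and the identification of $\mathcal{N}(L_+)$). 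That part is sound.

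The genuine gap is precisely where you flag it: the exclusion of nonzero eigenvalues with Fourier support outside $|k|\le m$, and in particular of embedded eigenvalues $E>0$. You propose a WKB/Birkhoff asymptotic analysis or discrete Wronskian estimate for the Jacobi recurrence, but you do not carry it out — and it is considerably more involved than what is actually needed. The decisive shortcut, which you miss, is that on the positive-frequency subspace $\Lambda^{(+)}_m$ the nonlocal operator $|\nabla|_{\Ss^1}$ \emph{is} the first-order operator $-\im\,d/d\theta$ (and on $\Lambda^{(-)}_m$ it is $+\im\,d/d\theta$). The spectral equation $Jg = Eg$ restricted to $\Lambda^{(+)}_m$ therefore becomes the explicit scalar ODE $(1-\sin\theta)(-\im\,\partial_\theta - m)g = Eg$, which after the gauge $\widetilde g = e^{-\im m\theta}g$ integrates to $\widetilde g(\theta) = g_0 e^{\im E(\tan\theta + \sec\theta)}$. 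Since $|g| = |g_0|$ is constant on $\Ss^1\setminus\{\im\}$, the necessary finiteness of $\int_{\Ss^1}|g|^2(1-\sin\theta)^{-1}\,d\theta$ (which follows from $Jg=Eg$ with $E\neq 0$) is violated unless $g_0 = 0$. This immediately kills any $\ell^2$-tail for $E\neq 0$ and in particular rules out embedded eigenvalues, without any recourse to the unitary equivalence of Theorem \ref{thm:unitaryL2}. You should also make explicit (as the paper does) that the tridiagonal structure with zero columns at $l=\pm m$ implies any putative eigenvector with a nonzero Fourier coefficient at some $k>m$ would have its $k\ge m$ truncation solve $Jg=Eg$ within $\Lambda^{(+)}_m$ — this is the step that couples the block analysis to the ODE argument and closes the proof.
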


\begin{remarks*}
{\em 
1) In fact, all eigenvalues $E_k $ of $L_+$ can (in principle) be calculated together with the corresponding eigenfunctions $\vphi_k \in L^2(\R)$ by means of suitable orthogonal polynomials, which are related to the tridiagonal matrices $M^{(m)} \in \C^{(2m-1) \times (2m-1)}$ introduced below. In particular, the non-zero eigenvalues $E_k < 0$ of $L_+$ are exactly given by the eigenvalues of the matrix $M^{(m)}$.

For example, the cases $m=1$ and $m=2$ yield
$$
M^{(1)} = -1  \quad \mbox{and} \quad  M^{(2)} = \left [ \begin{array}{ccc} -1 & \im & 0 \\ -\frac{\im}{2} & -2 & \frac{\im}{2} \\ 0 & -\im & -1 \end{array} \right ].
$$
Its eigenvalues are easily found to be $\{ -1 \}$ for $M^{(1)}$ and $\{\frac 1 2( -3 - \sqrt{5}), -1, \frac 1 2 (-3 + \sqrt{5}) \}$ for $M^{(2)}$. Thus we get
\begin{itemize}
\item $L_+ = |\nabla| - \frac{2}{1+x^2}$ has two $L^2$-eigenvalues  which are simple and given by
$$
E_0 = -1, \quad E_1 = 0.
$$
\item $L_+ = |\nabla|- \frac{4}{1+x^2}$ has four $L^2$-eigenvalues which are simple and given by
$$
E_0 = \frac{- 3 - \sqrt{5}}{2}, \quad E_1 = -1, \quad E_2 = \frac{-3 + \sqrt{5}}{2}, \quad E_3 = 0.
$$
\end{itemize}

3) From Proposition \ref{prop:null_Lp} we recall that $L_+$ always has a resonance $\vphi \in \dot{H}^{\frac 1 2}(\R)\setminus L^2(\R)$ with $L_+ \vphi = 0$.

4) Note that we rule out that $L_+$ has {\em no positive eigenvalues} $E > 0$ embedded inside the essential spectrum $\sigma_{\mathrm{ess}}(L_+)= [0, \infty)$.
}
\end{remarks*}

\begin{proof} 
Let $\vphi \in \mathrm{dom}(L_+) = H^1(\R)$ with $\vphi \not \equiv 0$ solve $L_+ \vphi = E \vphi$ with some $E \in \R$. If $E=0$, then we conclude that $E=0$ is a simple eigenvalue of $L_+$ in $L^2(\R)$ from Proposition \ref{prop:null_Lp} and the remark following it. 

Hence it suffices to consider the case $E \neq 0$ for the rest of the proof. As usual, we let $\tvphi = \vphi \circ \Pi : \Ss^1 \to \R$ denote the stereographic lifting of $\vphi$ to the unit circle. By applying Lemma \ref{lem:pol}, we deduce the equivalence 
\be \label{eq:goodstuff}
L_+ \vphi = E \vphi \quad \Longleftrightarrow \quad (1-\sin \theta) \widetilde{L}_+ \tvphi = E \tvphi ,
\ee
where we recall that $\widetilde{L}_+ = |\nabla|_{\Ss^1} - m \mathds{1}$. Note that since we deal with $E \neq 0$, we cannot easily get rid of the factor $(1-\sin \theta)$ in contrast to the previous discussion of the nullspace of $L_+$. Moreover, we remark that for $E \neq 0$ any solution $\vphi \in \dot{H}^{\frac 1 2}(\R)$ of $L_+ \vphi = E \vphi$ automatically belongs to $H^1(\R)$. Furthermore, it is easy to see that $\vphi \in L^2(\R)$ if and only if $\int_{\Ss^1} \frac{|\tvphi|^2}{1- \sin \theta} d \theta$ is finite. The latter condition is readily checked to be true for any solution $\tvphi \in \dot{H}^{\frac 1 2}(\Ss^1)$ of $(1-\sin \theta) \tL_+ \tvphi = E \tvphi$ provided that $E \neq 0$.

Therefore, in view of the equivalence \eqref{eq:goodstuff}, we are led to determine the non-zero eigenvalues $E \neq 0$ for the non self-adjoint and unbounded operator
\be
J = (1-\sin \theta) \widetilde{L}_+,
\ee
which is a closed operator with operator domain $H^1(\Ss^1)$. The upshot of this approach turns out to be that $J$ exhibits the structure of a {\bf Jacobi operator} (i.\,e.~an infinite tridiagonal matrix) when expressed as a matrix in the canonical (complex) basis of $L^2(\Ss^1)$ that is given by $e_k(\theta) =  (2 \pi)^{-1/2} e^{\im k \theta}$ with $k \in \Z$. Recall that the complex inner product for complex-valued functions $f,g \in L^2(\Ss^1)$ is taken to be
\be
\langle f | g \rangle =  \int_{\Ss^1} \overline{f(\theta)} g(\theta) \, d \theta.
\ee 
For the matrix elements of $J$, we use the short-hand notation $J_{kl} = \langle e_k | J e_l \rangle$ with $k,l \in \Z$. Note that $J e_l \in L^2(\Ss^1)$ for all $l \in \Z$. An elementary calculation yields
$$
 J_{kl} = \frac{(|l|-m)}{2 \pi} \int_{\Ss^1} (1- \sin \theta)  e^{\im (l-k) \theta} \,d \theta = (|l|-m) \cdot \begin{dcases*} 1 & for $k = l$, \\
 \pm \frac{\im}{2} & for $k = l \pm 1$, \\ 0 & else. \end{dcases*}
$$
Thus we see that $J$ has a tridiagonal matrix of the form
\begin{equation} \label{eq:Jkl}
[J_{kl}] = 
\left [ \begin{array}{ccc|c|ccc|c|ccc} & A^{(-)} & & \vdots & & 0 & & \vdots & & & 0 \\ &  & & 0 &   &  & & 0 &   &   \\[0.5ex] \hline   \ldots & 0  & \frac{\im}{2} & 0 &  \frac{\im}{2}& 0 & \ldots & 0 & 0 & \ldots \\[0.5ex]  \hline & \ldots & 0 & \vdots & & M^{(m)} & & \vdots & 0 & \ldots \\  \hline & \ldots & 0 & 0 & \ldots & 0 & -\frac{\im}{2} & 0 & -\frac{\im}{2} & 0 & \ldots \\ \hline & & & 0  &  &  &  & 0  &  \\ 0 & & & \vdots & & &  & \vdots &  & A^{(+)} & \end{array}   \right ]
\end{equation}
where the two columns that contain only zeros correspond to $l=-m$ and $l=+m$. In \eqref{eq:Jkl}, the matrices  
\be \label{eq:Apm}
A^{(-)} = \left [ \begin{array}{cccc}
 & & &  0 \\
\ddots &  \ddots & \ddots & \\
& \frac{3 \im}{2} & 2 & -\frac{\im}{2} \\
0 &  & \im & 1 \end{array} \right ] , \quad A^{(+)} = \left [ \begin{array}{cccc}
1 & -\im & & 0 \\
\frac{\im}{2} & 2 & -\frac{3 \im}{2} \\ & \ddots & \ddots & \ddots\\ 0  \end{array} \right ]
\ee
are semi-infinite and tridiagonal.  The finite matrix $M^{(m)}$ is the $(2m-1)\times(2m-1)$-tridiagonal matrix given by the submatrix of $[J_{kl}]$ restricted to $k,l \in \{ -m+1, \ldots, m-1\}$, i.\,e., we have
\begin{equation} \label{eq:matrixMn}
M^{(m)} = \left [ \begin{array}{cccc}
\alpha_{1} & \beta_{2m-2} & & 0\\
- \beta_1 & \ddots & \ddots & \\
& \ddots & \ddots &  \beta_1 \\
0 & & - \beta_{2m-2} & \alpha_{2m-1} \end{array} \right ]  .
\end{equation}
Here the non-zero entries  $(\alpha_n)_{n=1}^{2m-1}$ and $(\beta_n)_{n=1}^{2m-2}$ are given by
\be \label{eq:an}
\alpha_n = \begin{dcases*} -n & for $1 \leq n \leq m-1$,  \\ 
n - 2m & for $m \leq n \leq 2m-1$,  \end{dcases*} 
\ee
\be \label{eq:bn}
 \beta_n = \begin{dcases*} \frac{\im}{2} n & for $1 \leq n \leq m-1$, \\ \frac{\im}{2} (2m-n) & for $m \leq n \leq 2m-2$.  \end{dcases*}  
\ee

We now determine the non-zero eigenvalues of the operator $J : H^1(\Ss^1) \subset L^2(\Ss^1) \to L^2(\Ss^1)$ as follows. In view of \eqref{eq:Jkl}, we decompose $L^2(\Ss^1)$ into the three closed subspaces as
$$
L^2(\Ss^1) = \Lambda^{(-)}_m + \Lambda^{(0)}_m + \Lambda^{(+)}_m,
$$
where
$$
\Lambda^{(-)}_m := \overline{\mathrm{span} \, \{ e_k : k \leq - m \}}, \quad \Lambda^{(0)}_m := \mathrm{span} \, \{ e_k : |k| \leq m \}, \quad \Lambda^{(+)}_m = \overline{\mathrm{span} \, \{ e_k : k \geq m \}}.
$$
Note that  $\Lambda^{(\pm)} \cap \Lambda^{(0)}_m = \mathrm{span} \, \{e_{-m}, e_{+m} \} = \mathrm{ker} \, J$ is the kernel of $J$ (which corresponds to the nullspace $\mathcal{N}(L_+)$ after undoing the stereographic projection). Furthermore, we readily see from \eqref{eq:Jkl} that   $J$ acts invariantly on these three subspaces, i.\,e., $J g \in \Lambda^{(\pm)}_m$ for any $g \in \Lambda^{(\pm)}_m \cap H^1(\Ss^1)$ and $J g \in \Lambda^{(0)}_m$ for any $g \in \Lambda^{(0)}_m$. Thus when determining the eigenvalues of $J$, we can analyze the action of $J$ on the spaces $\Lambda^{(\pm)}_m$ and $\Lambda^{(0)}_m$ separately.

We begin with the finite-dimensional subspace $\Lambda^{(0)}_m$.

\begin{lemma} \label{lem:J_eigen}
The operator $J$ restricted to $\Lambda^{(0)}_m$ has exactly $2m-1$ non-zero eigenvalues $E \neq 0$ and they satisfy (counting multiplicity) 
$$
E_0 < E_1 < \cdots < E_{2m-2} <0 .
$$
In particular, every eigenvalue $E$ is simple and strictly negative.
\end{lemma}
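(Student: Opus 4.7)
The plan is to exploit the Jacobi (tridiagonal) structure of $J$ on $\Lambda_m^{(0)}$ together with the self-adjointness of $L_+$ on $L^2(\R)$, transferred to the unit circle via stereographic projection. I first write $\psi = \sum_{|k|\le m}c_k e_k \in \Lambda_m^{(0)}$ and record the eigenvalue equation $J\psi = E\psi$ componentwise. The two boundary rows at $k=\pm m$ reduce to $Ec_m = -\tfrac{\im}{2}c_{m-1}$ and $Ec_{-m}=\tfrac{\im}{2}c_{-m+1}$, while the interior rows indexed by $|k|\le m-1$ \emph{decouple} from $c_{\pm m}$---the coupling coefficient $|k\pm 1|-m$ vanishes precisely at $k=\pm(m-1)$---and read exactly $E\mathbf{c} = M^{(m)}\mathbf{c}$ for $\mathbf{c} = (c_{-m+1},\ldots,c_{m-1})^\top$. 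Consequently, whenever $E\ne 0$ the boundary data $c_{\pm m}$ are uniquely recovered from $c_{\pm(m-1)}$, and the non-zero eigenvalues of $J|_{\Lambda_m^{(0)}}$ are in bijection with the non-zero eigenvalues of the $(2m-1)\times(2m-1)$ matrix $M^{(m)}$.

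The decisive step is a Rayleigh-quotient argument for the sign of $E\ne 0$, which needs the eigenfunction to lie in $L^2(\R)$. Crucially, evaluating $(1-\sin\theta)\tL_+\widetilde{\psi} = E\widetilde{\psi}$ at the north pole $\theta=\pi/2$ forces $0 = E\widetilde{\psi}(\pi/2)$, hence $\widetilde{\psi}(\pi/2)=0$ automatically. Since $\theta=\pi/2$ is the preimage of $\infty$ under $\Pi$ and $\|\vphi\|_{L^2(\R)}^2 = \int_{\Ss^1}|\widetilde{\vphi}|^2(1-\sin\theta)^{-1}d\theta$, this vanishing is exactly the condition for $\vphi = \widetilde{\psi}\circ \Pi^{-1}$ to belong to $L^2(\R)$ and thus to be a bona fide $L^2$-eigenfunction of $L_+$. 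Using Lemma~\ref{lem:pol} to transfer the Rayleigh quotient back to $\Ss^1$ then yields
\[
E\,\|\vphi\|_{L^2(\R)}^2 \;=\; (\vphi, L_+\vphi)_{L^2(\R)} \;=\; \langle \widetilde{\vphi}, \tL_+\widetilde{\vphi}\rangle \;=\; 2\pi\sum_{|k|\le m}(|k|-m)|c_k|^2.
\]
Every summand on the right is $\le 0$, with equality iff $c_k=0$ for all $|k|<m$; in that case $\psi\in\mathrm{span}\{e_{\pm m}\} = \ker J$, contradicting $E\ne 0$. Hence $E<0$ strictly.

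For the count and simplicity, I pass to the invariant $2m$-dimensional subspace $\Lambda_{m,0}^{(0)} := \{\psi\in\Lambda_m^{(0)} : \widetilde{\psi}(\pi/2)=0\}$; invariance is immediate because the factor $(1-\sin\theta)$ in $J$ vanishes at $\pi/2$. On $\Lambda_{m,0}^{(0)}$ the weighted norm $\|\cdot\|_w$ with $\langle f,g\rangle_w := \int_{\Ss^1}\bar f g\,(1-\sin\theta)^{-1}d\theta$ is finite, so $J$ is self-adjoint there and in particular diagonalizable with real spectrum; intersecting $\ker J = \mathrm{span}\{e_{\pm m}\}$ with the hyperplane $\widetilde{\psi}(\pi/2)=0$ produces a one-dimensional kernel spanned by $e_m+(-1)^{m+1}e_{-m}$. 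Therefore $J|_{\Lambda_{m,0}^{(0)}}$ carries exactly $2m-1$ non-zero eigenvalues counted with multiplicity; via the bijection above these are all the non-zero eigenvalues of $M^{(m)}$, which in particular is invertible. Simplicity then follows from the standard fact that a tridiagonal matrix with non-vanishing sub- and super-diagonals has only simple eigenvalues, applied to $M^{(m)}$.

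The step I expect to be the main obstacle is the $L^2(\R)$-correspondence for eigenfunctions: absent the forced vanishing $\widetilde{\psi}(\pi/2)=0$ for $E\ne 0$, one would only have access to the non-self-adjoint Jacobi matrix $M^{(m)}$, whose strict negativity is not visible from its explicit formula alone. Recognizing that the degenerate factor $1-\sin\theta$ in the definition of $J$ is precisely what funnels every non-zero eigenfunction into the $L^2(\R)$-domain of the self-adjoint operator $L_+$---thereby activating the Rayleigh-quotient machinery---is the conceptual heart of the argument.
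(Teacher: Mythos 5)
Your proof is correct, and its core ingredients coincide with the paper's: the stereographic transfer to $\Ss^1$, the block structure of the matrix of $J$ isolating the finite tridiagonal piece $M^{(m)}$, and the Rayleigh quotient $E\|\vphi\|_{L^2(\R)}^2 = \langle\widetilde\psi,\widetilde{L}_+\widetilde\psi\rangle = \sum_{|k|\le m}(|k|-m)|c_k|^2 \le 0$ (the paper writes this as $\int_{\Ss^1}|g|^2(1-\sin\theta)^{-1}\,d\theta = E^{-1}\langle g,\widetilde{L}_+g\rangle$, which is the same computation). The observation that evaluating $(1-\sin\theta)\widetilde{L}_+\widetilde\psi = E\widetilde\psi$ at $\theta=\pi/2$ forces $\widetilde\psi(\pi/2)=0$, so that nonzero eigenfunctions automatically lie in $L^2(\R)$, is implicit in the paper's computation but you make it explicit, which is clarifying.

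Where you genuinely diverge is in ruling out $0\in\sigma(M^{(m)})$. The paper deduces this from the claim that $E_{2m-2}=0$ would force $\dim\ker M = 3$ and hence $\dim\mathcal{N}(L_+)\ge 3$, contradicting Proposition~\ref{prop:null_Lp}. But the geometric kernel of $M$ is \emph{always} $2$-dimensional: the boundary rows $k=\pm m$ force $c_{\pm(m-1)}=0$ for any kernel vector, while any eigenvector $\mathbf{c}$ of $M^{(m)}$ necessarily has $c_{\pm(m-1)}\neq0$ by the non-vanishing sub/superdiagonal (exactly the Lemma~\ref{lem:jacobi} argument). If $M^{(m)}$ were singular, $0$ would acquire algebraic multiplicity $3$ for $M$ but only geometric multiplicity $2$, so the paper's appeal to the kernel dimension does not directly close the argument. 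Your replacement is cleaner and self-contained: $J$ preserves the $2m$-dimensional hyperplane $\Lambda_{m,0}^{(0)}=\{\widetilde\psi(\pi/2)=0\}$, the weighted pairing $\langle f,g\rangle_w=\int_{\Ss^1}\bar f g\,(1-\sin\theta)^{-1}d\theta$ is finite and positive definite there, $J$ is $\langle\cdot,\cdot\rangle_w$-self-adjoint, and since its kernel there is one-dimensional there must be exactly $2m-1$ nonzero real eigenvalues (with multiplicity), in bijection with those of $M^{(m)}$. This simultaneously yields diagonalizability, realness, invertibility of $M^{(m)}$, and the count; you then still need the tridiagonal argument (Lemma~\ref{lem:jacobi}) for simplicity, but the negativity and the exclusion of $0$ come for free. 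In short: same scaffolding, but a different and more careful route through the last step.
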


\begin{proof}
The operator $J$ restricted to the $2m+1$-dimensional subspace $\Lambda^{(0)}_m$ has the corresponding matrix given by
\be
M = \left [ \begin{array}{ccccc} 0 & \frac{\im}{2} & 0 & \ldots & 0 \\ \vdots & & M^{(m)} & & \vdots \\ 0 & \ldots &0 & -\frac{\im}{2} & 0 \end{array}  \right ] .
\ee
Here $M^{(m)}$ denotes the $(2m - 1)\times (2m-1)$-matrix taken from \eqref{eq:matrixMn}. Note that the first and last column of $M$ are both zero. Thus, by Cramer's rule, the characteristic polynomial $p_M(\lambda)$ of the matrix $M$ factorizes as
\be
p_M(\lambda)  = \det ( M - \lambda \mathds{1}_{2m+1} ) = \lambda^2 \cdot \det ( M^{(m)} - \lambda \mathds{1}_{2m-1} )  .
\ee
Clearly, the vectors $[1,0, \ldots, 0 ]^T$ and  $[0, \ldots, 0, 1]^T$ are eigenvectors for $M$ with eigenvalue $\lambda = 0$. Hence the eigenvalue $\lambda = 0$ is (at least) double degenerate. 

Next, by exploiting the tridiagonal structure of $M^{(m)}$, we deduce that its characteristic polynomial factorizes as 
\be
p_{M^{(m)}}(\lambda)=\det ( M^{(m)} - \lambda \mathds{1}_{2m-1}) = (-1) \prod_{k=1}^{2m-1} (\lambda - \lambda_k)  
\ee
with distinct real roots $\lambda_1 < \lambda_2 < \ldots < \lambda_{2m-1}$; see Lemma \ref{lem:jacobi}. By setting $E_k = \lambda_{k+1}$ for $0 \leq k \leq 2m-2$, we conclude that $J$ restricted to $\Lambda^{(0)}_m$ has exactly $2m-1$ non-zero eigenvalues $E_0 < E_1 < \ldots < E_{2m-2}$. 

It remains to show that the largest eigenvalue $E_{2m-2} < 0$ is strictly negative. Indeed, assume that $J g = E g$ with $E = E_{2m-2} > 0$ and $g \in \Lambda^{(0)}_m$.  Recalling that $J = (1-\sin \theta) \tL_+$ with $\tL_+ = |\nabla|_{\Ss^1} - m \mathds{1}$, we note that
\be
\int_{\Ss^1} \frac{|g(\theta)|^2}{1 - \sin \theta} \, d \theta = \frac{1}{E} \langle g, \tL_+ g \rangle = \frac{1}{E} \sum_{|k| \leq m} (k-m) |\widehat{g}(k)|^2 \leq 0. 
\ee 
But this shows that $g(\theta) = 0$ for a.\,e.~$\theta \in [0,2\pi]$. Hence we must have that $E_{2m-2} \leq 0$. Finally, assume that $E_{2m-2}=0$. Then the matrix $M$ has a three-dimensional kernel. But this implies that $\dim \mathcal{N}(L_+) = \dim \mathrm{ker}(J) = \dim \mathrm{ker}(\tL_+) \geq 3$, which contradicts Proposition \ref{prop:null_Lp}. \end{proof}

\begin{lemma} \label{lem:J_eigen2}
The operator $J$ acting on $\Lambda^{(\pm)}_m$ has no non-zero eigenvalues $E \neq 0$.
\end{lemma}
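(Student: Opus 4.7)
The plan is to use that on $\Lambda^{(+)}_m$ the nonlocal operator $|\nabla|_{\Ss^1}$ collapses to the first-order local operator $-i\partial_\theta$, since only nonnegative Fourier frequencies $k\geq m$ appear. Consequently, for $g\in\Lambda^{(+)}_m\cap H^1(\Ss^1)$ one has $\tilde L_+ g = -ig'(\theta)-mg(\theta)$, and the eigenvalue equation $Jg=Eg$ reduces to the pointwise identity
\[
(1-\sin\theta)\bigl(-ig'(\theta)-mg(\theta)\bigr)\;=\;E\,g(\theta) \quad \text{a.e.\ on }\Ss^1.
\]
The coefficient $(1-\sin\theta)$ vanishes only at $\theta=\pi/2$; away from that point this is a regular linear first-order ODE with smooth coefficients, and a standard bootstrap argument shows that any $L^2$-solution $g$ is in fact smooth on the open arc $\Ss^1\setminus\{\pi/2\}$ and satisfies the equation classically there.

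Using the elementary antiderivative $\int d\theta/(1-\sin\theta)=\tan\theta+\sec\theta$, the ODE integrates explicitly on the (connected) arc to
\[
g(\theta)\;=\;C\,e^{im\theta}\,e^{iE(\tan\theta+\sec\theta)},\qquad \theta\in\Ss^1\setminus\{\pi/2\},
\]
for a single constant $C\in\C$. Equivalently, in the variable $z=e^{i\theta}$ this reads $g(z)=C\,z^m e^{2iE/(z-i)}$, which exhibits an essential singularity at the boundary point $z=i\in\partial\D$.

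The key observation that closes the argument is a dichotomy at $\theta=\pi/2$. The modulus is $|g(\theta)|=|C|\cdot e^{-\mathrm{Im}(E)(\tan\theta+\sec\theta)}$, and the quantity $\tan\theta+\sec\theta$ diverges to $+\infty$ and $-\infty$ as $\theta\to\pi/2^\pm$. If $\mathrm{Im}(E)\neq 0$, then $|g|$ blows up on one side of $\pi/2$ and decays to $0$ on the other, so $g$ has no limit there. If $E\in\R\setminus\{0\}$, then $|g|=|C|$ is constant on the arc but the phase contains $E(\tan\theta+\sec\theta)$ and oscillates wildly with no limit. Either way $g$ fails to be continuous at $\pi/2$ unless $C=0$; since $g\in H^1(\Ss^1)\hookrightarrow C^0(\Ss^1)$ must be continuous, we conclude $C=0$ and hence $g\equiv 0$.

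The case of $\Lambda^{(-)}_m$ is handled analogously, with $|\nabla|_{\Ss^1}$ now acting as $+i\partial_\theta$ on negative frequencies and the same ODE/oscillation analysis applying verbatim; alternatively, one may invoke the intertwining $J\bar g=\overline{Jg}$, valid since $(1-\sin\theta)(|\nabla|_{\Ss^1}-m)$ has real coefficients, which maps any eigenfunction of $J$ in $\Lambda^{(+)}_m$ with eigenvalue $E$ to one in $\Lambda^{(-)}_m$ with eigenvalue $\bar E$ and reduces the second case to the first. The main technical obstacle I anticipate is the regularity bootstrap: one must verify that a weak $L^2$-solution of $Jg=Eg$ in $\Lambda^{(+)}_m\cap H^1(\Ss^1)$ is automatically smooth on $\Ss^1\setminus\{\pi/2\}$, so that classical ODE solution formulas apply. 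This should be routine given that the coefficient is smooth and non-vanishing off $\pi/2$ and that $|\nabla|_{\Ss^1}$ behaves cleanly on the Hardy-space side, but it is the one place where some care is required.
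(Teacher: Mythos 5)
Your proposal is correct, and it follows the same main route as the paper: restrict to $\Lambda^{(+)}_m$, use that positive frequencies turn $|\nabla|_{\Ss^1}$ into $-i\partial_\theta$, and integrate the resulting first-order ODE explicitly to $\tilde g(\theta) = C\,e^{iE(\tan\theta+\sec\theta)}$. Where you diverge is in the closing step. The paper first derives, directly from the eigenvalue equation and the self-adjointness of $\tilde L_+$, the weighted finiteness $\int_{\Ss^1}\tfrac{|g|^2}{1-\sin\theta}\,d\theta = E^{-1}\sum_{k\geq m}(k-m)|\hat g(k)|^2 < \infty$; this immediately kills $E<0$, and then the ODE solution's constant modulus makes the weighted integral diverge for $E>0$. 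You instead appeal to the one-dimensional Sobolev embedding $H^1(\Ss^1)\hookrightarrow C^0(\Ss^1)$ and observe that the explicit solution has no limit at $\theta=\pi/2$: the phase $E(\tan\theta+\sec\theta)$ oscillates without bound if $E\in\R\setminus\{0\}$, and the modulus blows up on one side if $\mathrm{Im}\,E\neq 0$. Your version is a bit more unified — it dispatches all $E\neq 0$, real or complex, in one stroke, whereas the paper first rules out $E<0$ by a separate quadratic-form argument and treats only $E>0$ via the ODE (real $E$ being implicit from the correspondence with $L_+$). It also avoids invoking the weighted-$L^2$ identity, at the (mild) price of relying on the Morrey embedding and the regularity bootstrap, which is indeed routine since $g$ is absolutely continuous and the ODE coefficient is smooth away from $\theta=\pi/2$. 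Both arguments are complete; the intertwining $J\bar g = \overline{Jg}$ you invoke for $\Lambda^{(-)}_m$ is a clean way to transfer the result.
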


\begin{proof}
It suffices to consider the case $\Lambda^{(+)}_m$, since the arguments for $\Lambda^{(-)}_m$ will be analogous. Thus we suppose that $g \in \Lambda^{(+)}_m \cap H^1(\Ss^1)$ solves $J g = E g$ for some $E \neq 0$. 

First, for any $E \neq 0$, we remark that 
\be \label{eq:rule_out}
\int_{\Ss^1} \frac{|g(\theta)|^2}{1-\sin \theta} \, d \theta = \frac{1}{E} \langle g, \widetilde{L}_+ g \rangle = \frac{1}{E} \sum_{k \geq m}  (k-m) |\widehat{g}(k)|^2 < +\infty
\ee
 and that $E < 0$ implies that $g(\theta) = 0$ a.\,e.~on $\Ss^1$. In particular, the operator $J$ acting on $\Lambda^{(+)}_m$ cannot have $E < 0$ as an eigenvalue.   

It remains to rule out that $E>0$ can occur. Since $g \in \Lambda^{(+)}_m \cap H^1(\Ss^1)$ only has positive frequencies (i.\,e.~we have $\widehat{g}(k) = 0$ for $k < 0$), we have $|\nabla|_{\Ss^1} g = -\im g'$. Thus we find that $g : \Ss^1 \to \C$ satisfies the ordinary differential equation
\be
(1- \sin \theta) \left ( -\im \frac{d}{d \theta} - m \right ) g = E g .
\ee
If  we now set $\widetilde{g}(\theta) = e^{-\im m \theta} g(\theta)$, we deduce that the equation
\be
 \frac{d \widetilde{g}}{d \theta} = \frac{\im E \widetilde{g}}{1-\sin \theta} .
\ee
holds for all $\theta \in [0, 2 \pi]$ with $\theta \neq \pi/2$. By integration, we obtain its general non-trivial solution
\be
\widetilde{g}(\theta) = g_0 e^{\im E ( \tan \theta + \sec \theta )} .
\ee
defined on $\Ss^1 \setminus \{ \im \}$, where $g_0 \in \C$ is some constant $g_0 \neq 0$. But since $|g(\theta)| = |\widetilde{g}(\theta)| = |g_0| > 0$ on $\Ss^1 \setminus \{ \im \}$, this shows that
\be
\int_{\Ss^1} \frac{|g(\theta)|^2}{1-\sin \theta} \, d \theta = \int_{\Ss^1} \frac{|g_0|^2}{1-\sin \theta} \, d\theta = +\infty .
\ee
But this contradicts  \eqref{eq:rule_out} and completes the proof of Lemma \ref{lem:J_eigen2}.
\end{proof}

\begin{remark*}
{\em Below, we will further expound the idea of using the {\bf ``gauge transformation''}  $g(\theta) \mapsto \widetilde{g}(\theta) = e^{-\im m \theta} g(\theta)$ when we study the continuous spectrum of $L_+$ and $L_-$ and show a unitary equivalence  to the free operator $|\nabla|$ on $\R$. }
\end{remark*}

{\bf Completing the Proof of Proposition \ref{prop:Lp_L2}.} Let $E \neq 0$ and $\tvphi = \sum_k c_k e_k \in H^1(\Ss^1)$ solve $J \tvphi = E \tvphi$. Suppose that $c_k \neq 0$ for some $k > m$. In view of the matrix for $J$ displayed in \eqref{eq:Jkl}, we find that $g = \sum_{k \geq m} c_k e_k \in \Lambda^{(+)}_m$ is a nontrivial solution of $J g = Eg$ with $E \neq 0$, which contradicts Lemma \ref{lem:J_eigen2}. Thus $c_k = 0$ for all $k > m$. Likewise, we prove that $c_k  = 0$ for all $k < m$. In summary, we obtain $\tvphi = \sum_{|k| \leq m} c_k e_k \in \Lambda^{(0)}_m$ and we conclude that $E \neq 0$ is one of the eigenvalues given by Lemma \ref{lem:J_eigen}. Thus the operator $J$ has exactly $2m-1$ non-zero and simple eigenvalues with
$$E_0 < E_1 < \cdots < E_{2m-2} < 0.$$
By the equivalence  \eqref{eq:goodstuff}, the operator $L_+$ has exactly $\{E_k \}_{k=0}^{2m-2}$ as non-zero eigenvalues and these are simple. Finally, from Proposition \ref{prop:null_Lp} and the remark following it, we deduce that $L_+$ also has $E_{2m-1}=0$ as a simple eigenvalue in $L^2$. 

The proof of Proposition \ref{prop:Lp_L2} is now complete.
\end{proof}

\subsection{Point Spectrum of $L_-$} We now turn to the complete description of the point spectrum of the self-adjoint operator $L_-$ acting on $L^2(\R)$, i.\,e., we consider the set
\be
\sigma_{\mathrm{p}}(L_-) = \{ E \in \R : \mbox{$L_- \vphi = E \vphi$ for some $\vphi \in H^1(\R)$ with $\vphi \not \equiv 0$} \} .
\ee
We derive the following result.

\begin{prop} \label{prop:Lm_L2}
Let $m \in \N$ with $m \geq 1$. Then the point spectrum of $L_- = |\nabla| - \frac{2m}{1+x^2}+ R_m$ acting on $L^2(\R)$ consists only of zero, i.\,e.,
$$
\sigma_{\mathrm{p}}(L_-) = \{ 0 \} .
$$ 
The corresponding eigenspace of $L_-$ in $L^2(\R)$ has dimension $2m$ and is given by
$$
\mathrm{ker} (L_-) = \mathrm{span} \{ \vphi_0, \ldots, \vphi_{2m-1} \},
$$
where $\{ \vphi_k \}_{k=0}^{2m-1} \subset H^1(\R)$ denote the  set of (normalized) $L^2$-eigenfunctions of the operator $L_+$.
\end{prop}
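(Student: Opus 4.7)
The plan is to mimic the stereographic-reduction strategy of the proof of Proposition \ref{prop:Lp_L2} while exploiting the explicit diagonal action of $\widetilde R_m$ on Fourier modes furnished by Lemma \ref{lem:funk-hecke}. Using Lemma \ref{lem:pol} together with the change-of-variables computation
\[
(R\varphi)(x) = (1-\sin\theta)\,(\widetilde R_m \tvphi)(e^{i\theta}),
\]
which follows from \eqref{eq:stereo3} and \eqref{eq:Rm_tilde}, the eigenvalue problem $L_-\varphi = E\varphi$ on $L^2(\R)$ is equivalent to
\[
H\tvphi = E\tvphi, \qquad H := (1-\sin\theta)\,\widetilde L_-, \qquad \widetilde L_- := |\nabla|_{\Ss^1} - m\mathds{1} + \widetilde R_m,
\]
with $\tvphi = \varphi \circ \Pi$, while $\int_\R|\varphi|^2 dx < \infty$ becomes $\int_{\Ss^1}|\tvphi|^2(1-\sin\theta)^{-1}d\theta < \infty$. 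Since $1-\sin\theta$ vanishes quadratically at $\theta = \pi/2$, this integrability is equivalent, for smooth $\tvphi$, to the single point condition $\tvphi(i)=0$. Extending Lemma \ref{lem:funk-hecke} by linearity to the complex exponentials $e_k(\theta) = e^{ik\theta}/\sqrt{2\pi}$ gives $\widetilde R_m e_k = (m-|k|)e_k$ for $|k|<m$ and $\widetilde R_m e_k = 0$ for $|k|\geq m$; combined with $|\nabla|_{\Ss^1}e_k = |k|e_k$ this produces the crucial cancellation
\[
\widetilde L_- e_k = 0 \ \ \text{for } |k|\leq m, \qquad \widetilde L_- e_k = (|k|-m)e_k \ \ \text{for } |k|>m.
\]
In particular $\ker H = \Lambda^{(0)}_m := \operatorname{span}\{e_k : |k|\leq m\}$, and the matrix of $H$ in the Fourier basis has every column indexed by $|l|\leq m$ identically zero.

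To rule out non-zero $L^2$-eigenvalues, I suppose $H\tvphi = E\tvphi$ with $E\neq 0$. The zero-column structure turns the $k$-th scalar equation for $|k|<m$ into $E\tvphi_k = 0$, forcing $\tvphi_k = 0$ for every such $k$. On each invariant subspace $\Lambda^{(\pm)}_m = \overline{\operatorname{span}\{e_k:\pm k\geq m\}}$ the operator $H$ acts as $J$, so the projections $g^{(\pm)} := \sum_{\pm k\geq m} \tvphi_k e_k$ satisfy $Jg^{(\pm)} = Eg^{(\pm)}$; Lemma \ref{lem:J_eigen2} then forces $g^{(\pm)} = 0$, whence $\tvphi \equiv 0$. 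Hence $\sigma_{\mathrm{p}}(L_-)\subseteq\{0\}$.

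For the zero eigenspace, the admissible stereographic lifts form the subspace $\{\tvphi\in\Lambda^{(0)}_m : \tvphi(i)=0\}$, of dimension $2m+1-1 = 2m$, giving $\dim\ker L_-|_{L^2(\R)} = 2m$. To identify this eigenspace with $\operatorname{span}\{\varphi_0,\ldots,\varphi_{2m-1}\}$, I recall from the proof of Proposition \ref{prop:Lp_L2} that every $L^2$-eigenfunction $\varphi_k$ of $L_+$ has its stereographic lift $\tilde\varphi_k$ lying inside $\Lambda^{(0)}_m$ (Lemma \ref{lem:J_eigen2} rules out any contribution from $\Lambda^{(\pm)}_m$) and automatically satisfying $\tilde\varphi_k(i)=0$ (from the $L^2(\R)$-condition). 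Since $\widetilde L_-$ annihilates all of $\Lambda^{(0)}_m$ by the cancellation above, each $\varphi_k$ lies in $\ker L_-$; being $L^2$-orthogonal (distinct eigenvalues of the self-adjoint $L_+$, with a simple zero eigenvalue) they are linearly independent, and $2m$ vectors in a $2m$-dimensional space must span it. The main obstacle is not any single hard calculation but the bookkeeping around the weight $(1-\sin\theta)^{-1}$: getting the kernel dimension exactly right requires translating the $L^2(\R)$-condition into the single linear constraint $\tvphi(i)=0$, and the striking calculational input is that $\widetilde R_m$ cancels $|\nabla|_{\Ss^1}-m$ on the \emph{entire} central block $\Lambda^{(0)}_m$ rather than only on the two-dimensional kernel of $|\nabla|_{\Ss^1}-m$ itself, which is what produces the unexpected identification of $\ker L_-|_{L^2}$ with the full bound-state subspace of $L_+$.
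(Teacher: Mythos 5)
Your proof is correct, and the backbone is the same as the paper's: you reduce via the stereographic projection to the operator $H=(1-\sin\theta)\widetilde L_-$, use Lemma~\ref{lem:funk-hecke} to see that $\widetilde R_m$ cancels $|\nabla|_{\Ss^1}-m\mathds 1$ on the whole central block $\Lambda^{(0)}_m$, exploit the invariance of $\Lambda^{(\pm)}_m$ under $H$ (where $H$ agrees with $J$) together with Lemma~\ref{lem:J_eigen2} to exclude nonzero eigenvalues, and finally analyze $E=0$. Where you diverge is in the endgame. The paper pins down $\dim\ker(L_-|_{L^2})=2m$ by explicitly exhibiting a $2m$-dimensional subspace of $L^2(\R)$ inside $\mathcal N(L_-)$, namely $\operatorname{span}\{f_k-A_k,\,g_k-B_k\}_{k=1}^m$ with hand-chosen constants $A_k,B_k$ making each generator decay, and then caps the dimension above by observing that $1\notin L^2(\R)$ forbids the full $(2m+1)$-dimensional nullspace from lying in $L^2$. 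You instead translate the $L^2(\R)$-integrability of $\varphi=\tvphi\circ\Pi^{-1}$ into the single pointwise constraint $\tvphi(\im)=0$ on the smooth trigonometric polynomials of $\Lambda^{(0)}_m$ (using that $1-\sin\theta$ vanishes quadratically at $\theta=\pi/2$), which immediately yields the codimension-one count $2m$ without any ad~hoc constants. Both arguments are valid; yours is a bit more structural and also makes the identification $\ker(L_-|_{L^2})=\operatorname{span}\{\vphi_0,\ldots,\vphi_{2m-1}\}$ fully explicit — you verify that each lift $\tilde\vphi_k$ lies in $\Lambda^{(0)}_m$ and satisfies $\tilde\vphi_k(\im)=0$, so the $2m$ orthonormal bound states of $L_+$ sit inside, and hence span, the $2m$-dimensional kernel — a point the paper's proof leaves somewhat implicit after establishing the dimension.
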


\begin{proof}
The proof parallels the analysis for the operator $L_+$ in the proof of Proposition \ref{prop:Lp_L2} above. Therefore, we only sketch the main steps and focus on the necessary modifications.

We consider the non self-adjoint closed operator
\be
H = (1- \sin \theta) \widetilde{L}_- \quad 
\ee 
with domain $H^1(\Ss^1)$ and with the self-adjoint operator
\be
\widetilde{L}_- = |\nabla|_{\Ss^1} - m \mathds{1} + \widetilde{R}_m 
\ee
where $\widetilde{R}_m$ is the self-adjoint and bounded integral operator on $L^2(\Ss^1)$ introduced in \eqref{eq:Rm_tilde} above. As in the proof of Proposition \ref{prop:Lp_L2}, we deduce the following equivalence for eigenfunctions $\vphi \in H^1(\R)$ of $L_- \vphi = E \vphi$ with $E \neq 0$:
\be \label{eq:goodstuff2}
L_+ \vphi = E \vphi \quad \Longleftrightarrow \quad H \tvphi = E \tvphi,
\ee
where $\tvphi = \vphi \circ \Pi$. Thus in order to determine all non-zero eigenvalues $E \neq 0$ of $L_-$, we have to find all non-zero eigenvalues of $H$.

Recall that $\widetilde{R}_m$ commutes with rotations on $\Ss^1$ and hence it becomes diagonal in the basis $\{ e_k : k \in \Z \}$. The corresponding eigenvalues of $\widetilde{R}_m$ in the invariant subspaces $\mathcal{Y}_\ell (\Ss^1) = \mathrm{span} \{ e_k  : |k| = \ell \}$ are calculated by Lemma \ref{lem:funk-hecke} through the Funk--Hecke formula. Thus, in analogy to \eqref{eq:Jkl}, the matrix elements $H_{kl} = \langle e_k, H e_l \rangle$ with $k,l \in \Z$ are given by the tridiagonal infinite matrix
\be
[H_{kl}] = \left [ \begin{array}{ccc|c|ccc|c|ccc} & A^{(-)} & & \vdots & & 0 & & \vdots & & & 0 \\ &  & & 0 &   &  & & 0 &   &   \\[0.5ex] \hline   \ldots & 0  & \frac{\im}{2} & 0 & 0 & 0 & \ldots & 0 & 0 & \ldots \\[0.5ex]  \hline & \ldots & 0 & \vdots & & N^{(m)} & & \vdots & 0 & \ldots \\  \hline & \ldots & 0 & 0 & \ldots & 0 & 0 & 0 & -\frac{\im}{2}  & 0 & \ldots \\ \hline & & & 0  &  &  &  & 0  &  \\ 0 & & & \vdots & & &  & \vdots &  & A^{(+)} & \end{array}   \right ] 
\ee      
with the same semi-infinite matrices $A^{(-)}$ and $A^{(+)}$ as in \eqref{eq:Apm} above. Furthermore, the central $(2m-1) \times (2m-1)$-matrix denoted by $N^{(m)}$ is now the {\em zero matrix}, i.\,e.,
\be
N^{(m)} = \left [ \begin{array}{ccc} 0 & \cdots & 0 \\ \vdots & \ddots & \vdots \\ 0 & \cdots & 0 \end{array} \right ].
\ee
We claim that the following holds.

\begin{lemma}
The operator $H$ has no non-zero eigenvalue $E \neq 0$.
\end{lemma}

\begin{proof}
As in the analysis for $J$ above, we find that $H$ acts invariantly on the subspaces $\Lambda^{(\pm)}_m$ and $\Lambda^{(0)}_m$. Thus we can split our eigenvalue analysis accordingly.

First, we note that $H$ restricted to the $2m+1$-dimensional subspace $\Lambda^{(0)}_m$ is given by the $(2m+1) \times (2m+1)$-matrix
\be
N =  \left [ \begin{array}{ccccc} 0 & 0 & 0 & \ldots & 0 \\ \vdots & & N^{(m)} & & \vdots \\ 0 & \ldots &0 & 0 & 0 \end{array}  \right ] ,
\ee
where all entries are zero. Thus $H$  restricted to $\Lambda^{(0)}_m$ is identically zero and hence $E \neq 0$ cannot be an eigenvalue of $H$ acting on $\Lambda^{(0)}_m$.

Finally, by adapting the arguments detailed in the proof of Lemma \ref{lem:J_eigen2}, we conclude that  $H$ acting $\Lambda^{(\pm)}_m$ has non-zero eigenvalues $E \neq 0$ either. 
\end{proof}

In view of \eqref{eq:goodstuff2}, we conclude that $L_+$ has no non-zero eigenvalue $E \neq 0$. We claim that $E=0$ is an $L^2$-eigenvalue for $L_-$ with and that its eigenspace in $L^2(\R)$ has dimension $2m$. From Proposition \ref{prop:null_Lm} we recall that nullspace of $L_-$ in $\dot{H}^{\frac 1 2}(\R)$ is 
\be \label{eq:null_Lm_again}
\mathcal{N}(L_-) = \mathrm{span} \, \{ 1, f_1, \ldots, f_m, g_1, \ldots, g_m \} ,
\ee
where the $\dot{H}^{\frac 1 2}$-functions $f_k$ and $g_k$ are given through \eqref{eq:TmUm}. Now consider the subspace
\be
N = \mathrm{span} \, \{ f_1 - A_1, \ldots, f_m - A_m, g_1 - B_1, \ldots, g_m -B_m \} \subset \mathcal{N}(L_-),
\ee
where the constants $A_k$ and $B_k$ are chosen such that
\be
A_k = \begin{dcases*} 0 & if $k$ is odd \\ -1 & if $k=2,6,10, \ldots$ \\ +1 & if $k=4,8,12, \ldots$ \end{dcases*}, \quad B_k = \begin{dcases*} 1 & if $k=1,5,9, \ldots$ \\ -1 & if $k=3,7,11,\ldots$ \\ 0 & if $k$ is even \end{dcases*} .
\ee
It is clear that $\dim N = 2m$ and it follows that $N \subset H^1(\R)$ by elementary properties of Chebyshev polynomials. Therefore the eigenvalue $E=0$ for $L_-$ in $L^2(\R)$ has an eigenspace of at least $2m$ dimensions. Finally, we note that the kernel of $L_- : H^1(\R) \to L^2(\R)$ cannot have more than $2m$ dimensions in $L^2(\R)$, as this would contradict \eqref{eq:null_Lm_again}.

The proof of Proposition \ref{prop:Lm_L2} is now complete.
\end{proof}

\subsection{Continuous Spectrum and Unitary Gauge Transform}

We start with a remarkable factorization formula, which is somewhat reminiscent of the first-order factorzaition of classical one-dimensional Schr\"odinger operators $H= -\pt_{xx} + V$ using the ground state eigenfunction, which is typically referred to as Darboux transformation.

\begin{lemma}[Darboux-type formula] \label{lem:darboux} For any $f \in \dot{H}^{\frac 1 2}(\R)$, it holds
$$
L_+ f = Q_m \left ( - \im \frac{d}{dx} \right ) \overline{Q}_m f_+ + \overline{Q}_m \left (+\im \frac{d}{dx} \right )  Q_m f_- ,
$$
where 
$$
Q_m(x) = f_m(x) + \im g_m(x) = \left ( \im \cdot \frac{x- \im}{x+ \im} \right )^m.
$$  
\end{lemma}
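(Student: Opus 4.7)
The plan is to verify the identity by direct computation, exploiting two facts: on the Hardy-type subspaces $L^2_{\pm}(\R)$, the nonlocal operator $|\nabla|$ reduces to the local first-order operator $\mp \im \pt_x$; and the function $Q_m$ has unit modulus on $\R$, so conjugation by $Q_m$ produces a multiplication potential that should coincide with $-\tfrac{2m}{1+x^2}$. The cancellation mechanism will therefore be purely algebraic and driven by the identity $|Q_m|^2 \equiv 1$.

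First I would recall that for $g \in \dot{H}^{\frac 1 2}(\R)$ with $\Pi_+ g = g$ one has $|\nabla| g = -\im \pt_x g$, while if $\Pi_- g = g$ then $|\nabla| g = +\im \pt_x g$, both being immediate from Fourier inversion since $|\xi| = \pm \xi$ on the half-lines $\pm \xi \geq 0$. Next, assuming enough regularity to apply the product rule, I would expand
\begin{align*}
Q_m(-\im \pt_x)(\overline{Q}_m f_+) &= -\im\, Q_m (\pt_x \overline{Q}_m)\, f_+ + (-\im\, \pt_x f_+), \\
\overline{Q}_m(+\im \pt_x)(Q_m f_-) &= +\im\, \overline{Q}_m (\pt_x Q_m)\, f_- + (+\im\, \pt_x f_-).
\end{align*}
By the preceding observation the two ``free'' terms reassemble into $|\nabla|(f_+ + f_-) = |\nabla| f$, so it only remains to identify the multiplicative contribution.

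Differentiating $|Q_m|^2 \equiv 1$ gives $Q_m \pt_x \overline{Q}_m + \overline{Q}_m \pt_x Q_m = 0$, so the two multiplicative coefficients are equal to a single real-valued function $V(x) := -\im\, Q_m \pt_x \overline{Q}_m$, and the multiplicative contribution collapses to $V(x)(f_+ + f_-) = V(x) f$. Writing $Q_m = w^m$ with $w(x) = \im \tfrac{x-\im}{x+\im}$ (so $\overline{w} = w^{-1}$), a short direct computation gives $\pt_x w = -2/(x+\im)^2$ and $\overline{w}\, \pt_x w = 2\im/(1+x^2)$, whence
\[
Q_m \pt_x \overline{Q}_m \;=\; -m\, \overline{w}\, \pt_x w \;=\; -\frac{2m \im}{1+x^2}, \qquad V(x) \;=\; -\frac{2m}{1+x^2},
\]
which matches the multiplier in $L_+$ exactly.

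The remaining point is to upgrade the identity from smooth $f$ to arbitrary $f \in \dot{H}^{\frac 1 2}(\R)$. Since $\Pi_{\pm}$, multiplication by $Q_m$ or $\overline{Q}_m$, and $\pt_x$ are bounded between the appropriate Sobolev spaces (using $\|Q_m\|_{L^\infty} = 1$ and the fact that $\pt_x Q_m$ decays like $\langle x \rangle^{-2}$), both sides of the asserted identity define bounded maps $\dot{H}^{\frac 1 2}(\R) \to \dot{H}^{-\frac 1 2}(\R)$, and density of Schwartz functions concludes the proof. I do not anticipate any genuine obstacle beyond the bookkeeping just described; the content of the lemma is the elementary algebraic identity $-\im\, Q_m \pt_x \overline{Q}_m = -2m/(1+x^2)$, and this is precisely what will drive the gauge-transform construction behind the unitary equivalence $U^\ast P^\perp L_\pm P^\perp U = |\nabla|$ in Theorem \ref{thm:unitaryL2}.
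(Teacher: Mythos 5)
Your proof is correct, and it takes a genuinely different route from the paper's. The paper deduces the identity by passing through the stereographic projection to the circle: it uses that the lift $\widetilde{f}_\pm = f_\pm \circ \Pi$ lands in the Hardy spaces $L^2_\pm(\Ss^1)$, writes $\tL_+ = |\nabla|_{\Ss^1} - m\mathds{1}$ as $e^{\im m\theta}(-\im\tfrac{d}{d\theta})e^{-\im m\theta}$ on the positive-frequency sector (and the conjugate form on the negative sector), and then transports the factored operator back to $\R$ via $\widetilde Q_m = e^{\im m\theta}$ and $\tfrac{d}{dx} = (1-\sin\theta)\tfrac{d}{d\theta}$. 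You instead stay on $\R$ throughout: you use $|\nabla| = \mp\im\pt_x$ on $L^2_\pm(\R)$, expand with the product rule, observe that $Q_m\overline Q_m \equiv 1$ forces the two multiplicative coefficients to coincide with a single real potential $V = -\im\, Q_m\pt_x\overline Q_m$, and compute $V = -\tfrac{2m}{1+x^2}$ directly. (Your computation of $\overline w\,\pt_x w = 2\im/(1+x^2)$, and hence $V = -2m/(1+x^2)$, is correct.) Your approach is more elementary and makes the algebraic content — that the Darboux factorization is driven precisely by $|Q_m| \equiv 1$ together with the Hardy-space reduction of $|\nabla|$ — immediately visible; the paper's detour through the circle is more in the spirit of its global strategy (it sets up the Jacobi-operator machinery used throughout Section 6), but is not logically needed for this particular lemma. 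Both proofs handle the extension from smooth $f$ to $\dot H^{1/2}$ by the same density argument, so there is no gap there.
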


\begin{proof}
By a standard density argument, it suffices to consider the case $f \in H^1(\R)$. Let $f_+ = \Pi_+ f$ and $f_- = \Pi_- f$ denote the projections of $f$ onto positive and negative frequencies, respectively. Next, let $g \in C^\infty_c(\R)$ be an arbitrary test function. From the behavior under the stereographic projection we deduce
\be
(g, L_+ f) = (g, L_+ f_+) + (g, L_+ f_-) = \langle \widetilde{g}, \tL_+ \widetilde{f}_+ \rangle + \langle \widetilde{g}, \tL_+ \widetilde{f}_- \rangle,
\ee  
 with $\tL_+ = |\nabla|_{\Ss^1} - m \mathds{1}$ and where $\widetilde{f}_+ = f_+ \circ \Pi \in \dot{H}^{\frac 1 2}(\Ss^1)$ an $\widetilde{g}= g \circ \Pi \in \dot{H}^{\frac 1 2}(\Ss^1)$. Since also $f_+ \in L_+^2(\R)$, it is a classical fact (see, e.\,g.,~\cite[Chapter 13]{Ma-09}) from the theory of Hardy spaces that its stereographic lifting $\widetilde{f}_+ \in L_+^2(\Ss^1) = \mathcal{H}^2(\Ss)$ is supported on the positive frequencies on the circle. Hence $\widetilde{f}_+ = \sum_{k \geq 0} f_k e^{\im k \theta}$ satisfies 
 \be
 \tL_+ \widetilde{f}_+ = \left ( |\nabla|_{\Ss^1} - m  \right ) \widetilde{f}_+ = \left ( - \im \frac{d}{d \theta} - m \right ) \widetilde{f}_+ = e^{\im m \theta} \left ( - \im \frac{d}{d \theta} \right ) e^{-\im m \theta}  \widetilde{f}_+ .
 \ee
Recalling that $\widetilde{Q}_m = Q_m \circ \Pi = e^{\im m \theta}$, $\frac{d}{d x} = (1- \sin \theta) \frac{d}{d \theta}$ for $e^{\im \theta} = \Pi(x)$, we conclude that
\be
\langle \widetilde{g}, \tL_+ \widetilde{f}_+ \rangle = \left \langle \widetilde{g}, Q_m \left ( -\im \frac{d}{d \theta} \right ) \overline{Q}_m \widetilde{f}_+ \right \rangle = \left ( g, Q_m \left (- \im \frac{d}{d x} \right ) \overline{Q}_m f_+ \right ) .
\ee
In the same fashion as above, we derive the identity 
\be
\langle \widetilde{g}, \tL_+ \widetilde{f}_- \rangle =  \left (g, \overline{Q}_m \left ( + \im \frac{d}{dx} \right ) Q_m f_- \right ),
\ee 
which completes the proof of Lemma \ref{lem:darboux}.
\end{proof}

Next, we define the linear bounded map $U$ on $L^2(\R)$ by setting
\be
U f = Q_m f_+ + \overline{Q}_m f_-  \quad \mbox{for $f \in L^2(\R)$},
\ee
where $f_+= \Pi_+ f$ and $f_- = \Pi_- f$. In fact, we can show the following result.

\begin{lemma} \label{lem:unitary}
We have $U f \in P^\perp L^2(\R)$ for any $f \in L^2(\R)$. Moreover, the map $U : L^2(\R) \to P^\perp L^2(\R)$ is {\bf unitary}, where its inverse is given by its adjoint $U^* : P^\perp L^2(\R) \to L^2(\R)$ with
$$
U^* g = \overline{Q}_m g_+ + Q_m g_-,
$$
where $g_+ = \Pi_+ g$ and $g_- = \Pi_- g$.
\end{lemma}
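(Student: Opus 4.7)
The plan is to view $Q_m(z) = (\im(z-\im)/(z+\im))^m$ as an inner function on the upper half-plane: it is holomorphic on $\overline{\C}_+$ with $|Q_m(z)| < 1$ for $z \in \C_+$ and $|Q_m(x)| = 1$ for $x \in \R$, and its only zero in $\C_+$ lies at $z = \im$ with multiplicity $m$. Consequently, multiplication by $Q_m$ sends the Hardy space $L^2_+(\R) = \mathcal{H}^2(\C_+)$ isometrically into itself, and the Beurling--Lax theorem yields the orthogonal decomposition $L^2_+(\R) = Q_m L^2_+(\R) \oplus K_{Q_m}$, where the model space has the explicit description $K_{Q_m} = \{p(x)/(x+\im)^m : p \in \C[x],\ \deg p \leq m-1\}$ of complex dimension $m$. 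By symmetry, $\overline{Q}_m$ is inner on $\C_-$, so that $L^2_-(\R) = \overline{Q}_m L^2_-(\R) \oplus K_{\overline{Q}_m}$ with $K_{\overline{Q}_m} = \{q(x)/(x-\im)^m : q \in \C[x],\ \deg q \leq m-1\}$.

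With these preliminaries the isometry property is immediate: since $Q_m f_+ \in L^2_+(\R)$ and $\overline{Q}_m f_- \in L^2_-(\R)$ lie in orthogonal subspaces of $L^2(\R)$ and $|Q_m| = |\overline{Q}_m| = 1$ on the real line,
\[
\|Uf\|_{L^2}^2 = \|Q_m f_+\|_{L^2}^2 + \|\overline{Q}_m f_-\|_{L^2}^2 = \|f_+\|_{L^2}^2 + \|f_-\|_{L^2}^2 = \|f\|_{L^2}^2.
\]
The range of $U$ is therefore $\mathrm{Ran}(U) = Q_m L^2_+(\R) \oplus \overline{Q}_m L^2_-(\R)$, and its orthogonal complement in $L^2(\R)$ is the $2m$-dimensional subspace $K_{Q_m} \oplus K_{\overline{Q}_m}$. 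Combining the two descriptions via partial fractions gives
\[
K_{Q_m} \oplus K_{\overline{Q}_m} = \bigl\{ P(x)/(x^2+1)^m : P \in \C[x],\ \deg P \leq 2m-1 \bigr\}.
\]

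The main step is to identify this subspace with the $L^2$-eigenspace $V := \mathrm{span}_{\C} \{\vphi_0, \dots, \vphi_{2m-1}\}$ of $L_+$. By Theorem \ref{thm:L2}, $V$ has complex dimension $2m$. From the analysis of the Jacobi operator $J$ in Section \ref{sec:spectrum}, every $L^2$-eigenfunction $\vphi$ lifts under the stereographic projection $\Pi$ to a trigonometric polynomial $\widetilde{\vphi} = \sum_{|j|\leq m} c_j e^{\im j \theta}$ in the invariant subspace $\Lambda^{(0)}_m$, subject to the integrability condition $\int_{\Ss^1} |\widetilde{\vphi}|^2/(1-\sin\theta)\,d\theta < \infty$; since $1-\sin\theta$ has a simple zero at $\theta = \pi/2$, this is equivalent to $\widetilde{\vphi}(\pi/2) = 0$, i.e.\ $\sum c_j \im^j = 0$. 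Lifting back by $e^{\im\theta} = \im(x-\im)/(x+\im)$ and factoring out the vanishing condition at $x = \infty$ (write the associated Laurent polynomial in $u = (x-\im)/(x+\im)$ as $(u-1)u^{-m} Q(u)$ with $\deg Q \leq 2m-1$, then multiply numerator and denominator by $(x+\im)^m$), a direct calculation rewrites $\vphi$ as $P(x)/(x^2+1)^m$ with $P \in \C[x]$ of degree $\leq 2m-1$. Since this provides an injective linear map from the $2m$-dimensional space $V$ into the $2m$-dimensional subspace $K_{Q_m} \oplus K_{\overline{Q}_m}$, it is a bijection. Consequently, $\mathrm{Ran}(U)^\perp = V = P L^2(\R)$, which simultaneously shows $Uf \in P^\perp L^2(\R)$ for every $f$ and surjectivity of $U$ onto $P^\perp L^2(\R)$.

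Finally, the adjoint formula is a direct verification: given $g \in P^\perp L^2(\R) = \mathrm{Ran}(U)$, its unique preimage $h \in L^2(\R)$ satisfies $g_+ = Q_m h_+$ and $g_- = \overline{Q}_m h_-$, so $\overline{Q}_m g_+ + Q_m g_- = h_+ + h_- = h$, confirming that $U^* g := \overline{Q}_m g_+ + Q_m g_-$ furnishes the two-sided inverse of $U$; by the isometry property this coincides with the Hilbert-space adjoint. The principal obstacle will be the matching step, which requires unpacking the stereographic projection and the $L^2(\R)$-decay condition at $x = \infty$ to extract the rational-function form of the eigenfunctions.
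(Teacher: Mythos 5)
Your proof is correct, but it takes a genuinely different route from the paper's. The paper's Step 1 verifies $Uf \perp \vphi_k$ for each eigenfunction directly via the stereographic projection (treating the $E_k<0$ eigenfunctions and the $E=0$ eigenfunction $\vphi_{2m-1}$ by separate arguments), and its Step 2 establishes surjectivity by exhibiting $U^*$ and showing $\overline{Q}_m g_+ \in L_+^2$, $Q_m g_- \in L_-^2$ for $g \in P^\perp L^2$ via a density reduction (the auxiliary Lemma \ref{lem:dense_ran}). You instead invoke the Beurling--Lax structure theory: $Q_m$ is a degree-$m$ finite Blaschke product on $\C_+$, so $\mathrm{Ran}(U) = Q_m L_+^2 \oplus \overline{Q}_m L_-^2$ is a closed subspace whose orthogonal complement is the explicit $2m$-dimensional model space $K_{Q_m}\oplus K_{\overline{Q}_m}$ of rational functions $P(x)/(x^2+1)^m$ with $\deg P \leq 2m-1$, and then you identify that space with $\mathrm{Ran}(P)$ by dimension counting once you have shown $\vphi_0,\dots,\vphi_{2m-1}$ have this rational form. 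This buys you both assertions of the lemma at once and avoids the density argument entirely; the price is the matching step, which you acknowledge needs to be carried out in detail. That step is sound: the lifts $\tvphi_k$ live in $\Lambda^{(0)}_m$, the $L^2(\R)$ condition forces $\tvphi_k(\pi/2)=0$, and unwinding $u=e^{\im\theta}=\im\frac{x-\im}{x+\im}$ produces exactly $P(x)/(x^2+1)^m$.

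One small but genuine error: you say $1-\sin\theta$ has a \emph{simple} zero at $\theta=\pi/2$, but in fact $\sin\theta$ has a nondegenerate maximum there, so $1-\sin\theta \sim \tfrac12(\theta-\pi/2)^2$ is a \emph{double} zero. Your conclusion that integrability of $|\tvphi|^2/(1-\sin\theta)$ is equivalent to $\tvphi(\pi/2)=0$ is nevertheless correct, but it is correct \emph{because} of the double zero: a smooth $\tvphi$ with $\tvphi(\pi/2)\neq 0$ makes $|\tvphi|^2/(\theta-\pi/2)^2$ nonintegrable, while $\tvphi(\pi/2)=0$ makes the quotient bounded. If the zero were only simple, the integrability would hold automatically for every trigonometric polynomial and the vanishing condition would not be forced, which would break your argument. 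Fix the stated order of the zero; the rest stands.
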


\begin{proof}
We divide the proof into the following steps.

\subsubsection*{Step 1} Let $f \in L^2(\R)$ be given and we write $f_+ = \Pi_+ f$ and $f_- = \Pi_- f$. We claim that 
\be \label{eq:U1}
U f = Q_m f_+ + \overline{Q}_m f_- \in P^\perp L^2(\R).
\ee 
To see this, we first note that, since $f_+ \in L^2_+(\R)$, its stereographic lift satisfies $\widetilde{f}_+ = f_+ \circ \Pi \in L_+^2(\Ss^1)$ by a classical result (see, e.\,g.,~\cite[Chapter 13]{Ma-09}). Likewise, we find that $\widetilde{f}_- = f_- \circ \Pi \in L_-^2(\Ss^1)$. Recalling that $\widetilde{Q}_m = Q_m \circ \Pi = e^{\im m \theta}$,  we deduce
\be
\widetilde{Q}_m \widetilde{f}_+ = \sum_{k \geq m} a_k e^{\im k \theta} \in L^2_+(\Ss^1), \quad \widetilde{\overline{Q}}_m \widetilde{f}_- = \sum_{k \leq -m} b_k e^{\im k \theta} \in L^2_-(\Ss^1).
\ee
Now let $\vphi_k \in H^1(\R)$ be an $L^2$-eigenfunction of $L_+$ with eigenvalue $E_k < 0$. Then
\begin{align*}
(\vphi_k, Q_m f_+ ) & = E_k^{-1} ( \vphi_k, L_+( Q_m f_+) )  = E_k^{-1} \langle \tvphi_k, \tL_+ (\widetilde{Q}_m \widetilde{f}_+) \rangle \\
& = E_k^{-1} \left \langle \tvphi_k, \tL_+ \left ( \sum_{k \geq m} a_k e^{\im k \theta} \right ) \right \rangle =0 ,
\end{align*}
which follows from $\tL_+ ( \sum_{k \geq m} a_k e^{\im k \theta} ) = \sum_{k \geq m+1}  (k-m) a_k e^{\im k \theta} \perp \tvphi_k \in \Lambda^{(0)}_m$. Thus we have found $ Q_m f_+ \perp \vphi_k$ and, similarly, we show that $\overline{Q}_m f_- \perp \vphi_k$. In summary, we deduce the $L^2$-orthogonality so that
$$
U f \perp \mathrm{span} \left \{ \vphi_0, \ldots, \vphi_{2m-2} \right \}.
$$
To finally conclude that $P U f = 0$, i.\,e., $Uf \in P^\perp L^2(\R)$, it remains to show that
$$
Uf \perp \vphi_{2m-1},
$$ 
where $\vphi_{2m-1} \in H^1(\R)$ is the unique $L^2$-zero eigenfunction of $L_+$. From the discussion in Section \ref{sec:nullspace} we see that $\vphi_{2m-1} = \mathrm{Re} \, Q_m$ if $m \in \N$ is odd and $\vphi_{2m-1} = \mathrm{Im} \, Q_m$ if $m \in \N$ is even. Thus,  up to an inessential normalization constant, we have $\vphi_{2m-1} = Q_m + \alpha_m \overline{Q}_m$ with $\alpha_m = (-1)^{m+1}$. Hence we have to show that
\be \label{eq:final_touch}
( Q_m + \alpha_m \overline{Q}_m, Uf ) = (Q_m + \alpha_m \overline{Q}_m, Q_m f_+) + (Q_m + \alpha_m \overline{Q}_m, \overline{Q}_m f_-) = 0  .
\ee 
To prove this, we first claim that
\be \label{eq:final_touch2}
(Q_m + \alpha_m \overline{Q}_m , \overline{Q}_m f_-)  = 0.
\ee 
Since $f_- \in L^2_-(\R)$ and $(Q_m + \alpha_m \overline{Q}_m, \overline{Q}_m f_-) = (Q_m^2 + \alpha_m, f_-)$, it suffices to show that $Q_m^2 + \alpha_m$ belongs to $L^2_+(\R)$. Indeed, we notice that $(Q_m^2 + \alpha_m) \circ \Pi = e^{2 \im m \theta} + \alpha_m \in L^2_+(\Ss^1)$. Moreover, we recall that the map
$$
T : L^2_+(\Ss) \to L^2_+(\R), \quad g \mapsto (Tg)(x) = \frac{\sqrt{2}}{x - \im} (g \circ \Pi^{-1})(x)
$$ 
is unitary; see, e.\,g., \cite[Chapter 13]{Ma-09} adapted to our choice of the stereographic projection. In particular, we find that $w := T( \widetilde{h} ) = \frac{1}{x- \im} (Q_m^2 + \alpha_m) \in L^2_+(\R)$ and hence the (distributional) Fourier transform $\mathcal{F}(Q_m^2 + \alpha_m ) = ( -\im \frac{d}{d \xi} - \im) \widehat{w}$ is supported in $[0, \infty)$. Since $Q_m^2 + \alpha_m \in L^2(\R)$, we must have that $Q_m^2 + \alpha_m \in L^2_+(\R)$, whence \eqref{eq:final_touch2} follows. In the same fashion, we conclude that $(Q_m + \alpha_m \overline{Q}_m, f_+) = ( 1 + \alpha_m \overline{Q}^2_m, f_+)= 0$ thanks to the fact that $1 + \alpha_m \overline{Q}^2_m \in L^2_-(\R)$, which follows by taking the complex conjugate of $\alpha_m^{-1} ( \alpha_m + Q_m^2) \in L^2_+(\R)$. 

Thus we have shown that \eqref{eq:final_touch} holds and thereby completing the proof that $Uf \in P^\perp L^2(\R)$ holds.

\subsubsection*{Step 2} We show that $U : L^2(\R) \to P^\perp L^2(\R)$ is a unitary map. First, we demonstrate that $U$ is an isometry. Let $f \in L^2(\R)$ be given and write $f_+ = \Pi_+ f$ and $f_- = \Pi_- f$ as before. By adapting the Hardy space arguments used in Step 1 above, we deduce that $Q_m f_+ \in L^2_+(\R)$ and $\overline{Q}_m f_- \in L^2_-(\R)$. Thus, using also that $Q_m \overline{Q}_m=1$, we see
\begin{align*}
(U f, U f) & = (Q_m f_+ + \overline{Q}_m f_-, Q_m f_+ + \overline{Q}_m f_-) = (Q_m f_+, Q_m f_+) + (\overline{Q}_m f_-, \overline{Q}_m f_-) \\
& = (f_+, f_+) + (f_-, f_-) = (f,f),
\end{align*}
whence it follows that $U$ is an isometry.

It remains to prove that $U : L^2(\R) \to P^\perp L^2(\R)$ is surjective. Let $g \in P^\perp L^2(\R)$ be given. We claim that
\be \label{eq:unitary_good}
\overline{Q}_m g_+ \in L^2_+(\R) \quad \mbox{and} \quad Q_m g_- \in L^2_-(\R).
\ee
Since $L^2_\pm(\R)$ are closed subspaces in $L^2(\R)$ and by the density result provided in Lemma \ref{lem:dense_ran}, it suffices to show the above claim under the additional assumptions that $g \in \mathrm{ran} (L_+)$ and $|g(x)| \leq C/(1+x^2)$. 

To prove the claim \eqref{eq:unitary_good}, we argue as follows. Let $\vphi = \sum_{k=0}^{2m} c_k \vphi_k \in \mathrm{ran}(P) + \mathcal{N}(L_+)$ be a linear combination of the $L^2$-eigenfunctions $\{ \vphi_k  \}_{k=0}^{2m-1}$ of $L_+$ and the resonant zero mode $\vphi_{2m} \in \dot{H}^{\frac 1 2}(\R) \setminus L^2(\R)$. Since $P^\perp g = g$, we have $g \perp \vphi_k$ for all $0 \leq k \leq 2m-1$. Furthermore, since $g = L_+ h$ for some $h \in H^1(\R)$ and by the decay properties $|g(x)| \leq C /(1+x^2)$ and $\vphi_{2m} \in \dot{H}^{\frac 1 2}(\R) \subset L^1(\R, (1+x^2)^{-1} dx)$, we conclude that $(f,\vphi_{2m} ) = (L_+ g, \vphi_{2m} ) = (g, L_+ \vphi_{2m}) = 0$. In summary, by using the stereographic projection, we obtain 
\be \label{eq:foo_doo}
0 = (g, \vphi) = (L_+ h, \vphi ) = \langle \tL_+ \widetilde{h}, \tvphi \rangle = \langle \widetilde{h}, \tL_+ \tvphi \rangle
\ee
with $\widetilde{h} = h \circ \Pi$ and $\tvphi = \vphi \circ \Pi$ as usual. From the proof of Proposition \ref{prop:Lm_L2} we recall that $\mathrm{span} \, \{ \tvphi_0, \ldots, \tvphi_{2m} \} = \Lambda^{(0)}_m = \mathrm{span} \, \{ e_k : |k| \leq m \}$. Thus, for every $k \in \Z$ with $|k| \leq m$, we can find coefficients $c_k \in \C$ such that $\tvphi = e_k$. Since $\tL_+ e_k = (|k|-m) e_k$ and by the self-adjointness of $\tL_+$, we deduce  from \eqref{eq:foo_doo} that $\langle \widetilde{h}, e_k \rangle = 0$ for $|h| \leq m-1$. Therefore we have $\widetilde{h}= \sum_{|k| \geq m} h_k e^{\im k \theta} \in \Lambda^{(-)}_m \oplus \Lambda^{(+)}_m$. Since $L_+ h = g$ is equivalent to $J \widetilde{h} = \widetilde{g}$ and the operator $J$ acts on $\Lambda^{(\pm)}_m$ invariantly, we deduce that 
\be \label{eq:f_lift}
\widetilde{g} = \sum_{|k| \geq m} g_k e^{\im k \theta} \in \Lambda^{(-)}_m \oplus \Lambda^{(+)}_m .
\ee
Recalling that $\widetilde{Q}_m = Q_m \circ \Pi = e^{\im m \theta}$, we thus see
\be
\widetilde{\overline{Q}}_m \widetilde{g}_+ = e^{-\im m \vartheta} \sum_{k \geq m} g_k e^{\im k \theta} \in L^2_+(\Ss^1), \quad \widetilde{Q}_m \widetilde{g}_- = e^{\im m \theta} \sum_{k \leq m} g_k e^{\im k \theta} \in L^2_-(\Ss^1).
\ee
From this we conclude now (similar to the arguments in Step 1 above) that \eqref{eq:unitary_good} holds true.

Thus if we define the map $U^* : P^\perp L^2(\R) \to L^2(\R)$ by setting
\be
U^* g = \overline{Q}_m g_+ + Q_m g_- \in L^2_+(\R) \oplus L^2_-(\R),
\ee
we see that $U(U^* g)= g$. Hence $U$ is surjective and thus it is a unitary map with its inverse given by the adjoint $U^*$.

The proof of Lemma \ref{lem:unitary} is now complete.
\end{proof}

Next, we show that the unitary map $U : L^2(\R) \to P^\perp L^2(\R)$ provides a common unitary equivalence of both the operators $L_+$ and $L_-$ acting on $P^\perp L^2(\R)$ to the free half-Laplacian $|\nabla|$. The precise result is as follows, which is basically a corollary to Lemmas \ref{lem:darboux} and \ref{lem:unitary}.

\begin{cor}[Unitary Equivalence on $P^\perp L^2(\R)$] \label{cor:U_free} We have the unitary equivalence
$$
     L_+ f = L_- f =   U |\nabla| U^* f \quad \mbox{for $f \in P^\perp L^2(\R) \cap H^1(\R)$},
$$
with the unitary map $U : L^2(\R) \to P^\perp L^2(\R)$ from above. As a consequence, the operators $L_+$ and $L_-$ have purely absolutely continuous spectra given by $\sigma_{\mathrm{ac}}(L_+) = \sigma_{\mathrm{ac}}(L_-) = [0, \infty)$.
\end{cor}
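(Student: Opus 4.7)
The plan is to prove the identity $L_+ f = L_- f = U |\nabla| U^* f$ on $P^\perp L^2(\R) \cap H^1(\R)$ in two stages and then to read off the spectral consequence. First I would establish $L_+ = U |\nabla| U^*$ on the relevant subspace by feeding the Darboux-type identity of Lemma \ref{lem:darboux} into the definition of $U^*$. For $f \in P^\perp L^2(\R) \cap H^1(\R)$, Step~2 of the proof of Lemma \ref{lem:unitary} already shows $\overline{Q}_m f_+ \in L^2_+(\R)$ and $Q_m f_- \in L^2_-(\R)$; since $|\nabla|$ acts on those Hardy subspaces as $-\im \partial_x$ and $+\im \partial_x$, respectively, one has
\[
|\nabla| U^* f = -\im \partial_x(\overline{Q}_m f_+) + \im \partial_x(Q_m f_-),
\]
and both summands remain in $L^2_\pm(\R)$ since $|\nabla|$ preserves Fourier support. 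Applying $U$, which multiplies $L^2_+$-pieces by $Q_m$ and $L^2_-$-pieces by $\overline{Q}_m$, reproduces exactly the right-hand side of Lemma \ref{lem:darboux}, yielding $U |\nabla| U^* f = L_+ f$.

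The second and subtler stage is to show that the nonlocal piece $R = L_- - L_+$ vanishes identically on $P^\perp L^2(\R)$, not merely modulo the weaker relation $P^\perp R P^\perp = 0$. I would proceed via three observations. (i) The symmetric bounded kernel $\tfrac{1}{2\pi}|\QB(x)-\QB(y)|^2/(x-y)^2$ makes $R$ a self-adjoint compact operator on $L^2(\R)$. (ii) From $(Rf)(x) = (1-\sin \theta)(\widetilde{R}_m \widetilde{f})(e^{\im \theta})$, Lemma \ref{lem:funk-hecke}, and the a.e.\ positivity of $1-\sin \theta$, one deduces $\mathrm{rank}(R) \leq 2m-1$, since $\mathrm{range}(\widetilde{R}_m) = \mathrm{span}\{\cos(\ell \theta), \sin(\ell \theta): 0 \leq \ell \leq m-1\}$ is $(2m-1)$-dimensional. (iii) By Proposition \ref{prop:Lm_L2}, $L_- \vphi_k = 0$ for $k = 0, \ldots, 2m-1$, so $R \vphi_k = -L_+ \vphi_k = -E_k \vphi_k$; since $E_0, \ldots, E_{2m-2}$ are nonzero, this produces $2m-1$ linearly independent vectors $\{\vphi_0, \ldots, \vphi_{2m-2}\}$ inside $\mathrm{range}(R)$. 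Combining (ii) and (iii) yields $\mathrm{range}(R) = \mathrm{span}\{\vphi_0, \ldots, \vphi_{2m-2}\}$, and self-adjointness then gives $\ker R = \mathrm{range}(R)^\perp \supset P^\perp L^2(\R)$, as required.

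With $L_+|_{P^\perp L^2} = L_-|_{P^\perp L^2}$ now unitarily equivalent to $|\nabla|$ on $L^2(\R)$, and the latter having purely absolutely continuous spectrum $[0, \infty)$ (diagonalised via the Fourier transform into multiplication by $|\xi|$), the restrictions inherit $\sigma_{\mathrm{ac}} = [0, \infty)$ with no singular continuous part; combined with the finite point spectra on $\mathrm{ran}(P)$ recorded in Propositions \ref{prop:Lp_L2} and \ref{prop:Lm_L2}, one concludes $\sigma_{\mathrm{ac}}(L_+) = \sigma_{\mathrm{ac}}(L_-) = [0, \infty)$ with no embedded or singular continuous spectrum. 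The main obstacle is the middle stage: Step~1 is essentially a bookkeeping translation of Lemma \ref{lem:darboux}, but upgrading $P^\perp R P^\perp = 0$ to the strong pointwise identity $R f = 0$ on $P^\perp L^2(\R)$ requires pinning down the full range of $R$, for which the rank bound from Lemma \ref{lem:funk-hecke} paired with the eigenvalue identities $R \vphi_k = -E_k \vphi_k$ is the cleanest route I see.
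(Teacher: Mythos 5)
Your proof is correct, and on the central point it is actually tighter than what the paper writes down. Stage one, feeding the Darboux factorization of Lemma \ref{lem:darboux} through $U^*$ and $U$ using the Hardy-space memberships $\overline{Q}_m f_+ \in L^2_+(\R)$, $Q_m f_- \in L^2_-(\R)$ and the fact that $|\nabla| = \mp\im\partial_x$ on $L^2_\pm$, is exactly the paper's argument for $L_+ = U|\nabla|U^*$. The genuine divergence is the middle stage. The paper's proof of the corollary simply opens with ``Since $L_+ f = L_- f$ for $f \in H^1(\R) \cap P^\perp L^2(\R)$ \dots'' and offers no justification in situ; the implicit mechanism is the frequency-support observation from Step~2 of the proof of Lemma \ref{lem:unitary} (for a dense class of $g \in P^\perp L^2$ the lift $\widetilde g$ has Fourier support $|k|\ge m$, so $\widetilde R_m\widetilde g=0$ by Lemma \ref{lem:funk-hecke}), but closing this for all of $P^\perp L^2(\R)\cap H^1(\R)$ needs a density step the paper leaves to the reader. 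Your route avoids the lift entirely: the Funk--Hecke computation bounds $\mathrm{rank}(R) \le 2m-1$, while $R\vphi_k = L_-\vphi_k - L_+\vphi_k = -E_k\vphi_k$ with $E_k < 0$ for $0\le k\le 2m-2$ exhibits $2m-1$ independent vectors in $\mathrm{ran}(R)$, forcing $\mathrm{ran}(R)=\mathrm{span}\{\vphi_0,\dots,\vphi_{2m-2}\}$ and hence, by self-adjointness, $\ker R = \mathrm{ran}(R)^\perp \supset P^\perp L^2(\R)$. This is cleaner (it makes the compactness of $R$ explicit as finite rank and needs no density argument), and it pins down the vanishing of $R$ on $P^\perp L^2$ as an honest equality rather than the a priori weaker $P^\perp R P^\perp = 0$ that the preceding discussion in the paper nominally advertises. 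The concluding spectral statement is handled the same way in both.
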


\begin{remark*}
{\em Regarding domain questions, we remark that it is straightforward to check that $U^* f \in H^1(\R)$ for any $f \in H^1(\R)$. }
\end{remark*}

\begin{proof} Since $L_+ f =  L_- f$ for $f \in H^1(\R) \cap P^\perp L^2(\R)$, it suffices to show the claimed identity for the operator $L_+$. Let $f \in H^1(\R) \cap P^\perp L^2(\R)$ be given and let $f_+ = \Pi_+ f$ and $f_- = \Pi_- f$. Recalling that $\overline{Q}_m f_+ \in L^2_+(\R)$ and $Q_m f_- \in L^2_-(\R)$  for $f \in P^\perp L^2(\R)$ together with $\mp \im \frac{d}{dx} = |\nabla|$ on $L^2_{\pm}(\R)$, we apply Lemma \ref{lem:darboux} to conclude that
$$
L_+ f  = Q_m \left ( -\im \frac{d}{dx} \right ) \overline{Q}_m f_+ + \overline{Q}_m \left ( +\im \frac{d}{dx} \right ) Q_m f_- = U |\nabla| U^* f,
$$
which is the claimed identity.
\end{proof}

\subsection{Coercivity Estimate}
We recall the definition of the set of functions $\{ \psi_k \}_{k=0}^{2m}$ in \eqref{def:psi_k}. We prove the following estimate.

\begin{lemma}
Let $f, g \in \dot{H}^{\frac 1 2}(\R)$ satisfy $(\psi_k, f) = (\psi_k, g)=0$ for $0 \leq k \leq 2m$. Then we have
$$
(f, L_+ f) + (g, L_- g) \geq \frac{1}{m+1} \left ( \| f \|_{\dot{H}^{\frac 1 2}}^2 + \| g \|_{\dot{H}^{\frac 1 2}}^2 \right ).
$$
Moreover, the constant $\frac{1}{m+1}$ on the left-hand side is optimal.
\end{lemma}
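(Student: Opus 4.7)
The plan is to transport the entire estimate to the unit circle $\Ss^1$ via the stereographic projection $\Pi$, where it becomes a clean Fourier inequality. By Lemma~\ref{lem:pol} and the identity $\tfrac{2m}{1+x^2} = m(1-\sin\theta)$, the potential term in $L_\pm$ is absorbed by the Jacobian; setting $\tilde f = f\circ\Pi = \sum_k \hat f_k e^{ik\theta}$ one obtains
\[
(f, L_+ f)_{L^2(\R)} = \langle \tilde f, \tL_+ \tilde f\rangle_{L^2(\Ss^1)} = \sum_{k\in\Z}(|k|-m)|\hat f_k|^2, \qquad \|f\|_{\dot H^{1/2}(\R)}^2 = \sum_{k\in\Z}|k|\,|\hat f_k|^2,
\]
with $\tL_+ = |\nabla|_{\Ss^1} - m\mathds{1}$. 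Using moreover that $\widetilde R_m$ has Fourier eigenvalue $(m-|k|)_+$ by Lemma~\ref{lem:funk-hecke} (which exactly cancels $(|k|-m)$ on the low-frequency band), the same change of variables yields $(g, L_- g)_{L^2(\R)} = \sum_{|k|\ge m+1}(|k|-m)|\hat g_k|^2$.

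A short algebraic manipulation then shows that the desired estimate, applied to $f$ and to $g$ separately, is equivalent to the scalar Fourier inequality $\sum(|k|-(m+1))|\hat\cdot_k|^2 \ge 0$ (summed over $k\in\Z$ for $f$ and over $|k|\ge m+1$ for $g$). Since $|k|-(m+1)\ge 0$ precisely when $|k|\ge m+1$, this holds \emph{if and only if} all low-frequency Fourier modes vanish, that is $\hat f_k = \hat g_k = 0$ for every $|k|\le m$, and the test mode $|k|=m+1$ achieves equality, yielding the sharpness of $1/(m+1)$. The problem is therefore reduced to showing that the prescribed $2m+1$ orthogonality conditions $(\psi_k,\cdot) = 0$ encode exactly this spectral cut-off, i.e.\ they jointly annihilate the $(2m+1)$-dimensional space $\Lambda^{(0)}_m = \mathrm{span}\{e^{ik\theta} : |k|\le m\}$; the count $2m+1 = 2m+1$ is at least correct.

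This is where I expect the main technical work, and I plan to handle the two families of conditions separately, since they correspond to the kernel and the non-zero eigenvectors of the Jacobi operator $J = (1-\sin\theta)\tL_+$ from Section~\ref{sec:spectrum}. For the two weighted conditions ($k=2m-1,2m$), the factor $1/(1+x^2)$ absorbs the Jacobian exactly, yielding the clean pairing $(\vphi/(1+x^2), f)_{L^2(\R)} = \tfrac12\langle \tvphi, \tilde f\rangle_{L^2(\Ss^1)}$; since $\tvphi_{2m-1}$ and $\tvphi$ together span $\ker\tL_+\cap\Lambda^{(0)}_m = \mathrm{span}\{e^{\pm im\theta}\}$, these two relations force $\hat f_{\pm m} = 0$. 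For each $0\le j\le 2m-2$ the eigenequation $\tL_+\tvphi_j = E_j\tvphi_j/(1-\sin\theta)$ together with self-adjointness of $\tL_+$ on $\Ss^1$ rewrites $(\vphi_j, f)_{L^2(\R)} = 0$ as $\langle\tL_+\tvphi_j, \tilde f\rangle_{L^2(\Ss^1)} = 0$. The $2m-1$ vectors $\tL_+\tvphi_j$ lie in $\Lambda^{(0)}_{m-1} = \mathrm{span}\{e^{ik\theta}:|k|\le m-1\}$ of the same dimension, and they are linearly independent: a relation $\sum c_j\tL_+\tvphi_j = 0$ places $\sum c_j\tvphi_j$ in $\ker\tL_+$, so applying $J$ gives $\sum c_j E_j \tvphi_j = 0$, which forces $c_j = 0$ for every $j$ by the simplicity of the non-zero spectrum of $J$ on $\Lambda^{(0)}_m$ (Lemma~\ref{lem:J_eigen}). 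Hence these $2m-1$ relations are equivalent to $\hat f_k = 0$ for $|k|\le m-1$, and combined with the two weighted conditions above they force $\hat f_k = 0$ for all $|k|\le m$. The identical argument applied to $g$ closes the proof.
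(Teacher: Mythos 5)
Your argument is correct and follows essentially the same route as the paper: conformal transplantation to $\Ss^1$, the Funk--Hecke computation of $\widetilde R_m$, and the translation of the $2m+1$ orthogonality conditions (via the lifted eigenvalue equations $J\tvphi_j=E_j\tvphi_j$) into the Fourier cut-off $\hat f_k = \hat g_k = 0$ for $|k|\le m$, after which the inequality $\sum_{|k|\ge m+1}(|k|-m)|\hat\cdot_k|^2 \ge \frac{1}{m+1}\sum_{|k|\ge m+1}|k|\,|\hat\cdot_k|^2$ is immediate and sharp at $|k|=m+1$. The only (harmless) difference in presentation is that you spell out the linear independence of $\{\tL_+\tvphi_j\}_{j=0}^{2m-2}$ explicitly, whereas the paper gets the same spanning statement for $\Lambda^{(0)}_{m-1}$ directly from $\mathrm{span}\{\tvphi_0,\dots,\tvphi_{2m-1},\tvphi\}=\Lambda^{(0)}_m$ and the two-dimensional kernel of $\tL_+$ there.
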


\begin{proof}
We first consider the quadratic form $(f, L_+ f)$ and let $\widetilde{f} = f \circ \Pi$ be conformal lift of $f$ to $\Ss^1$. Using the stereographic projection $\Pi(e^{\im \theta}) = x$ so that $\frac{2 dx}{1+x^2} = d \theta$, the two conditions $(\psi_{2m-1}, f)= (\psi_{2m}, f)= 0$ become
$$
0= (\psi_{2m-1}, f) = \langle \widetilde{\vphi}_{2m-1}, \widetilde{f} \rangle , \quad 0 = (\psi_{2m}, f) = \langle \widetilde{\vphi}, \widetilde{f} \rangle .
$$
Since $\mathrm{span} \{ \widetilde{\vphi}_{2m-1}, \widetilde{\vphi} \} = \mathrm{span} \{ e_{-m}, e_m \}$, we see that $\widetilde{f} \perp \mathrm{span} \{ e_{-m}, e_m \}$. 

Next,  we use that $\psi_k = \vphi_k$ for $0 \leq k \leq 2m-2$, where $\vphi_k \in H^1(\R)$ are the $L^2$-eigenfunctions of $L_+$ with corresponding eigenvalue $E_k < 0$. Thus, for any $0 \leq k \leq 2m-2$,
$$
0=(\vphi_k, f ) = E_k^{-1} (L_+  \vphi_k, f ) = E_k^{-1} \langle \tL_+ \tvphi_k, \widetilde{f} \rangle .
$$
Since $\mathrm{span} \, \{ \tvphi_0, \ldots, \tvphi_{2m-1}, \tvphi \} = \Lambda^{(0)}_m = \mathrm{span} \, \{ e_ k : |k| \leq m \}$ and $\tL_+ e_k = (|k|-m) e_k$, we conclude that $\widetilde{f} \perp e_k$ for $|k| \leq m-1$. 

In summary, we have found that $\widetilde{f} = \sum_{|k| \geq m+1} f_k e^{\im k \theta}$.  Hence we have
\begin{align*}
(f, L_+ f) & = \langle \widetilde{f}, \tL_+ \widetilde{f} \rangle  = \sum_{|k| \geq m+1} ( |k| - m) |f_k|^2 \\
& \geq \frac{1}{m+1} \sum_{|k| \geq m+1} |k| |f_k|^2 = \frac{1}{m+1} \langle \widetilde{f}, |\nabla|_{\Ss^1} \widetilde{f} \rangle = \frac{1}{m+1} (f, |\nabla| f),
\end{align*}
where we also used the elementary inequality $|k|-m \geq \frac{1}{m+1} |k|$ for $|k| \geq m+1$. Note that equality holds above when $f_k = 0$ for $|k| \geq m+2$, showing that the constant $\frac{1}{m+1}$ is optimal.

The proof of $(g,L_-g) \geq \frac{1}{m+1} ( g, |\nabla| g )$ follows in the same fashion, noticing that $\tL_- \widetilde{g} = (\tL_+ + \widetilde{R}_m) \widetilde{g} = \tL_+ \widetilde{g}$, since it holds that $\widetilde{R}_m \widetilde{g} = 0$ for $\widetilde{g}  = \sum_{|k| \geq m+1} g_k e^{\im k \theta}$ as we recall from Lemma \ref{lem:funk-hecke}.
\end{proof}

\subsection{Spectral Properties of $JL$} Here we prove Theorem \ref{thm:JL}. Recall that we consider the matrix operator
$$
\mathcal{L} = JL = \left [ \begin{array}{cc} 0 & - L_- \\ L_+ & 0  \end{array} \right ]
$$
acting on $H=L^2(\R) \times L^2(\R)$ with operator domain $H^1(\R) \times H^1(\R)$. 

We start with some general observations reflecting the special structure of the problem at hand. Given the projection $P: L^2(\R) \to L^2(\R)$ onto the space spanned by the $L^2$-eigenfunctions of $L_+$ from above, we can define the orthogonal projection 
$$
\mathcal{P} : L^2(\R) \times L^2(\R) \to L^2(\R) \times L^2(\R) \quad \mbox{with} \quad \mathcal{P} = \left [ \begin{array}{cc} P & 0 \\ 0 & P \end{array} \right ] ,
$$
where we set $\mathcal{P}^\perp = \mathds{1}  - \mathcal{P}$. Using that $H= \mathcal{P} H \mathcal{P} \oplus \mathcal{P}^\perp H \mathcal{P}^\perp$, the operator $\mathcal{L}$ acts invariantly on these two subspaces and we have the corresponding decomposition
$$
\mathcal{L} = \mathcal{P} \mathcal{L} \mathcal{P}+ \mathcal{P}^\perp \mathcal{L} \mathcal{P}^\perp .
$$
Recalling that $P L_- P = 0$, we find
$$
\mathcal{P} \mathcal{L} \mathcal{P} = \mathcal{P} \mathcal{L}_1 \mathcal{P} \quad \mbox{with} \quad \mathcal{L}_1 =  \left [ \begin{array}{cc} 0 & 0 \\ A & 0 \end{array} \right ] ,
$$
with the self-adjoint finite-rank operator $A= P L_+ P$ given by $A= \sum_{k=0}^{2m-1} E_k \vphi_k (\vphi_k ,\cdot)$, where $\{ \vphi_0, \ldots, \vphi_{2m-1} \}$ denote the orthonormal set of $L^2$-eigenfunctions of $L_+$ with eigenvalues $E_k$. Let us now suppose that $\mu \in \C$ is an eigenvalue of $\mathcal{L}_1$, i.\,e,~
\be
\left [ \begin{array}{cc} 0 & 0 \\ A & 0 \end{array} \right ] \left [ \begin{array}{c} u \\ v \end{array} \right ] = \mu \left [ \begin{array}{c} u \\ v \end{array} \right ] 
\ee 
for some $u, v \in \mathrm{span} \, \{ \vphi_0, \ldots, \vphi_{2m-1} \}$. Thus $A u = \mu v$ and $0 = \mu u$. If $\mu \neq 0$, then $u=0$ and consequently $v = 0$. Hence $\mu=0$ is the only possible eigenvalue. In this case $Au = 0$ implies that $u = \alpha \vphi_{2m-1}$ with some $\alpha \in \R$ and $u \in \mathrm{span} \, \{ \vphi_0, \ldots, \vphi_{2m-1} \}$ is arbitrary. Thus we have shown
\be
\sigma(\mathcal{L}_1) = \{ 0 \} \quad \mbox{and} \quad \mathrm{ker} ( \mathcal{L}_1) = \mathrm{span} \, \left \{  \left [ \begin{array}{c} \vphi_{2m-1} \\ 0\end{array} \right ], \left [ \begin{array}{c} 0 \\ \vphi_0 \end{array} \right ] , \ldots,  \left [ \begin{array}{c}0  \\ \vphi_{2m-1} \end{array} \right ]   \right \}.
\ee
Furthermore, we readily find that 
$$
\bigcup_{n \geq 1} \mathrm{ker} ( \mathcal{L}_1^n) = \mathrm{ker} ( \mathcal{L}_1^2) = \mathrm{span} \, \left \{  \left [ \begin{array}{c} \vphi_{0} \\ 0\end{array} \right ], \cdots, \left [ \begin{array}{c} \vphi_{2m-1} \\ 0\end{array} \right ], \left [ \begin{array}{c} 0 \\ \vphi_0 \end{array} \right ] , \ldots,  \left [ \begin{array}{c}0  \\ \vphi_{2m-1} \end{array} \right ]   \right \}.
$$
using that the $\mathcal{L}_1^2 = \mathcal{L}_1 \mathcal{L}_1 = 0$ is nilpotent.

Next, we study $\mathcal{P}^\perp \mathcal{L} \mathcal{P}^\perp$. By using the unitary equivalence stated in Theorem \ref{thm:unitaryL2}, we deduce that
\be
\mathcal{P}^\perp \mathcal{L} \mathcal{P}^\perp =  \mathcal{P}^\perp  \mathcal{L}_2  \mathcal{P}^\perp \quad \mbox{with} \quad \mathcal{L}_2 =  \mathcal{U} \left [ \begin{array}{cc} 0 & -|\nabla| \\ |\nabla| & 0 \end{array} \right ] \mathcal{U}^*
\ee
with the unitary map defined as
\be
\mathcal{U} : L^2(\R) \times L^2(\R) \to \mathcal{P}^\perp (L^2(\R) \times L^2(\R)) \quad \mbox{with} \quad \mathcal{U} = \left [ \begin{array}{cc} U & 0 \\ 0 & U \end{array} \right ] ,
\ee 
where $U : L^2(\R) \to P^\perp L^2(\R)$ is the unitary map from Theorem \ref{thm:unitaryL2} above.  By the unitary equivalence and from standard methods (e.\,g.,~using the Fourier transform), it is easy to see that
\be
\sigma(\mathcal{L}_2) = \sigma_{\mathrm{ess}}(\mathcal{L}_2) = \sigma \left ( \left [ \begin{array}{cc} 0 & -|\nabla| \\ |\nabla| & 0 \end{array} \right ] \right ) =  \im \R.
\ee

Collecting the above results, we can now deduce the items (i)--(iv) in Theorem \ref{thm:JL} about $\mathcal{L}=JL$, which completes the proof of Theorem \ref{thm:JL}. \hfill $\Box$

\begin{appendix}

\section{Short Primer on the Half-Wave Maps Equation}

\label{sec:primer}

Here we collect some  background facts about the half-wave maps equation defined in $N \geq 1$ spatial dimensions and target $\Ss^2$, which is the evolution equation given by
\be \label{eq:hwm_n} 
\pt_t \uu + \uu \wedge |\nabla| \uu = 0
\ee
for a function $\uu : [0, T ) \times \R^N \to \Ss^2$.  In what follows, we will give a brief summary about the Hamiltonian structure, symmetries, and scaling. We do not address the question of well-posedness for the Cauchy problem; but see, e.\,g., the recent work by Krieger and Sire \cite{KrSi-16} on small data global well-posedness in the energy-supercritical case of $N \geq 5$ space dimensions. We mention that decent well-posedness results for lower dimensions (in particular, for the energy-critical case $N=1$) are currently not available.

In particular, we discuss below the feature of an explicit {\bf Lorentz boost symmetry} for traveling solitary waves in the energy-critical dimension $N=1$. Moreover, we mention the connection to  {\bf completely integrable systems} in the energy-critical case.

\subsection{Hamiltonian Structure and Linear Momentum}
In any dimension $N \geq 1$, the half-wave maps equation \eqref{eq:hwm_n} can be regarded as an infinite-dimensional Hamiltonian system whose energy functional is found to be
\be
E[\uu] = \frac{1}{2} \int_{\R^N} \uu \cdot |\nabla| \uu \, dx  = c_N \int \! \! \int_{\R^N \times \R^N} \frac{|\uu(x)- \uu(y)|^2}{|x-y|^{N+1}} \, dx \, dy,
\ee
which is well-defined for $\uu \in \dot{H}^{\frac 1 2}(\R^N; \Ss^2)$ and $c_N > 0$ is some constant. To exhibit the Hamiltonian nature of the half-wave maps equation, we endow the phase space $\dot{H}^{\frac 1 2}(\R^N; \Ss^2)$ with the canonical Poisson bracket for $\Ss^2$-valued functions, i.\,e., we set
\be
\{ u_i(x), u_j(y) \} =  \eps_{ijk}  u_k(x) \delta(x-y) ,
\ee
where $\eps_{ijk}$ is the usual antisymmetric Levi-Civit\`a symbol. A moment's reflection shows that \eqref{eq:hwm_n} is equivalent to 
\be \label{eq:uE}
\pt_t \uu = \{ \uu, E \} .
\ee
As a direct consequence we obtain (formally, at least) that the energy $E[\uu(t)] = E[\uu(0)]$ is conserved along the flow.  

Next, let us focus on the energy-critical case with $N=1$ space dimension, where we can make the following interesting formal observations. Indeed, recalling the notion of linear momentum in the classical continuum Heisenberg spin chain (see, e.\,g., \cite{Ha-86}), we introduce the functional
\be
P[\uu] = \int_{\R} \mathbf{A}(\mathbf{u}(x)) \cdot \pt_x \uu(x) \, dx, 
\ee
which plays the role of a {\bf linear momentum}, provided we make the following choice
\be
\mathbf{A}(\uu) = \frac{\eB \wedge \uu}{(\eB \cdot \uu)-1}  \quad \mbox{for any $\uu \in \Ss^2$ with $\uu \neq \eB$},
\ee
where $\eB \in \Ss^2$ is a fixed unit vector (e.\,g., we can take $\eB = \eB_z$). If we assume for the moment that $\uu : \R \to \Ss^2$ avoids the point $\eB$ in a suitable way (e.\,g., we suppose $| \uu(x) - \eB | > \delta$ for all $x \in \R$ and some $\delta > 0$), it is straightforward to check that
\be \label{eq:uP}
\pt_x \uu = \{  \uu, P \} .
\ee 
Therefore, at least for such configurations $\uu$ as above, the functional $P[\uu]$ can be seen as the generator of spatial translations, in analogy to the time translations generated by  $E[\uu]$. Furthermore, the functional $P[\uu]$ has a concrete geometric meaning which is as follows. For definiteness, let us take $\eB = (0,0,1)$ be the north pole on $\Ss^2$ and assume that $\uu : \R \to \Ss^2$  that avoids $\eB$ in the sense above and traces out a closed curve $\gamma$ on $\Ss^2$, i.\,e., we have ~$\uu(+ \infty) = \uu (-\infty)$. By applying Stoke's theorem, we deduce that
\be
P[\uu] = {\oint}_\gamma \mathbf{A} \cdot d \mathbf{x} = \int \! \! \int_{S} ( \nabla \wedge \mathbf{A}) \cdot \mathbf{n} \, d\Sigma = \int \! \! \int_{S} d \Sigma,
\ee
where $S$ is the (oriented) surface on $\Ss^2$ whose boundary is given by the closed curve $\gamma$ and $S$ lies opposite to $\eB$. Hence $P[\uu]$ corresponds to the {\em solid angle} traced out by $S$. Of course, due to different choices for $\eB$, the notion of $P[\uu]$ contains an ambiguity of integer multiples of $4 \pi$. Hence, as the physically reasonable candidate for a conversed quantity corresponding to linear momentum one should rather consider the functional $T[\uu] = e^{\im \frac{1}{2} P[\uu]}$; cf.~\cite{Ha-86} in the setting of classical Heisenberg spin chains.

Finally, we remark that the solitary wave profiles $\QB_v \in \dot{H}^{\frac 1 2}(\R; \Ss^2)$ solving \eqref{eq:uv} can be (formally, at least) seen as critical points of the functional 
\be
S[\uu] = E[\uu] + v P[\uu],
\ee
where the velocity $v \in \R$ is a given parameter. Indeed, a calculation shows that the profile equation \eqref{eq:uv} for $\QB_v$ is formally equivalent to the criticality condition 
\be
\frac{d}{d \eps}  \Big |_{\eps=0} S [ \QB_v + \eps \hh]  = 0
\ee 
for all perturbations $\hh \in C^\infty_c(\R; \R^3)$ with $\hh(x) \in T_{\QB_v(x)} \Ss^2$. In fact, this can be formally seen by writing $\hh = \QB_v \wedge \mathbf{g} \in T_{\QB_v} \Ss^2$ with $\mathbf{g} \in C^\infty_c(\R; \R^3)$. Then we verify that
$$
\frac{d}{d \eps} \Big |_{\eps=0} S [\QB_v + \eps \hh] = \int_{\R} \{ \QB_v , S \} \cdot \mathbf{g} \, dx
$$
Consider now  a traveling solitary wave $\uu(t,x) = \QB_v(x-vt)$. Then $0 = (\pt_t + v \pt_x) \uu = \{ \uu, E + v P \}$ holds, which means that $\{ \QB_v, S\} = 0$. This shows -- on a formal level -- that critical points of $S_v[\uu]$ are given by boosted half-harmonic maps $\QB_v$ and vice versa. 

Of course, it is an interesting open problem to setup a rigorous mathematical framework that justifies the formal observations made above.

\subsection{Lorentz Boosts as Conformal Transformations on $\Ss^2$}
In the energy-critical case of $N=1$ dimensions in the domain, we recall that the half-wave maps equation \eqref{eq:hwm_n} possesses non-trivial traveling solitary waves with finite energy, i.\,e., solutions of the form $\uu(t,x) = \QB_v(x-vt)$ with $|v| < 1$ and where the profile function $\QB_v \in \dot{H}^{\frac 1 2}(\R; \Ss^2)$ solves the equation
\be \label{eq:uv_app}
\QB_v \wedge |\nabla| \QB_v - v \pt_x \QB_v = 0.
\ee 
As shown by Theorem \ref{thm:vsmall} above, solutions $\QB_v \in \dot{H}^{\frac 1 2}(\R; \Ss^2)$ can be obtained  by making the transform
\be \label{eq:QB_lorentz}
\QB(x) = (f(x), g(x), 0) \mapsto \QB_v(x) = \left ( \sqrt{1-v^2} f(x), \sqrt{1-v^2} g(x), v \right ) ,
\ee
where $\QB = (f,g,0) \in \dot{H}^{\frac 1 2}(\R; \Ss^2)$ is a half-harmonic map with values in the equatorial plane. It is easy to check that $\QB_v$ defined as above does indeed solve \eqref{eq:uv_app}. Indeed, it is clear that $|\QB_v|^2 = (1-v^2) (f^2 + g^2) + v^2 = 1$. Furthermore, if we let $\alpha = \sqrt{1-v^2}$, we note that
\be
\pt_x \QB_v = ( \alpha \pt_x f, \alpha \pt_x g, 0), \quad \QB_v \wedge |\nabla| \QB_v = ( - \alpha v |\nabla| g, \alpha v |\nabla| f, 0 ),
\ee
using that $f |\nabla| g - g |\nabla| f = 0$. Now we recall that $|\nabla| f = -\pt_y \mathrm{Re} \, \mathcal{B}(x+\im y) |_{y=0}$ and $|\nabla| g = -\pt_y \mathrm{Im} \, \mathcal{B}(x+ \im y) |_{y=0}$, where $\mathcal{B} : \overline{\C}_+ \to \C$ is some finite Blaschke product. By the Cauchy--Riemann equation satisfied by $\mathcal{B}(z)$ and the boundary regularity at hand, we conclude that $|\nabla| f = \pt_x g$ and $|\nabla| g = -\pt_x f$. Therefore $\QB_v \wedge |\nabla| \QB_v = ( \alpha v \pt_x f, \alpha v \pt_x g,0) = v \pt_x \QB_v$, which is the same as \eqref{eq:uv_app}.

Now, we provide a more geometrical interpretation of the calculation done above. In fact, we can regard the transformation $\QB \to \QB_v$ above as a Lorentz boost realized by a M\"obius transform on $\Ss^2$, identified with the so-called {\bf celestial sphere} in special relativity; see the monograph by Penrose and Rindler \cite{PeRi-84}. Indeed, let $\mathcal{S}$ denote the stereographic projection from $\Ss^2$ to the projective complex plane $\hat{\C} = \C \cup \{ \infty \}$ given by
$$
 \mathcal{S}(x) = \frac{x_1 + \im x_2}{1-x_3} .
$$
Note that the north pole $N=(0,0,1) \in \Ss^2$ gets mapped to $\infty$. Its inverse map is found to be 
$$
\mathcal{S}^{-1}(z) = \left ( \frac{2 \, \mathrm{Re} \, z}{1 + |z|^2}, \frac{2 \, \mathrm{Im} \, z}{1 + |z|^2}, \frac{|z|^2-1}{1+|z|^2} \right ).
$$
Now from \cite{PeRi-84} we recall that the Lorentz boost in $z$-direction with velocity $v \in (-1,1)$ on the celestial sphere $\Ss^2$, is given by the transform $\Lambda_v : \hat{\C} \to \hat{\C}$ with 
$$
z \mapsto \Lambda_v (z)= e^{-\chi} z
$$
acting on the projective complex plane $\hat{\C}$, where the parameter $\chi = \mathrm{artanh} \, v$ denotes the so-called {\bf rapidity}. Thus we find that applying the corresponding Lorentz boost to the half-harmonic map $\QB=(f,g,0)$ yields 
\begin{align*}
(\mathcal{S}^{-1} \circ \Lambda_v \circ \mathcal{S})(\QB) & = \left ( \frac{2 e^{-\chi} f}{1+ e^{-2 \chi}}, \frac{2 e^{-\chi} g}{1+e^{-2 \chi}}, \frac{e^{-2 \chi} -1}{1+ e^{-2 \chi}} \right ) = \left ( \frac{f}{\cosh \chi} , \frac{g}{\cosh \chi}  , \tanh \chi \right ).
\end{align*}
Since $\tanh \chi = v$ and $\cosh \chi = \frac{1}{\sqrt{1-v^2}}$, we see that this is exactly the same as $\QB_v$ given in \eqref{eq:QB_lorentz} above. 

\subsection{Connection to Completely Integrable Spin Systems}
In the energy-critical case $N=1$, the half-wave maps equation \eqref{eq:hwm_n} arises as in the continuum limit of discrete {\bf Calogero--Moser (CM) spin systems}. These CM systems  that have been intensively studied in the theory of completely integrable systems and they arise as a natural extensions including spin as internal degrees of freedom. We refer to \cite{GiHe-84,BlLa-99} and references therein for further background on CM systems with spin. We also remark that the classical CM spin systems can be (formally, at least) obtained by taking a suitable semi-classical limit of the quantum spin chains related to the celebrated {\bf Haldane--Shastry spin chains} (\cite{Ha-88, Sha-88}), which are exactly solvable quantum models. 

Let us briefly sketch the formal relation of \eqref{eq:hwm_n} to the dynamics of Calogero-Moser spin systems; a rigorous analysis will be given in \cite{BuLe-17}. The staring point is the lattice $h \Z$ with equidistant spacing $h > 0$,  where we attach a classical spin $\mathbf{S}(x_k) \in \Ss^2$ to each lattice site $x_k = h k$ with $k \in \Z$. Intriguing examples for such a classical spin system are given by so-called Calogero--Moser type models, which are spin Hamiltonian with long-range interactions of inverse-square type, e.\,g.,  
\be
H_{CS} = \frac 1 2 \sum_{k  \neq l} \frac{1 - \mathbf{S}(x_k) \cdot \mathbf{S}(x_l)}{(x_k - x_l)^2} .
\ee 
We remark that, in greater generality, the Hamiltonian $H_{CS}$ can also contain a kinetic energy term $T=\sum_{k} \frac{p_k^2}{2m_k}$ with momenta $p_k$ (in closer relation to the original system introduced by J.~Moser in his seminal work \cite{Mo-75}). But here we consider the ``infinite mass limit'' with $m_k \to +\infty$ and hence only the spin part in $H_{CS}$ is relevant.

The evolution equation for the time-dependent spin configuration $\mathbf{S} = \mathbf{S}(t,x_k) \in \Ss^2$ generated by $H_{CS}$ is found to be $\pt_t \mathbf{S} = \mathbf{S} \wedge H'(\mathbf{S})$ with $H'(\mathbf{S}) = \sum_{k \neq l} \frac{\mathbf{S}(x_k) - \mathbf{S}(x_l)}{(x_k - x_l)^2}$. When taking the continuum limit $h \to 0^+$, we obtain (formally, at least) the half-wave maps equation \eqref{eq:hwm_n} as an effective evolution for the field $\uu = \uu(t,x) \in \Ss^2$ defined on the real line $\R$ obtained from $h \Z$ as $h \to 0^+$. For the case of nearest-neighbors interaction given by the Heisenberg Hamiltonian $H= \sum_{|k-l|=1} (1- \mathbf{S}(x_k) \cdot \mathbf{S}(x_l)$, such a continuum limit leading to the Landau-Lifschitz equation $\pt_t \uu = \uu \wedge \DD \uu$ has been rigorously proven in \cite{SuSuBa-86}.  Of course, the case of (critical) long-range interactions present in $H_{CS}$ is analytically  more subtle and it will be rigorously addressed in \cite{BuLe-17}.

We mention that choice of the spin-spin interaction $1/(x_k-x_l)^2$ in the definition of $H_{CS}$ is often referred to as the rational case; other natural completely integrable choices are $1/\sin^2(x_k -x_l)$ (trigonometric case) and $1/\sinh^2(x_k-x_l)$ (hyperbolic case), which however formally lead to the the same continuum dynamics as $h \to 0^+$. 

\section{Regularity Theory}

\label{sec:regularity}

Let $v \in \R$ and $\QB \in \dot{H}^{\frac{1}{2}}(\R;\S^2)$ be a distributional solution to
\begin{equation}\label{eq:ourel:old}
 \QB \wedge \Dx \QB - v \pt_x \QB = 0 \quad \mbox{in $\R$.}
\end{equation}
The main result of this section is higher regularity for solutions of \eqref{eq:ourel:old}.
\begin{thm2}\label{th:regularity}
Let $\QB \in \dot{H}^{\frac 1 2}(\R; \Ss^2)$ be as above with  $|v| < 1$. Then $\QB \in C^\infty \cap \dot{H}^2$. 
\end{thm2}
For $v = 0$, the solution $\QB$ from \eqref{eq:ourel:old} is a half-harmonic map from $\R$ into the sphere $\S^2$. H\"older-continuity of $\uu$ was shown in the pioneering work by F. Da Lio and T. Rivi\`{e}re~\cite{DR1dSphere}. Another proof was given by V. Millot and Y. Sire~\cite{Millot-Sire-2015}, who identified half-harmonic maps into spheres with the partially free boundary harmonic maps  studied via reflection arguments by Ch.~Scheven~\cite{Scheven-2006}. For the general situation with $v \neq 0$, it is far from clear how to extend Scheven's reflection argument. Instead, we show how to treat the case $v \in (-1,1)$ via an extension of the techniques by Da Lio and Rivi\`{e}re and variations thereof; see \cite{DR1dMan,SNHarmS10,DndMan,DSpalphSphere,Schikorra-epsilon,BPSknot12,SchikorraCPDE14}. 

The reader should be warned that some parts of this section are rather technical and thus they maybe skipped at first -- or even second -- reading.

First let us fix some notation as follows. We use $\Rz$ to denote the Hilbert transform, normalized so that
\begin{equation}\label{eq:HTconstant}
 \|\Rz f \|_{L^2(\R)} = \|f \|_{L^2(\R)}, \quad \Rz (\Dx f) = \pt_x f.
\end{equation}
In particular, if we write $\uu = \QB$ from now and using the Hilbert transform, we can write \eqref{eq:ourel:old} as
 \begin{equation}\label{eq:ourel}
 \uu \wedge \Dx \uu -v \Rz\Dx \uu = 0 \quad \mbox{in $\R$}.
\end{equation}
Furthermore, we introduce the Riesz potential $\lapms{\sigma}$ in one dimension with $\sigma \in (0,1)$, which is the inverse of the fractional Laplacian $\laps{\sigma}$ such that $\lapms{\sigma} \laps{\sigma} f = f$ for all $f$ in the Schwartz space. We recall the potential representation
\[
 \lapms{\sigma} f(z)= c_\sigma \int_{\R} |x-z|^{\sigma-1}\, f(z)\ dz.
\]
It is useful to observe that $\Rz = \partial_x \lapms{1}$ on a suitable set of functions. For two operators $T$ and $S$, we denote the commutator
\begin{equation}\label{eq:commutatornot}
 [T,S] = T\circ S - S\circ T.
\end{equation}
In what follows, we use $X \aleq Y$ to mean $X \leq C Y$, where $C >0$ is some constant that only depends $|v| < 1$ and possibly some fixed exponents.

\subsection{Preliminaries and Morrey Space Estimates}
By Lagrange's identity
\begin{equation}\label{eq:lagrange}
 |\ve{a}|^2\, |\ve{b}|^2 = |\langle \ve{a},\ve{b} \rangle|^2 + |\ve{a} \wedge \ve{b}|^2 \quad \forall \ve{a},\ve{b} \in \R^3,
\end{equation}
we see that for $\uu(x) \in \dot{H}^{\frac 1 2}(\R;\S^2)$, 
\[
 |\lapv \uu(x)| \leq |\uu(x) \cdot \lapv \uu(x)| + |\uu(x) \wedge \lapv \uu(x)|.
\]
In order to exploit this pointwise inequality for a regularity bounds for \eqref{eq:ourel}, we first show that $|\uu \wedge \lapv \uu|$ can be estimated in terms of $|\uu \wedge \lapv \uu - \Rz\lapv \uu|$. This is accomplished by the following Lemma, which crucially uses $|v| < 1$.

\begin{lemma}\label{la:TfBfest}
Let $\uu \in L^\infty(\R;\S^2)$, $p \in (1,\infty)$, and $v \in \R$. For $\ff \in L^p(\R;\R^3)$ we set
\[
 T_{\uu}\ff := \uu\wedge \ff - v \Rz\ff.
\]
Then if $|v| < 1$, with constants depending only on $v$ and $p$,
\[
 \|\ff\|_{L^p(\R)} \aleq \|\uu \cdot \ff\|_{L^p(\R)} + \|T_{\uu}\ff\|_{L^p(\R)} +     \|[\Rz,\uu\cdot] \ff\|_{L^p(\R)} + \|[\Rz,\uu\wedge] \ff\|_{L^p(\R)}.
\]
More generally, on any interval $I \subset \R$,
\[
\begin{split}
  \|\ff\|_{L^p(I)} \aleq &\|\uu \cdot \ff\|_{L^p(I)} + \|\Rz(\uu \cdot \ff)\|_{L^p(I)} + \|T_{\uu}\ff\|_{L^p(I)}+ \|\Rz(T_{\uu}\ff)\|_{L^p(I)}\\
  &+     \|[\Rz,\uu\cdot] \ff\|_{L^p(I)} +   \|[\Rz,\uu\wedge] \ff\|_{L^p(I)}.
\end{split}
  \]
\end{lemma}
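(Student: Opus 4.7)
\textbf{Proof plan for Lemma \ref{la:TfBfest}.} The plan is to solve algebraically for $\ff$ in terms of $\uu\cdot\ff$, $T_\uu\ff$, and controlled commutator errors, exploiting $|v|<1$ to get an invertible coefficient. First I would write out the vector-identity decomposition
\[
\ff \;=\; (\uu\cdot\ff)\,\uu \;-\; \uu\wedge(\uu\wedge\ff),
\]
which follows from the BAC--CAB rule and $|\uu|^2=1$. Since $\uu\wedge\ff = T_\uu\ff + v\,\Rz\ff$ by definition, applying $\uu\wedge$ yields
\[
\uu\wedge T_\uu\ff \;=\; \uu\wedge(\uu\wedge\ff) - v\,\uu\wedge \Rz\ff \;=\; (\uu\cdot\ff)\uu - \ff - v\,\uu\wedge\Rz\ff .
\]

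The next step is to ``pull $\Rz$ outside'' the cross product: write $\uu\wedge\Rz\ff = \Rz(\uu\wedge\ff) - [\Rz,\uu\wedge]\ff$, then substitute $\uu\wedge\ff = T_\uu\ff + v\Rz\ff$ and use $\Rz^2=-\mathrm{Id}$ to obtain $\uu\wedge\Rz\ff = \Rz T_\uu\ff - v\ff - [\Rz,\uu\wedge]\ff$. Plugging this into the previous display and collecting the $\ff$ terms gives the key identity
\[
(1-v^2)\,\ff \;=\; (\uu\cdot\ff)\,\uu \;-\; \uu\wedge T_\uu\ff \;-\; v\,\Rz T_\uu\ff \;+\; v\,[\Rz,\uu\wedge]\ff ,
\]
which is exactly where the hypothesis $|v|<1$ is used: it makes the scalar $1-v^2$ strictly positive and thus invertible. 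Dividing through and taking $L^p$-norms on an interval $I\subset\R$, together with $|\uu|=1$, yields
\[
\|\ff\|_{L^p(I)} \;\aleq\; \|\uu\cdot\ff\|_{L^p(I)} + \|T_\uu\ff\|_{L^p(I)} + \|\Rz T_\uu\ff\|_{L^p(I)} + \|[\Rz,\uu\wedge]\ff\|_{L^p(I)},
\]
which already gives the interval bound claimed (the commutator $[\Rz,\uu\cdot]\ff$ and the term $\|\Rz(\uu\cdot\ff)\|_{L^p(I)}$ simply appear on the right-hand side as harmless upper-bound flexibility, and can be produced symmetrically by instead applying $\Rz$ to the identity before localizing). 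For the global case $I=\R$, the Hilbert transform $\Rz$ is bounded on $L^p(\R)$ for $1<p<\infty$, so $\|\Rz T_\uu\ff\|_{L^p(\R)} \aleq \|T_\uu\ff\|_{L^p(\R)}$ and the estimate collapses to the first form asserted.

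The computation is essentially algebraic and routine once the identity is in hand; the only conceptual point is the use of $|v|<1$ to invert $1-v^2$. I expect no genuine obstacle. The sole care needed is in manipulating the commutators cleanly (treating $\uu$ componentwise when it multiplies inside $\Rz$, so that $[\Rz,\uu\wedge]$ and $[\Rz,\uu\cdot]$ are understood as componentwise operations), and in verifying that the sign convention $\Rz^2=-\mathrm{Id}$ matches the normalization fixed in \eqref{eq:HTconstant}.
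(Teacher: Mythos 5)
Your argument is correct and establishes a slightly sharper bound than the lemma states, so the lemma follows a fortiori. The mechanism that makes it work — using $\Rz^2 = -\mathrm{Id}$ to generate a $v^2\ff$ term which can be absorbed because $|v|<1$ — is identical to the paper's, but the two proofs package it differently. You derive an exact pointwise identity
\[
(1-v^2)\,\ff \;=\; (\uu\cdot\ff)\,\uu \;-\; \uu\wedge T_\uu\ff \;-\; v\,\Rz T_\uu\ff \;+\; v\,[\Rz,\uu\wedge]\ff
\]
by combining BAC--CAB with the commutator rewrite $\uu\wedge\Rz\ff = \Rz(\uu\wedge\ff) - [\Rz,\uu\wedge]\ff$, and then take $L^p(I)$ norms once. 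The paper instead works with inequalities throughout: it starts from Lagrange's pointwise bound $|\ff|\le|\uu\cdot\ff|+|\uu\wedge\ff|$, estimates $\|\ff\|_{L^p(I)}\le\|\uu\cdot\ff\|_{L^p(I)}+\|T_\uu\ff\|_{L^p(I)}+|v|\|\Rz\ff\|_{L^p(I)}$, feeds this same inequality into the leftover $\|\Rz\ff\|_{L^p(I)}$ term, uses $\Rz\circ\Rz=-\mathrm{Id}$, and absorbs $|v|^2\|\ff\|_{L^p(I)}$. Both routes produce the same constant $(1-v^2)^{-1}$. Your identity-based version is tidier and in fact never needs the terms $\|\Rz(\uu\cdot\ff)\|_{L^p(I)}$ or $\|[\Rz,\uu\cdot]\ff\|_{L^p(I)}$ (they appear in the paper's bound only because the iterated-inequality route picks up a $\|B_\uu\Rz\ff\|_{L^p(I)}$ term, which is then split into $\Rz(\uu\cdot\ff)$ plus a commutator). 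The paper's phrasing lines up directly with the six-term right-hand side it states, whereas yours proves a four-term bound that is clearly dominated by it; both are acceptable.
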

\begin{proof}
We set
\[
 B_{\uu} \ff := \uu \cdot \ff, \quad A_{\uu}\ff := \uu \wedge \ff.
\]
Since $\uu(x) \in \S^2$ pointwise a.e. and with Lagrange's identity \eqref{eq:lagrange},
\[
\begin{split}
 \| \ff \|_{L^p(I)} \leq& \|B_{\uu} \ff\|_{L^p(I)} + \|A_{\uu}\ff\|_{L^p(I)}\\
 \leq & \|B_{\uu} \ff\|_{L^p(I)} + \|T_{\uu} \ff\|_{L^p(I)} + |v| \|\Rz\ff\|_{L^p(I)}.
\end{split}
 \]
We iterate this estimate, applying it to the last term $|v| \|\Rz\ff\|_{L^p(I)}$,
\[
 \leq  \|B_{\uu} \ff\|_{L^p(I)} + \|T_{\uu} \ff\|_{L^p(I)} + |v|  \brac{ \|B_{\uu} \Rz \ff\|_{L^p(I)} + \|T_{\uu}\Rz \ff\|_{L^p(I)} + |v|\,  \|\Rz\Rz\ff\|_{L^p(I)}}.
 \]
Since $\Rz \circ \Rz = -1$, this is equal to
 \[
 =\|B_{\uu} \ff\|_{L^p(I)} + \|T_{\uu} \ff\|_{L^p(I)} +  |v|  \|B_{\uu} \Rz \ff\|_{L^p(I)} + |v|\,  \|T_{\uu}\Rz \ff\|_{L^p(I)} + |v|^2 \| \ff \|_{L^p(I)}.
 \]
Absorbing the term $|v|^2 \| \ff \|_{L^p(I)}$ and, using that $|v| <1$ and dividing by $(1-v^2)$, we have shown
\[
 \| \ff \|_{L^p(I)} \leq \frac{1}{(1-v^2)}\brac{\|B_{\uu} \ff\|_{L^p(I)} + \|T_{\uu} \ff\|_{L^p(I)} +  |v|\,  \|B_{\uu} \Rz \ff\|_{L^p(I)} + |v|\,  \|T_{\uu}\Rz \ff\|_{L^p(I)}}.
\]
Using the commutator notation \eqref{eq:commutatornot}, the right-hand side can be written as the upper bound
\[
\begin{split}
 \aleq & \|B_{\uu} \ff\|_{L^p(I)} + \|T_{\uu} \ff\|_{L^p(I)} +\|\Rz(B_{\uu} \ff)\|_{L^p(I)} +  \|[\Rz,\uu\cdot] \ff\|_{L^p(I)} + \|\Rz(T_{\uu} \ff)\|_{L^p(I)} \\
 &    +   \|[\Rz,\uu\wedge] \ff\|_{L^p(I)}.
\end{split}
 \]
If $I = \R$, then we can use the boundedness of the Hilbert transform $\Rz$ on $L^p(\R)$ to conclude.
\end{proof}

In view of \eqref{eq:ourel}, when estimating $\uu \wedge \lapv \uu - \Rz\lapv \uu$ we notice that \[
 \lapv (\uu \wedge \lapv \uu - \Rz\lapv \uu) = [\lapv , \uu \wedge] (\lapv \uu).
\]
This induces an estimate in the Hardy space $\mathcal{H}^1(\R)$.
\begin{lemma}\label{la:hardy}
Let $\uu$ satisfy the equation \eqref{eq:ourel}, then
\begin{equation}\label{la:hardy:first}
\|\lapv \brac{\uu \wedge \lapv \uu - v \Rz[\lapv \uu]}\|_{\mathcal{H}^1(\R)} \aleq  \|\lapv \uu \|_{L^2(\R)}^2.
\end{equation}
More generally, for any $\sigma \in [0,\frac{1}{2}]$,
\begin{equation}\label{la:hardy:second}
 \|\laps{\sigma} \brac{\uu\wedge \lapv \uu - v \Rz[\lapv \uu]}\|_{L^{\frac{2}{1+2\sigma}}(\R)} \aleq \|\lapv \uu \|_{L^2(\R)}^2.
\end{equation}
\end{lemma}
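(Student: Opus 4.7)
The plan is to use equation \eqref{eq:ourel} to rewrite the quantity inside the norm as a commutator, and then invoke a classical $\mathcal{H}^1$ commutator estimate. Setting $\Phi := \uu \wedge \lapv \uu - v\Rz[\lapv \uu]$, and using that $\lapv$ and $\Rz$ commute while $\lapv\circ\lapv = \Dx$, I would first compute
\begin{align*}
\lapv \Phi &= \lapv(\uu \wedge \lapv \uu) - v\,\Rz\Dx \uu \\
&= [\lapv,\uu\wedge](\lapv\uu) + \uu \wedge \Dx \uu - v\,\Rz\Dx \uu \\
&= [\lapv,\uu\wedge](\lapv\uu),
\end{align*}
where the last equality uses the PDE \eqref{eq:ourel}, i.e.\ $\uu\wedge\Dx\uu = v\,\Rz\Dx\uu$. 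Thus \eqref{la:hardy:first} reduces to the commutator estimate
\[
\|[\lapv,\uu\wedge](\lapv\uu)\|_{\mathcal{H}^1(\R)} \aleq \|\lapv\uu\|_{L^2(\R)}^2.
\]

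Expanding the cross product componentwise via $\eps_{ijk}$, this follows from the scalar bilinear inequality
\[
\|[\lapv,b]f\|_{\mathcal{H}^1(\R)} \aleq \|b\|_{\dot{H}^{\frac 1 2}(\R)}\,\|f\|_{L^2(\R)},
\]
applied with $b$ equal to each component of $\uu$ and $f$ equal to each component of $\lapv\uu$, using $\|b\|_{\dot{H}^{\frac 1 2}} \leq \|\uu\|_{\dot{H}^{\frac 1 2}} = \|\lapv\uu\|_{L^2}$ and summing contributions. This is a Coifman--Rochberg--Weiss type commutator estimate in fractional-order form, and its proof proceeds via a Littlewood--Paley/paraproduct decomposition that isolates the resonant frequency interactions and exploits the cancellation in $[\lapv,b]$ to produce an $\mathcal{H}^1$ (rather than just $L^1$) bound. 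This is the same analytic input underpinning the Da Lio--Rivi\`ere regularity theory for half-harmonic maps, and it represents the main (and most delicate) step of the argument.

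For the second assertion \eqref{la:hardy:second}, I would simply apply the Riesz potential $\lapms{\alpha}$ with $\alpha := \tfrac{1}{2} - \sigma \in [0,\tfrac{1}{2}]$. Since $\laps{\sigma}\Phi = \lapms{\alpha}(\lapv\Phi)$, combining \eqref{la:hardy:first} with the Hardy--Littlewood--Sobolev inequality in the form $\lapms{\alpha}:\mathcal{H}^1(\R) \to L^{1/(1-\alpha)}(\R)$ (valid for $\alpha\in[0,1)$, with the case $\alpha=0$ being the embedding $\mathcal{H}^1\hookrightarrow L^1$) yields
\[
\|\laps{\sigma}\Phi\|_{L^{2/(1+2\sigma)}(\R)} \aleq \|\lapv\Phi\|_{\mathcal{H}^1(\R)} \aleq \|\lapv\uu\|_{L^2(\R)}^2,
\]
using $1/(1-\alpha) = 2/(1+2\sigma)$. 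The only genuinely nontrivial ingredient in the whole scheme is the fractional $\mathcal{H}^1$-commutator estimate; the reduction and the HLS step are routine.
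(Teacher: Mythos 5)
Your reduction $\lapv\Phi = [\lapv,\uu\wedge](\lapv\uu)$ via the equation is correct, and the HLS step for \eqref{la:hardy:second} is fine. However, the scalar bilinear estimate you then invoke,
\[
\|[\lapv,b]f\|_{\mathcal{H}^1(\R)} \aleq \|b\|_{\dot{H}^{\frac 1 2}(\R)}\,\|f\|_{L^2(\R)},
\]
is \emph{false}, and this is a genuine gap. Every $\mathcal{H}^1(\R)$ function has zero integral, but
\[
\int_{\R} [\lapv,b]f\,dx = \int_{\R}\lapv(bf)\,dx - \int_{\R} b\,\lapv f\,dx = -\int_{\R} b\,\lapv f\,dx
\]
is generically nonzero, so $[\lapv,b]f$ need not lie in $\mathcal{H}^1$ at all, let alone satisfy the bound. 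Equivalently, writing $[\lapv,b]f = H_{1/2}(b,f) + \lapv b\cdot f$ with the three-term commutator $H_{1/2}(b,f) = \lapv(bf) - \lapv b\cdot f - b\,\lapv f$, only the $H_{1/2}$ piece gains Hardy-space regularity; the leftover product $\lapv b\cdot f$ is merely $L^1$ with no cancellation. The estimate that is actually known (Da Lio--Rivi\`ere, and the one cited in the paper) is for $H_{1/2}$, not for the two-term commutator, and attributing it to Coifman--Rochberg--Weiss is a mislabeling: CRW gives $L^p\to L^p$ boundedness for $1<p<\infty$ with BMO symbol, not an $L^2\to\mathcal{H}^1$ gain.

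What rescues the concrete quantity $[\lapv,\uu\wedge](\lapv\uu)$ is precisely the antisymmetry $\lapv\uu\wedge\lapv\uu = 0$: after summing against $\eps_{ijk}$, the problematic diagonal term $\lapv u_j\,\lapv u_k$ cancels, and $[\lapv,\uu\wedge](\lapv\uu)$ coincides with the (wedge version of the) three-term commutator $H_{1/2}(\uu,\lapv\uu)$. Your componentwise reduction discards exactly this cancellation and so invokes a false scalar estimate. The paper avoids the issue by testing $\Phi$ against $\lapv\varphi$ (i.e.\ working via $\mathcal{H}^1$--$BMO$ duality), explicitly inserting the zero term $(\lapv\uu\wedge\lapv\uu)\varphi$, and landing on $H_{1/2}(\uu,\varphi)$, whose $L^2$ bound against $[\varphi]_{BMO}$ is the cited input. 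To repair your argument: note that
\[
[\lapv,\uu\wedge](\lapv\uu) = H_{1/2}(\uu,\lapv\uu)\wedge\;+\;\lapv\uu\wedge\lapv\uu = H_{1/2}(\uu,\lapv\uu)\wedge
\]
and then appeal to the $\dot H^{1/2}\times L^2\to\mathcal{H}^1$ bound for $H_{1/2}$. With that fix your route differs from the paper's only in estimating $\|\lapv\Phi\|_{\mathcal{H}^1}$ directly rather than through duality, which is a cosmetic distinction.
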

\begin{proof}
Estimate \eqref{la:hardy:second} follows from \eqref{la:hardy:first} and the Sobolev embedding
\[
 \|\laps{\sigma} f\|_{L^{\frac{2}{1+2\sigma}}(\R)} \aleq \|\lapv f\|_{\mathcal{H}^1(\R)} \quad \forall \sigma \in [0,1].
\]
For \eqref{la:hardy:first}, by the duality between the Hardy space $\mathcal{H}^1$ and $BMO$, it suffices to prove for any scalar function $\varphi \in C_c^\infty(\R)$,
\[
 \mathcal{I} :=\int_{\R} \brac{\uu \wedge \lapv \uu + \nu \Rz[\lapv \uu]}\ \lapv \varphi \aleq  \|\lapv \uu \|_{L^2(\R)}^2\ [\varphi]_{BMO}.
\]
To see the latter, we write in view of \eqref{eq:ourel} and integrating by parts
\[
 \mathcal{I} =  \int_{\R} \uu\wedge \lapv \uu\ \lapv \varphi - \uu\wedge \Dx \uu\ \varphi = \int_{\R} \brac{\uu\, \lapv \varphi - \lapv (\uu\, \varphi)}\wedge \lapv \uu.
 \]
Now we add the term $(\lapv \uu \wedge \lapv \uu)  \varphi  \equiv 0$ to get
 \[
\mathcal{I} = \int_{\R} \brac{\lapv \uu\ \varphi + \uu\ \lapv \varphi - \lapv (\uu\wedge \varphi)}\wedge  \lapv \uu.
\]
Setting $H_\sigma(a,b) := \laps{\sigma} (ab) - \laps{\sigma}a\ b - a \laps{\sigma} b$, we have shown that 
\[
\mathcal{I} = -\int_{\R} H_{\frac{1}{2}} (\uu, \varphi) \wedge \lapv \uu\\
 \]
Now the claim follows from \cite[(5.29)]{Schikorra-epsilon}, see also \cite[Theorem~8.2]{Lenzmann-Schikorra-commutators}, where is is shown that
\[
 \|H_{\frac{1}{2}} (\uu, \varphi)\|_{L^2(\R)} \aleq \|\lapv \uu \|_{L^2(\R)}\ [\varphi]_{BMO}.
\]
\end{proof}

From Lemma~\ref{la:TfBfest} and Lemma~\ref{la:hardy} we obtain the following estimate which, localized to small balls, is the main growth estimate of the equation and will lead us in Proposition~\ref{pr:below1-est} below to obtain $W^{\sigma,p}_{loc}$-regularity for any $\sigma \in (0,1)$ and $p \in (1,\infty)$.

\begin{prop}\label{pr:scalingintegrability}
Assume $\uu$ is as in Theorem~\ref{th:regularity} and $|v| < 1$. Then for any $s \in [\frac{1}{2},1)$,
\[
 \|\laps{s} \uu \|_{L^{\frac{1}{s}}(\R)} \aleq \|\lapv \uu\|_{L^2(\R)}^2 \aleq \|\laps{s} \uu \|_{L^{\frac{1}{s}}(\R)}^2 .
\]
\end{prop}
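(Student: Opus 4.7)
The plan is to establish the two inequalities by separate arguments: the lower bound is a Sobolev embedding, while the upper bound uses the equation through Lemma~\ref{la:hardy}.

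First I would prove the lower bound $\|\lapv\uu\|_{L^2}^2 \aleq \|\laps{s}\uu\|_{L^{1/s}}^2$. In one dimension, the homogeneous Sobolev embedding $\dot{W}^{s,1/s}(\R) \hookrightarrow \dot{H}^{1/2}(\R)$ holds for every $s \in [\tfrac{1}{2},1)$, because the differential--integrability balance $s - s = 0 = \tfrac{1}{2} - \tfrac{1}{2}$ is satisfied and $1/s \leq 2$. Applied to $\uu$ and squared, this gives the claim at once.

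For the upper bound $\|\laps{s}\uu\|_{L^{1/s}} \aleq \|\lapv\uu\|_{L^2}^2$ the strategy is to combine Lemma~\ref{la:TfBfest} with the Hardy-space estimate of Lemma~\ref{la:hardy}. Applied to $\ff := \laps{s}\uu$, Lemma~\ref{la:TfBfest} produces the four terms
\[
\|\laps{s}\uu\|_{L^{1/s}} \aleq \|\uu\cdot\laps{s}\uu\|_{L^{1/s}} + \|T_{\uu}(\laps{s}\uu)\|_{L^{1/s}} + \|[\Rz,\uu\cdot]\laps{s}\uu\|_{L^{1/s}} + \|[\Rz,\uu\wedge]\laps{s}\uu\|_{L^{1/s}},
\]
and each of them must be bounded by $\|\lapv\uu\|_{L^2}^2$. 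The essential one is the second: writing $\laps{s}\uu = \laps{s-1/2}\lapv\uu$ and using that $\Rz$ commutes with $\laps{s-1/2}$, I would split
\[
T_{\uu}(\laps{s}\uu) = \laps{s-1/2}\bigl(T_{\uu}(\lapv\uu)\bigr) - [\laps{s-1/2},\uu\wedge](\lapv\uu),
\]
and the first summand is precisely the object controlled by Lemma~\ref{la:hardy} with $\sigma = s-\tfrac{1}{2} \in [0,\tfrac{1}{2}]$, giving the bound $\aleq \|\lapv\uu\|_{L^2}^2$. The remaining three terms of the display above are handled by exploiting $|\uu|^2 \equiv 1$: from $\laps{s}(1) = 0$ one derives the identity $2\,\uu\cdot\laps{s}\uu = -H_s(\uu,\uu)$ with $H_s(a,b) := \laps{s}(ab) - a\laps{s}b - b\laps{s}a$, and three-commutator estimates in the style of \cite{Schikorra-epsilon,Lenzmann-Schikorra-commutators} give $\|H_s(\uu,\uu)\|_{L^{1/s}} \aleq \|\lapv\uu\|_{L^2}^2$. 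The leftover commutators $[\laps{s-1/2},\uu\wedge](\lapv\uu)$, $[\Rz,\uu\cdot]\laps{s}\uu$ and $[\Rz,\uu\wedge]\laps{s}\uu$ are then controlled by fractional-product expressions of the type $\|\laps{s-1/2}\uu\|_{L^{q}}\|\lapv\uu\|_{L^2}$ with $1/q = 1/s - 1/2$, and the first factor is bounded by $\|\lapv\uu\|_{L^2}$ via Gagliardo--Nirenberg interpolation using $\|\uu\|_{L^{\infty}} \leq 1$.

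The main obstacle will be the exponent bookkeeping in these commutator estimates. The dual exponent $q = 2s/(2-s)$ lies inside $(1,\infty)$ only for $s \in (\tfrac{1}{2},1)$, so the endpoint $s = \tfrac{1}{2}$ should be handled separately (there the upper inequality degenerates and must be read after the normalization implicit in the constants, since $\laps{1/2}\uu = \lapv \uu$). If necessary, I would pass to the localized version of Lemma~\ref{la:TfBfest} on short intervals $I$ to absorb any Hilbert-commutator contribution on the left-hand side, exploiting that $\lapv\uu \in L^2$ yields small $BMO$-oscillation on sufficiently small scales.
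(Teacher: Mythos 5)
Your proposal follows essentially the same route as the paper: Sobolev for the easy direction; then Lemma~\ref{la:TfBfest} applied to $\ff=\laps{s}\uu$, the three-term commutator identity $2\,\uu\cdot\laps{s}\uu=-H_s(\uu,\uu)$, and exactly the decomposition $T_\uu(\laps{s}\uu)=\laps{s-\frac12}\bigl(T_\uu(\lapv\uu)\bigr)-[\laps{s-\frac12},\uu\wedge](\lapv\uu)$ feeding into Lemma~\ref{la:hardy}, with the remaining commutators handled by Coifman--Meyer/Kenig--Ponce--Vega type bounds. Two small corrections are worth recording. First, your H\"older exponent is miswritten: to place $\|\laps{s-\frac12}\uu\|_{L^q}\,\|\lapv\uu\|_{L^2}$ into $L^{1/s}$ you need $1/q+1/2=s$, i.e.\ $1/q=s-\tfrac12$ and $q=1/(s-\tfrac12)\in(2,\infty]$, not $1/q=1/s-\tfrac12$; this also removes your worry about $q$ leaving $(1,\infty)$. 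The paper sidesteps the issue entirely by choosing intermediate parameters $\nu\in(\sigma,\tfrac12)$ and $\theta\in(s-\tfrac12,s)$, which is a bit more robust. Second, your reading of the endpoint $s=\tfrac12$ as a degenerate tautology is off: the inequality $\|\lapv\uu\|_{L^2}\aleq\|\lapv\uu\|_{L^2}^2$ is a nontrivial small-energy rigidity statement, and at this endpoint the Hilbert commutator $[\Rz,\uu\wedge]\lapv\uu$ does \emph{not} vanish --- the Coifman--Meyer bound is no longer applicable and one must invoke the Coifman--Rochberg--Weiss theorem (as the paper does), so the endpoint requires a genuinely different commutator estimate rather than a normalization remark. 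Your final fallback to localized estimates is unnecessary here; the global bounds close without it.
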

\begin{proof}
The second estimate is just Sobolev inequality. For the first estimate, in view of Lemma~\ref{la:TfBfest} we note
\[
 \|\laps{s} \uu \|_{L^{\frac{1}{s}}} \aleq \|\uu \cdot \laps{s} \uu\|_{L^{\frac{1}{s}}(\R)} + \|T_{\uu}\laps{s}\uu\|_{L^{\frac{1}{s}}(\R)} +  \|[\Rz,\uu\wedge] \laps{s} \uu\|_{L^{\frac{1}{s}}(\R)}
\]
with $T_{\uu}$ as in Lemma \ref{la:TfBfest} above. Since $\uu \cdot \uu = 1$ a.\,e., the first term on right side is
\[
 \uu \cdot \laps{s} \uu = \frac{1}{2} H_{s} (\uu\cdot,\uu),
\]
where $H_s = \laps{s}(ab) - a\laps{s}b-\laps{s}a\ b$ denotes again the three-term commutator by Da Lio and Rivi\`{e}re introduced above. We thus have for any $s < 1$, see \cite{Schikorra-epsilon} or \cite[Theorem~8.2]{Lenzmann-Schikorra-commutators}, combined with the Sobolev inequality,
\[
 \|\uu \cdot \laps{s} \uu\|_{L^{\frac{1}{s}}(\R)} \aleq \|\laps{s/2} \uu\|_{L^{\frac{2}{s}}(\R)}^2 \aleq \|\lapv \uu\|_{L^2(\R)}^2.
\]
As for the second term, we set $\sigma := s-\frac{1}{2}$ and observe
\[
 \|T_{\uu}\laps{s}\uu\|_{L^{\frac{1}{s}}(\R)} = \| \uu \wedge \laps{\frac{1}{2}+\sigma} \uu - v \Rz[\laps{\frac{1}{2}+\sigma} \uu]\|_{L^{\frac{1}{s}}}.
\]
With the help of Lemma~\ref{la:hardy} we can bound this by
\[
\aleq \|\lapv \uu\|_{L^2(\R)}^2 + \| [\laps{\sigma} ,\uu \wedge]( \laps{\frac{1}{2}} \uu) \|_{L^{\frac{1}{s}}(\R)}.
\]
Note that the second term vanishes if $s = \frac{1}{2}$ and $\sigma = 0$. For $\sigma \neq 0$, with the Coifman-Meyer or Kenig-Ponce-Vega type estimates in \cite[Theorem 8.1.]{Lenzmann-Schikorra-commutators}, for some $\nu \in (\sigma,\frac{1}{2})$,
\[
\aleq \|\lapv \uu\|_{L^2(\R)}^2 + \|\laps{\nu} \uu \|_{L^{\frac{1}{\nu}}(\R)} \| \laps{s-\nu} \uu \|_{L^{\frac{1}{s-\nu}}(\R)} \aleq \|\lapv \uu\|_{L^2(\R)}^2,
\]
where the last step follows from Sobolev's embedding. Finally, we treat the third term. If $s = \frac{1}{2}$, we use the Coifman--Rochberg--Weiss Theorem \cite{Coifman-Rochberg-Weiss-1976}, see also \cite[Theorem 4.1]{Lenzmann-Schikorra-commutators}, to find that
\[
 \|[\Rz,\uu\wedge] \lapv \uu\|_{L^{2}(\R)} \aleq [\uu]_{BMO}\ \|\lapv \uu \|_{L^2(\R)} \aleq \|\lapv \uu \|_{L^2(\R)}^2.
\]
If $s \in (\frac{1}{2},1)$, with the Coifman--Meyer or Kenig--Ponce--Vega type estimates in \cite[Theorem 6.1.]{Lenzmann-Schikorra-commutators}, 
if we choose $\theta \in (s-\frac{1}{2},s)$ together with Sobolev's embedding, we get
\[
  \|[\Rz,\uu\wedge] \lapv \uu\|_{L^{2}(\R)}  \aleq \|\laps{\theta} \uu\|_{L^{\frac{1}{\theta}}(\R)}\ \|\laps{s-\theta}\uu\|_{L^{\frac{1}{s-\theta}}(\R)} \aleq \|\lapv \uu\|_{L^2(\R)}^2.
\]
\end{proof}

Finally, we will need the following estimate on Riesz potentials acting on Morrey spaces due to Adams (see, e.\,g., \cite[Theorem 3.1]{Adams75}).

\begin{thm2}[Adams' Sobolev inequality on Morrey spaces]\label{th:adams}
The Morrey space $L^{p,\lambda}(D)$ for $D \subset \R$ is defined via its norm,
\[
 \|f\|_{L^{p,\lambda}(D)} := \sup_{r>0,x\in\R}  r^{\frac{\lambda-1}{p}}\, \|f\|_{L^{p}(B(r,x))}.
\]
Then for any $\lambda \in (0,1]$, $s_2 \in (0,s_1)$,
\[
 \|\laps{s_2} f\|_{L^{p_2,\lambda}(\R)} \aleq C_{s_1,s_2,p_1,p_2,\lambda}   \|\laps{s_1} f\|_{L^{p_1,\lambda}(\R)},
\]
where $p_1,p_2 \in (1,\infty)$ so that
\[
 \frac{1}{p_2} = \frac{1}{p_1}-\frac{s_1-s_2}{\lambda}.
\]
\end{thm2}

The following is essentially the localized version of Proposition~\ref{pr:scalingintegrability} for $s = \frac{1}{2}$.

\begin{prop}\label{pr:morreyiteration}
Let $\uu$ be as in Theorem~\ref{th:regularity} with $|v| < 1$. Assume moreover for some $\lambda \in(0,1]$,
\begin{equation}\label{eq:morreycondition}
 \|\lapv \uu\|_{L^{2,\lambda}(\R)} < \infty.
\end{equation}
Then for any $B(x_0,\rho) \subset \R^n$, and any $k_0 \geq 10$,
 \[
 \begin{split}
\| \lapv \uu\|_{L^{2,\lambda}(B(x_0,\rho))} 
 \aleq&\ 2^{k_0\frac{1-\lambda}{2}} \|\lapv \uu\|_{L^2(B(x_0,2^{k_0} \rho))} \ \|\lapv \uu\|_{L^{2,\lambda}(B(x_0,2^{k_0}\rho))} \\
 &+\|\lapv \uu\|_{L^2(\R)}\ \sum_{k=k_0}^\infty 2^{-k\frac{\lambda}{2}}\ \|\lapv \uu\|_{L^{2,\lambda}(B(x_0,2^{k}\rho))}.
 \end{split}
\]
\end{prop}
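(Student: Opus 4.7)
The plan is to apply the localized form of Lemma~\ref{la:TfBfest} with $\ff = \lapv \uu$ on the interval $I = B(x_0,\rho)$, which bounds $\|\lapv \uu\|_{L^2(B(x_0,\rho))}$ by the $L^2(B(x_0,\rho))$-norms of six quantities: $\uu \cdot \lapv \uu$, $\Rz(\uu \cdot \lapv \uu)$, $T_{\uu} \lapv \uu = \uu \wedge \lapv \uu - v \Rz \lapv \uu$, $\Rz(T_{\uu}\lapv \uu)$, $[\Rz,\uu\cdot]\lapv \uu$ and $[\Rz,\uu\wedge]\lapv \uu$. Renormalizing this inequality by $\rho^{(\lambda-1)/2}$ and supremizing over balls will then produce the Morrey norm on the left of the claimed inequality.

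For each of the six terms I would mimic the strategy of Proposition~\ref{pr:scalingintegrability}, but adapted to dyadic annuli. The symmetric product $\uu \cdot \lapv \uu = \tfrac{1}{2} H_{1/2}(\uu,\uu)$ is a three-term commutator controlled in $\mathcal{H}^1$ by $\|\lapv \uu\|_{L^2}^2$; the equation-driven residual $T_{\uu}\lapv \uu$ is handled by Lemma~\ref{la:hardy}, which yields $\|\laps{\sigma}(T_{\uu}\lapv\uu)\|_{L^{2/(1+2\sigma)}} \aleq \|\lapv\uu\|_{L^2}^2$ for $\sigma \in [0,\tfrac12]$; and the Riesz commutators $[\Rz,\uu\cdot]$, $[\Rz,\uu\wedge]$ admit Coifman--Rochberg--Weiss-type bounds by $[\uu]_{BMO}\|\lapv\uu\|_{L^2}$. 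These global bounds would be upgraded to the Morrey scale $L^{2,\lambda}$ by Adams' Theorem~\ref{th:adams} applied to $\laps{\sigma}$ of the products and by the assumed finiteness \eqref{eq:morreycondition}.

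To extract the precise right-hand side I would, in each bilinear estimate, decompose the slowly-varying factor $\uu$ (or one copy of $\lapv \uu$ in the commutator) as $\uu \chi_{B(x_0,2^{k_0}\rho)} + \sum_{k \geq k_0} \uu \chi_{A_k}$ with $A_k = B(x_0,2^{k+1}\rho) \setminus B(x_0,2^k\rho)$. On the enlarged ball $B(x_0,2^{k_0}\rho)$, running the Hardy-space and Morrey-Sobolev arguments above gives a bilinear bound whose passage from $L^2$ to $L^{2,\lambda}$ (at scale $2^{k_0}\rho$) contributes the prefactor $2^{k_0(1-\lambda)/2}$ and yields the first term of the conclusion, carrying one uncontracted factor $\|\lapv\uu\|_{L^2(B(x_0,2^{k_0}\rho))}$ and one factor $\|\lapv\uu\|_{L^{2,\lambda}(B(x_0,2^{k_0}\rho))}$. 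For each far annulus $A_k$, the singular kernels involved (the Hilbert kernel $|x-y|^{-1}$ of $\Rz$ and the $|x-y|^{-2}$-type kernels of $\lapv$ and of the commutators) are pointwise bounded by $(2^k\rho)^{-1}$ or $(2^k\rho)^{-2}$ for $x \in B(x_0,\rho)$, $y \in A_k$; pairing this with the $L^{2,\lambda}$ factor on $A_k$ and the global $L^2$ factor, then integrating in $x \in B(x_0,\rho)$ and rescaling, delivers a contribution of size $\|\lapv\uu\|_{L^2(\R)} \cdot 2^{-k\lambda/2}\|\lapv\uu\|_{L^{2,\lambda}(B(x_0,2^k\rho))}$, whose geometric sum over $k \geq k_0$ matches the second term.

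The hard part will be that the three bilinear objects above (three-term commutator, Riesz commutator, Lemma~\ref{la:hardy} residual) are genuinely nonlocal, so the cut-off decomposition $\uu = \uu_{\mathrm{in}}+\uu_{\mathrm{out}}$ does not pass transparently through them. A workable remedy is to dualize against $\varphi \in C_c^\infty(B(x_0,\rho))$ and perform the dyadic splitting at the level of the kernel representation of each commutator, as in the proof of Lemma~\ref{la:hardy}: the cancellation built into the commutator structure suppresses the diagonal singularity, so that on the off-diagonal pieces indexed by $A_k$ the remaining kernel is genuinely integrable with $|x-y|^{-2}$ decay. This justifies the tail summation and pins the global prefactor $\|\lapv \uu\|_{L^2(\R)}$ to the correct remaining bilinear factor, completing the proof.
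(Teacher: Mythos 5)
Your proposal follows essentially the same route as the paper: apply the localized form of Lemma~\ref{la:TfBfest}, estimate the three-term commutator $\uu\cdot\lapv\uu = -\tfrac12 H_{1/2}(\uu,\uu)$, the Riesz commutators $[\Rz,\uu\cdot]$, $[\Rz,\uu\wedge]$, and the $T_\uu$-residual via the Hardy-space/Coifman--Rochberg--Weiss bounds together with Lemma~\ref{la:hardy}, upgrade to the Morrey scale via Adams' Theorem~\ref{th:adams}, and then perform a dyadic near/far splitting whose tail sums to the second term. One correction worth making in your implementation: the near/far split should be applied to the $L^1$/Hardy-level quantities $\uu\cdot\lapv\uu$ and $T_\uu\lapv\uu$ (or to $\lapv\uu$ itself, as you also suggest), not to the bounded map $\uu$, which has no decay and for which the truncations $\uu\chi_{A_k}$ have uniformly large $L^\infty$ norm; the paper effects the split by inserting a cutoff $\eta$ in front of $\Rz$ and writing $\Rz(\cdot)=\Rz(\eta\,\cdot)+[\Rz,1-\eta](\cdot)$, so that the disjoint-support commutator immediately produces the $2^{-k/2}$ gain over dyadic annuli. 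Your alternative of dualizing against $\varphi\in C_c^\infty(B(x_0,\rho))$ and splitting at the kernel level is equally workable and is in fact what the paper defers to its cited earlier work for, so you are identifying the right technical hurdle.
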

\begin{proof}
Fix $B(x,r) \subset B(x_0,\rho)$. 
Recall that
\[
 T_{\uu}\lapv \uu := \uu\wedge \lapv \uu - \nu \Rz[\lapv \uu].
\]
By Proposition~\ref{la:TfBfest},
\[
\begin{split}
 \|\lapv \uu\|_{L^2(B(x,r))}   \aleq & \|\uu \cdot \lapv \uu\|_{L^2(B(x,r))} + \|\Rz(\uu \cdot \lapv \uu)\|_{L^2(B(x,r))}\\
 & +    \|[\Rz,\uu \cdot] \lapv \uu\|_{L^2(B(x,r))} + \|[\Rz,\uu\wedge] \lapv \uu\|_{L^2(B(x,r))} \\
  &+\|T_{\uu} \lapv \uu\|_{L^2(B(x,r))}+\|\Rz(T_{\uu}\lapv \uu)\|_{L^2(B(x,r))}.
\end{split}
\]
We now gather the following facts.
\begin{enumerate}
\item Since $|\uu| \equiv 1$,
\[
 \uu \cdot \lapv \uu = -\frac{1}{2} H_{\frac{1}{2}} (\uu,\uu) := -\frac{1}{2}\brac{\lapv(\uu \cdot \uu) - \lapv \uu \cdot \uu - \uu \lapv \uu}.
\]
\item For any $\alpha \in (0,\frac{1}{2})$, with the estimates for three-commutators, see \cite[Theorem~8.2]{Lenzmann-Schikorra-commutators}, and \cite[Theorem 6.1]{Lenzmann-Schikorra-commutators}, respectively,
\[
\|H_{\frac{1}{2}} (\uu,\uu)\|_{L^2(\R)} +
 \|[\Rz,\uu \cdot] \lapv \uu\|_{L^2(\R)} + \|[\Rz,\uu\wedge] \lapv \uu\|_{L^2(\R)}
 \]
 \[
 \aleq \|\laps{\alpha} \uu \|_{L^{\frac{1}{\alpha}}(\R)}\ \|\laps{\frac{1}{2}-\alpha} \uu \|_{L^{\frac{2}{1-2\alpha}}(\R)} \aleq \|\lapv \uu \|_{L^{2}(\R)}\ \| \laps{\frac{1}{2}-\alpha} \uu \|_{L^{\frac{2}{1-2\alpha}}(\R)}.
\]
The last inequality comes from Sobolev's inequality.
\item Using Adams' estimate, Theorem~\ref{th:adams}, 
\[
  \|\laps{\frac{1}{2}-\alpha} \uu \|_{L^{\frac{1-2\alpha}{2}}(B(x,R))} \aleq R^{-\frac{\lambda-1}{2}} \|\lapv \uu \|_{L^{2,\lambda}(\R)}
\]
\end{enumerate}
Now, for any $\ell_0 \geq 2$, we pick a generic cutoff function $\eta_{B(x,2^{\ell_0} r)} \in C_c^\infty(B(x,2^{\ell_0} r))$ with $\eta_{B(x,2^{\ell_0} r)} \equiv 1$ in $B(x,2^{\ell_0-1} r)$. We conclude
\[
\begin{split}
 &\|\uu \cdot \lapv \uu\|_{L^2(B(x,r))} + \|\Rz(\uu \cdot \lapv \uu)\|_{L^2(B(x,r))}\\
 \aleq &\|\uu \cdot \lapv \uu\|_{L^2(B(x,2^{\ell_0}r))} + \|[\Rz,(1-\eta_{B(x,2^{\ell_0} r)})]\uu \cdot \lapv \uu\|_{L^2(B(x,r))}
\end{split}
 \]
and by the disjoint support (for details on this kind of arguments, see \cite{Schikorra-epsilon}) we find
\[
 \aleq \|\uu \cdot \lapv \uu\|_{L^2(B(x,2^{\ell_0}r))} + \sum_{k=\ell_0}^\infty 2^{-\frac{k}{2}} \|\uu \cdot \lapv \uu\|_{L^2(B(x,2^k r))}
\]
Localizing the above facts (again, for details we refer to \cite{Schikorra-epsilon}) we obtain for any $k_0 \geq \ell_0 +1$,
\[
\begin{split}
 &r^{\frac{\lambda-1}{2}}\brac{\|\uu \cdot \lapv \uu\|_{L^2(B(x,r))} + \|\Rz(\uu \cdot \lapv \uu)\|_{L^2(B(x,r))}}\\
 \aleq\ &2^{k_0\frac{1-\lambda}{2}} \|\lapv \uu\|_{L^2(B(x,2^{k_0}r))}\ \|\lapv \uu \|_{L^{2,\lambda}(B(x,2^k \rho))}\\
 &+  \|\lapv \uu \|_{L^2(\R)}\sum_{k=k_0}^\infty 2^{-k\frac{\lambda}{2}} \|\lapv \uu\|_{L^2(B(x,2^k r))}.
\end{split}
 \]
In the same way we obtain estimates for 
\[
\begin{split}
 &r^{\frac{\lambda-1}{2}} \brac{ \|[\Rz,\uu \cdot] \lapv \uu\|_{L^2(B(x,r))} + \|[\Rz,\uu\wedge] \lapv \uu\|_{L^2(B(x,r))}}\\
  \aleq\ &2^{k_0\frac{1-\lambda}{2}} \|\lapv \uu\|_{L^2(B(x,2^{k_0}r))}\ \|\lapv \uu \|_{L^{2,\lambda}(B(x,2^k \rho))}\\
 &+  \|\lapv \uu \|_{L^2(\R)}\sum_{k=k_0}^\infty 2^{-k\frac{\lambda}{2}} \|\lapv \uu\|_{L^2(B(x,2^k r))}.
\end{split}
 \]
Finally, we need to estimate
\[
\|T_{\uu} \lapv \uu\|_{L^2(B(x,r))}+\|\Rz(T_{\uu}\lapv \uu)\|_{L^2(B(x,r))} \\
\]
For this we employ Lemma~\ref{la:hardy} and get (by a localization argument) the same estimate.
\end{proof}

From Proposition~\ref{pr:morreyiteration} we obtain the following result.
\begin{prop}\label{pr:below1-est}
Let $\uu$ be as above. For any $s < 1$ and any $p \in (1,\infty)$, we have
\[
 \laps{s} \uu \in L^p_{loc}(\R) .
\]
In addition, for any $\Lambda > 0$ there is a $C(\Lambda,\uu,p,s) > 0$ such that
\begin{equation}\label{eq:Lpest}
 \sup_{I \subset \R, |I| \leq \Lambda} \|\laps{s} \uu\|_{L^p(I)} \leq C(\Lambda,\uu),
\end{equation}
where the supremum is over intervals of size less or equal to $\Lambda$.

In particular, we have the H\"older continuity $\uu \in C^{\mu}(\R)$ for any $\mu \in (0,1)$.
\end{prop}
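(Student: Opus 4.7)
The plan. The target is to bootstrap the integrability of $\laps{s}\uu$ from the baseline $\lapv \uu \in L^2(\R)$ up to full $L^p_{\mathrm{loc}}$ for all $s<1$ and all $p<\infty$. The main engine is the self-improving Morrey inequality of Proposition \ref{pr:morreyiteration}, which I will run in two stages: a Morrey bootstrap to gain a better scale-invariant estimate on $\lapv \uu$, followed by an Adams-type embedding and a secondary iteration using the equation to push $s$ past $1/2$.

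\textbf{Stage 1 (Morrey iteration for $\lapv \uu$).} I will first show $\lapv \uu \in L^{2,\lambda}_{\mathrm{loc}}(\R)$ for every $\lambda \in (0,1)$. The crucial input is absolute continuity of the $L^2$-norm: for any compact $K \subset \R$ and any $\eps>0$ there is $R_0=R_0(K,\eps)$ with $\sup_{x \in K}\|\lapv \uu\|_{L^2(B(x,R_0))} < \eps$. Fixing $\lambda \in (0,1)$, I first pick $k_0$ so large that $\sum_{k \geq k_0} 2^{-k\lambda/2} \leq \theta$ for some small $\theta>0$, and then shrink $\rho$ so that $2^{k_0(1-\lambda)/2}\|\lapv \uu\|_{L^2(B(x_0, 2^{k_0}\rho))} \leq \theta$; inserting this into Proposition \ref{pr:morreyiteration} produces a self-similar bound of the form
\[
\|\lapv \uu\|_{L^{2,\lambda}(B(x_0,\rho))} \leq \theta\, \|\lapv \uu\|_{L^{2,\lambda}(B(x_0, 2^{k_0}\rho))} + C.
\]
A dyadic absorption/hole-filling argument then yields the desired local $L^{2,\lambda}$-bound, with constants uniform over $x_0$ in the compact set.

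\textbf{Stage 2 (Adams embedding and bootstrap beyond $s=\tfrac12$).} With $\lapv \uu \in L^{2,\lambda}_{\mathrm{loc}}$ for $\lambda$ arbitrarily close to $1$, Adams' Sobolev embedding (Theorem \ref{th:adams}, applied on balls) delivers $\laps{s}\uu \in L^{p,\lambda}_{\mathrm{loc}} \subset L^p_{\mathrm{loc}}$ for every $s \in (0,\tfrac12)$ and every $p \in (1,\infty)$ by choosing $\lambda$ close enough to $1$. To reach the range $s \in [\tfrac12, 1)$, I will redo the argument of Proposition \ref{pr:scalingintegrability} with the new integrability: the equation $T_\uu(\lapv \uu)=0$ is inverted through the $L^p$-estimate of Lemma \ref{la:TfBfest} (valid for $|v|<1$), so that $\laps{s}\uu$ is controlled by the three-commutator $H_{s}(\uu,\uu)$ (which carries the Schauder gain), the Riesz commutators $[\Rz,\uu\cdot]$ and $[\Rz,\uu\wedge]$, and the Hardy-space term from Lemma \ref{la:hardy}. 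Each of these can be estimated by products of fractional norms $\|\laps{\alpha}\uu\|_{L^{q}}$ with $\alpha<s$, all of which now sit in $L^q_{\mathrm{loc}}$ for arbitrarily large $q$ by Stage 1. A finite chain of such bootstraps covers all $s<1$ and all $p<\infty$, giving \eqref{eq:Lpest}; the H\"older continuity $\uu \in C^\mu$ for every $\mu<1$ then follows from the Morrey-type embedding $W^{s,p}_{\mathrm{loc}} \hookrightarrow C^{\mu}_{\mathrm{loc}}$ whenever $s - 1/p > \mu$.

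\textbf{Main obstacle.} The delicate point is the absorption step in Stage 1: the Morrey norm on the larger ball in Proposition \ref{pr:morreyiteration} is \emph{a priori} infinite (since $\lapv \uu \in L^2$ does not imply any local $L^{2,\lambda}$-bound for $\lambda<1$), so one cannot immediately move it to the left-hand side. I expect to handle this by working with a truncated Morrey quantity $\sup_{r \geq r_0} r^{(\lambda-1)/2}\|\lapv \uu\|_{L^2(B(x,r))}$ (which is trivially finite), running the absorption with constants uniform in $r_0$, and finally sending $r_0 \to 0$; an equivalent alternative is to smooth $\uu$ by convolution, establish the bound for the regularized object, and pass to the limit.
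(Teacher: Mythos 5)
Your plan uses the same main ingredients as the paper (Proposition~\ref{pr:morreyiteration} to improve the Morrey exponent, Adams' embedding, and a localized version of Proposition~\ref{pr:scalingintegrability} for the range $s\in[\tfrac12,1)$), and your Stage~2 is essentially the paper's argument. The genuine difference is how Stage~1 deals with the a~priori finiteness of the Morrey norm that appears on the right side of the inequality in Proposition~\ref{pr:morreyiteration} --- exactly the obstacle you flag.

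You fix a target $\lambda<1$ and try to prove $\lapv\uu\in L^{2,\lambda}$ directly, proposing either a truncated Morrey seminorm (sup over $r\ge r_0$) or a convolution regularization as a workaround. The paper avoids this issue entirely by iterating $\lambda$ downwards: it starts at $\lambda_0=1$, where the Morrey norm in hypothesis \eqref{eq:morreycondition} is nothing but the $L^2(\R)$ norm, so all quantities on the right of Proposition~\ref{pr:morreyiteration} are manifestly finite from the outset. The dyadic scale iteration $\rho_k=2^{-k}\rho_0$ then yields finiteness of the $L^{2,\lambda_1}$ norm (at small radii) for some $\lambda_1<\lambda_0$; with \eqref{eq:morreycondition} now validated for $\lambda_1$, the argument is repeated with a smaller $\rho_1$, producing a decreasing sequence $\lambda_i\to0$. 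This induction-in-$\lambda$ keeps the right-hand side finite at every step and sidesteps your truncation/regularization machinery, which would require checking that the absorption constant is uniform in $r_0$ (or that the Morrey bound survives the smoothing limit) --- neither of which you verify. A secondary issue in your Stage~1: the displayed ``self-similar bound'' has the Morrey norm on the \emph{larger} ball $B(x_0,2^{k_0}\rho)$ on the right, which dominates the quantity on the left by monotonicity, so it cannot be absorbed by a simple hole-filling step; the scale iteration $\rho_k=2^{-k}\rho_0$ combined with the decreasing-$\lambda$ bootstrap is the mechanism that makes the absorption effective. Finally, a minor remark: once $\lapv\uu\in L^{2,\lambda}(\R)$ for all $\lambda\in(0,1)$ is established, the paper obtains the whole range $s\in(0,1)$, $p\in(1,\infty)$ directly by localizing Proposition~\ref{pr:scalingintegrability} and applying Adams' Theorem~\ref{th:adams}; no separate ``secondary iteration'' beyond $s=\tfrac12$ is needed, since Proposition~\ref{pr:scalingintegrability} already covers $s\in[\tfrac12,1)$.
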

\begin{proof}
Let us first establish \eqref{eq:Lpest}. We begin with the estimate from Proposition~\ref{pr:morreyiteration} for $\lambda_0  := 1$. Then \eqref{eq:morreycondition} is satisfied since $\lapv \uu \in L^2(\R)$. For any $\sigma > 0$ and any $\eps > 0$, we can find $k_0 \geq 10$ and $\rho_0 > 0$ so that
\begin{equation}\label{eq:radiusepsilon}
 2^{-k_0 \frac{\sigma}{2}}\|\lapv \uu\|_{L^2(\R)}  + \sup_{x_0 \in \R} 2^{k_0\frac{1-\lambda_0}{2}} \|\lapv \uu\|_{L^2(B(x_0,2^{k_0} \rho_0))} \leq \eps.
\end{equation}
The estimate from Proposition~\ref{pr:morreyiteration} becomes
 \[
 \begin{split}
\| \lapv \uu\|_{L^{2,\lambda_0}(B(x_0,\rho))} 
 \aleq\ & \eps\ \|\lapv \uu\|_{L^{2,\lambda}(B(x_0,2^{k_0}\rho))} \\
 &+\eps\ \sum_{k=k_0}^\infty 2^{-k\frac{\lambda-\sigma}{2}}\ \|\lapv \uu\|_{L^{2,\lambda}(B(x_0,2^{k}\rho))},
 \end{split}
\]
which holds for any $x_0 \in \R$ and any $\rho \leq \rho_0$. For small enough $\eps > 0$, using an iteration scheme for $\rho_k = 2^{-k}\rho_0$ (we refer e.g. to \cite[Appendix 4]{Schikorra-epsilon}), we find some $\lambda_1 < 1$ so that \eqref{eq:morreycondition} holds for this $\lambda_1$ (we need to ensure \eqref{eq:morreycondition} only for small radii, since $\lapv \uu \in L^2(\R)$). 

We repeat this argument: we employ Proposition~\ref{pr:morreyiteration} with this $\lambda_1 < \lambda_0$, choosing a radius $\rho_1 < \rho_0$ even smaller so that \eqref{eq:radiusepsilon} is satisfied for $\lambda_1$ instead of $\lambda_0$, etc. \

We obtain a decreasing sequence of $(\lambda_i)_{i \in \N}$ in $(0,1]$ for which \eqref{eq:morreycondition} is satisfied. Actually, retracing the arguments in \cite[Appendix 4]{Schikorra-epsilon} (see also \cite{BPSknot12}) we observe that $\lambda_i \to 0$ as $i \to \infty$. Thus we obtain
\[
 \lapv \uu \in L^{2,\lambda}(\R) \quad \forall \lambda \in (0,1).
\]
In view of Proposition~\ref{pr:scalingintegrability}, using a suitable localization argument, we obtain that
\[
 \laps{s} \uu \in L^{1/s,\lambda}(\R) \quad \mbox{for any $\lambda \in (0,1)$, $s \in (0,1)$.}
\]
By Theorem~\ref{th:adams}, this implies
\[
 \laps{s} \uu \in L^{p,\lambda}(\R)\quad \mbox{for any $\lambda \in (0,1)$, $s \in (0,1)$, $p \in (1,\infty)$.}
\]
Finally, the $C^\mu$-regularity follows from Sobolev embedding from \eqref{eq:Lpest}. Indeed, for any ball $B(x,1)$ one $C^\mu(B(x,1))$-estimate, independent of $x \in \R$. Observe that this immediately implies global $C^\mu$-regularity, since $\uu$ is bounded. 
\end{proof}

\subsection{$\dot{H}^1$-Bound and Local Bootstrap}
So far, in Proposition~\ref{pr:below1-est} we only have estimates of differential order $< 1$.
To jump over the threshold $1$, we observe that $\uu \cdot \uu' = 0$ implies
\[
 \|\uu \cdot \Dx \uu\|_{L^2(\R)} = \|[\Rz, \uu\cdot ](\uu')\|_{L^2(\R)} \aleq [u]_{C^{\frac{1}{2}}} \|\lapv \uu\|_{L^2(\R)}. 
\]
In the last step we used the Coifman-Meyer estimate, see \cite[Theorem 6.1.]{Lenzmann-Schikorra-commutators}. The same is true for
\[
 \|[\Rz, \uu\wedge](\uu')\|_{L^2(\R)} \aleq [u]_{C^{\frac{1}{2}}} \|\lapv \uu\|_{L^2(\R)}
\]
Now we apply Lemma~\ref{la:TfBfest}, observing that in view of \eqref{eq:ourel} we have $T_\uu \Dx \uu \equiv 0$. Then
\begin{equation}\label{eq:H1}
 \|\uu'\|_{L^2(\R)} = \|\Dx \uu\|_{L^2(\R)}\aleq [u]_{C^{\frac{1}{2}}} \|\lapv \uu\|_{L^2(\R)}.
\end{equation}
In particular we have shown $\uu \in \dot{H}^1(\R; \Ss^2)$.

Repeating the above argument instead of $L^2$ with $L^p$, suitably localized, we also have
\[
 \uu',\ \Dx \uu \in L^p_{loc}(\R) \quad \mbox{for any $p \in (1,\infty)$}.
\]

The \emph{local} bootstrap regularity is then a (simpler) iteration of the previous arguments. We repeatedly use Lemma~\ref{la:TfBfest} with $\ff := \laps{s} \uu$.

The resulting terms are all controlled by terms of lower order, one simply localizes the following estimates: 
For $s > 1$, see \cite{SNHarmS10,DndMan},
\[
 \|\uu \cdot \laps{s} \uu\|_{L^2(\R)} \aleq \begin{cases}
                                             \|\laps{\frac{s}{2}} u\|_{L^4(\R)}^2 \quad &\mbox{if $s \leq 1$,}\\
                                             \|\laps{s-1} u\|_{L^4(\R)}^2\ \|\laps{1} u\|_{L^4(\R)} &\mbox{if $s > 1$.}
                                            \end{cases}
\]
With the equation \eqref{eq:ourel}, we have by the Coifman-Meyer estimate, for $s > 1$,
\[
 \|T_{\uu} \laps{s} \uu\|_{L^2(\R)} = \|[\laps{s-1},\uu](\Dx \uu)\|_{L^2(\R)} \aleq 
                                             \|\uu\|_{C^{s-1}}\ \|\laps{s-1} \uu \|_{L^2(\R)}.
\]
Finally, see \cite[Theorem 8.1]{Lenzmann-Schikorra-commutators}, for any $\alpha \in (0,1)$,
\[
 \|[\Rz,\uu\cdot] \laps{s}\uu\|_{L^p(I)} +   \|[\Rz,\uu\wedge] \laps{s} \uu\|_{L^p(I)} \aleq [u]_{C^{0,\alpha}}\ \|\laps{s-\alpha} \uu \|_{L^2(\R)}.
\]
Localizing all these arguments, we obtain for any $\Lambda > 0$ any order $k$ a constant so that for the supremum on intervals of size $\Lambda$,
\begin{equation}\label{eq:nablakuuL^2loc}
 \sup_{I \subset \R, |I| \leq \Lambda} \| \nabla^k \uu \|_{L^2(I)} \aleq C(k,\uu,\Lambda) < \infty.
\end{equation}

\subsection{Smoothness and $\dot{H}^{2}$-Bound}
For any interval $I \subset \R$, it holds that $\|f\|_{L^\infty(I)} \leq |I|^{-1}\, \|f\|_{L^1(I)} + \|f'\|_{L^1(I)}$. From \eqref{eq:nablakuuL^2loc} we thus conclude
\[
\sup_{x \in \R} \|\uu\|_{C^k(B(\rho,x))} \aleq C(k,\uu,\rho) \quad \forall k \in \N,
\]
which implies the global estimates:
\begin{equation}\label{eq:Ck}
 \|\uu\|_{C^k(\R)} \aleq C(k,\uu) \quad \forall k \in \N.
\end{equation}
In particular, we obtain $\uu \in C^\infty(\R)$ is smooth. Next, we derive global $\dot{H}^2$-bounds. By differentiating \eqref{eq:ourel}, we get
\begin{equation}\label{eq:diffourel}
\uu_x \wedge \Dx \uu+ \uu \wedge \Dx \uu_x -v \uu_{xx} = 0 \quad \mbox{in $\R$}.
\end{equation}
We have
\[
 \|\Dx \uu_x\|_{L^2(\R)} \leq \|\uu \cdot \Dx \uu_x\|_{L^2(\R)}+\|\uu \wedge \Dx \uu_x\|_{L^2(\R)}
\]
and in view of \eqref{eq:diffourel},
\[
 \leq \|\uu \cdot \Dx \uu_x\|_{L^2(\R)}+ |v| \|\uu_{xx} \|_{L^2(\R)}+\|\uu_x \wedge \Dx \uu\|_{L^2(\R)}.
\]
Using that $\|\uu_{xx} \|_{L^2(\R)} =  \|\Dx \uu_x\|_{L^2(\R)}$ and absorbing the term involving $|v| < 1$, 
\begin{equation}\label{eq:DxuuxL2estimate}
 \|\Dx \uu_x\|_{L^2(\R)} \aleq \|\uu \cdot \Dx \uu_x\|_{L^2(\R)}+ \|\uu_x \wedge \Dx \uu\|_{L^2(\R)}.
\end{equation}
Next, from $\uu_x \wedge \uu_x = 0$ and $\Rz \uu_x = - \Dx \uu$ we get
\[
 \uu_x \wedge \Dx \uu = -\uu_x \wedge \Rz[\uu_x] = [\Rz, \uu_x \wedge](\uu_x).
\]
Consequently, with the Coifman--Meyer or Kenig--Ponce--Vega type estimates in \cite[Theorem 6.1]{Lenzmann-Schikorra-commutators}
\begin{equation}\label{eq:uuxwedgeDxuu}
 \|\uu_x \wedge \Dx \uu\|_{L^2(\R)} \aleq [\uu_x]_{C^{0,1}}\ \|\Dx \uu\|_{L^2} \aleq \|\uu \|_{C^2(\R)}\ \|\uu \|_{H^1(\R)}.
\end{equation}
Moreover, we use $\uu \cdot \uu_x = 0$ since $\uu \in \S^2$ pointwise a.\,e., which implies
\[
 \uu \cdot \Dx \uu_x = [\Dx,\uu \cdot ] (\uu_x)
\]
Again with the Coifman--Meyer or Kenig--Ponce--Vega type estimates in \cite[Theorem 6.1]{Lenzmann-Schikorra-commutators}
\begin{equation}\label{eq:uuxcdotDxuux}
 \|\uu \cdot \Dx\uu_x \|_{L^2(\R)} \aleq [\uu]_{C^{0,1}}\ \|\uu_x \|_{L^2(\R)} \aleq \|\uu\|_{C^1(\R)}\ \|\uu\|_{\dot{H}^1(\R)}.
\end{equation}
Plugging \eqref{eq:uuxcdotDxuux} and \eqref{eq:uuxwedgeDxuu} into \eqref{eq:DxuuxL2estimate},
\[
 \|\uu\|_{\dot{H}^2(\R)} = \|\Dx \uu_x\|_{L^2(\R)} \aleq  \|\uu \|_{C^2(\R)}\ \|\uu \|_{\dot{H}^1(\R)}.
\]
By the $C^2(\R)$-estimates from \eqref{eq:Ck} and the $\dot{H}^1(\R)$-estimates from \eqref{eq:H1}, we deduce that $\uu \in \dot{H}^2(\R)$. We thus have provided a proof of Theorem \ref{th:regularity}.

\section{Miscellanea}

Let $m \geq 1$ and recall that $M^{(m)}$ denotes the $(2m-1)\times (2m-1)$--matrix defined in \eqref{eq:matrixMn} containing the finite sequences $(\alpha_n)_{n=1}^{2m-1}$ and $(\beta_n)_{n=1}^{2m-2}$ from \eqref{eq:an} and \eqref{eq:bn}.

\begin{lemma} \label{lem:jacobi}
The matrix $M^{(m)}$ has exactly $2m-1$ real and simple eigenvalues with
$$
\lambda_1 < \lambda_2 < \lambda_3 < \ldots < \lambda_{2m-1} 
$$ 
\end{lemma}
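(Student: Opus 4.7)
The idea is to show that, despite being non-Hermitian and complex, the tridiagonal matrix $M^{(m)}$ is similar to a real symmetric tridiagonal Jacobi matrix with non-vanishing off-diagonal entries; the claim then follows from standard Jacobi matrix theory.

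First, I would verify the key positivity property
\[
c_k := M^{(m)}_{k,k+1} \cdot M^{(m)}_{k+1,k} = -\beta_k \beta_{2m-1-k} > 0 \qquad \text{for } 1 \leq k \leq 2m-2.
\]
Using the explicit formulas \eqref{eq:an}--\eqref{eq:bn}, a direct calculation yields
\[
c_k = \begin{dcases*} \dfrac{k(k+1)}{4} & for $1 \leq k \leq m-1$, \\ \dfrac{(2m-k)(2m-1-k)}{4} & for $m \leq k \leq 2m-2$, \end{dcases*}
\]
which is strictly positive in all cases. Since $\beta_k \neq 0$ for every $k$, in particular, neither off-diagonal entry of $M^{(m)}$ vanishes.

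Next, I would introduce the diagonal similarity $D = \mathrm{diag}(d_1, \ldots, d_{2m-1})$ defined inductively by $d_1 = 1$ and
\[
\frac{d_{k+1}}{d_k} = \sqrt{\frac{M^{(m)}_{k+1,k}}{M^{(m)}_{k,k+1}}} \qquad (k = 1, \ldots, 2m-2),
\]
where we fix a branch of the square root. A straightforward computation shows
\[
\bigl(D^{-1} M^{(m)} D\bigr)_{k,k+1} = \bigl(D^{-1} M^{(m)} D\bigr)_{k+1,k} = \pm \sqrt{c_k} \in \R \setminus \{0\},
\]
while the diagonal entries $\alpha_1, \ldots, \alpha_{2m-1}$ are preserved. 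Thus $\widetilde{M} := D^{-1} M^{(m)} D$ is a real symmetric tridiagonal matrix whose super- and sub-diagonal entries are all non-zero.

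Finally, I would invoke two classical results for such unreduced real symmetric Jacobi matrices (see, e.g., any textbook on numerical linear algebra or orthogonal polynomials): (i) $\widetilde{M}$ has a full set of real eigenvalues since it is Hermitian, and (ii) these eigenvalues are all simple, because the three-term recurrence implies that any eigenvector is uniquely determined (up to scalar multiple) by its first component. Since $M^{(m)}$ and $\widetilde{M}$ are similar, they share the same spectrum, yielding exactly $2m-1$ distinct real eigenvalues $\lambda_1 < \lambda_2 < \cdots < \lambda_{2m-1}$. The main conceptual step is the positivity of the products $c_k$, which is what makes the symmetrizing diagonal conjugation work; the rest is bookkeeping and citation of classical Jacobi matrix theory.
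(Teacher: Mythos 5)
Your proof takes essentially the same route as the paper: diagonally conjugate $M^{(m)}$ to a real symmetric tridiagonal matrix with non-vanishing off-diagonal entries, then invoke the standard facts that an unreduced real symmetric Jacobi matrix has real, simple eigenvalues. The paper writes the diagonal entries $d_{kk}$ of the similarity explicitly as square roots of the cumulative products $\prod_{j=1}^{k-1}\beta_{N-j}/(-\beta_j)$ (allowing $d_{kk}$ to be imaginary but observing that the resulting off-diagonal entries $\gamma_n=(-1)^{n+1}\sqrt{|\beta_n||\beta_{N-n}|}$ are real and nonzero), whereas you isolate the positivity of the products $c_k = M^{(m)}_{k,k+1}M^{(m)}_{k+1,k}$ as the key structural fact that makes the symmetrization real — a slightly cleaner way to phrase the same observation, but conceptually identical. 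Your conclusion via the three-term recurrence / first-component argument is exactly the one used in the paper.
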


\begin{proof}
The result basically follows from the tridiagonal structure of $M^{(m)}$. Let $N = 2m -1$ in what follows. We define $D=[d_{kl}]$ to be the diagonal $N \times N$-matrix whose diagonal entries are given by
\be
d_{kk} = \begin{dcases*} 1 & for $k=1$, \\ \sqrt{ \frac{\beta_{N-1} \beta_{N-2} \cdots \beta_{N-k+1}}{(-\beta_1)(- \beta_2) \cdots (-\beta_{k-1})}} & for $2 \leq k \leq N$. \end{dcases*}
\ee
Note that $\frac{\beta_{N-1} \beta_{N-2} \cdots \beta_{N-k+1}}{(-\beta_1) (-\beta_2) \cdots (-\beta_{k-1})} \neq 0$ is a real number. In particular, the matrix $D$ is invertible and a calculation yields that
\be
T := D M^{(n)} D^{-1} = \left [ \begin{array}{cccc}
\alpha_{1} & \gamma_1 & & 0\\
\gamma_1 & \ddots & \ddots & \\
& \ddots & \ddots & \gamma_{2m-2} \\
0 & & \gamma_{2m-2} & \alpha_{2m-1} \end{array} \right ] 
\ee
where $\gamma_{n} = (-1)^{n+1} \sqrt{ |\beta_n| |\beta_{N-n}|}  \neq 0$ are real non-zero entries.  Since the $\alpha_n$ are also real numbers, the matrix $T$ is real and symmetric. Hence $T$ and $M^{(m)}$ have the same $N=2m-1$ real eigenvalues $\{ \lambda_{k} \}_{k=1}^{N}$ counting multiplicities. 

It is now standard fact that the tridiagonal structure of $T$ yields that each eigenvalue $\lambda_k$ must be simple. Indeed, suppose that $T \mathbf{v} = \lambda \mathbf{v}$ for some $\mathbf{v} \in \R^N$ and $\lambda \in \R$. From the tridiagonal form of $T$ and $\gamma_n \neq 0$, we readily see that $v_1 = 0$ implies that $v_k = 0$ for all $2 \leq k \leq N$. Therefore if the dimension of some eigenspace of $T$ was greater than one, we could construct an eigenvector $\mathbf{v} \in \R^N$ of $T$ with $v_1=0$ and hence $\mathbf{v}=0$, which is a contradiction.
\end{proof}

\begin{lemma} \label{lem:dense_ran}
Suppose $f \in H^1(\R)$ with $P^\perp f = f$ and let $\eps > 0$. Then there is a function $f_\eps \in H^1(\R)$ with $\| f - f _\eps \|_{L^2} < \eps$ such that $P^\perp f_\eps = f_\eps$, $f_\eps \in \mathrm{ran} \, (L_+)$, and $|f_\eps(x)| \leq C \langle x \rangle^{-2}$ for all $x \in \R$ with some constant $C=C(f,\eps)$.
\end{lemma}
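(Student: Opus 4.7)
My plan is to realize $f_\eps$ in the explicit form $f_\eps = L_+ h_\eps$ with a Schwartz function $h_\eps$ subject to $2m-1$ orthogonality conditions, and then to prove that the collection of all such functions is dense in $P^\perp L^2(\R)$. Concretely, I would introduce
\[
  \mathcal R := \left\{\, L_+ h \,:\, h \in \mathcal{S}(\R),\ (\vphi_k, h) = 0 \ \text{for}\ 0 \leq k \leq 2m-2 \,\right\}.
\]
For every $f_\eps = L_+ h \in \mathcal R$, the four structural requirements of the lemma are immediate: (i) $f_\eps \in H^1(\R)$ because $L_+ = |\nabla| - 2m/(1+x^2)$ maps $\mathcal{S}$ into $C^\infty \cap H^1$; (ii) $f_\eps \in \mathrm{ran}(L_+)$ by construction; (iii) $f_\eps \in P^\perp L^2$ follows from self-adjointness, since $(\vphi_k, L_+ h) = E_k(\vphi_k, h) = 0$ for $0 \leq k \leq 2m-2$ (using $E_k \neq 0$ together with the constraint on $h$), while $(\vphi_{2m-1}, L_+ h) = (L_+ \vphi_{2m-1}, h) = 0$ since $\vphi_{2m-1} \in \ker L_+$; (iv) the decay $|f_\eps(x)| \leq C\langle x\rangle^{-2}$ comes from a Taylor expansion of the Hilbert transform representation of $|\nabla| h$: for $|x|$ large one writes $\frac{1}{x-y} = \frac{1}{x} + \frac{y}{x^2} + O(y^2/x^3)$, and since $\int h'\,dy = 0$ the leading $1/x$ coefficient vanishes, giving $|\nabla|h(x) = -\frac{1}{\pi x^2}\int h\,dy + O(|x|^{-3})$; the Schwartz decay of $\frac{2m}{1+x^2}h$ contributes a negligible remainder.

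The heart of the argument is then density of $\mathcal R$ in $P^\perp L^2(\R)$, which I would obtain by duality. Suppose $g \in P^\perp L^2$ is $L^2$-orthogonal to $\mathcal R$. Interpreting $L_+ g$ as the tempered distribution $\langle L_+ g, \phi \rangle := (g, L_+ \phi)$ for $\phi \in \mathcal{S}$, the hypothesis becomes $\langle L_+ g, h\rangle = 0$ for every $h$ in the codimension-$(2m-1)$ subspace $V := \{h \in \mathcal{S} : (\vphi_k, h) = 0,\ 0 \leq k \leq 2m-2\}$. Since each $\vphi_k \in L^2 \subset \mathcal{S}'$ defines a continuous functional on $\mathcal{S}$ and these functionals are linearly independent, the annihilator of $V$ in $\mathcal{S}'$ equals $\mathrm{span}\{\vphi_0,\ldots,\vphi_{2m-2}\}$, so $L_+ g = \sum_{k=0}^{2m-2} c_k \vphi_k$ in $\mathcal{S}'$, hence in $L^2$. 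Then $|\nabla| g = L_+ g + \frac{2m}{1+x^2}g \in L^2$ upgrades $g$ to $H^1(\R)$. Pairing with $\vphi_j$ yields $c_j \|\vphi_j\|_{L^2}^2 = (L_+ g, \vphi_j) = E_j(g, \vphi_j) = 0$ by $g \in P^\perp L^2$, so all $c_j$ vanish and $L_+ g = 0$. Hence $g \in \ker L_+ = \mathrm{span}\{\vphi_{2m-1}\}$; combined with $g \perp \vphi_{2m-1}$ this forces $g = 0$. Given the density, the lemma follows by selecting $f_\eps \in \mathcal R$ with $\|f - f_\eps\|_{L^2} < \eps$.

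The main subtlety will be the annihilator identification in the duality step, which must be stated carefully because $\mathcal{S}$ is a Fr\'echet rather than a Hilbert space; however, it reduces to the standard fact that for a closed subspace of finite codimension in a locally convex space, the dual annihilator is spanned by any linearly independent family of continuous functionals defining it. All remaining steps are routine applications of the spectral structure already established in Propositions \ref{prop:null_Lp}, \ref{prop:Lp_L2}, and \ref{prop:Lm_L2}.
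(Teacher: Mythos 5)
Your argument is correct, but it takes a genuinely different route from the paper. The paper works forward from $f$: since $L_+$ is self-adjoint and $\ker(L_+) \subset \mathrm{ran}(P)$, it has $P^\perp L^2 \subset \ker(L_+)^\perp = \overline{\mathrm{ran}(L_+)}$, so it picks $h_\eps \in C^\infty_c$ with $\|f - L_+ h_\eps\|_{L^2} < \eps$, sets $f_\eps := L_+ P^\perp h_\eps$ (using $[L_+, P^\perp] = 0$), and expands $f_\eps = L_+ h_\eps - \sum_{k \leq 2m-2} (\vphi_k, h_\eps) E_k \vphi_k$; the pointwise decay is then obtained from the elementary bound on $L_+ h_\eps$ \emph{together with} the decay $|\vphi_k(x)| \lesssim \langle x\rangle^{-2}$ of the negative-energy eigenfunctions, for which the paper cites Carmona--Masters--Simon. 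You instead bake the orthogonality into the test function $h$ from the start, so that no eigenfunctions appear in $f_\eps = L_+ h$ at all and the decay reduces entirely to the elementary asymptotics of $|\nabla| h$ for Schwartz $h$; the price you pay is that density of the constrained set $\mathcal{R}$ must be re-proved by a duality argument (which, as you note, hinges on the identification of the annihilator of a closed finite-codimension subspace of $\mathcal{S}$ and on bootstrapping $g$ into $H^1$ once $L_+ g \in L^2$). Both routes rest on the same spectral facts; yours is more self-contained in that it avoids the pointwise eigenfunction estimate, while the paper's is shorter because the density step is a one-liner from self-adjointness. A minor remark: in your step (iii) the constraint $(\vphi_k,h)=0$ already kills the pairing for $0\le k\le 2m-2$ regardless of the sign of $E_k$, and for $k=2m-1$ you correctly invoke $L_+\vphi_{2m-1}=0$; the parenthetical ``using $E_k\neq 0$'' is not needed there (it matters only to explain why you did not also impose orthogonality to $\vphi_{2m-1}$). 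The Taylor-expansion decay argument is fine but should formally be split into $|y|<|x|/2$ and $|y|\ge |x|/2$; the latter region contributes a rapidly decaying remainder for Schwartz $h$, and in the principal-value region near $y=x$ one subtracts $h'(x)$ to control the singularity.
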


\begin{proof}
Let $f \in H^1(\R)$ with $P^\perp f = f$ and $\eps > 0$ be given. Since $\ker (L_+) \subset \mathrm{ran}(P)$ and by the self-adjointness of $L_+$, we find $f \in \ker (L_+)^\perp = \overline{\mathrm{ran} (L_+)}$. Thus there is some $g_\eps \in H^1(\R)$ such that $\| f- L_+ g_\eps  \|_{L^2} < \eps/2$. Furthermore, by a standard density argument, we can find some $h_\eps \in C^\infty_c(\R)$ such that $\| L_+ g - L_+ h_\eps \|_{L^2} < \eps/2$ and hence $\| f- L_+ h_\eps \| < \eps$. Let us now define the function  $f_\eps := L_+ P^\perp h_\eps \in \mathrm{ran}\, (L_+)$. Clearly we obtain $f_\eps \in H^1(\R)$, since $L_+ : H^2(\R) \to H^1(\R)$ and $P^\perp = \mathds{1} - P$, where $P$ projects onto a finite linear combination of smooth and bounded functions. Because $L_+$ and $P^\perp$ commute and $\| P^\perp \|_{L^2 \to L^2} = 1$, we also find that $\| f - f_\eps \|_{L^2} = \| P^\perp (f - L_+ h_\eps) \|_{L^2} < \eps$.

Finally, we prove the pointwise estimate $|f_\eps(x)| \leq C \langle x \rangle^{-2}$ for $x \in \R$ with some constant $C=C(f,\eps)$. Indeed, we note that 
$$
f_\eps = L_+ P^\perp h_\eps = L_+ h_\eps - \sum_{k=0}^{2m-2} c_k \vphi_k,
$$
with the coefficients $c_k = ( \vphi_k, h_\eps)$, where $\{ \vphi_0, \ldots, \vphi_{2m-2} \}$ denote the (normalized) $L^2$-eigenfunctions of $L_+$ corresponding to the eigenvalues $E_0< E_1 < \ldots E_{2m-2}$. Note here that since $L_+ \vphi_{2m-1}= 0$, the zero $L^2$-eigenfunction of $L_+$ with eigenvalue $E_{2m-1}=0$ does not appear in the sum over $k$ above. By well-known methods \cite{CaMaSi-90}, the eigenfunctions $\vphi_{k}$ of $L_+$ for $E_k < 0$ satisfy the pointwise bound $|\vphi_k(x)| \lesssim \langle x \rangle^{-2}$. Moreover, it is elementary to checkt that $h_\eps \in C^\infty_c(\R)$ implies that $|(L_+ h_\eps)(x)| \leq C \langle x \rangle^{-2}$ for some constant $C=C(h_\eps)$. Hence we conclude the asserted decay estimate for the function $f_\eps$.
\end{proof}

\end{appendix}


\def\cprime{$'$}

\end{document}